\newcommand{\tr}{\mathsf{ T}}
\newcommand{\RR}{\mathbb{R}}
\newtheorem{theorem}{Theorem}
\newtheorem{definition}{Definition}[section]
\newtheorem{remark}{Remark}[section]
\newtheorem{lemma}{Lemma}
\newtheorem{proposition}{Proposition}[section]
\newtheorem{corollary}{Corollary}[section]
\newtheorem{assumption}{Assumption}[section]
\newcommand{\bsf}[1]{\textsf{\LARGE\textbf{#1}}}
\newcommand{\bigline}{\\ \vrule height2pt width 5 in depth 0pt\newline\noindent}
\newcommand{\Dist}{\mathrm{dist}}
\DeclareMathOperator*{\argmin}{\arg\!\min}
\newcommand{\innerproduct}[2]{\left \langle #1, #2 \right\rangle }
\definecolor{moccasin}{rgb}{0.98, 0.92, 0.84}
\newtcolorbox{mybox}{colback=moccasin,
colframe=moccasin}
\crefname{equation}{}{}
\crefname{theorem}{Theorem}{Theorems}
\crefname{corollary}{Corollary}{Corollaries}
\crefname{example}{Example}{Examples}
\crefname{assumption}{Assumption}{Assumptions}
\crefname{lemma}{Lemma}{Lemmas}
\crefname{proposition}{Proposition}{Propositions}
\crefname{figure}{Figure}{Figures}
\crefname{table}{Table}{Tables}
\crefname{section}{Section}{Sections}
\crefname{appendix}{Appendix}{Appendices}
\Crefname{equation}{}{}
\Crefname{theorem}{Theorem}{Theorems}
\Crefname{corollary}{Corollary}{Corollaries}
\Crefname{example}{Example}{Examples}
\Crefname{lemma}{Lemma}{Lemma}
\Crefname{proposition}{Proposition}{Propositions}
\Crefname{figure}{Figure}{Figures}
\Crefname{table}{Table}{Tables}
\Crefname{section}{Section}{Sections}
\Crefname{appendix}{Appendix}{Appendices}
\newcommand{\bigO}{\mathcal{O}}
\newcommand{\PBM}{\texttt{PBM}}
\newcommand{\PPM}{PPM}
\newcommand{\muq}{\mu_{\mathrm{q}}}
\newcommand{\mup}{\mu_{\mathrm{p}}}
\newcommand{\ee}{\mathrm{e}}
\begin{document}

\title{
 \bf \Large 
 A Proximal Descent Method for Minimizing Weakly Convex Optimization 
 \thanks{This work is supported by NSF ECCS 2154650, NSF CMMI 2320697, and NSF CAREER 2340713. Emails: fliao@ucsd.edu; zhengy@ucsd.edu.}
 }
\author[1]{Feng-Yi Liao}
\author[1]{Yang Zheng}
\affil[1]{\small Department of Electrical and Computer Engineering, University of California San Diego}
\date{\small \today \vspace{-5ex}} 

\maketitle

\begin{abstract}
    We study the problem of minimizing a $m$-weakly convex and possibly nonsmooth function. Weak convexity provides a broad framework that subsumes convex, smooth, and many composite nonconvex functions. In this work, we propose a \emph{proximal descent method}, a simple and efficient first-order algorithm that combines the inexact proximal point method with classical convex bundle techniques. Our analysis establishes explicit non-asymptotic convergence rates in terms of $(\eta,\epsilon)$-inexact stationarity. In particular, the method finds an $(\eta,\epsilon)$-inexact stationary point using at most 
$
    \mathcal{O}\!\left( \Big(\tfrac{1}{\eta^2} + \tfrac{1}{\epsilon}\Big) \max\!\left\{\tfrac{1}{\eta^2}, \tfrac{1}{\epsilon}\right\} \right)
$ 
function value and subgradient evaluations. Consequently, the algorithm also achieves the best-known complexity of $\mathcal{O}(1/\delta^4)$ for finding an approximate Moreau stationary point with $\|\nabla f_{2m}(x)\|\leq \delta$. A distinctive feature of our method is its \emph{automatic adaptivity}: with no parameter tuning or algorithmic modification, it accelerates to $\mathcal{O}(1/\delta^2)$ complexity under smoothness and further achieves linear convergence under quadratic growth. 
Overall, this work bridges convex bundle methods and weakly convex optimization, while providing accelerated guarantees under structural assumptions. 
\end{abstract}

\section{Introduction}
We consider the unconstrained optimization problem 
\begin{align}   
    \label{eq:main-problem}
     f^\star = \min_{x \in \RR^n} \; f(x)
\end{align}
where the objective function $f:\RR^n \to \RR $ is $m$-weakly convex, possibly nonsmooth, and the optimal value $f^\star$ is assumed finite. Recall that $f$ is called $m$-weakly convex if the mapping $ x \to f(x) + \frac{m}{2} \|x\|^2$ is convex.  
The class of weakly convex functions, first studied in \cite{nurminskii1973quasigradient}, is very broad. It naturally includes all convex functions, smooth functions with Lipschitz continuous gradient, as well as composition functions of the form 
$x \mapsto h(c(x)),$
where $h :\RR^m \to \RR$ is convex and Lipschitz continuous, and the Jacobian of the function $c :\RR^n \to \RR^m$ is Lipschitz continuous \cite{davis2019stochastic}. Note that such composite functions~$h\circ c$ need not be smooth or convex; instead, this function class naturally interpolates between the convex and smooth regimes. 
The importance of weak convexity has long been recognized in classical optimization theory \cite{poliquin1996prox,rockafellar1997convex,drusvyatskiy2017proximal}. More recently, it has gained renewed attention due to its prevalence in modern applications in statistical learning and signal processing. Representative examples include robust phase retrieval, covariance matrix estimation, blind deconvolution, sparse dictionary learning, and robust principal component analysis, among others; see \cite[Section 2.1]{davis2019stochastic} and \cite{drusvyatskiy2017proximal}. 

{
\renewcommand{\arraystretch}{1.5}
\begin{table}[t]
\centering

\begin{tabular}{c c c c c}
\hline 
Assumption & Growth & Measure & Complexity &  Result  \\
\hline
\multirow{4}{*}{$L$-Lipchitz}
 & \multirow{2}{*}{No growth}
  & $(\eta,\epsilon)$-IS
 & $\bigO \left(  \left ( \frac{1}{\eta^2} + \frac{1}{\epsilon} \right ) \max \left\{\frac{1}{\eta^2},\frac{1}{\epsilon}\right\} \right )$ 
 & 
   \cref{theorem:main-result}  \\
 & & $(\delta,\alpha)$-MS & $\bigO \left( \frac{1}{\delta^4} \right )$ 
 & \cref{corollary:main-result}
 \\
 \cline{2-5}
 & \multirow{2}{*}{$\muq$-QG and $\muq > m$ } &$(\eta,\epsilon)$-IS & $\bigO \left (\max \left \{ \frac{1}{\eta^2},\frac{1}{\epsilon} \right \} \right)$ & \cref{theorem:main-result-QG}
 \\
 &  & $(\delta,\alpha)$-MS & $\bigO \left( \frac{1}{\delta^2} \right )$ &   \cref{corollary:main-result-QG} \\
\hline
\multirow{2}{*}{$M$-Smooth}
 & No growth & 
 $\|\nabla f(x) \| \leq \delta$ & $\bigO\left (\frac{1}{\delta^2} \right)$ &  \cref{theorem:smooth-functions} 
 \\
\cline{2-5}
 & $\muq$-QG and $\muq > m$ & $\|\nabla f(x) \| \leq \delta$  & $\bigO   \left  ( \log \left( \frac{1}{\delta} \right)   \right)$ &   \cref{theorem:smooth-functions-quadratic-growth}\\
\hline
\end{tabular}

\caption{Complexity of (sub)gradient and function evaluations for the proximal descent method in \Cref{alg:Proxi-descent} when minimizing $m$-weakly convex functions. We use $\muq$-QG to denote the quadratic growth condition; see \cref{eq:QG}. $(\eta,\epsilon)$-IS denotes the $(\eta,\epsilon)$-inexact stationarity, i.e., $\mathrm{dist}(0,\partial_\epsilon f(x)) \leq \eta$,  (\Cref{def:Regularized-stationary}), and $(\delta,\alpha)$-MS denotes the $(\delta,\alpha)$-Moreau stationarity, i.e., $\|\nabla_\alpha f(x)\| \leq \delta$,  (\Cref{def:Moreau-stationary}).
}
\label{tab:summary}
\end{table}
}

The main goal of this paper is to develop an efficient first-order algorithm, 
called \textit{proximal descent method}, for computing stationary points of problem \cref{eq:main-problem}. Our approach builds on the~framework of inexact proximal point methods \cite{correa1993convergence} and classical convex bundle methods \cite{lemarechal1994condensed}. While the underlying ideas have their origins in earlier works such as \cite{hare2009computing,hare2010redistributed}, our key contribution is to establish \textit{non-asymptotic convergence guarantees} for minimizing weakly convex functions. A notable feature of the proposed method is its adaptivity: without any algorithmic modification, it automatically accelerates in the presence of smoothness or quadratic growth conditions. 

\Cref{tab:summary} lists a detailed comparison of the resulting convergence rates under different assumptions on smoothness and growth (all allowing for nonconvexity). Before elaborating on these rates, let us first clarify the notion of optimality and stationary measure, which quantifies how quickly our algorithm converges. 

\subsection{Search for optimality or stationary points} 

\textbf{(Sub)gradient methods.} For convex functions, the most common performance measure is the rate at which the function values decrease, that is, how quickly $f(x_k) - \min_x f(x)$ converges to zero. 
In particular, when $f$ is differentiable and its gradient map $x \mapsto \nabla f(x)$ is $M$-Lipschitz continuous, the negative gradient direction always provides a descent direction. In this case, the basic gradient descent method  
$
    x_{k+1} = x_k - \alpha_k \nabla f(x_k), \, k = 0,1,2,\ldots,
$
with stepsize $\alpha_k = 1/M$, achieves an iteration complexity of $\mathcal{O}(1/\epsilon)$ to find a point satisfying $f(x_k) - \min_x f(x) \leq \epsilon$ \cite[Section~1.2.3]{nesterov2018lectures}. Moreover, Nesterov’s accelerated method improves this rate to the optimal $\mathcal{O}(\sqrt{1/\epsilon})$ \cite[Section~2.2]{nesterov2018lectures}. When $f$ is convex but nonsmooth, the situation is markedly different. In this case, the standard algorithm is the subgradient method $
     x_{k+1} = x_k - \alpha_k g_k , \, k = 0,1,2,\ldots,
$
where $g_k \in \partial f(x_k)$ is a subgradient. Unlike the smooth case, the negative subgradient direction may no longer guarantee descent in function value, and constant stepsize schemes typically fail to ensure convergence. Instead, a diminishing stepsize schedule is usually required to establish convergence of the function values, which achieves an iteration complexity of $\mathcal{O}(1/\epsilon^2)$ \cite[Section~3.2.3]{nesterov2018lectures}. 

For nonconvex functions, it is generally unrealistic to expect the function values to converge to a global minimum. Instead, the standard objective is to identify a stationary point. In the case of nonconvex but smooth functions, a common measure is the gradient norm along the iterates, $\|\nabla f(x_k)\|$. In particular, the basic gradient descent method with a constant stepsize achieves an iteration complexity of $\mathcal{O}(1/\epsilon^2)$ for finding an $\epsilon$-stationary point, i.e., a point satisfying $\|\nabla f(x_k)\| \leq \epsilon$ \cite[Equation 1.2.22]{nesterov2018lectures}.  
For nonconvex and nonsmooth problems, one may instead consider the minimal norm subgradient, $\Dist(0,\partial f(x_k))$, as a natural stationary measure. However, this quantity can be uninformative for subgradient methods, since $\Dist(0,\partial f(x_k))$ may remain bounded away from zero along all iterates, for example, when $f(x) = |x|$. This difficulty arises from the discontinuity of $x \mapsto  \Dist(0,\partial f(x))$, which complicates the analysis of nonsmooth nonconvex problems.  

Weak convexity provides a useful setting for analyzing nonsmooth and nonconvex problems. As highlighted in \cite{davis2019stochastic}, a key tool in this setting is the \emph{Moreau envelope}, defined for any $\rho > m$ as  
\begin{align*}
    f_{\rho}(x) := \inf_{y} \Big\{ f(y) + \tfrac{\rho}{2}\|y-x\|^2 \Big\}.
\end{align*}
The Moreau envelope smooths the $m$-weakly convex function $f$ and admits a Lipschitz continuous gradient, even when $f$ itself is nonsmooth. Consequently, the gradient norm $\|\nabla f_{\rho}(x)\|$ serves as an informative and continuous stationary measure. We defer a more detailed review of the Moreau envelope to \Cref{subsection:weakly}. In particular, the work \cite{davis2019stochastic} established a non-asymptotic convergence rate of $\mathcal{O}(1/\delta^4)$ for the subgradient method in terms of the Moreau envelope: after $J$ iterations, the method ensures  
$
    \min_{k = 1,\ldots,J} \|\nabla f_{2m}(x_k)\| \leq \delta
$  
with a complexity of order $\mathcal{O}(1/\delta^4)$. While this result provides the first rigorous rate guarantee in the weakly convex setting, the stopping criterion requires evaluating $\|\nabla f_{2m}(x_k)\|$, which may be prohibitively expensive in practice. 

\vspace{3pt}

\noindent \textbf{Proximal point methods.} 
In contrast to the subgradient method, the natural stationary measure $\Dist(0,\partial f(x_k))$ works well for the proximal point method (\PPM{}) \cite{rockafellar1976monotone}, which follows the recursion
\begin{align}
    \label{eq:PPM}
    x_{k+1} = \argmin_{x \in \RR^n}\; f(x) + \frac{\alpha}{2}\|x - x_k\|^2, \; \forall k = 1,2,\ldots,
\end{align}
with $\alpha > m$. 
 A simple telescoping argument shows that \PPM{} attains the rate $\mathcal{O}(1/\delta^2)$ for finding a point with $\Dist(0,\partial f(x_k)) \leq \delta$ (see \cref{proposition:PPM-weakly-convex}). The main drawback of \PPM{}, however, is that each iteration requires solving problem \cref{eq:PPM} which itself can be non-trivial.  

In the convex but nonsmooth setting (i.e., $m = 0$), it is common to consider inexact variants of \PPM{}, where the subproblem \eqref{eq:PPM} is solved only approximately \cite{rockafellar1973multiplier,rockafellar1976monotone}. The proximal bundle~method (\PBM{}) is a particularly efficient implementation of this idea \cite{lemarechal1994condensed}. Roughly speaking, \PBM{} addresses the difficulty of solving \eqref{eq:PPM} by constructing a sequence of simple minorants $\{\tilde f_j\}$ of $f$ (i.e., $\tilde f_j \leq f$, which is easy to construct since $f$ is convex), and then generating candidate points $\{w_j\}$ via
\begin{align*}
    w_j = \argmin_{x \in \RR^n} \; f_j(x) + \frac{\alpha}{2}\|x - x_k\|^2, \; \forall j = 1,2,\ldots,
\end{align*}
which admits an efficient solution by the construction of $f_j$. 
Through a systematic procedure for building these minorants and a specialized descent test, \PBM{} ensures that the resulting sequence $\{w_j\}$ yields a valid inexact solution of \eqref{eq:PPM}. Compared with the subgradient method, \PBM{} does not require delicate stepsize tuning to guarantee convergence of the function values, and it further achieves the optimal convergence rates for convex problems \cite{du2017rate,kiwiel2000efficiency,diaz2023optimal}.

Classical results on \PBM{} have primarily focused on the convex setting \cite{du2017rate,kiwiel2000efficiency,diaz2023optimal,lemarechal1994condensed,liao2023overview,liao2025bundle}. More recently, several works have extended \PBM{} to weakly convex problems, including \cite{hare2009computing,hare2010redistributed,fuduli2004minimizing,davis2019proximally,liang2023proximal,apkarian2009proximity}. Among these, the work \cite{davis2019proximally} introduced a novel scheme that combines \PPM{} with the subgradient method, called the \emph{proximally guided stochastic subgradient method} (PGSG). This algorithm achieves a non-asymptotic complexity guarantee of $\bigO(1/\epsilon^2)$ finding a point $x_k$ such that  
$$
    \Dist\!\left(x_k, \left\{ x \in \RR^n \,\middle|\, \Dist(0, \partial f(x))^2 \leq \varepsilon \right\} \right)^2 \leq \varepsilon.
$$
However, PGSG requires a \emph{predefined} number of inner subgradient iterations for approximately solving the subproblem \cref{eq:PPM}, which reduces its flexibility in practice.  
Another line of work \cite{hare2009computing,hare2010redistributed} explored constructing convex bundle models to lower approximate the local convexification $f(\cdot) + \tfrac{m}{2}\|\cdot - x_k\|^2$. This idea is conceptually very appealing and indeed serves as one of the motivations for our approach; but no iteration complexity guarantees were established in \cite{hare2009computing,hare2010redistributed}. 
More recently, \cite{liang2023proximal} also exploited the idea of local convexification and proposed a variant of the classical \PBM{} for weakly convex functions, and proved that it can find a point $x_k$ satisfying $\|\nabla f_{2m}(x_k)\| \leq \delta$ at a rate of $\mathcal{O}(1/\delta^4)$. Nonetheless, the algorithmic framework of \cite{liang2023proximal} appears considerably more complicated compared to the classical \PBM{} for convex functions or the subgradient method.

\subsection{Our contributions}

In this work, we develop a \emph{simple} and \emph{efficient} first-order algorithm, called the \emph{proximal descent method}, for computing stationary points of the weakly convex optimization problem \cref{eq:main-problem}.~Our~method is based on the framework of the inexact proximal point method \cite{rockafellar1976monotone}, with each subproblem solved efficiently via the classical convex bundle method \cite{lemarechal1994condensed}.  
A central concept in our algorithm is the $\epsilon$-inexact subdifferential $\partial_\epsilon f(x)$ (see \cref{eq:inexact-subdifferential} for a precise definition). At each iterate $x_k$, the proximal descent method aims to generate a new point $x_{k+1}$ satisfying the decrease condition  
\begin{equation} \label{eq:PPM-value-drop-main}
    f(x_{k+1}) \leq f(x_k) - \frac{\tilde{\beta}}{2\alpha}\|\tilde{g}_{k+1}\|^2,
\end{equation}
where $\tilde{\beta} \in (m/\alpha,1]$ and $\tilde{g}_{k+1} \in \partial_{\epsilon_{k+1}} f(x_{k+1})$ is an inexact subgradient with accuracy $\epsilon_{k+1} \geq 0$. For the exact PPM update \cref{eq:PPM}, this inequality holds with $\tilde{\beta} = 1$ and $\epsilon_{k+1} = 0$, i.e., $\tilde{g}_{k+1} \in \partial f(x_{k+1})$. Since solving \cref{eq:PPM} exactly is generally intractable, we relax the condition to allow both a smaller decrease and inexact subgradients as in \cref{eq:PPM-value-drop-main}. If \cref{eq:PPM-value-drop-main} holds, a simple telescoping argument then guarantees that 
the method finds a point satisfying $\min_{k=1,\ldots,T} \|\tilde{g}_{k+1}\| \leq \eta$ within $T =\mathcal{O}(1/\eta^2)$ iterations. Additional analysis shows that the inexactness terms $\epsilon_{k+1}$ also converge to zero, ensuring true stationarity (\Cref{lemma:proximal-descent}).  

Our key observation is that a point $x_{k+1}$ satisfying \cref{eq:PPM-value-drop-main} can be computed efficiently using classical convex bundle methods. Following the idea of \cite{hare2009computing,hare2010redistributed}, we decompose the PPM parameter $\alpha$ as $\alpha = m + \rho$, leading to the subproblem  
\[
x_{k+1} = \argmin_{x \in \RR^n}\; f(x) + \frac{m}{2}\|x - x_k\|^2 + \frac{\rho}{2}\|x - x_k\|^2,
\]
where $m$ is the \emph{convexification parameter} and $\rho$ the \emph{proximal parameter}. The function $x \mapsto f(x) + \tfrac{m}{2}\|x - x_k\|^2$ is convex for any $x_k \in \RR^n$, which enables us to adapt convex bundle methods to efficiently construct lower approximations of this local convex model and obtain iterates satisfying \cref{eq:PPM-value-drop-main}. Unlike the classical convex bundle setting, however, the local convexified model $f(\cdot) + \tfrac{m}{2}\|\cdot - x_k\|^2$ evolves with the iterate $x_k$, necessitating a different analysis for the proposed algorithm.

The idea of combining inexact \PPM{} with convex bundle methods provides a natural extension of the classical \PBM{} from the convex to the weakly convex setting. Our main technical contribution is to establish explicit convergence rates for the resulting proximal descent method. We measure progress using the notion of \emph{$(\eta,\epsilon)$-inexact stationarity}, defined as $\Dist(0,\partial_\epsilon f(x)) \leq \eta$. For weakly convex functions, we show that the proximal descent method finds an $(\eta,\epsilon)$-inexact stationary point within at most
\[
    \mathcal{O}\!\left( \Big(\tfrac{1}{\eta^2} + \tfrac{1}{\epsilon}\Big) \max\!\left\{\tfrac{1}{\eta^2}, \tfrac{1}{\epsilon}\right\} \right)
\]
function and subgradient evaluations (\cref{theorem:main-result}). Moreover, since $(\eta,\epsilon)$-inexact stationarity implies $(\alpha,\delta)$-Moreau stationarity, defined as $\|\nabla f_{\alpha}(x) \| \leq \delta$, our analysis further guarantees that the method produces an iterate $x_k$ with $\|\nabla f_{2m}(x_k)\| \leq \delta$ in at most $\mathcal{O}(1/\delta^4)$ function and subgradient evaluations (\cref{corollary:main-result}), matching the best known rates in \cite{davis2019stochastic,liang2021proximal}.  

Finally, we show that the proximal descent method \emph{automatically accelerates} in the presence of additional structural assumptions, such as smoothness or quadratic growth, without requiring any algorithmic modification. These accelerated guarantees apply for nonconvex functions, thereby extending the convex-only results of \cite{diaz2023optimal} to the nonconvex setting. In particular, when $f$ is $M$-smooth, the method adapts to find a point $x_k$ with $\|\nabla f(x_k)\| \leq \delta$ in $\mathcal{O}(1/\delta^2)$ iterations (\cref{theorem:smooth-functions}). If $f$ further satisfies a quadratic growth condition, the proximal descent method achieves a linear convergence rate (\cref{theorem:smooth-functions-quadratic-growth}). Unlike existing methods for weakly convex optimization (e.g., subgradient or PGSG), which require explicit tuning or modification to exploit smoothness, our approach adapts naturally. A summary of our convergence results across different settings is provided in \cref{tab:summary}. To the best of our knowledge, this is the first analysis that unifies best-known rates in the \textit{weakly convex} setting with accelerated guarantees under additional structural assumptions. A more detailed comparison with existing algorithms is given in \Cref{subsection:comparison}. 

\subsection{Paper organization}

The rest of this paper is organized as follows. \Cref{section:prelim} introduces stationary measures and reviews the (sub)gradient method and the proximal point method. \Cref{section:PBM-inexact} introduces our \textit{proximal descent method}, and  \Cref{section:PMB-weakly} details the bundle updates and establishes the non-asymptotic convergence rates. 
\Cref{section:improved-rate} establishes improved convergence results under additional regularity conditions. \Cref{section:numerics} demonstrates the numerical behavior. \Cref{section:conclusion} concludes the paper and discusses future directions. Some detailed proofs and discussion are presented in the appendix.
\section{Basic notation and preliminaries}
\label{section:prelim}

In this section, we define some basic notation and review some background materials.  We first introduce the subdifferential for weakly convex functions and the Moreau envelope in \Cref{subsection:weakly}, and then discuss two stationarity measures in \Cref{subsection:stationary}. We finally review the subgradient method and the proximal point method to minimize weakly convex functions in \cref{subsection:subgradient-method}.  

\subsection{Subdifferential and the Moreau envelope}
\label{subsection:weakly}

We focus on algorithms to minimize weakly convex functions. Let us begin with the definition. 

\begin{definition}
    A function $f: \mathbb{R}^n \to \mathbb{R} \cup \{+\infty\}$ is called $m$-weakly convex if the function $x \mapsto f(x) + \frac{m}{2}\|x\|^2$ is a convex function. 
\end{definition}

The class of weakly convex functions is quite broad, including any smooth functions, convex functions, and the composition of a convex Lipschitz function and a smooth function \cite{davis2019stochastic}. Given an $m$-weakly convex function $f$, it is clear that $f(\cdot) + \frac{m}{2}\|\cdot - z\|^2$ is also convex for any fixed $z \in \mathbb{R}^n$. This means that the function $f$ is convex up to a quadratic perturbation with curvature $m$. This simple fact is important since it allows us to construct a global affine under-estimator for the perturbed function $f(\cdot) + \frac{m}{2}\|\cdot - z\|^2$. 

Consider a proper closed function $f:\mathbb{R}^n \to \mathbb{R} \cup \{+\infty\}$ and a point $x$ with $f(x)$ finite. The Fr\'echet subdifferential of $f$ at $x\in\mathbb{R}^n$ is defined as  \cite[Definition 1.10]{kruger2003frechet}
$$
    \partial f(x) = \left \{ v \in \RR^n \mid \liminf_{y \to x }  \frac{ f(y) - f(x) -  \innerproduct{v}{y-x}}{\|y-x\|} \geq 0 \right\}.
$$ 
Informally, the Fr\'echet subdifferential $\partial f(x)$ consists of all vectors $v$ such that the function $f$ admits an affine under-estimator $l(y) = f(x) -  \innerproduct{v}{y-x}$ locally at the point $x$. If $f$ continuously differentiable, the Fr\'echet subdifferential $\partial f(x)$ only consists of the gradient $\{\nabla f(x)\}$ \cite[Exercise 8.8]{rockafellar2009variational}. For convex functions, it becomes the same as the usual convex subdifferential \cite[Proposition 8.12]{rockafellar2009variational}
$
    \partial f (x) = \{ v \in \RR^n \mid f(y) \geq f(x) + \innerproduct{v}{y-x}, \forall y \in \RR^n\}. 
$ 

The following result characterizes the Fr\'echet subdifferential of a weakly convex function. 
\begin{lemma} \label{lemma:subdifferential-weakly-convex}
    Let $f:\mathbb{R}^n \to \mathbb{R}\cup \{+\infty\}$ be a closed $m$-weakly convex function. We have
\begin{align}
    \partial f(x) = \left \{v \in \RR^n \mid f(y) \geq  f(x) + \innerproduct{v}{y-x} - \frac{m}{2}\|y-x\|^2,\; \forall y \in \RR^n\right \}. \label{eq:Frechet-subdifferetnial-concrete}
\end{align}
\end{lemma}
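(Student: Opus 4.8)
The plan is to reduce the claim to the already-recalled characterization of the convex subdifferential by passing to the convexified function $g(x) := f(x) + \frac{m}{2}\|x\|^2$, which is convex by the very definition of $m$-weak convexity (and closed, since $f$ is closed). The bridge between $\partial f$ and $\partial g$ is a shift rule for the Fr\'echet subdifferential under a smooth perturbation: writing $h(x) := \frac{m}{2}\|x\|^2$, which is continuously differentiable with $\nabla h(x) = mx$, I would first establish that
\begin{align*}
    \partial g(x) = \partial f(x) + m x, \qquad\text{equivalently}\qquad \partial f(x) = \partial g(x) - m x.
\end{align*}

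First I would prove this shift rule directly from the definition of the Fr\'echet subdifferential. Forming the difference quotient for $g = f + h$ and inserting the first-order expansion $h(y) - h(x) = \innerproduct{m x}{y-x} + o(\|y-x\|)$, the smooth remainder contributes $o(\|y-x\|)/\|y-x\| \to 0$ and hence leaves the $\liminf$ unchanged. Thus $v \in \partial g(x)$ holds if and only if $v - m x \in \partial f(x)$. This is the only genuinely analytic step, and it is short; everything afterward is algebra. Next, since $g$ is convex, its Fr\'echet subdifferential coincides with the convex subdifferential recalled just before the statement, so $w \in \partial g(x)$ exactly when $g(y) \geq g(x) + \innerproduct{w}{y-x}$ for all $y$. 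Substituting $w = v + m x$ with $v \in \partial f(x)$ and expanding $g = f + \frac{m}{2}\|\cdot\|^2$ converts this into an inequality for $f$ alone.

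The final step is to simplify the quadratic terms using $\|y\|^2 = \|x\|^2 + 2\innerproduct{x}{y-x} + \|y-x\|^2$: the cross terms $m\innerproduct{x}{y-x}$ cancel and $\frac{m}{2}\|x\|^2 - \frac{m}{2}\|y\|^2$ collapses to $-\frac{m}{2}\|y-x\|^2$, yielding exactly
\begin{align*}
    f(y) \geq f(x) + \innerproduct{v}{y-x} - \frac{m}{2}\|y-x\|^2 \qquad \text{for all } y \in \RR^n,
\end{align*}
which is the desired characterization \eqref{eq:Frechet-subdifferetnial-concrete}. The main obstacle is conceptual rather than computational: one must apply the smooth shift rule in the correct direction and confirm that weak convexity (hence convexity of $g$) together with closedness is what licenses the passage from the Fr\'echet to the convex subdifferential of $g$. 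Once the shift rule is in hand, the remaining derivation is a one-line completion of the square, so I expect no further difficulty.
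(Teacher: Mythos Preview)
Your proposal is correct and follows the standard route: pass to the convexification $g = f + \tfrac{m}{2}\|\cdot\|^2$, use the smooth shift rule $\partial g(x) = \partial f(x) + mx$ (which is immediate from the Fr\'echet definition since the quadratic remainder is $o(\|y-x\|)$), invoke the convex subdifferential characterization for $g$, and then complete the square. The algebra is right.

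There is nothing to compare against: the paper does not prove this lemma but simply cites it as standard, pointing to \cite[Lemma 2.1]{davis2019stochastic}. Your argument is essentially the proof one finds there, so you have filled in exactly what the paper omits.
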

This result is standard; see, for example, \cite[Lemma 2.1]{davis2019stochastic}. \Cref{lemma:subdifferential-weakly-convex} means that a weakly convex function admits global \textit{concave quadratic} under-estimators from any subgradient. This generalizes the case of convex functions, where any subgradient yields a global affine under-estimator. 

Given a function $f:\mathbb{R}^n \to \mathbb{R}\cup \{+\infty\}$ and $\rho > 0$, the Moreau envelope is defined by 
\begin{equation} \label{eq:Moreau-envelope}
    f_{\rho}(x) := \inf_{y\in \mathbb{R}^n} \left\{f(y) + \frac{\rho}{2}\|y-x\|^2\right\}.
\end{equation}
The Moreau envelope $f_{\rho}$ is a natural minorant of $f$ by definition since $\inf_{y} \{f(y) + \frac{\rho}{2}\|y-x\|^2\} \leq f(x)$. When $f$ is convex, the Moreau envelope is continuously differentiable for any $\rho > 0$; When $f$ is $m$-weakly convex, with $\rho > m$, the function $y \mapsto f(y) + \frac{\rho}{2}\|y-x\|^2$ becomes strongly convex, and the minimizer $ \hat{x}:=\argmin_{y} \{f(y) + \frac{\rho}{2}\|y-x\|^2\}$ is unique. In this case, the Moreau envelope also becomes continuously differentiable and the gradient is given by the following lemma.

\begin{lemma}
    \label{lemma:gradient-moreau}
    Let $f:\mathbb{R}^n \to \mathbb{R}\cup \{+\infty\}$ be a closed $m$-weakly convex function. Then, for any $\rho > m$, the Moreau envelope $f_{\rho}$ is continuously differentiable with gradient given by 
\begin{align}
    \label{eq:gradient-Moreau}
    \nabla f_{\rho}(x) = \rho (x - \hat{x}),
\end{align}
where $\hat{x}:=\argmin_{y} \{f(y) + \frac{\rho}{2}\|y-x\|^2\}$ is unique. 
\end{lemma}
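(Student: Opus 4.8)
The plan is to establish the differentiability of the Moreau envelope and the gradient formula \cref{eq:gradient-Moreau} by exploiting the strong convexity of the proximal subproblem when $\rho > m$. First, I would set $h_x(y) := f(y) + \frac{\rho}{2}\|y - x\|^2$ and observe that, since $f$ is $m$-weakly convex, the map $y \mapsto f(y) + \frac{m}{2}\|y\|^2$ is convex; adding the remaining curvature shows $h_x$ is $(\rho - m)$-strongly convex. Strong convexity guarantees that the infimum defining $f_\rho(x)$ is attained at a \emph{unique} minimizer $\hat{x} = \hat{x}(x)$, which justifies the notation and the uniqueness claim in the statement. This is the part that crucially uses $\rho > m$ rather than merely $\rho > 0$.

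Next I would derive the candidate gradient. At the minimizer $\hat{x}$ of $h_x$, the first-order optimality condition (valid because $h_x$ is convex with a $\frac{\rho}{2}\|y-x\|^2$ term whose subdifferential is classical) reads $0 \in \partial f(\hat{x}) + \rho(\hat{x} - x)$, so that $v := \rho(x - \hat{x}) \in \partial f(\hat{x})$. This already identifies the vector $\rho(x - \hat{x})$ as the natural candidate for $\nabla f_\rho(x)$ and connects it to \cref{eq:Frechet-subdifferetnial-concrete}. The main work is then to show that $x \mapsto f_\rho(x)$ is differentiable with precisely this gradient.

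For differentiability I would establish two-sided quadratic estimates on $f_\rho$ around a base point $x_0$ with minimizer $\hat{x}_0$. For the upper bound, plugging the suboptimal choice $y = \hat{x}_0$ into the definition of $f_\rho(x)$ gives $f_\rho(x) \le f(\hat{x}_0) + \frac{\rho}{2}\|\hat{x}_0 - x\|^2$, and expanding the square around $x_0$ produces $f_\rho(x) \le f_\rho(x_0) + \langle \rho(x_0 - \hat{x}_0), x - x_0\rangle + \frac{\rho}{2}\|x - x_0\|^2$. For the matching lower bound I would use the concave-quadratic under-estimator from \cref{lemma:subdifferential-weakly-convex} applied to the subgradient $v_0 = \rho(x_0 - \hat{x}_0) \in \partial f(\hat{x}_0)$, together with the fact that $f_\rho(x) = f(\hat{x}) + \frac{\rho}{2}\|\hat{x} - x\|^2$ for the minimizer $\hat{x}$ at $x$; a short computation yields $f_\rho(x) \ge f_\rho(x_0) + \langle \rho(x_0 - \hat{x}_0), x - x_0\rangle - C\|x - x_0\|^2$ for an appropriate constant. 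Sandwiching $f_\rho(x) - f_\rho(x_0) - \langle \rho(x_0 - \hat{x}_0), x - x_0\rangle$ between two quantities that are $O(\|x - x_0\|^2)$ shows $f_\rho$ is Fr\'echet differentiable at $x_0$ with gradient $\rho(x_0 - \hat{x}_0)$, giving \cref{eq:gradient-Moreau}.

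The main obstacle I anticipate is the lower bound and, relatedly, the Lipschitz-type continuity of the map $x \mapsto \hat{x}$ needed to upgrade pointwise differentiability to \emph{continuous} differentiability. The cleanest route is to show that $x \mapsto \hat{x}(x)$ is Lipschitz: for two base points, subtracting the two optimality inclusions $\rho(x_i - \hat{x}_i) \in \partial f(\hat{x}_i)$ and invoking the strong monotonicity of $\partial f + \rho\, \mathrm{Id}$ (equivalently, the $(\rho-m)$-strong convexity of $h$) yields $\|\hat{x}_1 - \hat{x}_2\| \le \frac{\rho}{\rho - m}\|x_1 - x_2\|$. Since $\nabla f_\rho(x) = \rho(x - \hat{x}(x))$ is then a composition of continuous (indeed Lipschitz) maps, continuity of the gradient follows immediately. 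I would present the strong-monotonicity estimate carefully, as it is the one place where the weak-convexity constant $m$ and the threshold $\rho > m$ enter quantitatively.
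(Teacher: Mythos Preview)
The paper does not actually supply a proof of \Cref{lemma:gradient-moreau}; it is stated as a standard result from the literature on Moreau envelopes of weakly convex functions and used as a black box thereafter. Your proposal therefore cannot be compared against a paper proof, but it is a correct and self-contained argument of the classical type: strong convexity of the proximal objective gives uniqueness of $\hat{x}$, the optimality condition identifies the candidate gradient, a two-sided quadratic sandwich yields Fr\'echet differentiability, and Lipschitz continuity of $x\mapsto\hat{x}(x)$ via strong monotonicity of $\partial f + \rho\,\mathrm{Id}$ gives continuity of the gradient.

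One small comment on presentation: your lower-bound step (``a short computation yields'') is the only place where the details are suppressed, and it can in fact be carried out \emph{without} first proving the Lipschitz estimate on $\hat{x}$. Using the concave-quadratic minorant $\phi(y)=f(\hat{x}_0)+\langle v_0,y-\hat{x}_0\rangle-\tfrac{m}{2}\|y-\hat{x}_0\|^2$ from \Cref{lemma:subdifferential-weakly-convex} and minimizing $\phi(y)+\tfrac{\rho}{2}\|y-x\|^2$ explicitly in $y$ (a strongly convex quadratic since $\rho>m$) gives
\[
f_\rho(x)\ \ge\ f_\rho(x_0)+\langle \rho(x_0-\hat{x}_0),\,x-x_0\rangle-\tfrac{\rho m}{2(\rho-m)}\|x-x_0\|^2,
\]
which matches your upper bound up to the quadratic constant. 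This makes the sandwich argument fully explicit and independent of the prox-map Lipschitz bound, which you can then invoke solely for the $C^1$ conclusion.
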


A simple but fundamental fact is that for any $\rho > m$, we have $\nabla f_{\rho}(x) = \{0\}$ if and only if $0 \in \partial f(x)$, i.e., the set of stationary points of a weakly convex function coincides with those of its Moreau envelope (see \cref{prop:moreau-iff}). Moreover, at these stationary points, the function values also agree, i.e., $f(x) = f_\rho (x)$ whenever $ 0 \in \partial f(x)$. \Cref{fig:envelop} illustrates this fact using a $2$-weakly convex function $x \mapsto |x^2-1|$.

\begin{figure}[t]
\setlength\textfloatsep{0pt}
\centering 
{\includegraphics[width=0.4\textwidth]
{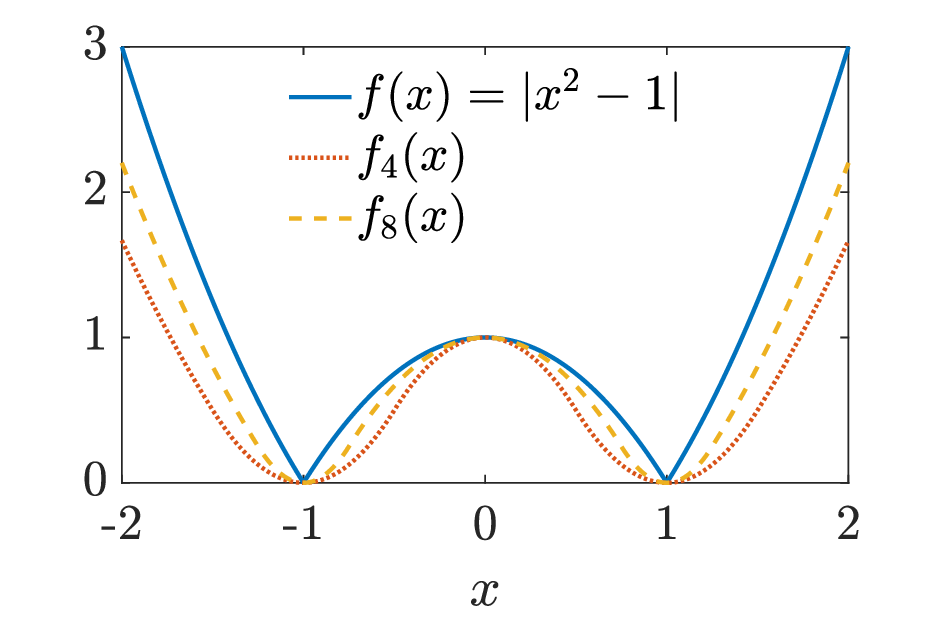}}
\caption{Moreau envelops for a $2$-weakly convex function $x \mapsto f(x)= |x^2-1|$ with $\rho = 4$ and $8$. Their stationary points are the same at $-1, 0,$ and $1$, and their function values also agree at these stationary points.}
\label{fig:envelop}
\end{figure}

\subsection{Approximate stationary points} \label{subsection:stationary}

When minimizing a weakly convex function, we cannot guarantee the discovery of its global minimum due to the potential nonconvexity. The practical goal is to find a first-order stationary point, i.e., a point $x$ satisfying $0 \in \partial f(x)$. This motivates the need for a notion of approximate stationarity.  

For smooth functions, the gradient norm $\|\nabla f(x)\|$ is a natural measure of stationarity since $\nabla f$ is continuous: small gradient norms indicate proximity to a stationary point.
For nonsmooth functions, one might consider $\mathrm{dist}(0,\partial f(x))$, which is the minimum distance from the origin to the subdifferential. However, this quantity may be discontinuous: even when $x$ is close to a stationary point, $\mathrm{dist}(0,\partial f(x))$ might be bounded away from zero. For example, in the scalar function $f(x) = |x|$, whose subdifferential jumps discontinuously at $x = 0$. 

For an $m$-weakly convex function, we can consider an inexact Fr\'echet subdifferential, defined~as\footnote{We note that $\partial_{\epsilon} f$ in \cref{eq:inexact-subdifferential} is different from the $\epsilon$-subdifferential defined in \cite[Definition 1.10]{kruger2003frechet}. Our $\partial_{\epsilon} f$ is more aligned with the $\epsilon$-subdifferntial for a convex function.}
\begin{align} \label{eq:inexact-subdifferential}
    \partial_{\epsilon} f(x)  & = \left \{v \in \RR^n \mid f(y) \geq  f(x) + \innerproduct{v}{y-x} - \frac{m}{2}\|y-x\|^2 - \epsilon, \forall y \in \RR^n \right \},
\end{align}
where $\epsilon \geq 0$ is the inexactness.~When $\epsilon = 0$, $\partial_\epsilon f(x)$ recovers the original subdifferential $\partial f(x)$ (see \Cref{lemma:subdifferential-weakly-convex}).  We then consider the following $(\eta,\epsilon)$-inexact stationary point.

\begin{definition}[$(\eta,\epsilon)$-inexact stationarity]
    \label{def:Regularized-stationary}
    Let $f:\RR^n \to \RR\cup \{+\infty\}$ be an $m$-weakly convex function, and $\eta \geq 0, \epsilon \geq 0$. A point $x \in \mathrm{dom}\, f$ is called an $(\eta,\epsilon)$-inexact stationary point if 
    \begin{align*}
        \Dist(0,\partial f_{\epsilon}(x)) \leq \eta.
    \end{align*}
\end{definition}

This $(\eta,\epsilon)$-inexact stationarity is also called the \textit{regularized stationarity} in \cite[Definition 2.5]{liang2023proximal}. 
It is clear that the above notion recovers the exact stationary if $\eta = 0$ and $\epsilon = 0$. For a fixed $\epsilon > 0$,  $\Dist(0,\partial_{\epsilon} f(x))$ is indeed a continuous function in $x$ when $f$ is convex; see \cref{lemma:continuous-subdifferential} for details.

As mentioned above, the \textit{Moreau envelope} $f_\rho$ of an $m$-weakly convex function is \textit{continuously differentiable} when $\rho > m$. The magnitude $\|\nabla f_{\rho}(x)\|$ naturally provides a quantification in terms of near-stationarity. We define another type of approximate stationary point.  
\begin{definition}[$(\delta,\alpha)$-Moreau stationarity]
    \label{def:Moreau-stationary}
    Let $f:\RR^n \to \RR\cup \{+\infty\}$ be an $m$-weakly convex function, and $\delta>0, \alpha >m$. A point $x \in \mathrm{dom}\, f$ is called an $(\delta,\alpha)$-Moreau stationary point if 
\begin{align*}
    \|\nabla f_{\alpha}(x)\| \leq \delta.
\end{align*}
\end{definition}

The optimality condition of the minimization problem \cref{eq:Moreau-envelope} directly implies that for any $x \in \mathbb{R}^n$ and $\alpha > m$, the point $\hat{x}:=\argmin_{y} \{f(y) + \frac{\alpha}{2}\|y-x\|^2\}$ satisfies 
\begin{subequations}
    \label{eq:Moreau-property}
    \begin{align}
    \|\hat{x} - x\| &=  \|\nabla f_\alpha(x)\|/\alpha, \label{eq:Moreau-property-1} \\
f(\hat{x}) &\leq f(x), \label{eq:Moreau-property-2}\\
\Dist(0, \partial f(\hat{x})) & \leq \|\nabla f_\alpha(x)\|. \label{eq:Moreau-property-3}
\end{align}
\end{subequations}
We now see from \cref{eq:Moreau-property-1} that that the distance between $x$ and $\hat{x}$ is  $\|\nabla f_\alpha(x)\|/\alpha$. Also, \cref{eq:Moreau-property-2} means the point $\hat{x}$ achieves a lower function value; and \Cref{eq:Moreau-property-3} says that the magnitude of the minimal norm element in the subdifferential at $\hat{x}$  is at most $\|\nabla f_\alpha(x)\|$. Thus, a small $\|\nabla f_{\alpha}(x)\|$ indicates that there is a nearby point $\hat{x}$ that is close to stationary, measured by $\Dist(0, \partial f(\hat{x}))$. If $\|\nabla f_{\alpha}(x)\| = 0$, we must have  $\Dist(0, \partial f(x)) = 0$, i.e., $x$ is an exact stationary point. 

These two notions, $(\eta,\epsilon)$-inexact stationarity and $(\delta,\alpha)$-Moreau stationarity, are closely related, as shown below.  

\begin{proposition}[$(\eta,\epsilon)$-inexact stationarity to $(\delta,\alpha)$-Moreau stationarity]
\label{lemma:stationary-conversion-R2M-2-main}
    Let $f:\RR^n \to \RR\cup \{+\infty\}$ be an $m$-weakly convex function. Suppose $x$ is a $(\eta,\epsilon)$-inexact stationary point. For any $\lambda > 0$, the point $x$ satisfies
    \begin{align*}
        \|\nabla f_{m+\lambda}(x)\|  \leq  (m + \lambda)\left ( \frac{2}{\lambda}\eta + \sqrt{\frac{2}{\lambda}\epsilon} \right).
    \end{align*}
    If $\lambda = m$, it simplifies to $\|\nabla f_{2m}(x)\| \leq  4 \eta + 2\sqrt{2m\epsilon}$.
\end{proposition}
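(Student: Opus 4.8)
The plan is to bound $\|\nabla f_{m+\lambda}(x)\|$ directly through the Moreau-envelope gradient formula of \Cref{lemma:gradient-moreau}. Writing $\rho = m+\lambda$ and $\hat{x} = \argmin_{y} \{f(y) + \frac{\rho}{2}\|y-x\|^2\}$, that lemma gives $\nabla f_{m+\lambda}(x) = (m+\lambda)(x-\hat{x})$, so it suffices to upper bound the distance $\|x - \hat{x}\|$. Since $x$ is $(\eta,\epsilon)$-inexact stationary, I would first fix a vector $v \in \partial_{\epsilon} f(x)$ with $\|v\| \leq \eta$, which exists by \Cref{def:Regularized-stationary}.

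The key step is to play two inequalities against each other. From the definition of $\hat{x}$ as the minimizer of $f(\cdot) + \frac{\rho}{2}\|\cdot - x\|^2$, comparing its objective value against that of the feasible point $y = x$ yields $f(\hat{x}) + \frac{m+\lambda}{2}\|\hat{x}-x\|^2 \leq f(x)$. On the other hand, the inexact subgradient inequality \cref{eq:inexact-subdifferential} evaluated at $y = \hat{x}$ gives $f(\hat{x}) \geq f(x) + \innerproduct{v}{\hat{x}-x} - \frac{m}{2}\|\hat{x}-x\|^2 - \epsilon$. Substituting the second estimate into the first cancels the two $\frac{m}{2}\|\hat{x}-x\|^2$ terms and collapses everything to the clean scalar inequality $\innerproduct{v}{\hat{x}-x} + \frac{\lambda}{2}\|\hat{x}-x\|^2 \leq \epsilon$.

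From here it reduces to a one-variable estimate. Applying Cauchy--Schwarz, $\innerproduct{v}{\hat{x}-x} \geq -\|v\|\,\|\hat{x}-x\| \geq -\eta\,\|\hat{x}-x\|$, so with $t := \|\hat{x}-x\|$ the bound becomes $\frac{\lambda}{2}t^2 - \eta\, t - \epsilon \leq 0$. Solving this quadratic for $t$ and using subadditivity of the square root, namely $\sqrt{\eta^2 + 2\lambda\epsilon} \leq \eta + \sqrt{2\lambda\epsilon}$, gives $t \leq \frac{2}{\lambda}\eta + \sqrt{\frac{2}{\lambda}\epsilon}$. Multiplying through by $(m+\lambda)$ and invoking $\nabla f_{m+\lambda}(x) = (m+\lambda)(x-\hat{x})$ recovers the stated bound; substituting $\lambda = m$ and using $2m\sqrt{2\epsilon/m} = 2\sqrt{2m\epsilon}$ produces the simplified form.

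I do not expect a genuine obstacle here; the only point requiring care is the cancellation step, which relies on the convexification curvature $\frac{m}{2}$ embedded in the definition of $\partial_\epsilon f$ matching exactly the $m$ inside $\rho = m+\lambda$, so that the surviving quadratic term carries the favorable sign $\frac{\lambda}{2} > 0$. Were the envelope parameter smaller than $m$, this residual term would change sign and the argument would break down, which is consistent with the requirement $\rho > m$ for $f_\rho$ to be well-defined and differentiable.
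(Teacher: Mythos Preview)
Your argument is correct and essentially identical to the paper's proof. Both fix a near-minimal-norm element of $\partial_\epsilon f(x)$, combine the inexact subgradient inequality at $y=\hat{x}$ with an upper bound on $f(\hat{x})+\tfrac{m+\lambda}{2}\|\hat{x}-x\|^2$ coming from the definition of $\hat{x}$, collapse to the same scalar quadratic $\tfrac{\lambda}{2}t^2 - \eta t - \epsilon \le 0$, and finish via the quadratic formula plus subadditivity of the square root; the only cosmetic difference is that the paper first invokes $\lambda$-strong convexity of $f(\cdot)+\tfrac{m+\lambda}{2}\|\cdot - x\|^2$ and then immediately relaxes away the extra $\tfrac{\lambda}{2}\|\hat{x}-x\|^2$ it produced, whereas you use the bare minimizer inequality directly, which is slightly cleaner.
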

A similar result appears in \cite[Proposition 2.10]{liang2023proximal} where $\lambda = m$. Our result holds for~any~$\lambda >0$. {\Cref{lemma:stationary-conversion-R2M-2-main} implies that an $(\eta,\epsilon)$-inexact stationary point must also be an $(\delta,\alpha)$-Moreau~stationary point, where $\delta$ depends on $\epsilon$ and $\eta$.} A natural question is then whether $(\delta,\alpha)$-Moreau~stationarity implies $(\eta,\epsilon)$-inexact stationarity. This is the case when the function is Lipschitz continuous.

\begin{proposition}[$(\delta,\alpha)$-Moreau stationarity to $(\eta,\epsilon)$-inexact stationarity]
     \label{prop:stationary-conversion-M2R-main}
        Let $f:\RR^n \to \RR\cup \{+\infty\}$ be an $m$-weakly convex function and $L$-Lipschitz continuous. If $x \in \RR^n$ is an $(\delta,\alpha)$-Moreau stationary point, then $x$ is an $(L\delta/\alpha,\sqrt{2(\alpha - m) L{\delta}/{\alpha}})$-inexact stationary point.
\end{proposition}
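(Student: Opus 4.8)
The plan is to certify inexact stationarity at $x$ by transporting an \emph{exact} subgradient from the proximal point $\hat x := \argmin_{y}\{f(y) + \tfrac{\alpha}{2}\|y - x\|^2\}$ back to $x$. First I would collect the three facts that the Moreau machinery supplies at $\hat x$. By \Cref{lemma:gradient-moreau} and the optimality conditions \cref{eq:Moreau-property}, the vector $g := \nabla f_\alpha(x) = \alpha(x - \hat x)$ is an honest subgradient $g \in \partial f(\hat x)$, its norm obeys $\|g\| = \|\nabla f_\alpha(x)\| \le \delta$, the displacement satisfies $\|x - \hat x\| = \|\nabla f_\alpha(x)\|/\alpha \le \delta/\alpha$, and $f(\hat x) \le f(x)$. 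Coupling the displacement bound with $L$-Lipschitz continuity yields the scalar estimate $f(x) - f(\hat x) \le L\|x - \hat x\| \le L\delta/\alpha$, which is the only place the constant $L$ enters.

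Next I would transport $g$ to the base point $x$. Starting from the weakly convex subgradient inequality of \Cref{lemma:subdifferential-weakly-convex} at $\hat x$, namely $f(y) \ge f(\hat x) + \innerproduct{g}{y - \hat x} - \tfrac{m}{2}\|y - \hat x\|^2$ for all $y$, I would re-center the quadratic at $x$ by completing the square in $y - x$. The second-order terms cancel exactly, leaving an affine-plus-concave-quadratic minorant centered at $x$; matching it against the definition \cref{eq:inexact-subdifferential} forces the candidate element $v := g - m(x - \hat x) = (\alpha - m)(x - \hat x)$ and shows $v \in \partial_\epsilon f(x)$ as soon as $\epsilon$ absorbs the residual constant $f(x) - f(\hat x) - \innerproduct{g}{x - \hat x} + \tfrac{m}{2}\|x - \hat x\|^2$. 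This already produces a clean inexact-stationarity certificate, with $\|v\| = (\alpha - m)\|x - \hat x\| \le (\alpha - m)\delta/\alpha$ and $\epsilon$ controlled by $f(x) - f(\hat x) \le L\delta/\alpha$.

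To reach the exact constants claimed --- a subgradient of norm at most $L\delta/\alpha$ together with the larger, square-root inexactness --- I would then trade norm against inexactness. The key structural observation is that $\partial_\epsilon f(x)$ coincides with the \emph{convex} $\epsilon$-subdifferential at $x$ of the convexified model $y \mapsto f(y) + \tfrac{m}{2}\|y - x\|^2$, so it is convex and monotone in $\epsilon$; hence the transported subgradient may be shrunk toward the origin, at the price of enlarging $\epsilon$ up to the Moreau gap $f(x) - \inf_{y}\{f(y) + \tfrac{m}{2}\|y - x\|^2\}$. Bounding this gap is the heart of the matter: a purely Lipschitz estimate, $f(y) \ge f(x) - L\|y - x\|$, combined with the curvature $-\tfrac{m}{2}\|y - x\|^2$ gives only a $\delta$-free constant, while the exact transport above cannot push the norm below $(\alpha - m)\|x - \hat x\|$. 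The main obstacle is therefore to couple the Lipschitz growth, the weak-convexity curvature, and the proximal optimality of $\hat x$ simultaneously, and to balance the linear and quadratic terms by Young's inequality; it is precisely this balance that converts the linear estimate $L\delta/\alpha$ into the square-root bound $\sqrt{2(\alpha - m)L\delta/\alpha}$ and pins down the factor $2(\alpha - m)$.
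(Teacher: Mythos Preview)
Your first two paragraphs are essentially the paper's proof. The paper packages the same transport as \cref{lemma:prox-gap-consq}: with $\rho=\alpha-m$ and $\hat x$ the prox point, the vector $w=\rho(x-\hat x)$ --- exactly your $v=g-m(x-\hat x)$ --- lies in $\partial_{\Delta_x}f(x)$ where $\Delta_x=f(x)-f_\alpha(x)$, and the Lipschitz estimate gives $\Delta_x\le L\|x-\hat x\|\le L\delta/\alpha$. So far you and the paper agree.

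The third paragraph is a detour caused by reading the statement too literally. The proposition as printed has its two arguments swapped relative to \Cref{def:Regularized-stationary} (the same swap appears in the appendix version, \cref{lemma:MS-2-IS}, and in \cref{lemma:prox-gap-consq}). What the paper actually \emph{proves} is inexactness $\epsilon\le L\delta/\alpha$ and norm $\eta\le\sqrt{2(\alpha-m)L\delta/\alpha}$, not the reverse. Once you stop trying to force $\|v\|\le L\delta/\alpha$, the ``trade norm against inexactness'' step is unnecessary. The only missing ingredient is how to get the norm bound in the claimed square-root form rather than your $\|v\|\le\rho\delta/\alpha$: the paper uses the $\rho$-strong convexity of the prox subproblem,
\[
\tfrac{\rho}{2}\|x-\hat x\|^2\le f(x)-f_\alpha(x)=\Delta_x\le L\delta/\alpha,
\]
which gives $\|v\|=\rho\|x-\hat x\|\le\sqrt{2\rho\Delta_x}\le\sqrt{2(\alpha-m)L\delta/\alpha}$ directly.
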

We provide the proofs of \cref{lemma:stationary-conversion-R2M-2-main,prop:stationary-conversion-M2R-main}, as well as further discussions on different stationarities, in \Cref{appendix:stationary-point}. 

\subsection{Subgradient methods and proximal point methods} \label{subsection:subgradient-method}

We next review two basic algorithms to minimize a weakly convex function $f$. The simplest~algorithm is {arguably} the \textit{subgradient method}, which iteratively updates 
\begin{equation} \label{eq:subgradient-method}
    x_{k+1} = x_k - \alpha_k g_k, \qquad k = 0, 1, 2, \ldots, 
\end{equation}
where $g_k \in \partial f(x_k)$, and $\alpha_k > 0$ is a step size. If $f$ is $M$-smooth, then \cref{eq:subgradient-method} becomes the gradient descent algorithm. In this case, a proper choice of step sizes can ensure the best gradient norm $\min_{k \in \{1,2, \ldots, T\}}\|\nabla f(x_k)\|$ converges to zero with rate $\bigO(1/\sqrt{T})$ \cite[Section 1.2.3]{nesterov2018lectures}.  If $f$ is convex and $M$-smooth, then we can ensure that the function value gap $f(x_k) - \min_x f(x)$ converges to zero with rate $\bigO(1/T)$ \cite[Corollary 2.1.2]{nesterov2018lectures}. If the function $f$ is only convex but nonsmooth, the best function value gap $\min_{k \in \{1,2, \ldots, T\}} f(x_k) - \min_x f(x)$ converges to zero with rate $\bigO(1/\sqrt{T})$ \cite[Section 3.2.3]{nesterov2018lectures}. However, it was an open question to determine the rate of convergence for \cref{eq:subgradient-method} when applied to the class of $m$-weakly convex functions. One difficulty was that neither the function value gap $f(x_k) - \min_x f(x)$ nor the stationary measure $\Dist(0,\partial f(x_k))$ goes to zero.  

\begin{algorithm}[t]
\caption{Subgradient method}\label{alg:subgradient-main}
\begin{algorithmic}
\Require $x_1 \in \RR^n$, a sequence of step sizes $\alpha_k > 0$, and maximum iteration $T$
\For{$k = 1, 2, \ldots, T$}
    \State set $ x_{k+1} = x_k - \alpha_k g_k,$ where $g_k \in \partial f(x_k)$;
\EndFor
\end{algorithmic}
\end{algorithm}

The seminal work \cite{davis2019stochastic} established the first convergence rate using the notion of Moreau stationarity. We here state the deterministic version of \cite[Theorem 3.1]{davis2019stochastic}.

\begin{lemma}
    \label{theorem:weakly-subgradient}
    Consider the $m$-weakly convex problem \cref{eq:main-problem}. Let $\alpha = \rho + m$ with $\rho > 0$. The subgradient method in \Cref{alg:subgradient-main} generates a sequence of iterates, satisfying
    \begin{align*}
        \min_{k \in \{ 1,\ldots,T\} } \|\nabla f_{\alpha}(x_k)\|^2 \leq \frac{\alpha}{\rho} \frac{ (f_{\alpha}(x_1) - f^\star) + \frac{\alpha L^2}{2} \sum_{k=1}^T \alpha_k^2 }{\sum_{k=1}^T \alpha_k},
    \end{align*}
    where $L\geq \max_{k \in \{ 1,\ldots,T\}}\|g_k\|$. If we choose a constant step size  $\alpha_k = \sqrt{\frac{\Delta}{m L^2 (T+1)}}$ with $\Delta \geq f_{2m}(x_1) - \min_{x}\, f(x)$, then it holds that 
    \begin{align*}
         \min_{k \in \{ 1,\ldots,T\} } \|\nabla f_{2m}(x_k)\|^2 \leq \sqrt{\frac{2m \Delta L^2}{T+1}}.
    \end{align*}
\end{lemma}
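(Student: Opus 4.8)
The plan is to use the Moreau envelope $f_\alpha$ as a Lyapunov (potential) function and to show it decreases by a controlled amount at each subgradient step; the two stated bounds then follow by telescoping and a standard step-size optimization. The key conceptual point is that, although neither $f(x_k)$ nor $\mathrm{dist}(0,\partial f(x_k))$ behaves monotonically along the iterates, the smoothed surrogate $f_\alpha(x_k)$ \emph{does} admit a usable descent recursion, and its gradient norm is exactly the quantity we wish to control.

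First I would establish a one-step inequality. Writing $\hat x_k = \argmin_y\{f(y) + \frac{\alpha}{2}\|y - x_k\|^2\}$, the definition of the Moreau envelope applied at $x_{k+1}$ with the test point $\hat x_k$ gives $f_\alpha(x_{k+1}) \le f(\hat x_k) + \frac{\alpha}{2}\|x_{k+1} - \hat x_k\|^2$. Substituting the update $x_{k+1} = x_k - \alpha_k g_k$ and expanding the square produces the block $f(\hat x_k) + \frac{\alpha}{2}\|x_k - \hat x_k\|^2 = f_\alpha(x_k)$, a cross term $-\alpha\alpha_k\langle g_k, x_k - \hat x_k\rangle$, and a remainder $\frac{\alpha}{2}\alpha_k^2\|g_k\|^2 \le \frac{\alpha L^2}{2}\alpha_k^2$ by the Lipschitz bound.

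The crux is lower-bounding the cross term. Using the weak-convexity subgradient inequality of \Cref{lemma:subdifferential-weakly-convex} with $y = \hat x_k$, namely $f(\hat x_k) \ge f(x_k) + \langle g_k, \hat x_k - x_k\rangle - \frac{m}{2}\|\hat x_k - x_k\|^2$, together with the prox optimality $f(\hat x_k) + \frac{\alpha}{2}\|\hat x_k - x_k\|^2 \le f(x_k)$, I would obtain $\langle g_k, x_k - \hat x_k\rangle \ge \frac{\alpha - m}{2}\|x_k - \hat x_k\|^2 = \frac{\rho}{2}\|x_k - \hat x_k\|^2$. Finally, \Cref{lemma:gradient-moreau} gives $\|x_k - \hat x_k\| = \|\nabla f_\alpha(x_k)\|/\alpha$, converting this into a decrease proportional to $\|\nabla f_\alpha(x_k)\|^2$. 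The net effect is $f_\alpha(x_{k+1}) \le f_\alpha(x_k) - c\,\alpha_k\|\nabla f_\alpha(x_k)\|^2 + \frac{\alpha L^2}{2}\alpha_k^2$ for an explicit constant $c$ of order $\rho/\alpha$.

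Telescoping this recursion over $k = 1,\ldots,T$, using $f_\alpha(x_{T+1}) \ge f^\star$ (since $f_\alpha \ge \inf_y f = f^\star$), and bounding the weighted sum from below by $(\min_k\|\nabla f_\alpha(x_k)\|^2)\sum_k\alpha_k$ gives the first displayed bound after rearranging. For the second bound I would specialize to $\alpha = 2m$ (so $\rho = m$), substitute the constant step size $\alpha_k = \sqrt{\Delta/(mL^2(T+1))}$, and simplify using $\sum_k\alpha_k = T\alpha_k$ and $\sum_k\alpha_k^2 = T\alpha_k^2$; this step size is precisely the AM-GM choice balancing the two terms in the numerator, yielding the $\mathcal{O}(1/\sqrt{T+1})$ rate. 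The main obstacle is the cross-term estimate above, since it is the only place where weak convexity and the prox-point optimality interact to turn the subgradient step into genuine descent of the envelope; everything else is bookkeeping.
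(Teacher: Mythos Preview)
Your proposal is correct and follows the standard Davis--Drusvyatskiy argument that the paper cites (the paper itself does not give a proof, only attributes the result to \cite{davis2019stochastic}). One small refinement: your cross-term estimate uses only the basic prox optimality $f(\hat x_k) + \tfrac{\alpha}{2}\|\hat x_k - x_k\|^2 \le f(x_k)$, which yields $\langle g_k, x_k - \hat x_k\rangle \ge \tfrac{\rho}{2}\|x_k - \hat x_k\|^2$ and hence an overall factor $\tfrac{2\alpha}{\rho}$ rather than the stated $\tfrac{\alpha}{\rho}$. To recover the exact constant, invoke the $\rho$-strong convexity of $y \mapsto f(y) + \tfrac{\alpha}{2}\|y - x_k\|^2$, which gives the sharper inequality $f(x_k) \ge f(\hat x_k) + \tfrac{\alpha+\rho}{2}\|\hat x_k - x_k\|^2$ and hence $\langle g_k, x_k - \hat x_k\rangle \ge \rho\|x_k - \hat x_k\|^2$.
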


\Cref{theorem:weakly-subgradient} shows that the subgradient method ensures the Moreau stationary measure~goes to zero with a rate $\mathcal{O}(T^{-1/4})$. In other words, the subgradient method finds an iterate $x_k$ with $\|\nabla f_{2m}(x_k)\| \leq \delta$ within at most $\bigO(1/\delta^4)$ iterations. However, we should carefully interpret this complexity result. First, as in the standard subgradient method, the stepsize $\alpha_k$ depends on the iteration horizon $T$. Moreover, the measure $\|\nabla f_{2m}(x_k)\|$ is implicit and, in general, not computable, and thus it is difficult to implement a stopping criterion for the subgradient method.

Another conceptually simple algorithm is the PPM \cite{rockafellar1976monotone}, which is closely linked with the Moreau envelope. For an $m$-weakly convex function $f$, \PPM{} generates a sequence of iterates by 
\begin{align}
    \label{eq:PPM-weakly}
    x_{k+1} = \argmin_{x \in \RR^n}\; f(x) + \frac{\alpha}{2}\|x - x_k\|^2, \; \forall k = 1,2,\ldots
\end{align}
where $\alpha > m$. Since we choose $\alpha > m$, the minimization problem in \cref{eq:PPM-weakly} becomes strongly convex, and thus the minimizer is unique. Accordingly, the iterates in \cref{eq:PPM-weakly} are well-defined. The convergence of PPM is very well-studied for convex functions \cite{rockafellar1976monotone,luque1984asymptotic}. Even for $m$-weakly convex functions, it is relatively straightforward to derive the following convergence guarantee.

\begin{algorithm}[t]
\caption{Proximal point method (PPM)}\label{alg:PPM}
\begin{algorithmic}
\Require $x_1 \in \RR^n$, $\alpha > m$, the maximum number of iterations $T$.
\For{$k=1,2, \ldots, T$}
    \State $x_{k+1} =  \argmin_{y} f(y) + \frac{\alpha}{2}\|y-x_k\|^2$. 
\EndFor
\end{algorithmic}
\end{algorithm}

\begin{proposition} \label{proposition:PPM-weakly-convex}
    Consider the $m$-weakly convex problem \cref{eq:main-problem}. Let $\alpha > m$. The proximal point method in \Cref{alg:PPM} generates a sequence of iterates, satisfying
    \begin{equation} 
        \min_{k \in \{1, 2,\ldots,T+1\} } \Dist^2(0,\partial f(x_k)) \leq \frac{2\alpha(f(x_1) -f^\star )}{T}. 
    \end{equation}
\end{proposition}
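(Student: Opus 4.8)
The plan is to exploit that each PPM subproblem is strongly convex and to combine a one-step function-value decrease with the first-order optimality condition, then telescope. First I would record the optimality condition for the subproblem \cref{eq:PPM-weakly}: since the smooth quadratic $\tfrac{\alpha}{2}\|\cdot - x_k\|^2$ adds cleanly to $\partial f$, the minimizer $x_{k+1}$ satisfies $0 \in \partial f(x_{k+1}) + \alpha(x_{k+1}-x_k)$, so that
\[
    g_{k+1} := \alpha(x_k - x_{k+1}) \in \partial f(x_{k+1}),
\]
and hence $\Dist(0,\partial f(x_{k+1})) \leq \|g_{k+1}\| = \alpha\|x_{k+1}-x_k\|$. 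This converts the (hard-to-control) stationarity measure into the step length $\|x_{k+1}-x_k\|$.

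Next I would establish a per-iteration descent. Because $x_{k+1}$ globally minimizes $y \mapsto f(y) + \tfrac{\alpha}{2}\|y-x_k\|^2$, comparing its value against the feasible point $y=x_k$ immediately yields
\[
    f(x_{k+1}) + \tfrac{\alpha}{2}\|x_{k+1}-x_k\|^2 \leq f(x_k),
\]
i.e.\ $f(x_k) - f(x_{k+1}) \geq \tfrac{\alpha}{2}\|x_{k+1}-x_k\|^2$. This uses only that $x_{k+1}$ is a global minimizer, which is guaranteed since $\alpha > m$ makes the subproblem strongly convex; the full strong-convexity bound is not needed here. Substituting $\|x_{k+1}-x_k\| = \|g_{k+1}\|/\alpha$ gives the key inequality
\[
    f(x_k) - f(x_{k+1}) \geq \frac{1}{2\alpha}\|g_{k+1}\|^2 \geq \frac{1}{2\alpha}\Dist^2(0,\partial f(x_{k+1})).
\]

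Finally I would telescope over $k=1,\ldots,T$. Summing the last inequality and using $f(x_{T+1}) \geq f^\star$ gives
\[
    f(x_1) - f^\star \;\geq\; \frac{1}{2\alpha}\sum_{k=1}^T \Dist^2(0,\partial f(x_{k+1})) \;\geq\; \frac{T}{2\alpha}\min_{k \in \{2,\ldots,T+1\}} \Dist^2(0,\partial f(x_k)),
\]
and since the minimum over $\{1,\ldots,T+1\}$ can only be smaller than that over $\{2,\ldots,T+1\}$, rearranging yields the claim. I do not anticipate a serious obstacle: the only points requiring care are justifying the additivity of the subdifferential in the optimality condition (routine, as the perturbation is smooth) and being precise about the index set in the final $\min$. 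The argument is essentially the standard telescoping analysis of PPM, with weak convexity entering only through the well-definedness of the unique subproblem minimizer when $\alpha > m$.
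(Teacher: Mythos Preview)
Your proposal is correct and follows essentially the same approach as the paper's proof: both combine the subproblem optimality condition $\alpha(x_k - x_{k+1}) \in \partial f(x_{k+1})$ with the function-value decrease $f(x_{k+1}) + \tfrac{\alpha}{2}\|x_{k+1}-x_k\|^2 \leq f(x_k)$, then telescope and extend the index set of the minimum. The only cosmetic difference is that the paper phrases the intermediate bound via the Moreau envelope gradient $\nabla f_\alpha(x_k) = \alpha(x_k - x_{k+1})$ and invokes \cref{eq:Moreau-property-3}, whereas you go directly through $g_{k+1} \in \partial f(x_{k+1})$; the two are identical.
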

\begin{proof}
    Since $x_{k+1}$ is the minimizer in \cref{eq:PPM-weakly}, we have 
    \begin{equation} \label{eq:cost-value-drop}
    f(x_{k+1}) + \frac{\alpha}{2}\|x_{k+1} - x_{k}\|^2 \leq f(x_k). 
    \end{equation}
    By the optimality condition, we have $\nabla f_{\alpha}(x_{k}) = \alpha (x_{k+1} - x_{k})$ in \cref{eq:Moreau-property-1}, we have
    $$
    \|\nabla f_{\alpha}(x_{k})\|^2 \leq 2\alpha(f(x_k) - f(x_{k+1})). 
    $$
    By a telescope sum, we obtain 
    $
    \sum_{k = 1}^{T} \|\nabla f_{\alpha}(x_k)\|^2 \leq 2\alpha(f(x_1) - f(x_{T+1})) \leq2\alpha (f(x_1)-f^\star). 
    $ 
    Considering the property in \cref{eq:Moreau-property-3}, we have $\Dist(0,\partial f(x_{k+1})) \leq \|\nabla f_{\alpha}(x_k)\|$, leading to 
    \begin{align*}
          \sum_{k = 1}^{T} \Dist^2(0, \partial f(x_{k+1})) \leq2\alpha (f(x_1)-f^\star) \quad
          & \Rightarrow \quad  T\min_{k \in \{2,\ldots,T+1\} } \Dist^2(0,\partial f(x_k)) \leq 2\alpha (f(x_1) -f^\star) \\
          & \Rightarrow \quad  T\min_{k \in \{1,\ldots,T+1\} } \Dist^2(0,\partial f(x_k)) \leq 2\alpha (f(x_1) -f^\star).
    \end{align*}
    This completes the proof. 
\end{proof}

Note that the convergence guarantee of the PPM is much stronger than that of the subgradient method. In particular, \Cref{proposition:PPM-weakly-convex} provides a direct convergence rate for the stationary measure $\Dist(0,\partial f(x_k))$, which decays at a rate $\mathcal{O}(1/\sqrt{T})$ for any parameter $\alpha > m$. 
This convergence rate holds uniformly for both convex and $m$-weakly convex functions. The PPM not only ensures convergence of $\Dist(0,\partial f(x_k))$, but also achieves a faster rate of $\mathcal{O}({T}^{-1/2})$ than the subgradient method, which converges at  $\mathcal{O}({T}^{-1/4})$ in terms of the Moreau stationary measure.  

One significant drawback in PPM is its computational complexity in each iteration \cref{eq:PPM-weakly}. Updating the iterates via \cref{eq:PPM-weakly} is typically much more expensive than the simple subgradient update in \cref{eq:subgradient-method}. As a result, PPM is often employed as a conceptual algorithm that guides the design and analysis of more practical methods \cite{drusvyatskiy2017proximal,rockafellar1976augmented}.

\section{Implicit subgradient and proximal descent methods}
\label{section:PBM-inexact}

In this section, we first introduce the idea of implicit subgradient updates. The discussion applies to both convex and $m$-weakly convex functions, with the convex case corresponding to $m = 0$. We then introduce a conceptual yet implementable proximal descent method. 

\subsection{Motivation: inexact subgradient method}

It is known that the PPM can be viewed as an implicit subgradient method \cite{correa1993convergence}, and this perspective remains valid when minimizing a weakly convex function. In particular, the optimality condition in PPM update \cref{eq:PPM-weakly} ensures that $
0 \in \partial f(x_{k+1}) + \alpha (x_{k+1} - x_{k}), 
$ and thus, we can rewrite it as 
\begin{equation} \label{eq:implicit-subgradient}
    x_{k+1} = x_{k} - \frac{1}{\alpha} g_{k+1},
\end{equation}
where $g_{k+1} \in \partial f(x_{k+1})$ and $\alpha > m$. Unlike the standard (explicit) subgradient method \cref{eq:subgradient-method}, this iteration is called an implicit subgradient update, as the subgradient $g_{k+1}$ is evaluated as the next point $x_{k+1}$ rather than the current one. 

Meanwhile, the cost drop for PPM   \cref{eq:cost-value-drop}  can be expressed in terms of the implicit subgradient~as 
\begin{equation} \label{eq:PPM-value-drop-subgradient}
    f(x_{k+1}) \leq f(x_k) - \frac{1}{2\alpha}\|g_{k+1}\|^2.
\end{equation}
A telescope sum from \cref{eq:PPM-value-drop-subgradient} directly leads to the convergence rate in \Cref{proposition:PPM-weakly-convex}. The implicit subgradient method \cref{eq:implicit-subgradient} is conceptual, but it provides a guiding principle for implementable algorithms. In particular, many implementable methods can be interpreted as approximately achieving the cost decrease in \cref{eq:PPM-value-drop-subgradient}.  We discuss here a general idea of inexact subgradient methods.   

In particular, we aim to find a point $x_{k+1}$ satisfying 
\begin{equation} \label{eq:PPM-value-drop-subgradient-inexact}
    f(x_{k+1}) \leq f(x_k) - \frac{\beta}{2\alpha}\|\tilde{g}_{k+1}\|^2,
\end{equation}
where $\beta \in (0,1)$ relaxes the cost value drop and $\tilde{g}_{k+1} \in \partial f_{\epsilon}(x_{k+1})$ is an inexact subgradient at the next point with inexactness $\epsilon > 0$ (recall the inexact subdifferential in \cref{eq:inexact-subdifferential}).
It is now clear that the inexact subgradient update \cref{eq:PPM-value-drop-subgradient-inexact} guarantees that 
$$
\min_{k \in \{ 1,\ldots,T+1\} } \Dist^2(0,\partial_{\epsilon} f(x_k)) \leq \frac{2\alpha(f(x_1) - \min_x f(x) )}{\beta T}.
$$
In other words, the update \cref{eq:PPM-value-drop-subgradient-inexact} finds an $(\eta,\epsilon)$-inexact stationary point for weakly convex functions with a rate $\mathcal{O}(T^{-1/2})$. We emphasize that \cref{eq:PPM-value-drop-subgradient-inexact} is a relaxed version of \cref{eq:PPM-value-drop-subgradient} from the PPM. Still, we need to establish an implementable method to achieve \Cref{eq:PPM-value-drop-subgradient-inexact} efficiently.  

\subsection{An inexact subgradient method from proximal approximations}
\label{subsection:PBM-convex}
The inexact update \cref{eq:PPM-value-drop-subgradient-inexact} is a good guiding principle, but it does not guarantee the convergence of stationarity as the inexactness $\epsilon$ is fixed across iterations. Based on \Cref{eq:PPM-value-drop-subgradient-inexact}, we here consider the following modified requirement
\begin{equation} \label{eq:bundle-update}
     f(x_{k+1}) \leq f(x_k) - \frac{m + \beta \rho }{\alpha}\times \frac{1}{2\alpha}\|\tilde{g}_{k+1}\|^2,
\end{equation}
where $\rho > 0$, $\alpha =  m + \rho$ , $\beta \in (0,1)$ and $\tilde{g}_{k+1} \in \partial f_{\epsilon_{k+1}}(x_{k+1})$. Compared with \cref{eq:PPM-value-drop-subgradient-inexact}, the cost reduction factor is relaxed to be 
$$
\frac{m + \beta\rho }{\alpha} \in \left(\frac{m}{\alpha},1\right),
$$
which allows the inexactness $\epsilon_{k}$ to be time varying. We will properly control $\epsilon_{k}$ such that they are summable, thus approaching zero.

A general idea is that if a trial point $z_{k+1}$ does not satisfy \cref{eq:bundle-update}, we will use it to find a better point. With some iterations, it guarantees that we can find a point satisfying \cref{eq:bundle-update} efficiently.~We will detail this strategy as \textit{proximal bundle updates} later, which builds on the idea of \cite{hare2009computing,hare2010redistributed}. Throughout this section, we let $\rho = \alpha - m > 0$. Then, the exact proximal update \cref{eq:PPM-weakly} can be written~as 
\begin{align}
    \label{eq:PPM-weakly-split}
    x_{k+1} = \argmin_{x \in \RR^n}\; f(x) + \frac{m}{2}\|x - x_k\|^2 + \frac{\rho}{2} \|x - x_k\|^2.
\end{align}
Since $f$ is $m$-weakly convex, the function $f(\cdot) + \frac{m}{2}\|\cdot - x_k\|^2$ is naturally convex. Thus, \cref{eq:PPM-weakly-split} can also be viewed as a proximal mapping on the convex function $f(\cdot) + \frac{m}{2}\|\cdot - x_k\|^2$ with the parameter $\rho > 0$, centered at $x_k$. Here, we call $m$ the  \textit{convexification} parameter and $\rho$ the \textit{proximal} parameter. 

One key idea is to use a \textit{simple} convex function $\tilde{f}$ to lower approximate the convex part $f(\cdot) + \frac{m}{2}\|\cdot - x_k\|^2$, such that the following proximal update
\begin{align} \label{eq:trial-point-1}
    z_{k+1}:= \argmin_{x}\; \tilde{f}(x) + \frac{\rho}{2}\|x - x_k\|^2
\end{align}
can be solved efficiently. Thanks to the convexity, a globally lower and convex approximation 
\begin{equation} \label{eq:lower-approximation-1}
    \tilde{f}(x) \leq f(x) + \frac{m}{2}\|x - x_k\|^2, \qquad \forall x \in \mathbb{R}^n
    \tag{Minr.}
\end{equation}
is easy to construct, e.g., using a first-order approximation. Then, the optimality condition for \cref{eq:trial-point-1} naturally gives 
$
{w}_{k+1}:=\rho (x_k -z_{k+1} ) \in \partial \tilde{f}(z_{k+1}).
$
Furthermore, with some arguments, we see that {$\tilde{g}_{k+1}: = {w}_{k+1} + m (x_k -z_{k+1} ) = \alpha (x_k -z_{k+1})$} is an inexact subgradient of the original function $f$ at the trial point $z_{k+1}$. 
\begin{lemma} \label{lemma:inexactness-update}
    Consider the $m$-weakly convex problem \cref{eq:main-problem}. Let $x_k \in \mathbb{R}^n$ and $\tilde{f}:\RR^n \to \RR$ be a convex function that satisfies the lower approximation \cref{eq:lower-approximation-1}. Denote $z_{k+1}$ as the proximal update in \cref{eq:trial-point-1}. Then, we have  
    \begin{subequations} \label{eq:inexactness-trial-point}
        \begin{equation} \label{eq:inexactness-trial-point-a}
           \tilde{g}_{k+1} := \alpha (x_k -z_{k+1} )\in \partial f_{\epsilon_{k+1}} (z_{k+1}),
        \end{equation}
    where $\alpha = m + \rho$ and the inexactness is given by 
    \begin{equation} \label{eq:inexactness-trial-point-b}
        \epsilon_{k+1} = f(z_{k+1}) + \frac{m}{2}\|z_{k+1} - x_k\|^2 - \tilde{f}(z_{k+1}) \geq 0. 
    \end{equation}
    \end{subequations}
\end{lemma}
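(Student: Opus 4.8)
The plan is to verify the two claims directly from the definition of the inexact subdifferential in \cref{eq:inexact-subdifferential}, treating the non-negativity of $\epsilon_{k+1}$ and the inexact-subgradient inclusion separately. The non-negativity is immediate: evaluating the minorant property \cref{eq:lower-approximation-1} at the point $x = z_{k+1}$ gives $\tilde{f}(z_{k+1}) \leq f(z_{k+1}) + \frac{m}{2}\|z_{k+1} - x_k\|^2$, and the right-hand side minus $\tilde f(z_{k+1})$ is exactly $\epsilon_{k+1}$ as defined in \cref{eq:inexactness-trial-point-b}, so $\epsilon_{k+1} \geq 0$. For the inclusion, I would first record the optimality condition for the strongly convex subproblem \cref{eq:trial-point-1}: since $z_{k+1}$ minimizes the convex map $x \mapsto \tilde{f}(x) + \frac{\rho}{2}\|x - x_k\|^2$, we have $0 \in \partial\tilde{f}(z_{k+1}) + \rho(z_{k+1} - x_k)$, hence $w_{k+1} := \rho(x_k - z_{k+1}) \in \partial\tilde{f}(z_{k+1})$, which is a genuine (exact) convex subgradient because $\tilde f$ is convex.

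Next I would chain three inequalities to produce a lower estimate on $f(y)$ for an arbitrary $y \in \RR^n$. Starting from the minorant \cref{eq:lower-approximation-1} applied at $y$, then the convex subgradient inequality for $\tilde f$ at $z_{k+1}$ with subgradient $w_{k+1}$, I obtain
\begin{align*}
    f(y) + \tfrac{m}{2}\|y - x_k\|^2 \;\geq\; \tilde{f}(y) \;\geq\; \tilde{f}(z_{k+1}) + \innerproduct{w_{k+1}}{y - z_{k+1}}.
\end{align*}
I would then substitute $\tilde{f}(z_{k+1}) = f(z_{k+1}) + \frac{m}{2}\|z_{k+1} - x_k\|^2 - \epsilon_{k+1}$ from the definition of $\epsilon_{k+1}$, and rewrite the linear term using $w_{k+1} = \tilde{g}_{k+1} - m(x_k - z_{k+1})$, which follows from $\tilde{g}_{k+1} = \alpha(x_k - z_{k+1}) = (m+\rho)(x_k - z_{k+1}) = w_{k+1} + m(x_k - z_{k+1})$. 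After collecting the linear-in-$y$ contribution as $\innerproduct{\tilde{g}_{k+1}}{y - z_{k+1}}$, what remains is a purely quadratic residual $Q$ in the $m$-terms, and the whole argument reduces to showing $Q = -\frac{m}{2}\|y - z_{k+1}\|^2$.

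This quadratic bookkeeping is the only real work, and it is the step I would flag as the crux, though it is a short completion-of-squares rather than a genuine obstacle. Writing $a = z_{k+1} - x_k$ and $b = y - x_k$ so that $y - z_{k+1} = b - a$, the residual is $Q = \frac{m}{2}\|a\|^2 - \frac{m}{2}\|b\|^2 - m\innerproduct{x_k - z_{k+1}}{y - z_{k+1}}$, and expanding the inner product as $-\innerproduct{a}{b-a}$ collapses this, by a parallelogram-type identity, to exactly $-\frac{m}{2}\|a-b\|^2 = -\frac{m}{2}\|y - z_{k+1}\|^2$. The precise cancellation of the convexification penalty against the extra $m(x_k - z_{k+1})$ piece in $\tilde g_{k+1}$ is what makes the local-convexification trick work, and it is worth carrying out in full to confirm no cross term survives. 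Substituting this identity back yields $f(y) \geq f(z_{k+1}) + \innerproduct{\tilde{g}_{k+1}}{y - z_{k+1}} - \frac{m}{2}\|y - z_{k+1}\|^2 - \epsilon_{k+1}$ for all $y$, which is precisely the membership $\tilde{g}_{k+1} \in \partial_{\epsilon_{k+1}} f(z_{k+1})$ required by \cref{eq:inexact-subdifferential}, completing the proof.
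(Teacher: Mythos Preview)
Your proposal is correct and follows essentially the same approach as the paper: both chain the minorant \cref{eq:lower-approximation-1} with the convex subgradient inequality for $\tilde f$ at $z_{k+1}$, then reduce the task to a quadratic identity that converts the $\tfrac{m}{2}\|\cdot - x_k\|^2$ term into $-\tfrac{m}{2}\|\cdot - z_{k+1}\|^2$. The paper organizes this identity by adding $\tfrac{m}{2}\|y - z_{k+1}\|^2$ to both sides and expanding $\|y - x_k\|^2$, whereas you substitute directly and collect terms at the end; the algebra is the same.
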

\begin{proof}
We observe the following property: for all $y \in \RR^n$, it holds that
\begin{align}
    \label{eq:inexactness-update-step-1}
    f(y) + \frac{m}{2}\|y - x_k\|^2 \geq \tilde{f}(y)  \geq  \tilde{f}(z_{k+1}) + \innerproduct{w_{k+1}}{y-z_{k+1}},
\end{align}
where the first inequality is by the lower approximation \cref{eq:lower-approximation-1} and the second inequality is due to the subgradient ${w}_{k+1}:=\rho (x_k -z_{k+1} ) \in \partial \tilde{f}(z_{k+1})$ for the convex function $\tilde{f}$. Then, by adding $\frac{m}{2}\|y - z_{k+1}\|^2$ on both sides of \cref{eq:inexactness-update-step-1} and a simple rearrangement of terms, we obtain
\begin{equation} \label{eq:subgradient-lower-bound}
    f(y) + \frac{m}{2}\|y - z_{k+1}\|^2 \geq  \tilde{f}(z_{k+1}) + \innerproduct{{w}_{k+1}}{y-z_{k+1}} + \frac{m}{2}(\|y-z_{k+1}\|^2- \|y - x_k \|^2).  
\end{equation}
Also, we have the identity 
\begin{equation}\label{eq:parallele-identity}
\begin{aligned}
   \|y - x_k \|^2  &=  \|y-z_{k+1}\|^2 -2\langle x_k - z_{k+1}, y - z_{k+1} \rangle + \| z_{k+1} - x_k \|^2. 
\end{aligned}
\end{equation}
Substituting \cref{eq:parallele-identity} into \cref{eq:subgradient-lower-bound}, we obtain 
\begin{equation*}
\begin{aligned}
    f(y) + \frac{m}{2}\|y - z_{k+1}\|^2 &\geq \tilde{f}(z_{k+1}) + \innerproduct{{w}_{k+1}}{y-z_{k+1}}  + m\langle x_k - z_{k+1}, y - z_{k+1}\rangle - \frac{m}{2}\|z_{k+1} - x_k \|^2.\\
    &=\tilde{f}(z_{k+1}) + \innerproduct{\tilde{g}_{k+1}}{y-z_{k+1}}  - \frac{m}{2}\| z_{k+1} - x_k\|^2\\
    &=f(z_{k+1})+ \innerproduct{\tilde{g}_{k+1}}{y-z_{k+1}}  - \left (f(z_{k+1}) + \frac{m}{2}\|z_{k+1} - x_k  \|^2 - \tilde{f}(z_{k+1})\right), 
\end{aligned}
\end{equation*}
for any $ y \in \mathbb{R}^n$. This confirms that $\tilde{g}_{k+1} = (\rho + m)(x_k - z_{k+1}) = \alpha (x_k - z_{k+1})\in \partial f_{\epsilon_{k+1}} (z_{k+1})$ with $\epsilon_{k+1}$ given in \cref{eq:inexactness-trial-point-b}.
\end{proof}

As shown in \cref{eq:inexactness-trial-point-b}, the inexactness $\epsilon_{k+1}$ is the same as the approximation gap at the next trial point $z_{k+1}$. Furthermore, since 
\cref{eq:trial-point-1} is an exact proximal update for the approximated function $\tilde{f}$, so we must have 
 \begin{equation} \label{eq:PPM-value-drop-inexact}
    \tilde{f}(z_{k+1}) \leq \tilde{f}(x_k) - \frac{1}{2\rho}\|w_{k+1}\|^2 \leq f(x_k) - \frac{1}{2\rho}\|w_{k+1}\|^2 
\end{equation}
where the first inequality comes from \cref{eq:PPM-value-drop-subgradient}, and the second inequality is due to \cref{eq:lower-approximation-1}. We also note the following relationship 
$ \|w_{k+1}\| = \frac{\rho}{\alpha}\|\tilde{g}_{k+1}\|.$ 
We thus observe that this cost value drop \cref{eq:PPM-value-drop-inexact} is bounded by a fraction of $\|\tilde{g}_{k+1}\|^2$, similar to the requirement \cref{eq:bundle-update}, with the exception that \cref{eq:PPM-value-drop-inexact} measures the approximated function value $\tilde{f}(z_{k+1})$. 

The value $f(x_k) - \tilde{f}(z_{k+1})$ can be viewed as the approximated cost value drop that is predicted by $\tilde{f}$. If $\tilde{f}(z_{k+1})$ approximates the true value $f(z_{k+1}) + \frac{m}{2}\|z_{k+1} - x_k\|^2 $ good enough in terms of 
\begin{equation} \label{eq:descent-condition-main}
    f(x_k) - \left(f(z_{k+1}) + \frac{m}{2}\|z_{k+1} - x_k\|^2\right)  \geq \beta \left(f(x_k) - \tilde{f}(z_{k+1})\right), 
    \tag{Approx.}
\end{equation}
where $\beta \in (0,1)$ is a relaxation parameter, we will obtain the desired descent criterion \cref{eq:bundle-update}. In particular, we have the following result.  

\begin{lemma}
    \label{lemma:descent-step}
     Consider the $m$-weakly problem \cref{eq:main-problem}. Let $x_k \in \mathbb{R}^n$ and $\tilde{f}:\RR^n \to \RR$ be a convex function that satisfies the lower approximation \cref{eq:lower-approximation-1}. Denote $z_{k+1}$ as the proximal update in \cref{eq:trial-point-1}. If this point $z_{k+1}$ satisfies \cref{eq:descent-condition-main}, then we have  
      \begin{subequations} \label{eq:desecent-progress}
        \begin{align}
            f(z_{k+1}) &\leq f(x_k) - \frac{m + \beta \rho }{\alpha}\times \frac{1}{2\alpha}\|\tilde{g}_{k+1}\|^2, \label{eq:desecent-progress-a} \\
            \epsilon_{k+1}&\leq \frac{1-\beta}{\beta}\left (f(x_k) - f(z_{k+1}) - \frac{m}{2\alpha^2}\|\tilde{g}_{k+1}\|^2\right ) , \label{eq:desecent-progress-b}
        \end{align}
        with $\tilde{g}_{k+1} \in \partial f_{\epsilon_{k+1}}(z_{k+1})$. 
\end{subequations} 
\end{lemma}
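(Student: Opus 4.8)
The plan is to reduce both bounds to the single scale $\|\tilde{g}_{k+1}\|^2$, using only the facts already recorded in \cref{lemma:inexactness-update} and the cost drop \cref{eq:PPM-value-drop-inexact}. First I would write down the elementary identities implied by $\tilde{g}_{k+1} = \alpha(x_k - z_{k+1})$ and $w_{k+1} = \rho(x_k - z_{k+1})$, namely
\[
\|z_{k+1} - x_k\|^2 = \tfrac{1}{\alpha^2}\|\tilde{g}_{k+1}\|^2, \qquad \|w_{k+1}\|^2 = \tfrac{\rho^2}{\alpha^2}\|\tilde{g}_{k+1}\|^2 .
\]
Substituting the second into \cref{eq:PPM-value-drop-inexact} converts the predicted decrease into the form $f(x_k) - \tilde{f}(z_{k+1}) \geq \tfrac{\rho}{2\alpha^2}\|\tilde{g}_{k+1}\|^2$; this is the only step that uses the minorant property and the exactness of the proximal update on $\tilde{f}$.

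For part~(a), I would begin from the descent test \cref{eq:descent-condition-main}. On its left-hand side I replace $\|z_{k+1}-x_k\|^2$ by $\tfrac{1}{\alpha^2}\|\tilde{g}_{k+1}\|^2$, and on its right-hand side I lower-bound $\beta\bigl(f(x_k) - \tilde{f}(z_{k+1})\bigr)$ by $\beta\cdot\tfrac{\rho}{2\alpha^2}\|\tilde{g}_{k+1}\|^2$ using the rewritten predicted decrease. Collecting the two resulting $\|\tilde{g}_{k+1}\|^2$ terms gives $f(x_k) - f(z_{k+1}) \geq \tfrac{m+\beta\rho}{2\alpha^2}\|\tilde{g}_{k+1}\|^2$, which is exactly \cref{eq:desecent-progress-a} after writing $\tfrac{m+\beta\rho}{2\alpha^2} = \tfrac{m+\beta\rho}{\alpha}\cdot\tfrac{1}{2\alpha}$.

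For part~(b), the key is an exact algebraic identity: adding and subtracting $f(x_k)$ in the definition \cref{eq:inexactness-trial-point-b} of $\epsilon_{k+1}$ yields
\[
\epsilon_{k+1} = \underbrace{\bigl(f(x_k) - \tilde{f}(z_{k+1})\bigr)}_{=:B} - \underbrace{\Bigl(f(x_k) - f(z_{k+1}) - \tfrac{m}{2}\|z_{k+1}-x_k\|^2\Bigr)}_{=:A}.
\]
In this notation the descent test \cref{eq:descent-condition-main} reads precisely $A \geq \beta B$, hence $B \leq A/\beta$, and therefore $\epsilon_{k+1} = B - A \leq \bigl(\tfrac{1}{\beta}-1\bigr)A = \tfrac{1-\beta}{\beta}A$. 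Replacing $\|z_{k+1}-x_k\|^2$ by $\tfrac{1}{\alpha^2}\|\tilde{g}_{k+1}\|^2$ inside $A$ then produces \cref{eq:desecent-progress-b}.

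I do not expect a genuine obstacle here: each bound is a one-line consequence of the descent test combined with the rewritten cost drop. The only part requiring care is the bookkeeping among the three equivalent ways of measuring the step ($\|\tilde{g}_{k+1}\|$, $\|w_{k+1}\|$, $\|z_{k+1}-x_k\|$), and ensuring that the coefficient $\tfrac{m+\beta\rho}{\alpha}$ in~(a) is assembled correctly, with its $m$-term originating from the left-hand side of the descent test and its $\beta\rho$-term from the minorant decrease.
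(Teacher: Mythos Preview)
Your proposal is correct and follows essentially the same approach as the paper's proof. Both arguments rearrange the descent test \cref{eq:descent-condition-main}, lower-bound the predicted decrease $f(x_k)-\tilde f(z_{k+1})$ via \cref{eq:PPM-value-drop-inexact}, and express everything through $\|\tilde g_{k+1}\|^2$ using $\tilde g_{k+1}=\alpha(x_k-z_{k+1})$ and $w_{k+1}=\rho(x_k-z_{k+1})$; your $A,B$ bookkeeping for part~(b) is exactly the paper's two-line chain $\epsilon_{k+1}\le(1-\beta)B\le\tfrac{1-\beta}{\beta}A$, just with the intermediate step absorbed.
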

\begin{proof}
We observe the following property. 
\begin{equation*}
\begin{aligned}
      f(z_{k+1}) &\leq  f(x_k) - \frac{m}{2}\|z_{k+1} - x_k\|^2  - \beta \left(f(x_k) - \tilde{f}(z_{k+1})\right) \\
      &\leq f(x_k) - \frac{m}{2}\|z_{k+1} - x_k\|^2  - \frac{\beta}{2\rho}\|w_{k+1}\|^2  \\
      &= f(x_k) - \frac{ m + \beta \rho }{\alpha} \times \frac{1}{2\alpha}\|\tilde{g}_{k+1}\|^2, 
\end{aligned}
\end{equation*}
where the first inequality is a rearrangement of \cref{eq:descent-condition-main}, the second inequality applies \cref{eq:PPM-value-drop-inexact}, and the last inequality uses $w_{k+1} = \rho (x_k - z_{k+1})$ and the relationship $\|w_{k+1}\| = \frac{\rho}{\alpha}\|\tilde{g}_{k+1}\|$. 
When \cref{eq:descent-condition-main} holds, the inexactness parameter $\epsilon_{k+1}$ also satisfies  
\begin{equation} \label{eq:inexactness-from-descent}
\begin{aligned}
    \epsilon_{k+1} &\leq (1-\beta)(f(x_k) - \tilde{f}(z_{k+1})) \\
    &\leq \frac{1-\beta}{\beta}\left(f(x_k) - \left(f(z_{k+1}) + \frac{m}{2}\|z_{k+1} - x_k\|^2\right)\right) \\
    &= \frac{1-\beta}{\beta}\left (f(x_k) - f(z_{k+1}) - \frac{m}{2\alpha^2}\|\tilde{g}_{k+1}\|^2 \right).  
\end{aligned}
\end{equation}
where $\tilde{g}_{k+1} \in \partial f_{\epsilon_{k+1}}(z_{k+1})$ is defined in \cref{eq:inexactness-trial-point-a}. 
\end{proof}

If a trial point $z_{k+1}$ satisfies \cref{eq:descent-condition-main}, we denote this step as 
\begin{equation} \label{eq:proximal-descent}
z_{k+1} = \texttt{ProxDescent}(x_k,\beta,\rho),
\end{equation}
which means that $z_{k+1}$ comes from the proximal step \cref{eq:trial-point-1} while satisfying \cref{eq:descent-condition-main}. We thus can list a meta-algorithm to minimize an $m$-weakly convex function in \Cref{alg:Proxi-descent}, which we call the \textit{proximal descent method}.  This proximal descent method has the following convergence guarantees.

\begin{algorithm}[t]
\caption{Proximal descent method}\label{alg:Proxi-descent}
\begin{algorithmic}
\Require $x_1 \in \RR^n$, $T$, $\beta$, $\rho$.
\For{$k=1,2, \ldots, T$}
    \State $x_{k+1} =  \texttt{ProxDescent}(x_k,\beta,\rho)$ from \cref{eq:proximal-descent};  
\EndFor
\end{algorithmic}
\end{algorithm}

\begin{theorem} \label{lemma:proximal-descent}
    Consider the $m$-weakly convex problem \cref{eq:main-problem}. The proximal descent method in \Cref{alg:Proxi-descent} with parameters $\beta \in (0,1)$, $\rho > 0$, and $\alpha = m + \rho$,  takes at most 
    \begin{equation} \label{eq:descent-bound}
    \frac{2\alpha^2 (f(x_1) - f^\star)}{m + \beta \rho } \frac{1}{\eta^2} + \frac{(1-\beta)\left(f(x_1) - f^\star\right)}{\beta}\frac{1}{\epsilon} + 1. 
    \end{equation}
    iterations to find an $(\eta,\epsilon)$-inexact stationary point. 
\end{theorem}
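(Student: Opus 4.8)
The plan is to turn the per-step guarantees of \cref{lemma:descent-step} into a counting argument over iterations, using the monotone decrease of $f$ along the trajectory as the ``budget'' that caps how many iterates can fail to be stationary. Recall that each call $x_{k+1} = \texttt{ProxDescent}(x_k,\beta,\rho)$ produces, via \cref{lemma:descent-step}, an inexact subgradient $\tilde g_{k+1} \in \partial_{\epsilon_{k+1}} f(x_{k+1})$ together with the two estimates \cref{eq:desecent-progress-a,eq:desecent-progress-b}. Since $f(x_{k+1}) \le f(x_k) - \tfrac{m+\beta\rho}{2\alpha^2}\|\tilde g_{k+1}\|^2 \le f(x_k)$, the sequence $\{f(x_k)\}$ is nonincreasing, and telescoping gives $\sum_{k=1}^{T}(f(x_k)-f(x_{k+1})) = f(x_1)-f(x_{T+1}) \le f(x_1)-f^\star$ for every horizon $T$.

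First I would record the \emph{stationarity certificate}: if at some step both $\|\tilde g_{k+1}\| \le \eta$ and $\epsilon_{k+1}\le \epsilon$ hold, then $x_{k+1}$ is an $(\eta,\epsilon)$-inexact stationary point. This follows because the inexact subdifferential \cref{eq:inexact-subdifferential} is monotone in its accuracy, i.e.\ $\partial_{\epsilon_{k+1}} f(x_{k+1}) \subseteq \partial_\epsilon f(x_{k+1})$ whenever $\epsilon_{k+1}\le\epsilon$ (immediate from the defining inequality), so $\tilde g_{k+1}\in \partial_\epsilon f(x_{k+1})$ and hence $\Dist(0,\partial_\epsilon f(x_{k+1}))\le \|\tilde g_{k+1}\|\le \eta$, which is exactly \cref{def:Regularized-stationary}.

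Next I would argue by contradiction. Suppose that after $T$ iterations none of $x_2,\dots,x_{T+1}$ is $(\eta,\epsilon)$-inexact stationary. By the certificate, at each $k\in\{1,\dots,T\}$ at least one of $\|\tilde g_{k+1}\|>\eta$ or $\epsilon_{k+1}>\epsilon$ must hold, so the index set splits as $\{1,\dots,T\}\subseteq A\cup B$ with $A=\{k:\|\tilde g_{k+1}\|>\eta\}$ and $B=\{k:\epsilon_{k+1}>\epsilon\}$, giving $T\le |A|+|B|$. To bound $|A|$, I sum \cref{eq:desecent-progress-a} over $k\in A$: each such step drops $f$ by more than $\tfrac{m+\beta\rho}{2\alpha^2}\eta^2$, and since all drops are nonnegative this partial sum is at most $f(x_1)-f^\star$, yielding $|A| < \tfrac{2\alpha^2(f(x_1)-f^\star)}{(m+\beta\rho)\eta^2}$. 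To bound $|B|$, I discard the nonnegative term $\tfrac{m}{2\alpha^2}\|\tilde g_{k+1}\|^2$ in \cref{eq:desecent-progress-b} to get $f(x_k)-f(x_{k+1})\ge \tfrac{\beta}{1-\beta}\epsilon_{k+1}>\tfrac{\beta}{1-\beta}\epsilon$ for $k\in B$; summing and using the same budget gives $|B| < \tfrac{(1-\beta)(f(x_1)-f^\star)}{\beta\epsilon}$. Adding the two bounds contradicts the assumption once $T$ reaches the sum of the two fractions, and rounding that sum up to the next integer accounts for the additive $+1$ in \cref{eq:descent-bound}.

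The only genuinely delicate point is that the failure condition is a \emph{disjunction}: a non-stationary iterate may fail either test, so a single uniform per-step decrease cannot be assumed and the telescoped budget cannot be applied in one shot. Splitting the indices into $A$ and $B$ resolves this, and it is safe to bound $|A|$ and $|B|$ each against the \emph{full} decrease $f(x_1)-f^\star$ rather than apportioning it between them: because every term $f(x_k)-f(x_{k+1})$ is nonnegative, restricting the telescoped sum to any subset of indices only decreases it, so both one-sided bounds hold simultaneously. Any double counting on $A\cap B$ merely loosens the final estimate, which is harmless for an upper bound on the iteration complexity.
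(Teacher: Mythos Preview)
Your proposal is correct and follows essentially the same approach as the paper: telescope the two per-step guarantees from \cref{lemma:descent-step}, define index sets where $\|\tilde g_{k+1}\|>\eta$ and where $\epsilon_{k+1}>\epsilon$, bound each set's cardinality against the total budget $f(x_1)-f^\star$, and conclude via a counting argument. Your framing as a contradiction and your explicit mention of the inclusion $\partial_{\epsilon_{k+1}}f\subseteq\partial_\epsilon f$ are cosmetic differences; the underlying mechanics are the same.
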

\begin{proof}
    By \cref{lemma:descent-step}, each iteration of \Cref{alg:Proxi-descent} satisfies \cref{eq:desecent-progress}. 
        \begin{subequations}
        A telescope sum for \cref{eq:desecent-progress-a} leads~to 
        \begin{equation} \label{eq:inexact-subgradient-bound}
        \sum_{k = 1}^T \|\tilde{g}_{k+1}\|^2 \leq (f(x_1) - f(x_{T+1})) \frac{2\alpha^2}{m + \beta \rho } \leq  \frac{2\alpha^2 (f(x_1) - f^\star)}{m + \beta \rho }. 
        \end{equation}
        On the other hand, a telescope sum for \cref{eq:desecent-progress-b} leads to 
        \begin{equation} \label{eq:inexactness-bound}
        \begin{aligned}
        \sum_{k=1}^T \epsilon_{k+1} &\leq \frac{1-\beta}{\beta}\left(f(x_1) - f(x_{T+1}) - \frac{m}{2\alpha^2}\sum_{k=1}^T(\|\tilde{g}_{k+1}\|^2)\right), \\
        &\leq \frac{1-\beta}{\beta}\left(f(x_1) - f^\star\right). 
        \end{aligned}
         \end{equation}
The right-hand size bounds are fixed in both \cref{eq:inexact-subgradient-bound,eq:inexactness-bound}. Therefore, both $\tilde{g}_{k+1}$ and $\epsilon_{k+1}$ approach to zero asymptotically. Note that they may not decrease monotonically; thus, we can only guarantee a sublinear rate for their best values, i.e., $\min_{k \in \{1,\ldots,T +1\}} \|\tilde{g}_{k}\| $ and  $\min_{k \in \{1,\ldots,T +1\}}\epsilon_{k} $, respectively. However, the best values of $\tilde{g}_{k+1}$ and $\epsilon_{k+1}$ may not be achieved at the same iterate.     
        \end{subequations}
        
We need another trick to establish the sublinear rate \cref{eq:descent-bound}. Fix $\eta > 0, \epsilon > 0$, and suppose we run $T$ iterations. We let $G\subseteq \{1, \ldots, T\}$ denote the indices such that $\|\tilde{g}_{i}\| > \eta, \forall i \in G$, and let $E \subseteq \{1, \ldots, T\}$ such that $\|\epsilon_{i}\| > \epsilon, \forall i \in E$. From \cref{eq:inexact-subgradient-bound,eq:inexactness-bound}, we see the cardinality of these two index sets satisfies 
\begin{equation*}
    \begin{aligned}
        |G|\leq \frac{2\alpha^2 (f(x_1) - f^\star)}{m + \beta\rho } \frac{1}{\eta^2}, \quad 
        |E|\leq \frac{(1-\beta)\left(f(x_1) - f^\star\right)}{\beta}\frac{1}{\epsilon}.
     \end{aligned}
\end{equation*}
If we let the total number of iterations $T \geq |G| + |E| + 1$, then there must be at least one iterate $k$ such that both $\|\tilde{g}_{k}\| \leq \eta, \epsilon_k \leq \epsilon$. Thus, the bound in \cref{eq:descent-bound} is sufficient. 
\end{proof}

We note that \Cref{lemma:proximal-descent} can be viewed as an inexact version of \cref{proposition:PPM-weakly-convex}, and the sublinear rate in \Cref{lemma:proximal-descent} is similar to that in \cref{proposition:PPM-weakly-convex}. Now, the important step is to generate a convex function $\tilde{f}$ to lower approximate $f(\cdot) + \frac{m}{2}\|\cdot - x_k\|^2$ such that the trial point $z_{k+1}$ in \cref{eq:trial-point-1} satisfies the gap \cref{eq:descent-condition-main}. Since the function $f(\cdot) + \frac{m}{2}\|\cdot - x_k\|^2 $ is convex, $\texttt{ProxDescent}(x_k,\beta,\rho)$ in \cref{eq:proximal-descent} can be realized using a classical proximal bundle idea, which will be detailed in \Cref{subsection:bundle-step}.

\begin{remark}[Approximation criteria]
The approximation criterion \cref{eq:descent-condition-main} is very commonly used in bundle-type methods \cite{lemarechal1981bundle,lemarechal1994condensed,diaz2023optimal,du2017rate,kiwiel2000efficiency}. 
Another potential approximation criterion might be 
\begin{equation} \label{eq:alterantive-criteria}
        f(z_{k+1}) + \frac{m}{2}\|z_{k+1} - x_k\|^2 - \tilde{f}(z_{k+1})  \leq (1-\beta)\frac{1}{2\rho}\|w_{k+1}\|^2,
\end{equation}
where $\beta \in (0,1)$. Combing \cref{eq:alterantive-criteria} with \cref{eq:PPM-value-drop-inexact} also leads to the desired inequality  
\begin{equation*}
\begin{aligned}
      f(z_{k+1}) \leq f(x_k) - \frac{m}{2}\|z_{k+1} - x_k\|^2  - \frac{\beta}{2\rho}\|w_{k+1}\|^2  
      &= f(x_k) - \frac{\beta \rho + m}{\alpha} \times \frac{1}{2\alpha}\|\tilde{g}_{k+1}\|^2. 
\end{aligned}
\end{equation*} 
However, the requirement of \cref{eq:alterantive-criteria} further implies that the inexactness parameter $\epsilon_{k+1}$ satisfies  
\begin{equation*} 
\epsilon_{k+1}=f(z_{k+1}) + \frac{m}{2}\|z_{k+1} - x_k\|^2 - \tilde{f}(z_{k+1}) \leq (1-\beta)\frac{\rho}{2\alpha^2} \|\tilde{g}_{k+1}\|^2. 
\end{equation*}
It means that the inexactness is also upper bounded by $\|\tilde{g}_{k+1}\|$. In contrast, \cref{eq:inexactness-from-descent} informs us that $\epsilon_{k+1}$ could be larger if $\tilde{g}_{k+1}$ gets smaller. Therefore, ensuring \cref{eq:alterantive-criteria} is harder than \cref{eq:descent-condition-main}. {Indeed, we will have finite-time guarantees to ensure  \cref{eq:descent-condition-main} using a bundle strategy in the next subsection, while the authors are currently unclear whether \cref{eq:alterantive-criteria} can be efficiently realized.} \hfill $\square$
\end{remark}

\section{A proximal bundle algorithm for weakly convex functions}
\label{section:PMB-weakly}

In this section, we introduce a proximal bundle update to realize \texttt{ProxDescent($x_k,\beta,\rho$)}, and derive the iteration complexity in terms of subgradient and function evaluations for \Cref{alg:Proxi-descent}. We also interpret \Cref{alg:Proxi-descent} as a standard proximal bundle method to minimize weakly convex functions. 

\subsection{Proximal bundle updates for \texttt{ProxDescent}} \label{subsection:bundle-step}
We here detail a classical bundle idea to achieve the proximal descent step \cref{eq:proximal-descent}. This step has two criteria: 1) a global convex lower approximation in \cref{eq:lower-approximation-1}; and 2) an appropriate approximation quality for the next proximal point in \cref{eq:descent-condition-main}. In particular, any convex lower approximation $\tilde{f}$ in \cref{eq:lower-approximation-1} with a proximal mapping guarantees an inexact subgradient for the original function, as shown in \cref{eq:inexactness-trial-point-a}; and the approximation quality \cref{eq:descent-condition-main} guarantees the inexact update \cref{eq:desecent-progress}.  

We now consider a fixed proximal center $x_{k} \in \RR^n$. For any convex lower approximation $\tilde{f}$ satisfying \cref{eq:lower-approximation-1}, we have 
\begin{align} \label{eq:proximal-gap}
    \min_{x} \left \{ \tilde{f}(x) + \frac{\rho}{2}\|x - x_k\|^2  \right \}   \leq f_{\alpha}(x_k) := \min_{x} \left \{f(x) + \frac{\alpha}{2}\|x - x_k\|^2 \right \},
\end{align}
where we recall $\alpha = m + \rho$. 
If \cref{eq:descent-condition-main} is not satisfied, we will update $\tilde{f}$ such that the value $\min_{x} \{ \tilde{f}(x) + \frac{\rho}{2}\|x - x_k\|^2 \}$ gets closer to the true Moreau envelope value $f_{\alpha}(x_k)$. With these iterations, the criterion \cref{eq:descent-condition-main} will be satisfied eventually. 

For notational convenience, let an index $j\geq 1$ and $\tilde{f}_{j}$ be a convex lower approximation satisfying \cref{eq:lower-approximation-1}, and we~denote 
\begin{align} \label{eq:lower-approximation-j}
    z_{j+1} = \argmin_{x} \left \{ \tilde{f}_j(x) + \frac{\rho}{2}\|x - x_{k}\|^2  \right \}, \quad  \eta_j := \min_{x} \left \{ \tilde{f}_j(x) + \frac{\rho}{2}\|x - x_{k}\|^2 \right \}.
\end{align}
From the optimality condition $s_{j+1} : = \rho(x_k - z_{j+1}) \in \partial \tilde{f}_j(z_{j+1})$, we observe that 
\begin{align} \label{eq:lower-approximation-j-reformulation}
   \eta_{j} = \min_{x} \left  \{ \tilde{f}_j(z_{j+1}) + \innerproduct{s_{j+1}}{x - z_{j+1}}  + \frac{\rho}{2}\|x - x_k\|^2 \right \}. 
\end{align}
This means that the minimization problems in \cref{eq:lower-approximation-j} and \cref{eq:lower-approximation-j-reformulation} have the same optimal value and the same minimizer. 
Thus, if the next lower approximation $\tilde{f}_{j+1}$ satisfies \cref{eq:lower-approximation-1}, as well as 
   \begin{align}
     \tilde{f}_{j+1}(\cdot) & \geq  \tilde{f}_j(z_{j+1}) + \innerproduct{s_{j+1}}{\cdot - z_{j+1}},   \label{eq:requirement-2} 
    \tag{Aggr.}
   \end{align} 
then the next optimal value $\eta_{j+1} := \min_x \{ \tilde{f}_{j+1}(x) + \frac{\rho}{2}\|x - x_{k}\|^2 \}$ is guaranteed to satisfy 
\begin{align*} 
    \eta_j \leq \eta_{j+1} \leq f_{\alpha}(x_k)
\end{align*}
where the first inequality is due to \cref{eq:requirement-2} and the second inequality is due to \cref{eq:lower-approximation-1}.

To ensure a strict improvement, we further require $\tilde{f}_{j+1}$ incorporates a subgradient of $f(\cdot) + \frac{m}{2}\|\cdot - x_{k}\|^2$ at the trial point $z_{j+1}$, i.e., there is a $g_{j+1} \in \partial \left (f(\cdot) + \frac{m}{2}\|\cdot- x_{k}\|^2 \right)(z_{j+1})$ such that
\begin{align}
    \label{eq:requirement-3}
    \tilde{f}_{j+1} (\cdot) \geq f(z_{j+1}) + \frac{m}{2}\|z_{j+1} - x_{k}\|^2 + \innerproduct{ g_{j+1}}{\cdot - z_{j+1}}.
    \tag{Subg.}
\end{align}
We have a key technical result that quantifies the improvement in terms of the proximal value $\eta_j$. 

\begin{lemma}
    \label{lemma:null-step-improvement}
    Consider an $m$-weakly convex function  $f:\RR^n \to \RR$, a proximal center $x_{k} \in \mathbb{R}^n$, and a proximal parameter $\rho > 0$. Let $\tilde{f}_{j}$ be any convex function satisfying \cref{eq:lower-approximation-1} and $\tilde{f}_{j+1}$ be any convex function  satisfying \cref{eq:lower-approximation-1}, \cref{eq:requirement-2}, and \cref{eq:requirement-3}. Then, we~have  
\begin{align} \label{eq:strict-improvement}
    \eta_{j+1} \geq \eta_j + \frac{1}{2} \min\left \{ \tilde{\epsilon}_{j+1} , \frac{\rho \tilde{\epsilon}_{j+1}^2}{\|g_{j+1} -s_{j+1} \|^2} \right\},
\end{align}
where $\tilde{\epsilon}_{j+1} = f(z_{j+1}) + \frac{m}{2}\|z_{j+1}-x_k\|^2 - \tilde{f}_j(z_{j+1})$ is the approximated gap at $z_{j+1}$, $\eta_j$ and $z_{j+1}$ are defined in \cref{eq:lower-approximation-j}, $s_{j+1} : = \rho(x_k - z_{j+1}) \in \tilde{f}_j(z_{j+1})$, and $g_{j+1} - m( z_{j+1} - x_k  ) \in \partial f(z_{j+1})$. 
If the criterion \cref{eq:descent-condition-main} is not satisfied at $z_{j+1}$, the improvement in \cref{eq:strict-improvement} can be further quantified as
\begin{align} \label{eq:strict-improvement-2}
    \eta_{j+1} \geq \eta_j + \frac{1}{2} \min \left \{(1-\beta) \tilde{\Delta}_j, \frac{(1 - \beta)^2 \rho \tilde{\Delta}_j^2}{  \|g_{j+1} -s_{j+1} \|^2} \right\},
\end{align}
where $\tilde{\Delta}_j := f(x_{k}) - \eta_j$ is called the approximated proximal gap.
\end{lemma}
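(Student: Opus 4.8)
The plan is to establish \cref{eq:strict-improvement} first and then specialize to obtain \cref{eq:strict-improvement-2}. The key idea is that the new lower approximation $\tilde{f}_{j+1}$ combines two pieces of information: the aggregated linearization encoded in \cref{eq:requirement-2}, which reproduces the old model's behavior at $z_{j+1}$ with slope $s_{j+1}$, and the fresh subgradient cut \cref{eq:requirement-3} at $z_{j+1}$ with slope $g_{j+1}$. Since both are valid minorants of $\tilde{f}_{j+1}$, the maximum of the two affine functions is also a minorant, so I expect to lower bound $\eta_{j+1}$ by replacing $\tilde{f}_{j+1}$ with $\max\{\ell_1, \ell_2\}$, where $\ell_1(\cdot) = \tilde{f}_j(z_{j+1}) + \innerproduct{s_{j+1}}{\cdot - z_{j+1}}$ and $\ell_2(\cdot) = \tilde{f}_j(z_{j+1}) + \tilde{\epsilon}_{j+1} + \innerproduct{g_{j+1}}{\cdot - z_{j+1}}$ (using the definition of $\tilde{\epsilon}_{j+1}$ to rewrite the constant term in \cref{eq:requirement-3}). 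The two cuts agree at $z_{j+1}$ up to the vertical gap $\tilde{\epsilon}_{j+1}$.

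The core computation is then a one-dimensional convex subproblem: minimize $\max\{\ell_1, \ell_2\}(x) + \frac{\rho}{2}\|x - x_k\|^2$ and compare the resulting value to $\eta_j = \ell_1(z_{j+1}) + \frac{\rho}{2}\|z_{j+1} - x_k\|^2$ (which holds because $z_{j+1}$ minimizes $\ell_1 + \frac{\rho}{2}\|\cdot - x_k\|^2$ by the optimality characterization in \cref{eq:lower-approximation-j-reformulation}). I would parametrize the minimizer along the direction that trades off the two cuts, writing the candidate point as $z_{j+1}$ perturbed toward where $\ell_2$ dominates, and optimize a scalar stepsize $t \in [0,1]$. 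This is the standard bundle "improvement from a cut" computation: the gain is governed by how much the new cut $\ell_2$ sits above the old optimal value, namely $\tilde{\epsilon}_{j+1}$, balanced against the slope mismatch $\|g_{j+1} - s_{j+1}\|$ and the curvature $\rho$. Carrying out the scalar minimization yields exactly the $\min$ of a linear-in-$\tilde{\epsilon}_{j+1}$ term (when the unconstrained optimal stepsize exceeds $1$) and a quadratic-in-$\tilde{\epsilon}_{j+1}$ term scaled by $\rho/\|g_{j+1}-s_{j+1}\|^2$ (the interior case), which is the content of \cref{eq:strict-improvement}.

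For \cref{eq:strict-improvement-2}, I would use the hypothesis that \cref{eq:descent-condition-main} fails at $z_{j+1}$. Negating \cref{eq:descent-condition-main} gives $f(x_k) - (f(z_{j+1}) + \frac{m}{2}\|z_{j+1} - x_k\|^2) < \beta(f(x_k) - \tilde{f}_j(z_{j+1}))$. The plan is to rewrite $\tilde{\epsilon}_{j+1} = (f(z_{j+1}) + \frac{m}{2}\|z_{j+1}-x_k\|^2) - \tilde{f}_j(z_{j+1})$ by adding and subtracting $f(x_k)$, so that $\tilde{\epsilon}_{j+1} = (f(x_k) - \tilde{f}_j(z_{j+1})) - (f(x_k) - (f(z_{j+1}) + \frac{m}{2}\|z_{j+1}-x_k\|^2))$, and then apply the failed descent test to bound the second term by $\beta(f(x_k) - \tilde{f}_j(z_{j+1}))$. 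This yields $\tilde{\epsilon}_{j+1} \geq (1-\beta)(f(x_k) - \tilde{f}_j(z_{j+1}))$. The remaining step is to relate $f(x_k) - \tilde{f}_j(z_{j+1})$ to the approximated proximal gap $\tilde{\Delta}_j = f(x_k) - \eta_j$; since $\eta_j = \tilde{f}_j(z_{j+1}) + \frac{\rho}{2}\|z_{j+1}-x_k\|^2 \geq \tilde{f}_j(z_{j+1})$, we get $f(x_k) - \tilde{f}_j(z_{j+1}) \geq f(x_k) - \eta_j = \tilde{\Delta}_j$, so $\tilde{\epsilon}_{j+1} \geq (1-\beta)\tilde{\Delta}_j$. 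Substituting this lower bound into \cref{eq:strict-improvement} (and noting both branches of the $\min$ are monotone increasing in $\tilde{\epsilon}_{j+1}$) produces \cref{eq:strict-improvement-2}.

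The main obstacle I anticipate is the scalar minimization in the first part: correctly identifying the two regimes (boundary versus interior optimum of the stepsize) and verifying that the $\min$ structure emerges cleanly, in particular checking that the constraint $t \le 1$ is what separates the linear term $\frac{1}{2}\tilde{\epsilon}_{j+1}$ from the quadratic term $\frac{\rho \tilde{\epsilon}_{j+1}^2}{2\|g_{j+1}-s_{j+1}\|^2}$. A secondary subtlety is handling the degenerate case $g_{j+1} = s_{j+1}$, where the slope mismatch vanishes; there the quadratic branch formally blows up, and one must confirm the $\min$ correctly selects the linear branch so the bound remains valid (and, if needed, argue that this case corresponds to the cut already being tight enough that the improvement $\tilde{\epsilon}_{j+1}/2$ is attained outright).
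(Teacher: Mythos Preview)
Your proposal is correct and follows essentially the same route as the paper. The paper lower-bounds $\tilde f_{j+1}$ by the essential model $\hat f_{j+1}=\max\{\ell_1,\ell_2\}$ exactly as you describe, solves $\min_y\{\hat f_{j+1}(y)+\tfrac{\rho}{2}\|y-x_k\|^2\}$ via the dual parameter $\theta\in[0,1]$ (your scalar stepsize $t$), obtains $\theta^\star=\min\{1,\rho\,\tilde\epsilon_{j+1}/\|g_{j+1}-s_{j+1}\|^2\}$, and the two regimes $\theta^\star=1$ versus $\theta^\star<1$ produce the two branches of the $\min$ in \cref{eq:strict-improvement}; your derivation of \cref{eq:strict-improvement-2} from the failed descent test, including the chain $\tilde\epsilon_{j+1}\ge(1-\beta)(f(x_k)-\tilde f_j(z_{j+1}))\ge(1-\beta)\tilde\Delta_j$, matches the paper line for line.
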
 
This result follows standard analysis in convex bundle methods (see e.g., \cite{du2017rate,kiwiel2000efficiency} and  \cite[Section 5.3]{diaz2023optimal}). In particular, when $m = 0$, \Cref{lemma:null-step-improvement} becomes the same as \cite[Section 5.3]{diaz2023optimal}. We adapt and generalize these classical analyses to the $m$-weakly convex case; see \cref{subsection:null-step-improvement} for proof details. 

\begin{algorithm}[t]
\caption{\texttt{ProxDescent($x_k,\beta,\rho$)}}\label{alg:Proxi-descent-subproblem}
\begin{algorithmic}[1]
\Require $x_{k} \in \RR^n$, $\beta \in (0,1)$ and $\rho > 0$.
\State Let $j = 1$, $\tilde{f}_j= f(x_{k}) + \innerproduct{g_{j}}{\cdot -x_{k}}$, where $g_j \in \partial f(x_k)$, and $z_j = x_k$; 
\State Compute $z_{j+1} = \argmin_{x}\; \tilde{f}_{j}(x) + \frac{\rho}{2}\|x - x_k\|^2$; 
\While{$f(x_k) - \left(f(z_{j+1}) + \frac{m}{2}\|z_{j+1} - x_k\|^2\right)  < \beta \left(f(x_k) - \tilde{f}_j(z_{j+1})\right) $}
    
    \State Construct $\tilde{f}_{j+1}$ satisfying \cref{eq:lower-approximation-1}, \cref{eq:requirement-2} and \cref{eq:requirement-3}; 
    \State Compute $z_{j+2} = \argmin_{x}\; \tilde{f}_{j+1}(x) + \frac{\rho}{2}\|x - x_k\|^2$;  
    \State $j = j + 1$;
\EndWhile
 \State \Return $z_{j+1}$;  
\end{algorithmic}
\end{algorithm}

We can thus provide an implementable procedure for \texttt{ProxDescent($x_k,\beta,\rho$)} in \Cref{alg:Proxi-descent-subproblem}. From \Cref{lemma:null-step-improvement}, we can bound the number of iterations required before \Cref{alg:Proxi-descent-subproblem} terminates.  

\begin{proposition}
    \label{lemma:null-step}
    With the same setup in \Cref{lemma:null-step-improvement}, let $\Delta_{k}= f(x_k) - f_{\alpha}(x_k)$ denote the true proximal gap.
    Then \Cref{alg:Proxi-descent-subproblem} takes at most 
     \begin{equation} \label{eq:null-steps}
        T_k \leq  \frac{8G^2_{k}}{(1 - \beta)^2 \rho \Delta_{k}}
     \end{equation}
    iterations to terminate, where $G_k = \sup_{1\leq j \leq T_k} \|g_{j}\|$ is the largest subgradient norm in \Cref{alg:Proxi-descent-subproblem}. In other words, \Cref{alg:Proxi-descent-subproblem} finds an iterate $z_{j+1}$ satisfying  \cref{eq:descent-condition-main} within at most $T_k$ iterations. 
\end{proposition}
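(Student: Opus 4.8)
The plan is to treat this as a standard null-step counting argument in which essentially all the analytic content is already supplied by \Cref{lemma:null-step-improvement}: every pass through the while loop of \Cref{alg:Proxi-descent-subproblem} is a null step at which the descent criterion \cref{eq:descent-condition-main} fails, so the improvement bound \cref{eq:strict-improvement-2} applies. Since the model values $\eta_j$ are nondecreasing and capped above by $f_{\alpha}(x_k)$ via \cref{eq:proximal-gap}, while each null step raises $\eta_j$ by a quantity bounded from below, only finitely many null steps can occur. Concretely, I would (i) convert \cref{eq:strict-improvement-2} into a recursion for the approximated proximal gap $\tilde{\Delta}_j = f(x_k) - \eta_j$, (ii) telescope the reciprocals $1/\tilde{\Delta}_j$, and (iii) compare the result against the true gap $\Delta_k$.

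First I would bound the subgradient difference in \cref{eq:strict-improvement-2}. The vector $s_{j+1} = \rho(x_k - z_{j+1})$ is a subgradient of the cutting-plane model $\tilde{f}_j$, which is a supremum of affine minorants whose slopes are drawn from the $g_i$ (together with aggregate cuts, themselves convex combinations of such slopes); hence every subgradient of $\tilde{f}_j$ lies in the convex hull of $\{g_i\}$ and $\|s_{j+1}\| \leq G_k$. Combined with $\|g_{j+1}\| \leq G_k$ and the triangle inequality this gives $\|g_{j+1} - s_{j+1}\|^2 \leq 4G_k^2$, so \cref{eq:strict-improvement-2} yields at each null step
\begin{equation*}
    \eta_{j+1} - \eta_j \;\geq\; \frac{1}{2}\min\left\{(1-\beta)\tilde{\Delta}_j,\ \frac{(1-\beta)^2 \rho\, \tilde{\Delta}_j^2}{4 G_k^2}\right\}.
\end{equation*}

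Next I would use $\eta_{j+1} - \eta_j = \tilde{\Delta}_j - \tilde{\Delta}_{j+1}$ and the monotonicity $\tilde{\Delta}_{j+1} \leq \tilde{\Delta}_j$ to pass to reciprocals:
\begin{equation*}
    \frac{1}{\tilde{\Delta}_{j+1}} - \frac{1}{\tilde{\Delta}_j}
    = \frac{\tilde{\Delta}_j - \tilde{\Delta}_{j+1}}{\tilde{\Delta}_j \tilde{\Delta}_{j+1}}
    \;\geq\; \frac{\eta_{j+1}-\eta_j}{\tilde{\Delta}_j^2}
    \;\geq\; \frac{1}{2}\min\left\{\frac{1-\beta}{\tilde{\Delta}_j},\ \frac{(1-\beta)^2\rho}{4G_k^2}\right\}.
\end{equation*}
The observation that produces a \emph{single} power of $\Delta_k$ is that the second term is always the active minimum. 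Indeed, the initial gap is $\tilde{\Delta}_1 = \|g_1\|^2/(2\rho) \leq G_k^2/(2\rho)$, which lies below the threshold $4G_k^2/((1-\beta)\rho)$ for every $\beta \in (0,1)$; since $\tilde{\Delta}_j$ is nonincreasing it stays below this threshold, and there $\frac{1-\beta}{\tilde{\Delta}_j} \geq \frac{(1-\beta)^2\rho}{4G_k^2}$. Hence every null step raises $1/\tilde{\Delta}_j$ by at least the constant $c := (1-\beta)^2\rho/(8G_k^2)$.

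Finally I would telescope over the $T_k$ null steps to get $1/\tilde{\Delta}_{T_k+1} \geq 1/\tilde{\Delta}_1 + c\,T_k \geq c\,T_k$. Because \cref{eq:proximal-gap} forces $\eta_j \leq f_{\alpha}(x_k)$ for all $j$, the model gap never drops below the true gap, i.e.\ $\tilde{\Delta}_{T_k+1} \geq \Delta_k$, whence $c\,T_k \leq 1/\Delta_k$ and therefore $T_k \leq 8 G_k^2/((1-\beta)^2 \rho \Delta_k)$, as claimed (the bound being vacuous when $\Delta_k = 0$, which is exactly the stationary case $0 \in \partial f(x_k)$). The main obstacle is not any single estimate but securing the linear rather than quadratic dependence on $\Delta_k$: a naive per-step lower bound on $\eta_{j+1}-\eta_j$ using only $\tilde{\Delta}_j \geq \Delta_k$ would cost an extra factor $1/\Delta_k$, so the reciprocal telescoping, together with the verification that the iteration stays in the sublinear regime $\tilde{\Delta}_j \leq 4G_k^2/((1-\beta)\rho)$, is the crux. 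A secondary point needing care is the uniform bound $\|s_{j+1}\| \leq G_k$, which rests on the cutting-plane structure of the models $\tilde{f}_j$.
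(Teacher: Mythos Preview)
Your approach is essentially the same as the paper's: both derive from \Cref{lemma:null-step-improvement} a quadratic recursion $\tilde{\Delta}_{j+1} \leq \tilde{\Delta}_j - c\,\tilde{\Delta}_j^2$ with $c = (1-\beta)^2\rho/(8G_k^2)$, verify that the quadratic branch of the minimum is always active because $\tilde{\Delta}_j \leq \tilde{\Delta}_1 = \|g_1\|^2/(2\rho) \leq G_k^2/(2\rho)$, and then conclude $T_k \leq 1/(c\Delta_k)$. The paper invokes a standard scalar recursion lemma for sequences satisfying $\delta_{k+1} \leq \delta_k - \alpha\delta_k^2$; your reciprocal telescoping is the same device in slightly different clothing.

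There is one gap worth flagging: your bound $\|s_{j+1}\| \leq G_k$ is argued via a convex-hull claim about the slopes of the model $\tilde{f}_j$. But \Cref{alg:Proxi-descent-subproblem} (and the hypotheses of \Cref{lemma:null-step-improvement}) allow $\tilde{f}_j$ to be \emph{any} convex minorant satisfying \cref{eq:lower-approximation-1}, \cref{eq:requirement-2}, \cref{eq:requirement-3}, not necessarily a piecewise-linear cutting-plane model built from previous cuts; for such general models the convex-hull description of $\partial\tilde{f}_j$ is unavailable. The paper instead obtains the same bound model-independently from strong convexity: since $z_{j+1}$ minimizes the $\rho$-strongly convex function $\tilde{f}_j(\cdot) + \tfrac{\rho}{2}\|\cdot - x_k\|^2$, one has $\tfrac{\rho}{2}\|z_{j+1} - x_k\|^2 \leq \tilde{f}_j(x_k) - \eta_j \leq f(x_k) - \eta_j = \tilde{\Delta}_j$, hence $\|s_{j+1}\|^2 = \rho^2\|z_{j+1}-x_k\|^2 \leq 2\rho\tilde{\Delta}_j \leq 2\rho\tilde{\Delta}_1 = \|g_1\|^2 \leq G_k^2$. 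Replacing your convex-hull step with this inequality closes the gap, and the rest of your argument goes through unchanged.
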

\begin{proof}    
    We here outline a proof sketch, and the proof details are postponed to \Cref{subsection:standard-bundle-proof}.  Note that \cref{eq:strict-improvement-2} estimates the improvement of $\eta_j$ in terms of the approximated proximal gap $\tilde{\Delta}_j$. We have the following  key inequality (see \Cref{lemma:key-improvement})
    \begin{align*}
        \frac{1}{2\rho }\|s_{j+1}\|^2 \leq \tilde{\Delta}_j \leq \tilde{\Delta}_1  \leq \frac{1}{2\rho}\|g_1\|^2,
    \end{align*}
    which is a consequence of the assumption \cref{eq:requirement-3}, implying $\frac{2 \rho \tilde{\Delta}_j }{\|g_1\|^2} \leq 1$ and $ \|s_{j+1}\|^2 \leq  \|g_1\|^2.$ By the definition of $G_k$ and some simple algebras, the improvement in \cref{eq:strict-improvement-2} can be lower bounded~as
    \begin{align*}
        \frac{1}{2} \min \left \{(1-\beta) \tilde{\Delta}_j, \frac{(1 - \beta)^2 \rho \tilde{\Delta}_j^2}{  \|g_{j+1} -s_{j+1} \|^2} \right\} \geq   \frac{(1 - \beta)^2 \rho \tilde{\Delta}_j^2}{ 8  G_k^2}.
    \end{align*}
        Hence, multiplying $(-1)$ on both sides of \cref{eq:strict-improvement-2} and adding $f(x_k)$ on the both sides lead to 
\begin{align*}
    \tilde{\Delta}_{j+1} \leq \tilde{\Delta}_j - \frac{(1 - \beta)^2 \rho \tilde{\Delta}_j^2}{8 G_{k}^2}.
\end{align*}
    This is a simple scalar sequence. Recursively, the above inequality yields
    $$
        \tilde{\Delta}_{j} \leq  \frac{ 8 G_{k}^2 }{(1 - \beta)^2 \rho} \times \frac{1}{j}, \qquad \; \forall j \geq 1.  
    $$
    Since $\Delta_k \leq \tilde{\Delta}_{j}$ for all $j \geq 1$, we know that the number of iterations that \cref{eq:descent-condition-main} fails is at most $\frac{8G^2_{k}}{(1 - \beta)^2 \rho \Delta_{k}}.$ This completes the proof.
\end{proof}
We here remark that the quantity $G_k$ in \Cref{lemma:null-step} encapsulates both the weakly convex parameter $m$ and the proximal center $x_k$, since it is related to the element in the subdifferential $\partial \left (f(\cdot) + \frac{m}{2}\|\cdot- x_{k}\|^2 \right)$. As we will detail later, $G_k$ can be further upper-bounded for certain classes of functions. In particular, when $f$ is Lipschitz continuous, $G_k$ can be bounded as a constant (see \cref{lemma:bounds-on-G-k}); when $f$ is smooth, $G_k$ can be bounded as $\bigO(\sqrt{\Delta_k})$ (see \cref{proposition:Upper-bound_G}). 

For a bounded  $G_k$, \Cref{lemma:null-step} guarantees that the number of subgradient and function evaluations (iterations) of \texttt{ProxDescent($x_k,\beta,\rho$)} in \Cref{alg:Proxi-descent-subproblem} is inversely related to the proximal gap $\Delta_{k}$. 
If this proximal gap $\Delta_{k}$ is larger, \texttt{ProxDescent($x_k,\beta,\rho$)} takes fewer iterations to terminate. We thus expect \texttt{ProxDescent($x_k,\beta,\rho$)} only takes very few iterations to ensure \cref{eq:descent-condition-main} when the proximal center $x_k$ is far away from the stationary.

\subsection{Iteration complexity in terms of subgradient and function evaluations}

The proximal descent method in \Cref{alg:Proxi-descent} can be viewed as a double-loop algorithm: the outer loop follows an inexact subgradient update that mimics the proximal iterations, and the inner loop uses bundle approximations (i.e., \Cref{alg:Proxi-descent-subproblem}) to achieve the desired inexact update. 

\begin{subequations}\label{eq:descent-steps-function-evaluations}
To reach an $(\eta,\epsilon)$-inexact stationary point for minimizing an $m$-weakly convex  functions, we let 
\begin{equation} \label{eq:maximum-descent-steps}
K_{\max}(\eta,\epsilon) := \left\lceil \frac{2\alpha^2 (f(x_1) - f^\star)}{m + \beta \rho } \frac{1}{\eta^2} + \frac{(1-\beta)\left(f(x_1) -  f^\star\right)}{\beta}\frac{1}{\epsilon} + 1\right \rceil
\end{equation}
from \Cref{lemma:proximal-descent}. Then, together with \Cref{lemma:null-step}, \Cref{alg:Proxi-descent} takes at most 
\begin{equation} \label{eq:total-subgradient-evaluations}
\sum_{k=1}^{K_{\max}(\eta,\epsilon)} T_k \leq  \frac{8K_{\max}(\eta,\epsilon)}{(1 - \beta)^2 \rho} \times \max_{k=\{1,\ldots, K_{\max}(\eta,\epsilon) \}} \frac{G^2_{k}}{\Delta_{k}}. 
\end{equation}
subgradient and function evaluations to find an $(\eta,\epsilon)$-stationary point. Our next task is to bound two quantities in \cref{eq:total-subgradient-evaluations}: 1) the largest subgradient norm $G_k$ and 2) the proximal gap $\Delta_k$.  
\end{subequations}

To bound $\Delta_{k}$, we observe that the magnitude of the proximal gap $\Delta_{k}$ naturally quantifies the stationarity of the center point $x_k$. The proof of the result below is given in \cref{lemma:prox-gap-consq}.
\begin{proposition}
    \label{proposition:implication-proximal-gap} 
    Consider a $m$-weakly convex function  $f:\RR^n \to \RR$, a proximal center $x_{k} \in \mathbb{R}^n$, and a proximal parameter $\rho > 0$. Let $\alpha = m + \rho$ and denote the proximal gap $\Delta_{k} = f(x_k) - f_{\alpha}(x_k)$.
    Then the point $x_{k}$ is an $(\eta,\Delta_k)$-inexact stationary point with $\eta = \sqrt{2\rho \Delta_k}$, and it is also a $(\delta,\alpha)$-Moreau stationary point with $\delta = \sqrt{{2\alpha^2\Delta_k}/{\rho}}$.
\end{proposition}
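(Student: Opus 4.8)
The plan is to route both stationarity measures through the exact proximal point $\hat x := \argmin_{y}\{f(y) + \frac{\alpha}{2}\|y - x_k\|^2\}$, which is unique since $\alpha > m$, so that everything is expressed via the single displacement $x_k - \hat x$. The one quantitative input I need is a bound on $\|x_k - \hat x\|$ in terms of $\Delta_k$. Since $f(\cdot) + \frac{m}{2}\|\cdot - x_k\|^2$ is convex and $\alpha = m + \rho$, the map $\phi(y) := f(y) + \frac{\alpha}{2}\|y - x_k\|^2$ is $\rho$-strongly convex with minimizer $\hat x$. Evaluating its strong-convexity inequality between $\hat x$ and $x_k$, together with $\phi(x_k) = f(x_k)$ and $\phi(\hat x) = f_\alpha(x_k)$, yields the key estimate $\frac{\rho}{2}\|x_k - \hat x\|^2 \leq f(x_k) - f_\alpha(x_k) = \Delta_k$, that is, $\|x_k - \hat x\| \leq \sqrt{2\Delta_k/\rho}$.

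The Moreau part then follows immediately. By \cref{lemma:gradient-moreau} we have $\nabla f_\alpha(x_k) = \alpha(x_k - \hat x)$, so $\|\nabla f_\alpha(x_k)\| = \alpha\|x_k - \hat x\| \leq \alpha\sqrt{2\Delta_k/\rho} = \sqrt{2\alpha^2\Delta_k/\rho}$, which is exactly the claimed $(\delta,\alpha)$-Moreau stationarity with $\delta = \sqrt{2\alpha^2\Delta_k/\rho}$.

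For the inexact part I would take the candidate element $w := \rho(x_k - \hat x)$; the displacement bound already gives $\|w\| = \rho\|x_k - \hat x\| \leq \sqrt{2\rho\Delta_k}$, so it only remains to certify $w \in \partial_{\Delta_k} f(x_k)$. The route is to start from the convex subgradient inequality for $h(y) := f(y) + \frac{m}{2}\|y - x_k\|^2$ at $\hat x$: the optimality condition for $\hat x$ gives $\alpha(x_k - \hat x) \in \partial f(\hat x)$, and adding the gradient $m(\hat x - x_k)$ of the quadratic shows $w \in \partial h(\hat x)$, whence $h(y) \geq h(\hat x) + \langle w, y - \hat x\rangle$ for all $y$. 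I would then split $\langle w, y - \hat x\rangle = \langle w, y - x_k\rangle + \rho\|x_k - \hat x\|^2$, move the $\frac{m}{2}\|y - x_k\|^2$ term to the right, and re-center at $x_k$ so the inequality assumes the inexact-subgradient form $f(y) \geq f(x_k) + \langle w, y - x_k\rangle - \frac{m}{2}\|y - x_k\|^2 - \epsilon$ with residual $\epsilon = f(x_k) - f(\hat x) - \frac{m}{2}\|x_k - \hat x\|^2 - \rho\|x_k - \hat x\|^2$.

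The crux is to show $\epsilon \leq \Delta_k$. Substituting $f(x_k) - f(\hat x) = \Delta_k + \frac{\alpha}{2}\|x_k - \hat x\|^2$ from the definition of $\Delta_k$, and using $\alpha - m = \rho$, produces the clean cancellation $\epsilon = \Delta_k - \frac{\rho}{2}\|x_k - \hat x\|^2 \leq \Delta_k$. Since $\partial_\epsilon f(x_k) \subseteq \partial_{\Delta_k} f(x_k)$ whenever $\epsilon \leq \Delta_k$ (the defining inequality in \cref{eq:inexact-subdifferential} only weakens as the inexactness grows), this places $w \in \partial_{\Delta_k} f(x_k)$, giving $\Dist(0, \partial_{\Delta_k} f(x_k)) \leq \|w\| \leq \sqrt{2\rho\Delta_k}$. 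I expect the only delicate step to be this algebraic re-centering from $\hat x$ to $x_k$ and the careful bookkeeping of the quadratic terms that makes the residual collapse to $\Delta_k - \frac{\rho}{2}\|x_k - \hat x\|^2$; the remaining pieces are direct consequences of strong convexity, \cref{lemma:gradient-moreau}, and the definition of $\partial_\epsilon f$.
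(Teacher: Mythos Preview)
Your proposal is correct and follows essentially the same route as the paper's proof (\cref{lemma:prox-gap-consq}): both take $w=\rho(x_k-\hat x)$, bound $\|w\|$ via the $\rho$-strong convexity of $f(\cdot)+\tfrac{\alpha}{2}\|\cdot-x_k\|^2$, read off the Moreau bound from $\nabla f_\alpha(x_k)=\alpha(x_k-\hat x)$, and verify $w\in\partial_{\Delta_k} f(x_k)$ by re-centering a subgradient inequality from $\hat x$ to $x_k$. The only cosmetic difference is that you start from $w\in\partial h(\hat x)$ for the convexified function $h=f+\tfrac{m}{2}\|\cdot-x_k\|^2$, whereas the paper starts from $\alpha(x_k-\hat x)\in\partial f(\hat x)$ and the weak-convexity inequality; the resulting algebra and residual $\epsilon=\Delta_k-\tfrac{\rho}{2}\|x_k-\hat x\|^2\le\Delta_k$ are the same.
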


\cref{proposition:implication-proximal-gap} implies that if we have $\Delta_{k} \leq \min\{\frac{ \eta^2}{2\rho } ,\epsilon\} $, then \Cref{alg:Proxi-descent} has found an $(\eta,\epsilon)$-inexact stationary point. Thus, the following corollary is immediate.

\begin{corollary} \label{corollary:proximal-gap}
    Consider \Cref{alg:Proxi-descent} that minimizes a $m$-weakly convex function  $f$. For any iteration $x_k$ that is not a $(\eta,\epsilon)$-inexact stationary point, we must have 
    $$ 
    \Delta_{k} > \min\left\{\frac{ \eta^2}{2\rho} ,\epsilon\right\}.  
    $$ 
\end{corollary}

We can further bound the largest subgradient norm $G_k$.

\begin{lemma} \label{lemma:bounds-on-G-k}
Let $ f : \mathbb{R}^n \to \mathbb{R}$ be an $ L $-Lipschitz and $m$-weakly convex function. Consider the proximal descent method in \Cref{alg:Proxi-descent}, using the subroutine in \Cref{alg:Proxi-descent-subproblem} with parameters $ \beta \in (0,1) $ and $ \rho > 0 $. Then the maximum subgradient norm $G_k = \sup_{1\leq j \leq T_k} \|g_{j}\|$ satisfies
\[
G_k \leq \left(1 + m/\rho  \right)L, \quad \forall k \geq 1.
\]
\end{lemma}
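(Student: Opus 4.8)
The plan is to bound the subgradient norms $\|g_j\|$ that appear inside \Cref{alg:Proxi-descent-subproblem}. Recall from \cref{eq:requirement-3} that each $g_{j+1}$ is chosen so that $g_{j+1}-m(z_{j+1}-x_k)\in\partial f(z_{j+1})$; equivalently, $g_{j+1}$ is a subgradient of the local convexification $f(\cdot)+\tfrac{m}{2}\|\cdot-x_k\|^2$ at the trial point $z_{j+1}$. The initial model uses $g_1\in\partial f(x_k)$. Thus every $g_j$ decomposes as a genuine subgradient of $f$ plus the curvature term $m(z_j-x_k)$, and a triangle inequality gives
\[
\|g_j\|\;\le\;\|h_j\|+m\|z_j-x_k\|,
\]
where $h_j\in\partial f(z_j)$. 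The first piece is controlled directly by Lipschitzness: since $f$ is $L$-Lipschitz, every element of $\partial f$ has norm at most $L$, so $\|h_j\|\le L$. The remaining task is to bound $\|z_j-x_k\|$ uniformly over the inner iterations.

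First I would control the displacement $\|z_{j+1}-x_k\|$ of each trial point from the proximal center. The point $z_{j+1}$ minimizes $\tilde f_j(\cdot)+\tfrac{\rho}{2}\|\cdot-x_k\|^2$, and the optimality condition gives $s_{j+1}=\rho(x_k-z_{j+1})\in\partial\tilde f_j(z_{j+1})$, so $\|z_{j+1}-x_k\|=\|s_{j+1}\|/\rho$. The key observation is that $s_{j+1}$ is itself a subgradient of the model $\tilde f_j$, and the construction of the models (starting from the affine minorant in Line 1 and built via \cref{eq:requirement-2}, \cref{eq:requirement-3}) keeps these subgradients controlled by the subgradients of the true convexified function. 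Concretely, I would argue that because each $\tilde f_j$ is a supremum/aggregate of affine functions whose slopes are of the form $g_i\in\partial\big(f(\cdot)+\tfrac m2\|\cdot-x_k\|^2\big)(z_i)$, any subgradient of $\tilde f_j$ at $z_{j+1}$ is a convex combination of such slopes, hence $\|s_{j+1}\|\le\max_i\|g_i\|$. This is the natural place to invoke convexity of the cutting-plane model.

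Combining these, I would set up a self-referential bound. Writing $G_k=\sup_{1\le j\le T_k}\|g_j\|$ and using $\|z_j-x_k\|=\|s_j\|/\rho\le G_k/\rho$ together with $\|h_j\|\le L$, we get
\[
\|g_j\|\le L+m\cdot\frac{G_k}{\rho},
\]
and taking the supremum over $j$ yields $G_k\le L+\tfrac{m}{\rho}G_k$. Since $\rho>0$ and we may assume $m<\rho$ or otherwise solve the linear inequality, rearranging gives the claimed $G_k\le(1+m/\rho)L$ once one verifies the displacement bound is genuinely $L/\rho$ rather than $G_k/\rho$. This suggests the cleaner route: bound $\|s_{j+1}\|$ directly. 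The crucial technical fact from the bundle construction, established in the referenced \Cref{lemma:key-improvement}, is $\tfrac{1}{2\rho}\|s_{j+1}\|^2\le\tilde\Delta_j\le\tfrac{1}{2\rho}\|g_1\|^2$, which immediately gives $\|s_{j+1}\|\le\|g_1\|\le L$, hence $\|z_{j+1}-x_k\|\le L/\rho$. Feeding this into the triangle inequality gives $\|g_{j+1}\|\le L+m\|z_{j+1}-x_k\|\le L+mL/\rho=(1+m/\rho)L$, which is exactly the claim.

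The main obstacle I anticipate is pinning down the uniform bound on the displacement $\|z_{j+1}-x_k\|$ with the correct constant, namely showing $\|s_{j+1}\|\le L$ rather than merely $\|s_{j+1}\|\le G_k$, to avoid the circularity of a self-referential inequality. The clean resolution relies on the monotone-improvement structure of the bundle models, specifically the chain $\tfrac{1}{2\rho}\|s_{j+1}\|^2\le\tilde\Delta_j\le\tilde\Delta_1\le\tfrac{1}{2\rho}\|g_1\|^2$ from \Cref{lemma:key-improvement} (sketched in the proof of \Cref{lemma:null-step}), which bounds every model-subgradient norm by that of the initial cut $g_1\in\partial f(x_k)$, whose norm is at most $L$ by Lipschitzness. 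Once this is in place the final estimate is a one-line triangle inequality, so the entire difficulty is isolating and correctly invoking that aggregate-subgradient control.
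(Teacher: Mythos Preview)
Your proposal is correct and follows essentially the same approach as the paper: decompose $g_j$ as a subgradient of $f$ plus the curvature term $m(z_j-x_k)$, bound the former by $L$ via Lipschitzness, bound $\|z_j-x_k\|\le\|g_1\|/\rho\le L/\rho$ via the chain $\tfrac{1}{2\rho}\|s_{j+1}\|^2\le\tilde\Delta_j\le\tilde\Delta_1\le\tfrac{1}{2\rho}\|g_1\|^2$ from \Cref{lemma:key-improvement}, and combine with the triangle inequality. Your initial detour through the self-referential bound $G_k\le L+\tfrac{m}{\rho}G_k$ is unnecessary (and would only yield the result when $m<\rho$), but you correctly identify and land on the clean route that matches the paper's proof.
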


The proof is presented in \cref{subsection:proof:bounds-on-G-k}.
Combining \Cref{lemma:bounds-on-G-k}, \cref{corollary:proximal-gap} with \Cref{eq:descent-steps-function-evaluations} directly leads to the following complexity guarantee of \Cref{alg:Proxi-descent}, which is one main theoretical convergence in this paper.  

\begin{theorem}[Main result]
    \label{theorem:main-result}
    Consider the $m$-weakly convex problem \cref{eq:main-problem}.~Suppose the function $f$
    is further $L$-Lipschitz. Then the proximal descent method in \Cref{alg:Proxi-descent} with subroutine in \Cref{alg:Proxi-descent-subproblem} and parameters $\beta \in (0,1)$, $\rho > 0$, and $\alpha = m + \rho$, takes at most 
    \begin{equation} \label{eq:descent-bound}
       \frac{8K_{\max}(\eta,\epsilon)}{(1 - \beta)^2 \rho} \times \left (1 + \frac{m}{\rho} \right)^2L^2 \times \max\left\{ \frac{2\rho}{ \eta^2},\frac{1}{\epsilon}  \right\}
    \end{equation}
    subgradient and function evaluations\footnote{Our count of subgradient and function evaluation is under the assumption that the subroutine \Cref{alg:Proxi-descent-subproblem} uses the essential model $\tilde{f}_{k+1}(\cdot) = \max\{f(z_{j+1}) + \frac{m}{2}\|z_{j+1} - x_{k}\|^2 + \innerproduct{g_{j+1}}{\cdot - z_{j+1}} ,\tilde{f}_j(z_{j+1}) + \innerproduct{s_{j+1}}{\cdot - z_{j+1}}\}$ that satisfies \cref{eq:lower-approximation-1},\cref{eq:requirement-2}, and \cref{eq:requirement-3} at every iteration. Thus, every iteration in \Cref{alg:Proxi-descent-subproblem} requires only one subgradient and one function evaluation.} to find an $(\eta,\epsilon)$-inexact stationary point, where $K_{\max}(\eta,\epsilon)$ is given~in~\cref{eq:maximum-descent-steps}.  
\end{theorem}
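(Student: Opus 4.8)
The plan is to assemble \Cref{theorem:main-result} directly from the three ingredients already established: the outer-loop iteration count \cref{eq:maximum-descent-steps}, the inner-loop termination bound from \Cref{lemma:null-step}, and the structural bounds on $G_k$ and $\Delta_k$. The quantity we must control is the total number of subgradient and function evaluations, which by the double-loop structure is $\sum_{k=1}^{K_{\max}(\eta,\epsilon)} T_k$, where each $T_k$ is the number of inner iterations of \Cref{alg:Proxi-descent-subproblem} at outer step $k$. Under the footnote's assumption that the essential model is used, each inner iteration costs exactly one subgradient and one function evaluation, so this sum is precisely the evaluation count.

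First I would invoke \cref{eq:total-subgradient-evaluations}, which already bounds the sum by
\[
\sum_{k=1}^{K_{\max}(\eta,\epsilon)} T_k \leq \frac{8K_{\max}(\eta,\epsilon)}{(1-\beta)^2 \rho} \times \max_{k} \frac{G_k^2}{\Delta_k}.
\]
The task then reduces to bounding the ratio $G_k^2/\Delta_k$ uniformly over all outer iterations $k$ that have not yet reached an $(\eta,\epsilon)$-inexact stationary point. For the numerator, \Cref{lemma:bounds-on-G-k} gives $G_k \leq (1+m/\rho)L$ uniformly in $k$, using $L$-Lipschitzness. For the denominator, \cref{corollary:proximal-gap} guarantees that any non-stationary center satisfies $\Delta_k > \min\{\eta^2/(2\rho), \epsilon\}$, so $1/\Delta_k < \max\{2\rho/\eta^2, 1/\epsilon\}$. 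Substituting both bounds yields
\[
\max_k \frac{G_k^2}{\Delta_k} \leq \left(1+\frac{m}{\rho}\right)^2 L^2 \times \max\left\{\frac{2\rho}{\eta^2}, \frac{1}{\epsilon}\right\},
\]
and plugging this into the displayed bound above reproduces exactly \cref{eq:descent-bound}.

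The proof is essentially a bookkeeping composition, so the main conceptual obstacle is not a calculation but a logical subtlety: the bound on $1/\Delta_k$ from \cref{corollary:proximal-gap} holds only for \emph{non-stationary} centers $x_k$. I would therefore argue that it suffices to consider the run up to the first stationary iterate, since the algorithm's guarantee is to \emph{find} an $(\eta,\epsilon)$-inexact stationary point; for every outer iterate preceding termination, $x_k$ is non-stationary and the bound applies. A second point worth flagging is that \cref{eq:total-subgradient-evaluations} is itself derived by combining \Cref{lemma:proximal-descent} (outer count) with \Cref{lemma:null-step} (inner count), so I would make sure the uniformity of the $G_k$ bound across \emph{all} inner iterations of \emph{all} outer steps is correctly inherited — this is exactly what \Cref{lemma:bounds-on-G-k} provides, since its bound is independent of both $j$ and $k$. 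With these two observations in place, the remaining steps are purely substitutions and the theorem follows.
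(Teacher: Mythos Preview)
Your proposal is correct and follows exactly the approach the paper takes: the paper states just before the theorem that combining \Cref{lemma:bounds-on-G-k}, \cref{corollary:proximal-gap}, and \cref{eq:descent-steps-function-evaluations} ``directly leads to'' the result, and your proof spells out precisely that combination. Your additional remarks on the logical subtlety (the $\Delta_k$ lower bound holding only for non-stationary centers) and on the uniformity of the $G_k$ bound are valid clarifications that the paper leaves implicit.
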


In summary, the complexity of finding a $(\eta,\epsilon)$-inexact stationary point for \Cref{alg:Proxi-descent} takes at most 
$$
\bigO \left(  \left ( \frac{1}{\eta^2} + \frac{1}{\epsilon} \right ) \max \left\{\frac{1}{\eta^2},\frac{1}{\epsilon}\right\} \right )
$$
subgradient and function evaluations. Combining this with \Cref{lemma:stationary-conversion-R2M-2-main}, we have the following complexity of finding an approximate Moreau stationary point. 
\begin{corollary}[Moreau stationary]
    \label{corollary:main-result}
    With the same setup in \Cref{theorem:main-result}, the proximal descent method in \Cref{alg:Proxi-descent} takes at most $\mathcal{O}(1/\delta^4)$ subgradient and function evaluations to find a point $x$ satisfying $\|\nabla_{\alpha} f(x)\| \leq \delta$ with $\alpha > m$, i.e., an  $(\delta,\alpha)$-Moreau stationary point.
\end{corollary}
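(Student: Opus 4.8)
The plan is to combine the complexity bound of \Cref{theorem:main-result} with the stationarity conversion in \Cref{lemma:stationary-conversion-R2M-2-main}, choosing the inexactness tolerances $(\eta,\epsilon)$ appropriately in terms of the target accuracy $\delta$. First I would invoke \Cref{lemma:stationary-conversion-R2M-2-main} with $\lambda = \rho$, so that $m + \lambda = \alpha$, which shows that any $(\eta,\epsilon)$-inexact stationary point $x$ satisfies
\begin{align*}
\|\nabla f_\alpha(x)\| \leq \alpha \left( \frac{2}{\rho}\eta + \sqrt{\frac{2}{\rho}\epsilon} \right).
\end{align*}
To force the right-hand side below $\delta$, it suffices to keep each of the two terms below $\delta/2$; concretely one sets
\begin{align*}
\eta = \frac{\rho \delta}{4\alpha}, \qquad \epsilon = \frac{\rho \delta^2}{8\alpha^2}.
\end{align*}
With these choices $\eta = \Theta(\delta)$ and $\epsilon = \Theta(\delta^2)$, so that an $(\eta,\epsilon)$-inexact stationary point is automatically a $(\delta,\alpha)$-Moreau stationary point.

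Next I would substitute these tolerances into the complexity bound of \Cref{theorem:main-result}. Recalling $K_{\max}(\eta,\epsilon) = \mathcal{O}(1/\eta^2 + 1/\epsilon)$ from \cref{eq:maximum-descent-steps}, the choice $\eta = \Theta(\delta)$, $\epsilon = \Theta(\delta^2)$ gives $K_{\max}(\eta,\epsilon) = \mathcal{O}(1/\delta^2)$. The remaining accuracy-dependent factor obeys $\max\{2\rho/\eta^2, 1/\epsilon\} = \mathcal{O}(1/\delta^2)$ as well, while the prefactor $\tfrac{8}{(1-\beta)^2 \rho}\left(1 + m/\rho\right)^2 L^2$ is an absolute constant independent of $\delta$. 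Multiplying these yields a total of $\mathcal{O}(1/\delta^2)\cdot\mathcal{O}(1/\delta^2) = \mathcal{O}(1/\delta^4)$ subgradient and function evaluations, which is the claimed bound.

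No individual step here is delicate; the only thing to monitor is the balancing of $\eta$ against $\epsilon$. Because the conversion inequality contributes a term that is \emph{linear} in $\eta$ but only \emph{square-root} in $\epsilon$, one must scale $\epsilon$ like $\delta^2$ rather than $\delta$, so that its contributions both to $K_{\max}$ and to the $\max\{\cdot\}$ factor land at the same order $1/\delta^2$ as the contributions from $\eta$. The main (mild) obstacle is therefore verifying that this balanced scaling keeps both factors at $\mathcal{O}(1/\delta^2)$ simultaneously, since any mismatched choice would leave one factor at $1/\delta^2$ while inflating the other beyond it; with the choices above this balance holds and the product is exactly $\mathcal{O}(1/\delta^4)$.
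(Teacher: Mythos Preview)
Your proposal is correct and follows essentially the same approach as the paper: invoke \Cref{lemma:stationary-conversion-R2M-2-main} with $\lambda=\rho$, choose $\eta=\tfrac{\rho\delta}{4\alpha}$ and $\epsilon=\tfrac{\rho\delta^2}{8\alpha^2}$ so that each term in the conversion bound contributes $\delta/2$, and then substitute into the complexity of \Cref{theorem:main-result} to obtain $\mathcal{O}(1/\delta^2)\cdot\mathcal{O}(1/\delta^2)=\mathcal{O}(1/\delta^4)$. The paper carries out exactly these steps with the same tolerance choices.
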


The proof can be found in \cref{subsection:corollary:main-result}. The complexity guarantees in \cref{theorem:main-result,corollary:main-result} are comparable to the best-known rates in the literature, while providing additional advantages. A detailed comparison with closely related works is presented in \cref{subsection:comparison}.

\subsection{Interpreting \Cref{alg:Proxi-descent} as a proximal bundle algorithm}
\label{subsection:re-interpretation}
In this section, we re-interpret \Cref{alg:Proxi-descent} as a standard proximal bundle method for minimizing $m$-weakly convex functions. The key idea of the proximal bundle method is to mimic the conceptual proximal update \cref{eq:PPM-weakly}, which is also the same as \Cref{eq:PPM-weakly-split}. We approximate the convex part $f(\cdot) + \frac{m}{2}\|\cdot - x_k\|^2$ in \cref{eq:PPM-weakly-split} by a lower bound $f_k$ (i.e., $f_k \leq f + \frac{m}{2}\|\cdot - x_k\|^2$) and solve the following subproblem 
\begin{align}
    \label{eq:PBM-subproblem-weakly}
    z_{k+1} = \argmin_{x \in \RR^n}\; f_k(x) + \frac{ \rho}{2}\|x - x_k\|^2. 
\end{align}
To decide if $z_{k+1}$ is a good solution to the original problem \Cref{eq:PPM-weakly-split}, we check the following criterion
\begin{equation}
    \begin{aligned}
    \label{eq:descent-weakly}
        \beta (f(x_k) - f_k(z_{k+1})) \leq f(x_k) -  \left ( f(z_{k+1}) + \frac{m}{2}\|z_{k+1}- x_k\|^2 \right ),
    \end{aligned}
\end{equation}
where $\beta \in (0,1)$, which is the same as \cref{eq:descent-condition-main}. If \cref{eq:descent-weakly} holds, we set $x_{k+1} = z_{k+1}$ (\textit{descent} step). Otherwise, we set $x_{k+1} = x_k$ (\textit{null} step).  
Regardless of a descent or null step, we construct a new function $f_{k+1}$ satisfying the conditions below.
\begin{assumption} \label{assumption:approximation-conditions}
The convex function $f_{k+1}$ satisfies three conditions:
\begin{enumerate}
    \item \textbf{Lower approximation:} Global convex lower approximation, 
    \begin{equation*} 
        f_{k+1}(x) \leq f(x) +\frac{m}{2}\|x - x_{k+1}\|^2, \quad \forall x \in \mathbb{R}^n.
    \end{equation*}
    \item  \textbf{Subgradient lower bound:} We have  
    \begin{equation*} 
        f_{k+1}(x) \geq  f(z_{k+1})  +  \frac{m}{2}\|z_{k+1} - x_{k+1}\|^2 + \innerproduct{g_{k+1}}{x-z_{k+1}},\quad \forall x \in \mathbb{R}^n,
    \end{equation*}
    where $g_{k+1}$ satisfies $g_{k+1}  - m(z_{k+1} - x_{k+1}) \in  \partial f(z_{k+1})$.  
        \item \textbf{Aggregation from the past approximation:} If \cref{eq:descent-weakly} fails, then we require
        \begin{equation*} 
            f_{k+1}(x) \geq f_k(z_{k+1}) + \innerproduct{s_{k+1}}{x-z_{k+1}}, \quad \forall x \in \mathbb{R}^n,
        \end{equation*}
        where $s_{k+1} = \rho(x_k - z_{k+1}) \in \partial f_{k}(z_{k+1})$.
\end{enumerate} 
\end{assumption}

When $m = 0$, \Cref{assumption:approximation-conditions} is the same as the classical proximal bundle method  \cite{kiwiel2000efficiency,du2017rate,diaz2023optimal,lemarechal1981bundle,lemarechal1994condensed,kiwiel1990proximity}. We here have adapted the classical bundle idea to the case of $m$-weakly convex function by adding the convexification term $\frac{m}{2}\|\cdot - x_k\|^2$. According to \Cref{assumption:approximation-conditions}, in principle, we can use only a maximum of two linear functions to construct the lower approximation, i.e., 
$$
f_{k+1}(\cdot) = \max\left\{f(z_{k+1}) \! +\!  \frac{m}{2}\|z_{k+1} - x_k\|^2 + \innerproduct{g_{k+1}}{\cdot-z_{k+1}}, f_k(z_{k+1})\! +\! \innerproduct{s_{k+1}}{\cdot-z_{k+1}}\right\}.
$$
In this case, the proximal update \cref{eq:trial-point-1} with $f_{k+1}$ is extremely simple and admits an analytical formula (see \cref{subsection:analytical-sol}).

We list the proximal bundle method for minimizing a weakly convex function in \Cref{alg:PBM-weakly}. Note that \Cref{alg:PBM-weakly} updates the proximal center $x_k$ to a new point $z_{k+1}$ only when the condition \cref{eq:descent-weakly} holds, which is the same as \cref{eq:descent-condition-main}. The three model update assumptions in \cref{assumption:approximation-conditions} are identical to  \cref{eq:lower-approximation-1}, \Cref{eq:requirement-2}, and \cref{eq:requirement-3}, respectively. Therefore,  the proximal bundle method in \Cref{alg:PBM-weakly} is just a re-interpretation of the proximal descent method in \Cref{alg:Proxi-descent}. In particular, the indices in \cref{subsection:bundle-step} drop the dependence on $k$ of the center point $x_k$, whereas the \Cref{alg:PBM-weakly} makes the dependence explicit. Hence,  the seemingly single-loop \Cref{alg:PBM-weakly} also has a double-loop interpretation as \Cref{alg:Proxi-descent}. Consequently, \Cref{alg:PBM-weakly} enjoys the same convergence guarantees in \cref{theorem:main-result}. The count for the subgradient and function evaluations in \cref{theorem:main-result,corollary:main-result} also becomes the iteration complexity for \Cref{alg:PBM-weakly}.

\begin{algorithm}[t]
\caption{Proximal bundle method (\PBM{}) for $m$-weakly convex functions}
\label{alg:PBM-weakly}
\begin{algorithmic}
\Require $x_1 \in \RR^n, T > 0, \rho > 0,$ weakly convex parameter $m \geq 0$, $\beta \in (0,1)$
\For{$k=1,2, \ldots, T$}
    \State Compute 
    $
         z_{k+1}= \argmin_{y}  f_k(y) + \frac{\rho}{2}\|y - x_k\|^2;
    $
    \If {$\beta (f(x_k) - f_k(z_{k+1})) \leq f(x_k) -  \left ( f(z_{k+1}) + \frac{m}{2}\|z_{k+1}- x_k\|^2 \right )$ }
        \State Set $x_{k+1} = z_{k+1}$; \Comment{\textit{Descent step}}
    \Else
        \State Set $x_{k+1} = x_k$;  \Comment{\textit{Null step}}
    \EndIf
    \State Construct $f_{k+1}$ that approximates $f(\cdot) + \frac{m }{2}\|\cdot - x_{k+1}\|^2$ satisfying \cref{assumption:approximation-conditions};
\EndFor
\end{algorithmic}
\end{algorithm}

\subsection{Connections and complexity comparison} \label{subsection:comparison}

The use of proximal bundle updates for solving weakly convex optimization problems is not new. This idea can be traced back to  \cite{hare2010redistributed,hare2009computing}, and a recent work \cite{liang2023proximal} also explored a similar idea. In particular, the works \cite{hare2010redistributed,hare2009computing} focus on minimizing a broader class of functions, namely lower-$C^2$ functions, and therefore do not provide iteration complexity guarantees. In this work, we establish iteration complexity results in \Cref{theorem:main-result,corollary:main-result} finding an inexact stationary point. The recent work \cite{liang2023proximal} modifies the classical proximal bundle update to obtain an inexact solution to the proximal subproblem \cref{eq:PPM-weakly}. In contrast, as established in \Cref{subsection:re-interpretation}, our \Cref{alg:Proxi-descent} retains the classical proximal bundle update and can be viewed as a natural generalization of the proximal bundle method \cite{lemarechal1994condensed,kiwiel2000efficiency,diaz2023optimal} from convex optimization to nonconvex optimization.

We here compare our main complexity results in \Cref{theorem:main-result,corollary:main-result} with several closely related works \cite{davis2019stochastic,davis2019proximally,zhang2020complexity,davis2022gradient,burke2020gradient,liang2023proximal}. First, in terms of finding an $(\delta,\alpha)$-Moreau stationary point, our result in \cref{corollary:main-result} is comparable to the subgradient method in \cite[Theorem 3.1]{davis2019stochastic} (see \cref{theorem:weakly-subgradient}), both of which require $\bigO(1/\delta^4)$  iteration complexity. However, the subgradient method is less practical as 1) its stepsize depends on the total number of iterations and 2) the stationarity measure $\min_{k = 1,\ldots,T} \|\nabla f_{\alpha}(x_k)\|$ is implicit and not easily computable. 
In contrast, \Cref{alg:Proxi-descent} is guaranteed to converge for any parameter $\alpha > m$ (or $\rho>0$), and relies on two explicit and easily computable quantities $\tilde{g}_{k+1}$ and $\epsilon_{k+1}$ in \cref{eq:inexactness-trial-point} to certify whether an iterate is $(\eta,\epsilon)$-stationary. 

Second, in terms of finding an $(\eta,\epsilon)$-inexaxt stationary point, our \cref{theorem:main-result}  matches the state-of-the-art complexity result in \cite[Theorem 3.3]{liang2023proximal}, both of which require $\bigO \left(  \left ( \frac{1}{\eta^2} + \frac{1}{\epsilon} \right ) \max \left\{\frac{1}{\eta^2},\frac{1}{\epsilon}\right\} \right )$ function value and subgradient evaluations. However, as we have clarified above,  our \Cref{alg:Proxi-descent} is more aligned with the classical proximal bundle method \cite{kiwiel2000efficiency,diaz2023optimal} that uses sufficient objective descent in \Cref{eq:descent-condition-main}. In contrast, the algorithm in \cite{liang2023proximal} uses a different and potentially more complicated algorithm structure.

Third, our \Cref{alg:Proxi-descent} is developed within the framework of the inexact proximal point method. A related algorithm in this framework is the proximally guided stochastic subgradient method (PGSG) in \cite{davis2019proximally}. Although PGSG was originally designed for stochastic optimization, its deterministic variant can be interpreted as applying the subgradient method as a subroutine to inexactly solve the proximal subproblem  \cref{eq:PPM-weakly}. In contrast, our \Cref{alg:Proxi-descent} adapts the proximal bundle idea to inexactly solve the proximal subproblem. 
While both algorithms share the same algorithmic framework, the criteria used by their respective subroutines to certify an inexact solution to \cref{eq:PPM-weakly} differ. Specifically, PGSG uses a \textit{predetermined} number of subgradient iterations, determined by the target accuracy, whereas our \Cref{alg:Proxi-descent} applies the dynamic test in \cref{eq:descent-condition-main} to adaptively determine termination. This makes \Cref{alg:Proxi-descent} more flexible and potentially more efficient in practice. Our numerical experiments in \Cref{section:numerics} further confirm this advantage.

Finally, two other classes of algorithms that can minimize $m$-weakly convex functions are the Interpolated Normalized Gradient Descent (INGD) method in \cite{davis2022gradient,zhang2020complexity} and the gradient sampling method in \cite{burke2020gradient}.  
One key difference is that we use the notion of inexact subdifferential in \cref{eq:inexact-subdifferential}, whereas the INGD and the gradient sampling approximate with probability the notion of \textit{Goldstein subdifferential}
$\hat{\partial}_{\delta} f(x) :=\mathrm{conv} \left ( \bigcup_{y \in \mathbb{B}_{\delta}(x)} \partial f(x) \right )$
where $\delta > 0$ and $\mathbb{B}_{\delta}(x):=\{y \in \RR^n \mid \|y-x\|\leq \delta\}$. As a result, our \Cref{alg:Proxi-descent} is deterministic, while the INGD and the gradient sampling are probabilistic. Moreover, while the INGD and the gradient sampling algorithms work for a more general class of nonconvex and nonsmooth functions, their convergence guarantees are much weaker than \Cref{theorem:main-result,corollary:main-result}.

\section{Improved rates with smoothness and quadratic growth}
\label{section:improved-rate}

The proximal descent method in \Cref{alg:Proxi-descent} (and similarly, the proximal bundle method in \Cref{alg:PBM-weakly}) applies to general weakly convex functions. In this section, we focus on two widely studied subclasses: $M$-smooth functions and functions exhibiting quadratic growth. Both can be nonconvex; still, we prove that the convergence guarantees of \Cref{alg:Proxi-descent} naturally improve for these two special cases, without requiring any modification to the algorithm. \Cref{tab:summary} lists our established convergence rates under different nonconvex settings.

\subsection{Nonconvex $M$-smooth functions}

We here establish an improved convergence guarantee for \Cref{alg:Proxi-descent} when the objective function is $M$-smooth, meaning that its gradient map $ \nabla f $ is $M$-Lipschitz continuous. Under this condition, the number of steps required for \Cref{alg:Proxi-descent-subproblem} to generate a trial point satisfying \cref{eq:descent-condition-main} can be bounded by a constant. 
In particular, we have the following proposition. 

\begin{proposition}
    \label{proposition:Upper-bound_G}
   With the same setup in \Cref{lemma:null-step-improvement}, suppose the function $f$ is further $M$-smooth. Then, the quantity $G_{k}$ can be bounded as 
    $$
        G_k \leq \left (1+ \frac{M}{\rho} \right ) \sqrt{2 (M+ \alpha ) \Delta_k}.
    $$
    Consequently, the ratio $\frac{G_k^2}{\Delta_k}$ in \cref{eq:total-subgradient-evaluations} is bounded as $\frac{G^2_{k}}{\Delta_{k}} \leq  \left (1+ \frac{M}{\rho} \right )^22 (M+ \alpha ).$ 
\end{proposition}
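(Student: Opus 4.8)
The plan is to exploit $M$-smoothness in order to replace the abstract constant bound of the Lipschitz case (\Cref{lemma:bounds-on-G-k}) by one that shrinks with the true proximal gap $\Delta_k$. Since $f$ is differentiable, every $g_j$ produced inside \Cref{alg:Proxi-descent-subproblem} is the exact gradient of the convexified model, i.e. $g_j = \nabla f(z_j) + m(z_j - x_k)$, so it suffices to control $\|\nabla f(z_j)\|$ and the displacement $\|z_j - x_k\|$ \emph{uniformly} over the inner iterates $1 \le j \le T_k$, and then invoke the Lipschitz continuity of $\nabla f$. The whole point is that both quantities turn out to be governed by $\Delta_k$, which is what eventually produces $G_k = \mathcal{O}(\sqrt{\Delta_k})$ and the bounded ratio $G_k^2/\Delta_k$.

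First I would bound the gradient at the proximal center. Applying the descent lemma for the $M$-smooth $f$, for every $y$ one has $f(y) + \tfrac{\alpha}{2}\|y - x_k\|^2 \le f(x_k) + \langle \nabla f(x_k), y - x_k\rangle + \tfrac{M+\alpha}{2}\|y - x_k\|^2$; minimizing the right-hand side over $y$ and subtracting from $f(x_k)$ yields
\[
\Delta_k = f(x_k) - f_\alpha(x_k) \ \ge\ \frac{1}{2(M+\alpha)}\|\nabla f(x_k)\|^2 ,
\]
so that $\|\nabla f(x_k)\| \le \sqrt{2(M+\alpha)\Delta_k}$. This relation — gradient norm at the center controlled by the square root of the proximal gap — is the key structural feature of the smooth regime and has no counterpart in the merely Lipschitz setting, where $G_k$ stays a constant.

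Next I would bound the displacement of the trial points. The monotonicity chain already available in the proof of \Cref{lemma:null-step} (through \Cref{lemma:key-improvement}), namely $\tfrac{1}{2\rho}\|s_{j+1}\|^2 \le \tilde{\Delta}_j \le \tilde{\Delta}_1 = \tfrac{1}{2\rho}\|g_1\|^2$ with $g_1 = \nabla f(x_k)$ and $s_{j+1} = \rho(x_k - z_{j+1})$, gives $\|s_{j+1}\| \le \|\nabla f(x_k)\|$ and hence $\|z_{j+1} - x_k\| \le \tfrac{1}{\rho}\|\nabla f(x_k)\|$ for every $j$. Combined with the first step, all trial points lie in a ball about $x_k$ of radius $\tfrac{1}{\rho}\sqrt{2(M+\alpha)\Delta_k}$. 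Then writing $g_j = \nabla f(z_j) + m(z_j - x_k)$ and using $\|\nabla f(z_j)\| \le \|\nabla f(x_k)\| + M\|z_j - x_k\| \le \bigl(1 + \tfrac{M}{\rho}\bigr)\sqrt{2(M+\alpha)\Delta_k}$ from the two previous estimates, the triangle inequality delivers a bound of the stated form, the convexification term $m(z_j - x_k)$ entering only the final constant; the bound on $G_k^2/\Delta_k$ is then immediate.

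The main obstacle, as I see it, is not any single inequality but securing \emph{uniformity over the inner loop}: the trial points $z_j$ keep moving as the model $\tilde{f}_j$ is refined, so the displacement estimate must hold for all $j$ simultaneously. This is precisely where the monotonicity $\tilde{\Delta}_j \le \tilde{\Delta}_1$ is indispensable, since it collapses the entire trajectory onto the single initial quantity $\tilde{\Delta}_1 = \tfrac{1}{2\rho}\|\nabla f(x_k)\|^2$, which the smoothness-based gap inequality then ties back to $\Delta_k$. A secondary point requiring care is the bookkeeping of the extra $m(z_j - x_k)$ contribution from the convexification, which must be tracked consistently so as not to inflate the constant beyond the claimed order.
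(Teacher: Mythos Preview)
Your proposal is correct and follows essentially the same approach as the paper's proof: first bound $\|\nabla f(x_k)\|$ via the descent lemma applied to the proximal gap, then use the monotonicity chain $\tfrac{1}{2\rho}\|s_{j+1}\|^2 \le \tilde{\Delta}_1 = \tfrac{1}{2\rho}\|g_1\|^2$ from \Cref{lemma:key-improvement} to control the displacement $\|z_j - x_k\|$, and finally invoke $M$-Lipschitzness of $\nabla f$ to tie $\|g_j\|$ back to $\|\nabla f(x_k)\|$. Your remark that the convexification term $m(z_j - x_k)$ requires separate bookkeeping is apt; the paper's own write-up absorbs it into the $M$-smoothness step without explicit mention, so your version is if anything slightly more careful on this point.
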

The proof is given in \cref{subsection:Upper-bound_G}, which is adapted from \cite[Section 5.2]{diaz2023optimal}.
\cref{proposition:Upper-bound_G} means that for each iteration of \Cref{alg:Proxi-descent}, the subroutine $\texttt{ProxDescent}(x_k,\beta,\rho)$ can be bounded by the same constant, independent of $\eta$ and $\epsilon$. Consequently, from \cref{eq:total-subgradient-evaluations}, we see that our proximal descent method in \Cref{alg:Proxi-descent} takes at most 
     \begin{align}
        \label{eq:iteration-smooth}
         \frac{8K_{\max}(\eta,\epsilon)}{(1 - \beta)^2 \rho}  \left (1+ \frac{M}{\rho} \right )^22 (M+\alpha)
     \end{align}
subgradient and function evaluations to find an $(\eta,\epsilon)$-inexact stationary point. 

In addition, we can further convert $(\eta,\epsilon)$-inexact stationarity to the magnitude of the gradient $\|\nabla f(x)\|$ for smooth functions.
\begin{lemma}
\label{lemma:stationary-conversion-R2G-main-text}
    Suppose $f:\RR^n \to \RR \cup \{+\infty\}$ is $m$-weakly convex and $M$-smooth. Let $\alpha > m$, $x \in \RR^n$, and $\epsilon \geq 0$. If $x$ is an $(\eta,\epsilon)$-inexact stationary point, then $$\|\nabla f(x)\| \leq \left ( 1+ \frac{M}{\alpha} \right )\alpha \left ( \frac{2}{\lambda}\eta + \sqrt{\frac{2}{\lambda}\epsilon} \right),$$
    where $\lambda = \alpha - m > 0$. 
\end{lemma}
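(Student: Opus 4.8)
The plan is to reduce the claim to the already-established conversion from inexact stationarity to Moreau stationarity, and then exploit $M$-smoothness to pass from the Moreau gradient $\nabla f_{\alpha}(x)$ to the true gradient $\nabla f(x)$. Since $\lambda = \alpha - m$ gives $m + \lambda = \alpha$, applying \cref{lemma:stationary-conversion-R2M-2-main} directly yields
\[
    \|\nabla f_{\alpha}(x)\| \leq \alpha\left(\frac{2}{\lambda}\eta + \sqrt{\frac{2}{\lambda}\epsilon}\right),
\]
so the whole task reduces to proving the single clean inequality $\|\nabla f(x)\| \leq \left(1 + \frac{M}{\alpha}\right)\|\nabla f_{\alpha}(x)\|$, after which substitution finishes the proof.

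To obtain this last inequality, I first introduce the proximal point $\hat{x} := \argmin_{y}\{f(y) + \frac{\alpha}{2}\|y - x\|^2\}$, which is well-defined and unique because $\alpha > m$. The first-order optimality condition for $\hat{x}$ reads $\nabla f(\hat{x}) + \alpha(\hat{x} - x) = 0$, hence $\nabla f(\hat{x}) = \alpha(x - \hat{x})$. Comparing with \cref{eq:gradient-Moreau} in \cref{lemma:gradient-moreau}, which states $\nabla f_{\alpha}(x) = \alpha(x - \hat{x})$, I conclude the key identity $\nabla f(\hat{x}) = \nabla f_{\alpha}(x)$.

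Next, using that $\nabla f$ is $M$-Lipschitz together with $\|x - \hat{x}\| = \|\nabla f_{\alpha}(x)\|/\alpha$ from \cref{eq:Moreau-property-1}, the triangle inequality gives
\[
    \|\nabla f(x)\| \leq \|\nabla f(\hat{x})\| + \|\nabla f(x) - \nabla f(\hat{x})\| \leq \|\nabla f_{\alpha}(x)\| + M\|x - \hat{x}\| = \left(1 + \frac{M}{\alpha}\right)\|\nabla f_{\alpha}(x)\|.
\]
Substituting the Moreau bound from the first step then yields exactly the claimed estimate. The argument is essentially a chain of known facts, so I do not expect a serious obstacle; the only point requiring care is the identification $\nabla f(\hat{x}) = \nabla f_{\alpha}(x)$, which is precisely what lets smoothness of $f$ control the gap between the gradient at $x$ and at its proximal point $\hat{x}$.
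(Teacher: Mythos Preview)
Your proof is correct and follows essentially the same route as the paper: first apply \cref{lemma:stationary-conversion-R2M-2-main} with $\lambda=\alpha-m$ to bound $\|\nabla f_\alpha(x)\|$, then use the proximal point $\hat{x}$ together with $M$-smoothness and the triangle inequality to obtain $\|\nabla f(x)\|\le(1+M/\alpha)\|\nabla f_\alpha(x)\|$. The only cosmetic difference is that you establish the exact identity $\nabla f(\hat{x})=\nabla f_\alpha(x)$ via the first-order optimality condition, whereas the paper cites the (equivalent, in the smooth case) inequality $\|\nabla f(\hat{x})\|\le\|\nabla f_\alpha(x)\|$ from \cref{eq:Moreau-property-3}.
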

The proof of \cref{lemma:stationary-conversion-R2G-main-text} is given in \cref{lemma:stationary-conversion-R2G-appendix}. It is now clear that choosing $\epsilon \leq  \frac{2}{\lambda}\eta^2$ and $\eta \leq  \frac{\delta \lambda}{4 (1+M/\alpha) \alpha}$ with $\delta \geq 0$ implies $\|\nabla f(x)\| \leq \delta$.
Hence, by choosing $\rho = \lambda$ in \cref{lemma:stationary-conversion-R2G-main-text}, 
we arrive at the following theorem.
\begin{theorem} \label{theorem:smooth-functions}
    Consider the $m$-weakly convex problem \cref{eq:main-problem}. Suppose the function $f$ is further $M$-smooth. Then the proximal descent method in \Cref{alg:Proxi-descent} with subroutine in \Cref{alg:Proxi-descent-subproblem} and parameters $\beta \in (0,1)$, $\rho > 0$, and $\alpha = m + \rho$, takes at most 
    \begin{equation*} 
   K_{\max}\left(\frac{\rho \delta }{4 (1+M/\alpha) \alpha}, \frac{2 \rho \delta^2}{(4(1+M/\alpha)\alpha)^2} \right) \frac{8}{(1 - \beta)^2 \rho}  \left (1+ \frac{M}{\rho} \right )^22 (M+\alpha)
    \end{equation*}
    subgradient and function evaluations to find an iterate $x_k$ satisfying $\|\nabla f(x_k)\| \leq \delta$.
\end{theorem}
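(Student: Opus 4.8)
The plan is to assemble the statement from two ingredients that are already in place: the gradient-norm conversion for smooth functions (\Cref{lemma:stationary-conversion-R2G-main-text}) and the constant per-iteration inner-loop bound under smoothness (\Cref{proposition:Upper-bound_G}), which together with the general evaluation count \cref{eq:total-subgradient-evaluations} yields the smooth complexity expression \cref{eq:iteration-smooth}. No new analysis is required; the work is choosing the right target accuracies and substituting.

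First I would fix the parameters. Since the algorithm uses $\alpha = m + \rho$, applying \Cref{lemma:stationary-conversion-R2G-main-text} with this same $\alpha$ forces $\lambda = \alpha - m = \rho$, so the lemma reads $\|\nabla f(x)\| \leq (1 + M/\alpha)\alpha\big(\tfrac{2}{\rho}\eta + \sqrt{\tfrac{2}{\rho}\epsilon}\big)$ at any $(\eta,\epsilon)$-inexact stationary point $x$. I would then set the target accuracies to
\[
    \eta := \frac{\rho \delta}{4(1 + M/\alpha)\alpha}, \qquad
    \epsilon := \frac{2\rho \delta^2}{(4(1 + M/\alpha)\alpha)^2},
\]
which are exactly the values at which the two sufficient conditions $\eta \leq \tfrac{\delta\rho}{4(1+M/\alpha)\alpha}$ and $\epsilon \leq \tfrac{2}{\rho}\eta^2$ hold with equality. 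The second bound gives $\sqrt{\tfrac{2}{\rho}\epsilon} \leq \tfrac{2}{\rho}\eta$, so $\tfrac{2}{\rho}\eta + \sqrt{\tfrac{2}{\rho}\epsilon} \leq \tfrac{4}{\rho}\eta$, and plugging in the chosen $\eta$ collapses the right-hand side of the lemma to exactly $\delta$. Hence reaching an $(\eta,\epsilon)$-inexact stationary point with these parameters guarantees $\|\nabla f(x_k)\| \leq \delta$.

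Second, I would bound the cost of reaching such a point. \Cref{proposition:Upper-bound_G} gives $\tfrac{G_k^2}{\Delta_k} \leq (1 + M/\rho)^2\,2(M + \alpha)$ uniformly in $k$, and feeding this into \cref{eq:total-subgradient-evaluations} produces the smooth evaluation count \cref{eq:iteration-smooth}, namely that the method finds an $(\eta,\epsilon)$-inexact stationary point within $\tfrac{8K_{\max}(\eta,\epsilon)}{(1-\beta)^2\rho}(1 + M/\rho)^2\,2(M+\alpha)$ subgradient and function evaluations. Substituting the explicit $\eta$ and $\epsilon$ above into the $K_{\max}$ term (defined in \cref{eq:maximum-descent-steps}) yields precisely the bound in the statement, completing the argument.

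The proof is essentially bookkeeping rather than a genuine technical hurdle, so there is no hard analytic step; the only point demanding care is the parameter matching. Specifically, one must verify that the two additive contributions to the gradient bound ($\tfrac{2}{\rho}\eta$ and $\sqrt{\tfrac{2}{\rho}\epsilon}$) are each controlled so their sum does not exceed $\tfrac{4}{\rho}\eta$, and that the identification $\lambda = \rho$ is consistent with the algorithm's choice $\alpha = m + \rho$. Once these are confirmed, the substitution into $K_{\max}(\eta,\epsilon)$ is mechanical and reproduces the claimed constants exactly.
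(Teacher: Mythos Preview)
Your proposal is correct and follows essentially the same approach as the paper: set $\lambda = \rho$ in \Cref{lemma:stationary-conversion-R2G-main-text}, choose $\eta = \tfrac{\rho\delta}{4(1+M/\alpha)\alpha}$ and $\epsilon = \tfrac{2}{\rho}\eta^2$, and substitute into the smooth evaluation bound \cref{eq:iteration-smooth}. The paper's proof is terser but identical in substance; your expanded verification of the parameter matching is a welcome elaboration rather than a different route.
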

\begin{proof}
    Let $\lambda = \rho$, $\epsilon \leq  \frac{2}{\rho}\eta^2$, and $\eta \leq \frac{\delta \rho}{4 (1+M/\alpha) \alpha}$ in \cref{lemma:stationary-conversion-R2G-main-text}. Plugging in the choice of $\epsilon$ and $\eta$ in \Cref{eq:iteration-smooth} leads to the desired complexity in \cref{theorem:smooth-functions}.
\end{proof}

In summary, when the function $f$ becomes $M$-smooth, the complexity of subgradient and function evaluations to find an iterate $x_k$ satisfying $\|\nabla f(x_k)\| \leq \delta$ improves to $\bigO\left ( 1/ \delta^2 \right)$, which is the same as the complexity result for gradient descent with the constant step size $1/M$ \cite[Section 1.2.3]{nesterov2018lectures}. It is worth pointing out that the complexity of $\bigO\left (1/ \delta^2\right)$ is achieved by \Cref{alg:Proxi-descent} for any step size parameter $\rho > 0$. In contrast, the gradient descent needs to know the smoothness parameter $M$. Nevertheless, we note that  \Cref{alg:Proxi-descent} requires the weak convexity parameter $m$, whereas the gradient descent does not.

\subsection{Nonconvex functions with quadratic growth}
\label{subsection:QG}
In convex nonsmooth optimization, algorithms generally have an improved convergence rate under the quadratic growth property, i.e., there exists $\muq > 0$ such that
    \begin{align}
        \label{eq:QG}
        f(x) - \min_x f(x) \geq \frac{\muq}{2}\cdot \Dist^2(x,S), \; \forall x \in \RR^n,
    \end{align}
where $S  = \argmin_x  f(x)$ is the set of optimal solutions (assumed nonempty). For example, the convergence of the cost value gap of the subgradient method with Polyak stepsizes \cite{polyak1969minimization} and the proximal bundle method \cite[Theorem 2.3]{diaz2023optimal} becomes $\bigO(1/\epsilon)$; the convergence rate of the proximal point method improves to $\bigO(\log(1/\epsilon))$ \cite[Theorem 4.2]{liao2024error}. In contrast, for nonconvex problems, it remains unclear whether the quadratic growth condition leads to faster convergence for the proximal bundle method. Existing results focused on either convex optimization \cite{kiwiel2000efficiency,du2017rate,diaz2023optimal,liang2021proximal,lemarechal1994condensed,lemarechal1981bundle} or nonconvex optimization without any growth assumptions \cite{liang2023proximal,hare2009computing}. 

We here analyze \Cref{alg:Proxi-descent}  under the assumption that the objective function satisfies quadratic growth \cref{eq:QG}. For technical reasons (see \cite{liao2024error}), we further assume that $\muq > m$, where $m$ is the weakly convex parameter. Under these assumptions, it is known that the function also satisfies the Polyak-{\L}ojasiewicz (PL) inequality \cite[Theorem 3.1]{liao2024error}, i.e., there exists a $\mup > 0$ such that 
\begin{align*}
    2 \mup (f(x) - \min_x f(x)) \leq \Dist^2(0,\partial f(x)), \; \forall x \in \RR^n.
\end{align*}
An immediate implication of the PL inequality is that every stationary point is a global minimizer. It is also known that, under these assumptions (i.e., weakly convex functions satisfying the PL inequality), the proximal point method in \Cref{alg:PPM} converges linearly in both the cost-value gap and the distance to the solution set \cite[Theorem 4.2]{liao2024error}. This fact gives the potential for improved convergence of \Cref{alg:Proxi-descent} in the same setting. 

We establish a linear convergence result for nonconvex and nonsmooth functions satisfying the quadratic growth condition \cref{eq:QG} with $\muq > m$, which significantly improves over the sublinear convergence in \cref{lemma:proximal-descent} for general weakly convex functions.

\begin{lemma}
    \label{lemma:Prox-descent-linear}
     Consider the $m$-weakly convex problem \cref{eq:main-problem}. Suppose the function $f$ satisfies quadratic growth \cref{eq:QG} with $\muq > m$. Then the proximal descent method in  \Cref{alg:Proxi-descent} generates a sequence of iterates $\{x_k\}$, satisfying
     $ f(x_{k+1}) - f^\star \leq \gamma (f(x_{k}) - f^\star ), \; \forall k \geq 1$, where $\gamma \in (0,1)$.
\end{lemma}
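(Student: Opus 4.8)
The plan is to turn the per-iteration guarantee of \Cref{lemma:descent-step} into a contraction on the value gap, using the inexact subgradient $\tilde g_{k+1}\in\partial_{\epsilon_{k+1}}f(x_{k+1})$ as the bridge between the one-step decrease and the distance to optimality. Write $\Delta_k^f:=f(x_k)-f^\star$ and $\delta_k:=f(x_k)-f(x_{k+1})=\Delta_k^f-\Delta_{k+1}^f\ge 0$. Since every iterate of \Cref{alg:Proxi-descent} is produced by \texttt{ProxDescent} and hence satisfies \cref{eq:descent-condition-main} with $z_{k+1}=x_{k+1}$, both \cref{eq:desecent-progress-a} and \cref{eq:desecent-progress-b} apply and bound the two relevant quantities by the \emph{same} decrease $\delta_k$:
\[
\|\tilde g_{k+1}\|\le \alpha\sqrt{\tfrac{2\delta_k}{m+\beta\rho}},\qquad \epsilon_{k+1}\le \tfrac{1-\beta}{\beta}\,\delta_k .
\]
The goal is to lower-bound $\delta_k$ by a constant multiple of $\Delta_{k+1}^f$, which immediately yields $(1+A)\Delta_{k+1}^f\le A\Delta_k^f$ and thus linear convergence with $\gamma=A/(1+A)\in(0,1)$.

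The first step is an \emph{inexact} Polyak--{\L}ojasiewicz estimate at $x_{k+1}$. Let $\bar x:=\Pi_S(x_{k+1})$ be a nearest minimizer and $r:=\Dist(x_{k+1},S)$. Testing the defining inequality of $\partial_{\epsilon_{k+1}}f(x_{k+1})$ in \cref{eq:inexact-subdifferential} at $y=\bar x$ gives $\Delta_{k+1}^f\le \|\tilde g_{k+1}\|\,r+\tfrac{m}{2}r^2+\epsilon_{k+1}$. Quadratic growth \cref{eq:QG} supplies $r\le\sqrt{2\Delta_{k+1}^f/\muq}$ and therefore $\tfrac{m}{2}r^2\le \tfrac{m}{\muq}\Delta_{k+1}^f$. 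Absorbing this perturbation term into the left-hand side and setting $\kappa:=1-m/\muq$, which is \emph{strictly positive precisely because} $\muq>m$, leaves
\[
\kappa\,\Delta_{k+1}^f\;\le\;\|\tilde g_{k+1}\|\sqrt{\tfrac{2\Delta_{k+1}^f}{\muq}}+\epsilon_{k+1}.
\]

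Next I would substitute the two $\delta_k$-bounds above and apply Young's inequality to the cross term $\sqrt{\delta_k\,\Delta_{k+1}^f}$, choosing the split parameter so that the induced coefficient of $\Delta_{k+1}^f$ equals exactly $\kappa/2$. After this absorption the inequality collapses to $\tfrac{\kappa}{2}\Delta_{k+1}^f\le C\,\delta_k$ with an explicit constant $C=\tfrac{2\alpha^2}{\kappa\,\muq(m+\beta\rho)}+\tfrac{1-\beta}{\beta}$ depending only on $\alpha,m,\rho,\beta,\muq$. Rearranging with $\delta_k=\Delta_k^f-\Delta_{k+1}^f$ gives $\Delta_{k+1}^f\le \tfrac{A}{1+A}\Delta_k^f$ with $A=2C/\kappa$, which is the claim.

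The main obstacle, and the step demanding care, is the \emph{joint} control of the inexactness $\epsilon_{k+1}$ and the subgradient magnitude: unlike the exact proximal point method one cannot invoke the standard PL inequality directly, since $\tilde g_{k+1}$ lies only in the \emph{inexact} subdifferential. The argument goes through because \cref{eq:desecent-progress-b} ties $\epsilon_{k+1}$ to the very decrease $\delta_k$ that also governs $\|\tilde g_{k+1}\|$, so both can be charged against $\Delta_k^f-\Delta_{k+1}^f$; and the strict positivity of $\kappa$, i.e.\ $\muq>m$, is exactly what prevents the concave quadratic perturbation $\tfrac{m}{2}r^2$ from absorbing the growth term and destroying the contraction.
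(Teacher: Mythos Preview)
Your argument is correct, and it takes a genuinely different route from the paper's proof. The paper works at the \emph{incoming} iterate $x_k$: it first shows the one-step decrease $f(x_{k+1})\le f(x_k)-\beta\Delta_k$ in terms of the proximal gap $\Delta_k=f(x_k)-f_\alpha(x_k)$ (\Cref{lemma:descent-proximal-gap}), and then lower-bounds $\Delta_k$ via a Moreau-envelope estimate (\Cref{lemma:Moreau-improvement}) obtained by restricting the proximal subproblem to the segment $[x_k,x_k^\star]$ and exploiting weak convexity together with quadratic growth. This yields $\Delta_k\ge \kappa\Lambda_k$ with $\Lambda_k=f(x_k)-f^\star-\tfrac{m}{2}\|x_k-x_k^\star\|^2$ and hence the explicit contraction factor $\gamma=1-\beta\kappa(1-m/\muq)$, where $\kappa=\min\{\tfrac12,\tfrac{\muq-m}{4\rho}\}$.

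You instead work at the \emph{outgoing} iterate $x_{k+1}$: the inexact subgradient $\tilde g_{k+1}\in\partial_{\epsilon_{k+1}}f(x_{k+1})$ is used to write down an inexact PL-type inequality directly, and both $\|\tilde g_{k+1}\|$ and $\epsilon_{k+1}$ are charged against the same one-step drop $\delta_k$ via \cref{eq:desecent-progress-a,eq:desecent-progress-b}. Your approach is more elementary---it avoids the auxiliary Moreau-envelope lemma entirely and stays within the inexact-subdifferential calculus already developed in \Cref{section:PBM-inexact}---at the price of a slightly less transparent contraction constant (coming from the Young-inequality split). The paper's route, on the other hand, reuses the proximal-gap quantity $\Delta_k$ that also drives the inner-loop complexity (\Cref{lemma:null-step}), which dovetails nicely with the subsequent total-evaluation bound in \Cref{theorem:main-result-QG}.
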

\cref{lemma:Prox-descent-linear} shows that the cost value gap converges to zero at a linear rate. Our proof requires some new analysis, which is provided in \cref{subsection:proof-Prox-descent-linear}. We next convert the cost value gap to the $(\eta,\epsilon)$-inexact stationarity. 

\begin{theorem} \label{lemma:proximal-descent-linear} 
     Consider the $m$-weakly convex problem \cref{eq:main-problem}. Suppose the function $f$ satisfies quadratic growth \cref{eq:QG} with $\muq > m$. Then the proximal descent method in \Cref{alg:Proxi-descent} with parameters $\beta \in (0,1)$, $\rho > 0$, and $\alpha = m + \rho$,  takes at most 
    \begin{equation}\label{eq:descent-bound-linear}
        T_{q,\max}(\eta,\epsilon):= (\log(\gamma^{-1}))^{-1} \log\left (  (f(x_1) - f^\star ) \max\left\{ \frac{2 \alpha^2}{(m+\beta \rho) \eta^2} ,\frac{(1-\beta)}{\beta\epsilon}\right\}\right) +1
    \end{equation} 
   iterations to find an $(\eta,\epsilon)$-stationary point, where $\gamma$ is the constant in \cref{lemma:Prox-descent-linear}.
\end{theorem}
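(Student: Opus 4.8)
The plan is to upgrade the linear decay of the cost value gap established in \cref{lemma:Prox-descent-linear} into a linear decay of \emph{both} stationarity quantities $\|\tilde{g}_{k+1}\|$ and $\epsilon_{k+1}$, and then to solve for the smallest iteration index at which both fall below their target thresholds. First I would record the per-iteration estimates already available from \cref{lemma:descent-step}. Since each step of \Cref{alg:Proxi-descent} sets $x_{k+1}=z_{k+1}$ and satisfies \cref{eq:desecent-progress}, rearranging \cref{eq:desecent-progress-a} gives $\|\tilde{g}_{k+1}\|^2 \leq \frac{2\alpha^2}{m+\beta\rho}\big(f(x_k)-f(x_{k+1})\big)$, while dropping the nonnegative term in \cref{eq:desecent-progress-b} gives $\epsilon_{k+1}\leq \frac{1-\beta}{\beta}\big(f(x_k)-f(x_{k+1})\big)$. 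Using $f(x_{k+1})\geq f^\star$ on both right-hand sides, both quantities become dominated by the single scalar $f(x_k)-f^\star$:
\[
\|\tilde{g}_{k+1}\|^2 \leq \frac{2\alpha^2}{m+\beta\rho}\big(f(x_k)-f^\star\big), \qquad \epsilon_{k+1}\leq \frac{1-\beta}{\beta}\big(f(x_k)-f^\star\big).
\]

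Next I would invoke \cref{lemma:Prox-descent-linear} in the form $f(x_k)-f^\star \leq \gamma^{k-1}\big(f(x_1)-f^\star\big)$, so that both stationarity quantities inherit geometric decay in $k$. The crucial observation — and the point that distinguishes this argument from the general sublinear case in \cref{lemma:proximal-descent}, where the best values of $\|\tilde{g}\|$ and of $\epsilon$ need not be attained at the same iterate and forced the counting trick over the index sets $G$ and $E$ — is that here both quantities are bounded by the \emph{same} monotonically decreasing gap $f(x_k)-f^\star$. Consequently, as soon as $k$ is large enough that $\gamma^{k-1}(f(x_1)-f^\star)$ drops below $\min\big\{\tfrac{(m+\beta\rho)\eta^2}{2\alpha^2},\,\tfrac{\beta\epsilon}{1-\beta}\big\}$, the single iterate $x_{k+1}$ satisfies $\|\tilde{g}_{k+1}\|\leq\eta$ and $\epsilon_{k+1}\leq\epsilon$ simultaneously.

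Finally I would solve this threshold condition for $k$. Writing it as $\gamma^{k-1}\big(f(x_1)-f^\star\big)\,\max\big\{\tfrac{2\alpha^2}{(m+\beta\rho)\eta^2},\,\tfrac{1-\beta}{\beta\epsilon}\big\}\leq 1$, taking logarithms, and using $\log\gamma=-\log(\gamma^{-1})<0$ to flip the inequality, yields exactly $k \geq T_{q,\max}(\eta,\epsilon)$ with $T_{q,\max}$ as defined in \cref{eq:descent-bound-linear}. To translate this into $(\eta,\epsilon)$-inexact stationarity in the sense of \cref{def:Regularized-stationary}, I would note that $\tilde{g}_{k+1}\in\partial_{\epsilon_{k+1}}f(x_{k+1})$ together with $\epsilon_{k+1}\leq\epsilon$ implies $\tilde{g}_{k+1}\in\partial_{\epsilon}f(x_{k+1})$ by the monotonicity of the inexact subdifferential in \cref{eq:inexact-subdifferential}, whence $\Dist\big(0,\partial_\epsilon f(x_{k+1})\big)\leq\|\tilde{g}_{k+1}\|\leq\eta$. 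I do not anticipate a genuine obstacle in this argument; the only point requiring care is the \emph{joint} satisfaction of the two thresholds, and the shared monotone geometric decay of the cost gap makes this automatic, so no index-set counting is needed.
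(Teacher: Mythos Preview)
Your proposal is correct and follows essentially the same approach as the paper: both combine the per-step bounds from \cref{lemma:descent-step} with $f(x_{k+1})\geq f^\star$ and the linear decay $f(x_k)-f^\star\leq\gamma^{k-1}(f(x_1)-f^\star)$ from \cref{lemma:Prox-descent-linear}, then solve for the first $k$ at which both thresholds are met, taking the maximum of the two resulting iteration counts. Your explicit remark that the shared monotone upper bound renders the index-set counting of \cref{lemma:proximal-descent} unnecessary is exactly the point the paper encapsulates in ``considering these two iteration counts jointly.''
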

\begin{proof}
    From \cref{lemma:Prox-descent-linear,lemma:descent-step}, we see that for all $k \geq 1$,
\begin{subequations}
 \label{eq:inexact-subgradient-bound-linear}
    \begin{align}
         \frac{m + \beta \rho }{\alpha}\times \frac{1}{2\alpha}\|\tilde{g}_{k+1}\|^2 &  \leq   f(x_k)  - f(x_{k+1})\leq f(x_k) - f^\star \leq \gamma^{k-1} (f(x_1) - f^\star), \\
            \left(\frac{\beta }{1 - \beta}\right)\epsilon_{k+1}&\leq  f(x_k) - f(x_{k+1}) \leq   f(x_k) -f^\star  \leq  \gamma^{k-1} (f(x_1) - f^\star).
    \end{align}
\end{subequations}
Fix two constants $\eta,\epsilon \geq 0$. From \Cref{eq:inexact-subgradient-bound-linear}, we can easily deduce that the conditions $\|\tilde{g}_{k+1}\| \leq \eta$ and $\epsilon_{k+1} \leq \epsilon$ will be satisfied after at most 
\begin{align*}
    (\log(\gamma^{-1}))^{-1} \log\left ( \frac{2 \alpha^2 (f(x_1) - f^\star)}{ (m+\beta \rho) \eta^2}\right) +1 \quad \text{and} \quad (\log(\gamma^{-1}))^{-1} \log\left ( \frac{(1-\beta) (f(x_1) - f^\star)}{ \beta \epsilon}\right) +1
\end{align*}
iterations, respectively. The proof is finished by considering these two iteration counts jointly.
\end{proof}

In summary, compared to the sublinear rate in \Cref{lemma:proximal-descent}, the iteration complexity of \Cref{alg:Proxi-descent} improves to 
\[
\bigO \left( \log \left( \max \left\{  \frac{1}{\eta^2} ,\frac{1}{\epsilon} \right\} \right) \right)
\]
when the objective function satisfies the quadratic growth condition with $\muq > m$. Note that \Cref{lemma:proximal-descent-linear} only counts the number of outer iterations in \Cref{alg:Proxi-descent}. We can further establish the complexity in terms of function and subgradient evaluations. 

\begin{theorem}
    \label{theorem:main-result-QG} 
    Consider the $m$-weakly convex problem \cref{eq:main-problem}. Suppose the function $f$ is $L$-Lipschitz and satisfies quadratic growth \cref{eq:QG} with $\muq > m$. Then the proximal descent method in \Cref{alg:Proxi-descent} with subroutine \Cref{alg:Proxi-descent-subproblem} and parameters $\beta \in (0,1)$, $\rho > 0$, $\alpha = m + \rho$, takes at most 
    \begin{equation*} 
    \frac{8 (1 + m/\rho)^2L^2}{(1 - \beta)^2 \rho } \frac{1}{ \min \left \{ \frac{1}{2}, \frac{ \muq - m}{4\rho} \right\} (1-m/\muq)(1-\gamma) \min \left \{ \frac{m + \beta     \rho}{2\alpha^2} , \frac{\beta}{1- \beta}  \right\} } \max \left \{\frac{1}{\eta^2},\frac{1}{\epsilon}\right \}
    \end{equation*}
    subgradient and function evaluations to find an $(\eta,\epsilon)$-inexact stationary point, where $\gamma$ is the constant in \cref{lemma:Prox-descent-linear}. 
\end{theorem}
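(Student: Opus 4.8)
The plan is to bound the total number of subgradient and function evaluations as $\sum_{k=1}^{N}T_k$, where $N$ is the number of outer descent steps needed to reach an $(\eta,\epsilon)$-inexact stationary point and $T_k$ is the number of inner bundle steps performed at outer iteration $k$. By \cref{lemma:null-step} combined with \cref{lemma:bounds-on-G-k}, each inner count obeys $T_k\le \frac{8(1+m/\rho)^2L^2}{(1-\beta)^2\rho}\cdot\frac{1}{\Delta_k}$, so the whole estimate reduces to controlling $\sum_{k=1}^{N}1/\Delta_k$. In the general (sublinear) regime one only has $\Delta_k>\min\{\eta^2/(2\rho),\epsilon\}$ from \cref{corollary:proximal-gap}, which forces the crude product $N\cdot\max\{2\rho/\eta^2,1/\epsilon\}$. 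Under quadratic growth I would instead exploit two facts unavailable in the general case: a lower bound of the proximal gap by the objective gap, $\Delta_k\ge\kappa\,(f(x_k)-f^\star)$ with $\kappa=\min\{\tfrac12,\tfrac{\muq-m}{4\rho}\}(1-\tfrac{m}{\muq})$, and the linear decay $f(x_k)-f^\star\le\gamma^{k-1}(f(x_1)-f^\star)$ from \cref{lemma:Prox-descent-linear}. Together these convert $\sum_k 1/\Delta_k$ from a term-by-term sum into a convergent geometric series.

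The technical heart is the proximal-gap lower bound $\Delta_k\ge\kappa(f(x_k)-f^\star)$, which I would prove as follows. Fix a center $x_k$, let $\bar x$ be its projection onto the solution set $S$, and feed the segment point $y_t=(1-t)x_k+t\bar x$, $t\in[0,1]$, into $f_\alpha(x_k)\le f(y_t)+\frac{\alpha}{2}\|y_t-x_k\|^2$. Using $m$-weak convexity along this segment (convexity of $f+\frac{m}{2}\|\cdot\|^2$) gives $f(y_t)\le (1-t)f(x_k)+t f^\star+\frac{m}{2}t(1-t)\|x_k-\bar x\|^2$, while $\|y_t-x_k\|^2=t^2\|x_k-\bar x\|^2$. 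Substituting and invoking \cref{eq:QG} in the form $\|x_k-\bar x\|^2\le\frac{2}{\muq}(f(x_k)-f^\star)$ reduces $\Delta_k=f(x_k)-f_\alpha(x_k)$ to $(f(x_k)-f^\star)$ times the scalar $t\big(1-\tfrac{m}{\muq}-\tfrac{\rho}{\muq}t\big)$, where I used $\alpha=m+\rho$. Evaluating this scalar at $t=\min\{1,(\muq-m)/(2\rho)\}$ lower-bounds it by exactly $\kappa$; the hypothesis $\muq>m$ is precisely what keeps $\kappa>0$ (a cruder test point, e.g.\ $t=1$, would give $1-\alpha/\muq$, which turns negative once $\rho$ is large).

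With $\Delta_k\ge\kappa(f(x_k)-f^\star)$ in hand I would assemble the count. Writing $F_k=f(x_k)-f^\star$, linear convergence yields $F_k\ge\gamma^{-(N-k)}F_N$, so $\sum_{k=1}^{N}\frac{1}{\Delta_k}\le\frac{1}{\kappa}\sum_{k=1}^{N}\frac{1}{F_k}\le\frac{1}{\kappa(1-\gamma)F_N}$; the geometric series collapses the entire sum onto the terminal gap $F_N$. It then remains to lower-bound $F_N$ at the last processed (still non-stationary) center. Since $x_N$ is not $(\eta,\epsilon)$-stationary, \cref{corollary:proximal-gap} gives $F_N\ge\Delta_N>\min\{\eta^2/(2\rho),\epsilon\}$; sharpening this through the descent-certificate inequalities $\|\tilde g_{k+1}\|^2\le\frac{2\alpha^2}{m+\beta\rho}F_k$ and $\epsilon_{k+1}\le\frac{1-\beta}{\beta}F_k$ from \cref{eq:inexact-subgradient-bound-linear} (which certify non-stationarity only when $F_k>\min\{\tfrac{(m+\beta\rho)\eta^2}{2\alpha^2},\tfrac{\beta\epsilon}{1-\beta}\}$) is what produces the factor $\frac{1}{\min\{(m+\beta\rho)/(2\alpha^2),\beta/(1-\beta)\}}\max\{1/\eta^2,1/\epsilon\}$. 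Multiplying by the prefactor $\frac{8(1+m/\rho)^2L^2}{(1-\beta)^2\rho\,\kappa(1-\gamma)}$ then gives the stated bound.

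I expect the main obstacle to be the proximal-gap lemma $\Delta_k\ge\kappa F_k$: choosing the segment test point and squeezing out the \emph{optimal} constant $\kappa$ (rather than a vacuous one) is exactly where $\muq>m$ must be used sharply, and the two-case maximization over $t\in[0,1]$ must be reconciled with the single closed form $\min\{\tfrac12,\tfrac{\muq-m}{4\rho}\}(1-\tfrac{m}{\muq})$. A secondary, more routine difficulty is the terminal bookkeeping in the geometric sum: anchoring on the last non-stationary iterate and converting its cost gap into the $\max\{1/\eta^2,1/\epsilon\}$ form must be matched to the certificate thresholds from \cref{eq:inexact-subgradient-bound-linear} rather than the looser $\max\{2\rho/\eta^2,1/\epsilon\}$ bound.
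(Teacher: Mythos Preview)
Your proposal is correct and follows essentially the same route as the paper. The paper likewise bounds $\sum_k T_k$ via $T_k\le\frac{8(1+m/\rho)^2L^2}{(1-\beta)^2\rho}\cdot\frac{1}{\Delta_k}$, proves the proximal-gap lower bound $\Delta_k\ge\kappa(1-m/\muq)F_k$ (their \cref{lemma:Moreau-improvement} plus quadratic growth, organized as a two-case analysis on $\Lambda$ versus $\rho\|x_k-x_k^\star\|^2$ that is equivalent to your maximization over $t\in[0,1]$), and then collapses $\sum_k 1/\Delta_k$ into a geometric series anchored at the threshold $\sigma=\min\{\tfrac{m+\beta\rho}{2\alpha^2}\eta^2,\tfrac{\beta}{1-\beta}\epsilon\}$. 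The only organizational difference is that the paper defines the anchor index $T$ directly as the first $k$ with $F_{k+1}\le\sigma$ (so that $F_T>\sigma$ is immediate), whereas you anchor at the last certificate-failing iterate and recover $F_N>\sigma$ via the contrapositive of \cref{eq:inexact-subgradient-bound-linear}; both lead to the same bound and share the same minor off-by-one bookkeeping you already flagged.
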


The proof of \Cref{theorem:main-result-QG} is provided in \Cref{subsection:proof-theorem:main-result-QG}. Compared to the general weakly convex case in \Cref{theorem:main-result}, the total number of subgradient and function evaluations required to find an $(\eta, \epsilon)$-stationary point improves to 
\[
\mathcal{O} \left( \max \left\{ \frac{1}{\eta^2}, \frac{1}{\epsilon} \right\} \right)
\]
when the function satisfies the quadratic growth condition with $\muq > m$. Note that the bound in \cref{theorem:main-result-QG} has no dependency of $\log(1/\epsilon)$ or $\log(1/\eta)$. This is made possible by a more careful treatment than using \cref{eq:total-subgradient-evaluations} directly; see \Cref{subsection:proof-theorem:main-result-QG} for details. 

Similar to \cref{corollary:main-result}, \Cref{theorem:main-result-QG} also leads to a convergence result for the Moreau stationarity. The proof is provided in \cref{subsection:corollary:main-result-QG}.

\begin{corollary}
    \label{corollary:main-result-QG}
    With the same setup in \Cref{theorem:main-result-QG}, the proximal descent method in \Cref{alg:Proxi-descent} takes at most $\mathcal{O}(1/\delta^2)$ subgradient and function evaluations to find a point $x$ satisfying $\|\nabla_{\alpha} f(x)\| \leq \epsilon$ with $\alpha > m$, i.e., an  $(\delta,\alpha)$-Moreau stationary point.
\end{corollary}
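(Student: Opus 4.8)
The plan is to follow exactly the route used for \cref{corollary:main-result}, but to feed in the sharper iteration bound of \cref{theorem:main-result-QG} in place of that of \cref{theorem:main-result}. The only conceptual ingredient is the conversion from $(\eta,\epsilon)$-inexact stationarity to Moreau stationarity supplied by \cref{lemma:stationary-conversion-R2M-2-main}; everything else is bookkeeping of the constants.

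First I would specialize \cref{lemma:stationary-conversion-R2M-2-main} to $\lambda = \rho$, so that $m + \lambda = m + \rho = \alpha$. This gives, for any $(\eta,\epsilon)$-inexact stationary point $x$,
\begin{equation*}
\|\nabla f_{\alpha}(x)\| \leq \alpha\left(\frac{2}{\rho}\eta + \sqrt{\frac{2}{\rho}\epsilon}\right).
\end{equation*}
Next I would choose $\eta$ and $\epsilon$ so that the right-hand side is at most $\delta$. Splitting $\delta$ evenly between the two summands, it suffices to take
\begin{equation*}
\eta \leq \frac{\rho\delta}{4\alpha}, \qquad \epsilon \leq \frac{\rho\delta^2}{8\alpha^2},
\end{equation*}
which forces each term to be at most $\delta/2$. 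The key feature of these choices is their scaling: $\eta = \Theta(\delta)$ while $\epsilon = \Theta(\delta^2)$.

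Finally I would substitute these into the quadratic-growth complexity of \cref{theorem:main-result-QG}, which scales as $\mathcal{O}(\max\{1/\eta^2, 1/\epsilon\})$. With the choices above one has $1/\eta^2 = \Theta(1/\delta^2)$ and $1/\epsilon = \Theta(1/\delta^2)$, so $\max\{1/\eta^2, 1/\epsilon\} = \Theta(1/\delta^2)$, which yields the claimed $\mathcal{O}(1/\delta^2)$ bound on the number of subgradient and function evaluations.

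The only subtlety worth flagging — and the real reason the rate improves over \cref{corollary:main-result} — is that the quadratic-growth complexity is a \emph{maximum} of $1/\eta^2$ and $1/\epsilon$ rather than their product. In the general Lipschitz case the factor $\left(1/\eta^2 + 1/\epsilon\right)\max\{1/\eta^2, 1/\epsilon\}$ with $\eta = \Theta(\delta)$, $\epsilon = \Theta(\delta^2)$ compounds to $1/\delta^4$; here, since both $1/\eta^2$ and $1/\epsilon$ are already $\Theta(1/\delta^2)$ and they enter only through a maximum, the two contributions do not multiply and the rate collapses to $1/\delta^2$. I do not anticipate any genuine obstacle beyond carrying the constants $\rho$, $\alpha$, and those appearing inside the bound of \cref{theorem:main-result-QG} correctly through the substitution, which I would relegate to the final line of the computation.
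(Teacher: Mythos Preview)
Your proposal is correct and follows essentially the same route as the paper's own proof: specialize \cref{lemma:stationary-conversion-R2M-2-main} with $\lambda=\rho$, pick $\eta\le \rho\delta/(4\alpha)$ and $\epsilon\le \rho\delta^2/(8\alpha^2)$ (equivalently $\epsilon\le \tfrac{2}{\rho}\eta^2$, which is exactly the paper's choice), and substitute into the $\mathcal{O}(\max\{1/\eta^2,1/\epsilon\})$ bound of \cref{theorem:main-result-QG}. Your closing remark contrasting the max in \cref{theorem:main-result-QG} with the product structure in \cref{theorem:main-result} is a nice piece of intuition that the paper does not spell out.
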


If the function is further smooth, the function value and subgradient evaluations in the subroutine $\texttt{ProxDescent}(x_k,\beta,\rho)$ become a constant (see \cref{proposition:Upper-bound_G}). Hence, if the function is smooth and satisfies the conditions in \cref{lemma:proximal-descent-linear}, \Cref{alg:Proxi-descent} takes at most $\bigO(\log(1/\delta))$ function value and gradient evaluations to find an iterate $x_k$ satisfying $\|\nabla f(x_k)\|\leq \delta$. We summarize this result in the following theorem.
\begin{theorem} \label{theorem:smooth-functions-quadratic-growth}
   Consider the $m$-weakly convex problem \cref{eq:main-problem}. Suppose the function $f$ is $M$-smooth and satisfies quadratic growth \cref{eq:QG} with $\muq > m$. 
   Then, the proximal descent method in \Cref{alg:Proxi-descent} with parameters $\beta \in (0,1)$, $\rho > 0$, and $\alpha = m + \rho$, takes at most 
    \begin{align*}
         T_{q,\max}\left(\frac{\rho \delta }{4 (1+M/\alpha) \alpha}, \frac{2 \rho \delta^2}{(4(1+M/\alpha)\alpha)^2}\right)\frac{8}{(1 - \beta)^2 \rho } \left (1+ \frac{M}{\rho} \right )^22 (M+\alpha)
    \end{align*}    
    gradient and function evaluations to find an $x_k$ satisfying $\|\nabla f(x_k)\| \leq \delta$.
\end{theorem}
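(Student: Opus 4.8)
The plan is to compose three results already established in the excerpt, following exactly the template of the proof of \Cref{theorem:smooth-functions} but substituting the logarithmic outer-iteration count available under quadratic growth for the sublinear count $K_{\max}$. Concretely, \Cref{theorem:smooth-functions-quadratic-growth} is the smooth-plus-quadratic-growth analog of \Cref{theorem:smooth-functions}, so the only structural change is replacing $K_{\max}(\eta,\epsilon)$ by $T_{q,\max}(\eta,\epsilon)$.

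First I would invoke \Cref{lemma:proximal-descent-linear}: since $f$ is $m$-weakly convex and satisfies quadratic growth \cref{eq:QG} with $\muq > m$, the proximal descent method reaches an $(\eta,\epsilon)$-inexact stationary point within at most $T_{q,\max}(\eta,\epsilon)$ outer iterations. Next I would convert inexact stationarity into a true gradient bound via \Cref{lemma:stationary-conversion-R2G-main-text} with $\lambda = \rho = \alpha - m$, which gives $\|\nabla f(x)\| \leq (1+M/\alpha)\alpha\bigl(\tfrac{2}{\rho}\eta + \sqrt{\tfrac{2}{\rho}\epsilon}\bigr)$. Choosing $\eta = \tfrac{\rho\delta}{4(1+M/\alpha)\alpha}$ and $\epsilon = \tfrac{2}{\rho}\eta^2 = \tfrac{2\rho\delta^2}{(4(1+M/\alpha)\alpha)^2}$ — the identical selection used in the proof of \Cref{theorem:smooth-functions} — makes $\sqrt{\tfrac{2}{\rho}\epsilon} = \tfrac{2}{\rho}\eta$, so the right-hand side collapses to exactly $\delta$. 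Hence an $(\eta,\epsilon)$-inexact stationary point with these values certifies $\|\nabla f(x)\| \leq \delta$.

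The remaining ingredient is the per-outer-iteration cost. By \Cref{proposition:Upper-bound_G}, $M$-smoothness supplies the uniform bound $G_k^2/\Delta_k \leq (1+M/\rho)^2\,2(M+\alpha)$ for every $k$; substituting this into the inner-loop bound $T_k \leq 8G_k^2/\bigl((1-\beta)^2\rho\,\Delta_k\bigr)$ from \Cref{lemma:null-step} shows that each call to \texttt{ProxDescent($x_k,\beta,\rho$)} terminates within $\tfrac{8}{(1-\beta)^2\rho}(1+M/\rho)^2\,2(M+\alpha)$ function and subgradient evaluations, a constant independent of $k$, $\eta$, and $\epsilon$. Since each of the $T_{q,\max}(\eta,\epsilon)$ outer iterations incurs this same constant cost, the total number of evaluations is the product of the two, and inserting the chosen $\eta$ and $\epsilon$ into $T_{q,\max}$ yields precisely the stated bound.

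I do not expect a deep obstacle here: the theorem is a clean composition of \Cref{lemma:proximal-descent-linear,lemma:stationary-conversion-R2G-main-text,proposition:Upper-bound_G}. The one point requiring care is that the inner-loop cost bound from \Cref{proposition:Upper-bound_G} is \emph{uniform} over all outer iterates, which is exactly what lets the total evaluation count factor as a simple product of the outer count and a per-iteration constant; this uniformity is precisely the benefit that smoothness provides over the general Lipschitz case (where one instead pays the extra $\max\{2\rho/\eta^2,1/\epsilon\}$ factor, cf. \Cref{theorem:main-result-QG}). A secondary, purely algebraic check is that the specific pair $(\eta,\epsilon)$ fed into $T_{q,\max}$ genuinely forces $\|\nabla f(x)\| \le \delta$, but this computation is identical to the one already carried out for \Cref{theorem:smooth-functions} and introduces nothing new.
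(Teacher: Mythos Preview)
Your proposal is correct and follows essentially the same approach as the paper: combine \Cref{lemma:null-step} with \Cref{proposition:Upper-bound_G} to get a constant per-outer-iteration cost, use \Cref{lemma:proximal-descent-linear} for the outer-iteration count $T_{q,\max}(\eta,\epsilon)$, and invoke \Cref{lemma:stationary-conversion-R2G-main-text} with the choice $\eta = \tfrac{\rho\delta}{4(1+M/\alpha)\alpha}$, $\epsilon = \tfrac{2}{\rho}\eta^2$ to translate $(\eta,\epsilon)$-inexact stationarity into $\|\nabla f(x)\|\le\delta$, then multiply.
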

\begin{proof}
    From \cref{lemma:null-step,proposition:Upper-bound_G}, we know the number of iterations in each $\texttt{ProxDescent}(x_k,\beta,\rho)$ is at most 
    $$
    T:=\frac{8}{(1 - \beta)^2 \rho }  \left (1+ \frac{M}{\rho} \right )^22 (M+\alpha).
    $$ 
    Moreover, \cref{lemma:proximal-descent-linear} bounds the iteration number of \Cref{alg:Proxi-descent} to find an $(\eta,\epsilon)$-stationary point. Lastly, \cref{lemma:stationary-conversion-R2G-main-text} guarantees that a $(\eta,\epsilon)$-stationary point $x$ with $\epsilon = \frac{2}{\rho}\eta^2$ and $\eta = \frac{\delta \rho}{4 (1+M/\alpha) \alpha}$ imply $\|\nabla f(x)\| \leq \delta$. Hence, multiplying the numbers $T$ and \cref{eq:descent-bound-linear} yields the desired result.
\end{proof}

\begin{remark}[Approximate stationarity versus cost value gap]
    Throughout this paper, we have focused on finding an (approximate) stationary point (i.e., $ \Dist(0,\partial f_{\epsilon}(x)) \leq \eta$ and $ \|\nabla f_{\alpha}(x)\| \leq \delta$) for an $m$-weakly convex function (\cref{theorem:main-result,theorem:smooth-functions,theorem:main-result-QG,theorem:smooth-functions-quadratic-growth}). Our results hold for both the convex case (in which $m = 0$) and the nonconvex case (in which $m > 0$). In the convex case, our complexity bounds in \cref{theorem:main-result,theorem:main-result-QG} may appear worse than those of the classical proximal bundle method in \cite{diaz2023optimal}, which are $\bigO(1/\epsilon^2)$ for convex Lipschitz functions and $\bigO(1/\epsilon)$ under additional quadratic growth. However, we emphasize that these discrepancies are due to the change of measure from finding an approximate stationary point to an $\epsilon$-optimal solution (i.e., $f(x) - \min_x f(x) \leq \epsilon$). Our results are indeed comparable to the results in \cite{diaz2023optimal} when switching to the measure $f(x) - \min_x f(x) \leq \epsilon$. With $m$-weak convexity, quadratic growth \cref{eq:QG}, and $\muq > m$, it is possible to establish an iteration complexity of $\bigO(1/\epsilon)$ for the cost value gap $f(x) - \min_x f(x)$, even for the nonconvex case. \hfill $\square$ 
\end{remark}

\section{Numerical Experiments}
\label{section:numerics}
In this section, we demonstrate the numerical behaviors of our proposed \Cref{alg:Proxi-descent}. 

\subsection{Experiment setup}

We run \Cref{alg:Proxi-descent} for two applications: Phase retrieval and blind deconvolution. 
In each application, we consider two settings:
\begin{enumerate}
    \item Varying different algorithm parameters $\beta \in (0,1)$ and $\rho > 0$ for \Cref{alg:Proxi-descent};
    \item Comparing with a deterministic version of the proximally guided stochastic subgradient method (PGSG)  in \cite{davis2019proximally}.
\end{enumerate}
The detail of the PGSG is listed in \Cref{alg:PGSG}. Both our \Cref{alg:Proxi-descent} and PGSG are developed under the framework of the proximal point method \Cref{alg:PPM}. The main difference is that our \Cref{alg:Proxi-descent} uses the proximal bundle idea in \cref{subsection:bundle-step} to solve the proximal map, whereas the PGSG uses the subgradient method to solve the proximal step.

\textbf{Practical stationarity measure.}  To measure the stationarity, we report the quantity $ \alpha \| x_{k+1} - x_k\| = \| \tilde{g}_{k+1}\| $ in \cref{eq:inexactness-trial-point-a}, which is a natural proxy of the stationarity, since the Moreau envelope is not easily computable in these applications.~Indeed, by \cref{eq:Moreau-property-3} and triangle inequality, we have
\begin{align*}
    \Dist(0,\partial f(\hat{x}_k)) \leq \alpha \|x_k - \hat{x}_k\| \leq \alpha  \|x_k - x_{k+1}\| + \alpha \|x_{k+1} -\hat{x}_k \|,
\end{align*}
where $\hat{x}_k = \argmin_{y} f(y) + \frac{\alpha}{2}\|y - x_k\|^2$. This means that $\alpha  \|x_k - x_{k+1}\|$ serves as a proxy of the near-stationary up to the error $\alpha \|x_{k+1} -\hat{x}_k \|$. When $x_{k+1}$ and $\hat{x}_k$ are close, the measure $\alpha \|x_k - x_{k+1}\|$ approximately indicates the stationarity (see \cite[Remark 1]{davis2019proximally} for related discussion). 

\textbf{Implementation.} For the implementation of \Cref{alg:Proxi-descent}, the function update in the step 4 of \Cref{alg:Proxi-descent-subproblem} is constructed using the essential model, i.e., 
\begin{align*}
    \tilde{f}_{j+1}(y) = \max \left \{ \tilde{f}_j(z_{j+1}) + \innerproduct{s_{j+1}}{ y- z_{j+1}}, f(z_{j+1}) + \frac{m}{2}\|z_{j+1} - x_{k}\|^2 + \innerproduct{ g_{j+1}}{y  - z_{j+1}} \right \}, \; \forall j \geq 1.
\end{align*}
Then, the subproblem $\argmin_{x}  \{ \tilde{f}_{j}(x) + \frac{\rho}{2}\|x - x_k\|^2\}$ admits an analytical solution (see \cref{subsection:closed-form-sol}).

\begin{algorithm}[t]
\caption{Deterministic proximally guided stochastic subgradient method (PGSG) \cite{davis2019proximally}}\label{alg:PGSG}
\begin{algorithmic}
\Require $x_1 \in \RR^n$, proximal parameter $\rho > 0$, maximum iteration $T$, step size sequence $\{\alpha_t\}$, maximum inner loop iteration $\{J_k\}$.
\For{$k=1,2, \ldots, T$}
    \State $x_{k+1} =  \texttt{SM}(x_k,\rho, \{ \alpha_t \}, J_k )$ from \Cref{alg:subgradient}
\EndFor
\end{algorithmic}
\end{algorithm}

\begin{algorithm}[t]
\caption{Subgradinet Method (SM)}\label{alg:subgradient}
\begin{algorithmic}
\Require $x_1 \in \RR^n$, proximal parameter $\rho$, maximum iteration $J$, step size sequence $\{\alpha_t\}$.
\For{$k=1,2, \ldots, J$}
    \State $v_k = g_k + \rho(x_k-x_1)$, \text{ where } $g_k \in \partial f(x_k)$
    \State $x_{k+1} = x_k + \alpha_k v_k$
\EndFor
\end{algorithmic}
\end{algorithm}

\subsection{Phase Retrieval}
\label{subsection:phase}
Our first numerical experiment considers a phase retrieval problem of the form \cite[Example 2.1]{davis2019stochastic}
\begin{align}
    \label{eq:Phase-retrieval}
    \min_{x \in \RR^d }\; \frac{1}{n} \sum_{i = 1}^n |\innerproduct{a_i}{x}^2 - b_i|,
\end{align}
where $a_i \in \RR^d$ and $b_i \in \RR $ for each $i =1,\ldots,n$ are problem data. The goal of phase retrieval is to find a point $x$ satisfying
 $ \innerproduct{a_i}{x}^2 \approx b_i, \; \forall i =1,\ldots,n.$ 

Let $f_i (\cdot) =|\innerproduct{a_i}{x}^2 - b_i|$ for $i = 1,\ldots,n$. Then $f_i$ can be written as $f_i(\cdot) = h_i(c_i(\cdot))$ 
where $h_i(\cdot) = |\cdot|$ is a convex function and $1$-Lipschitz continuous, and $c_i(\cdot) = \innerproduct{a_i}{\cdot}^2 - b_i$ is a $2 \|a_i\|^2$-smooth function. From \cite[Section 2.1]{davis2019stochastic}, it is known that the function $f_i$ is $2 \|a_i\|^2 $-weakly convex. Let $f = \frac{1}{n} \sum_{i = 1}^n f_i$. Then $f$ becomes $\frac{2}{n}\sum_{i=1}^n  \|a_i\|^2$-weakly convex. A detailed discussion can be found in \cref{Apx-section:numerics}. The subdifferential of $f$ can be computed as 
\begin{align*}
	\partial f(x) = \frac{1}{n}\sum_{i=1}^n \partial f_i(x), \quad \partial f_i(x) = 2 \innerproduct{a_i}{x} a_i \times \begin{cases}
	\mathrm{sign}(\innerproduct{a_i}{x}^2 - b_i), & \text{if } \innerproduct{a_i}{x}^2 \neq b_i\\
	[-1,1], & \text{otherwise}.
	\end{cases}
\end{align*}
We randomly draw the vectors $a_i$ for $i = 1,\ldots,n$ from a standard Gaussian distribution with mean $0$ and variance $I$, generate a ground truth $\bar{x} \in \RR^d$ from a unit sphere, and set $b_i  = \innerproduct{a_i}{\bar{x}}, i = 1,\ldots, n$.

\textbf{Experiment 1: Sensitivity to algorithm parameters $\beta \in (0,1)$ and $\rho >0$.}
We consider the phase retrieval problem \cref{eq:Phase-retrieval} with dimension $d=50$ and run \Cref{alg:Proxi-descent} under different parameter choices. Specifically, we test $\beta = 0.25, 0.5, 0.75$, which controls the inexactness of the proximal subproblem as defined in \cref{eq:bundle-update}, and $\rho = 0.1, 1, 10$ as the proximal parameter. For each configuration, the algorithm is executed for up to $10^{6}$ iterations, counting the inner loops of \Cref{alg:Proxi-descent-subproblem}.

The results are reported in \cref{fig:numerical-Phase-sensitivity}. Each subplot corresponds to a fixed $\beta$ with three trajectories for different values of $\rho$. The first row shows the residual $|\tilde{g}_{k+1}|^2 = \alpha^2|x_{k+1} - x_k|^2$ from \cref{eq:inexactness-trial-point-a}, where $\alpha = m + \rho$, while the second row reports the evolution of $\epsilon_{k+1}$ from \cref{eq:inexactness-trial-point-b}. As explained in \cref{eq:bundle-update}, $\beta$ controls the relative inexactness in \cref{eq:PPM-value-drop-subgradient-inexact}. In this experiment, when $\rho$ is fixed and $\beta$ varies, the convergence curves remain nearly unchanged, indicating that the algorithm is robust to the choice of $\beta$. In contrast, fixing $\beta$ and varying $\rho$ leads to noticeable differences. For instance, in the left column of \cref{fig:numerical-Phase-sensitivity}, the residuals $|\tilde{g}_{k+1}|^2$ and $\epsilon_{k+1}$ reach high accuracy levels of $10^{-8}$ and $10^{-6}$, respectively, for $\rho=10$, but only $10^{-1}$ and $10^{-2}$ for $\rho=0.1$.

\begin{figure}[t]
\centering
\begin{subfigure}[b]{0.32\textwidth}
\centering
{\includegraphics[width=1\textwidth]
{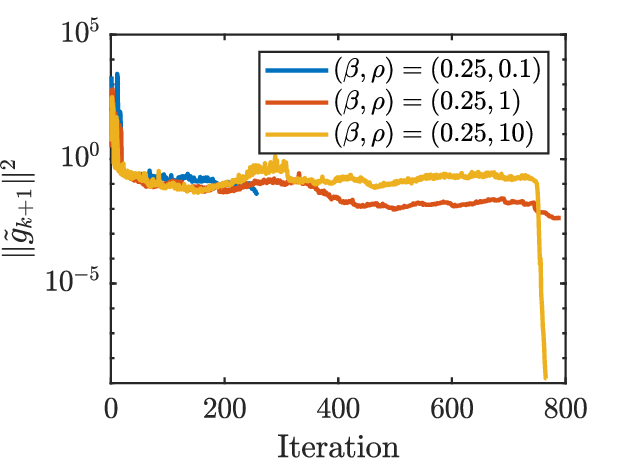}}
\end{subfigure}
\begin{subfigure}[b]{0.32\textwidth}
\centering
{\includegraphics[width=1\textwidth]
{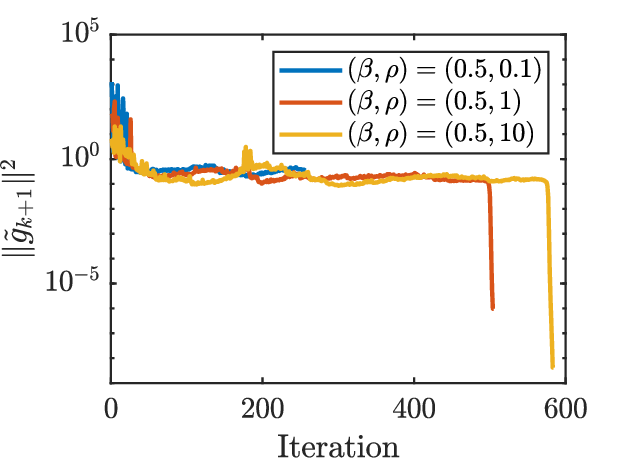}}
\end{subfigure}
\begin{subfigure}[b]{0.32\textwidth}
\centering
{\includegraphics[width=1\textwidth]
{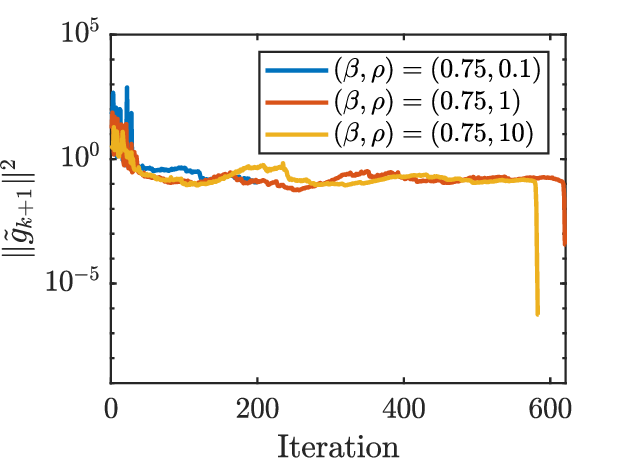}}
\end{subfigure}
\begin{subfigure}[b]{0.32\textwidth}
\centering  
{\includegraphics[width=1\textwidth]
{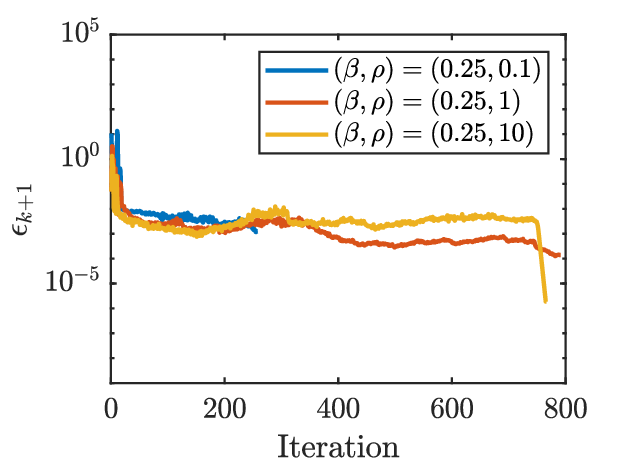}}
\end{subfigure}
\begin{subfigure}[b]{0.32\textwidth}
\centering
{\includegraphics[width=1\textwidth]
{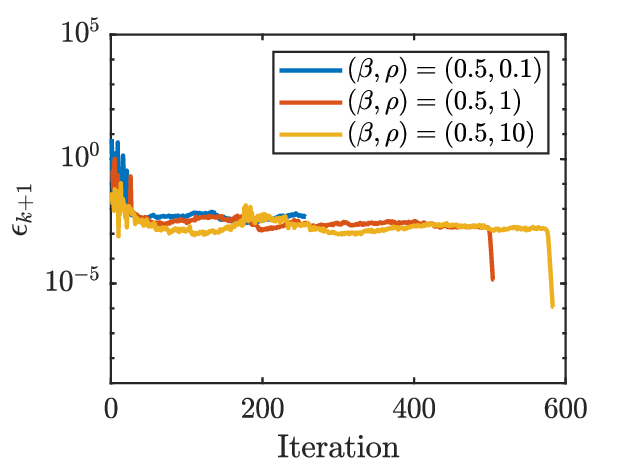}}
\end{subfigure}
\begin{subfigure}[b]{0.32\textwidth}
\centering
{\includegraphics[width=1\textwidth]
{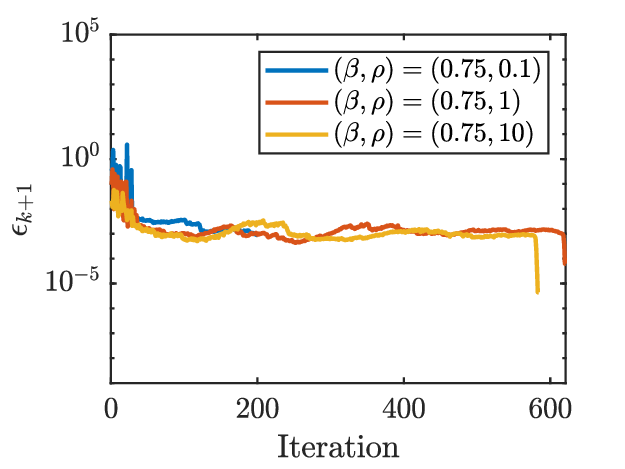}}
\end{subfigure}
\caption{Performance of \Cref{alg:Proxi-descent} on the phase retrieval problem with varying algorithm parameters: $\beta \in \{0.25, 0.5, 0.75\}$ and $\rho \in \{0.1, 1, 10\}$. The first row plots the evolution of the quantity $\|\tilde{g}_{k+1}\|^2$ in \cref{eq:inexactness-trial-point-a}. The second row plots the evolution of the quantity $\epsilon_{k+1}$ in \cref{eq:inexactness-trial-point-b}.  }
\label{fig:numerical-Phase-sensitivity}
\end{figure}
\textbf{Experiment 2: Comparison with PGSG in \cite{davis2019proximally}.}
In this experiment, we evaluate performance on three phase retrieval instances with dimensions $(d,n) = (100,300), (200,600),$ and $(300,900)$. We run both \Cref{alg:Proxi-descent} and PGSG with a total budget of $10^{6}$ iterations for each problem. For PGSG, we further explore different allocations between the outer and inner loops by setting $(T,J_k) = (250,4000), (500,2000),$ and $(1000,1000)$. The parameters are chosen following the guideline in \cite[Equation~9]{davis2019proximally},
with 
$\rho = 10$ 
and $\alpha_j =  \frac{2}{\mu\left(j+2+ \frac{36}{\gamma^4 \mu^4 (j+1)}\right)}$ where  $\mu = \rho$, $\gamma = 1/(\rho + m)$, and $m$ is the weakly convex parameter. For \Cref{alg:Proxi-descent}, the parameters $\beta$ and $\rho$ are selected as $0.75$ and $10$, respectively.

The numerical results are reported in \cref{tab:experiment-phase-comparison}. Across all problem instances, we observe that \Cref{alg:Proxi-descent} consistently attains the highest solution accuracy compared with PGSG. In particular, \Cref{alg:Proxi-descent} achieves residual accuracies of $10^{-8}$, $10^{-5}$, and $10^{-7}$ for three different sizes of problem instances, respectively, whereas the accuracy of PGSG is limited to about $10^{-2}$.

{
\renewcommand{\arraystretch}{1.1}
\begin{table}[t]
\centering

\begin{tabular}{c c c c c c}
\toprule 
$(d,n)$ &  Parameter &   \multicolumn{3}{c}{PGSG \cite{davis2019proximally}}  &  \Cref{alg:Proxi-descent}  \\
\hline
\multirow{4}{*}{$(100,300)$} 
& Outer Iter. & $250$  & $500$  & $1000$   & $1050$  \\
& Inner Iter. & $4000$ & $2000$ & $1000$  & Dynamic \\
& Total Iter. & $10^6$  & $10^6$ & $10^6$ & $10^6$ \\ 
& Stationarity & $1.98\ee{-1}$ & $9.53\ee{-2}$ &  $8.58\ee{-2}$ &$\mathbf{6.66\ee{-8}}$  \\
\cline{2-6}
\multirow{4}{*}{$(150,450)$}
& Outer Iter. & $250$  & $500$  & $1000$   & $2033$  \\
& Inner Iter. & $4000$ & $2000$ & $1000$  & Dynamic \\
& Total Iter. & $10^6$  & $10^6$ & $10^6$ & $10^6$ \\ 
& Stationarity  & $1.9\ee{-1}$ & $5.99\ee{-2}$& $2.05\ee{-2}$ & $\mathbf{6.76\ee{-5}}$ \\
\cline{2-6}
\multirow{4}{*}{$(200,600)$}
& Outer Iter. & $250$  & $500$  & $1000$ & $1927$  \\
& Inner Iter. & $4000$ & $2000$ & $1000$  & Dynamic \\
& Total Iter. & $10^6$  & $10^6$ & $10^6$ & $10^6$ \\ 
& Stationarity  & $4.4\ee{-1}$ & $1.49\ee{-1}$&$3.04\ee{-2}$ & $\mathbf{8.57\ee{-7}}$ \\
\bottomrule
\end{tabular}

\caption{The numerical performance of \Cref{alg:Proxi-descent} and PGSG for solving the phase retrieval problem \cref{eq:Phase-retrieval}. The stationarity measure uses the quantity $\min_{k} \{(\rho+m)^2\|x_{k+1} - x_k\|^2\}$.}
\label{tab:experiment-phase-comparison}
\end{table}
}

\subsection{Blind deconvolution}
\label{subsection:blind}
We next consider a blind deconvolution problem of the form \cite[Example 2.3]{davis2019stochastic}  
\begin{align}
    \label{eq:Blind-deconvolution}
    \min_{x,y \in \RR^d }\; \frac{1}{n} \sum_{i = 1}^n |\innerproduct{u_i}{x}\innerproduct{v_i}{y} - b_i|,
\end{align}
where $u_i,v_i \in \RR^d$ and $b_i \in \RR $ for $i =1,\ldots,n$ are problem data. The goal is to recover a pair of vectors from their pairwise convolution. 

Let $f_i (x,y) =|\innerproduct{u_i}{x}\innerproduct{v_i}{y} - b_i|$ for $i = 1,\ldots,n$. Then $f_i$ can be written as $f_i(x,y) = h_i(c_i(x,y))$ where $h_i(\cdot) = |\cdot|$ is a convex function and $1$-Lipschitz continuous, and $c_i$ is a $ |v_i^\tr u_i|$-smooth function defined as $c_i(x,y) = \innerproduct{u_i}{x}\innerproduct{v_i}{y} - b_i$. Similarly, from \cite[Section 2.1]{davis2019stochastic}, the function $f_i$ is $ |v_i^\tr u_i|$-weakly convex. Let $f = \frac{1}{n} \sum_{i = 1}^n f_i$. 
Then $f$ becomes $\frac{1}{n}\sum_{i=1}^n |v_i^\tr u_i|$-weakly convex. We provide detailed discussion in \cref{Apx-section:numerics}. The subdifferential of $f$ can be computed as 
\begin{align*}
	\partial f(x,y) = \frac{1}{n}\sum_{i=1}^n \partial f_i(x,y), \; \partial f_i(x,y) =  \begin{bmatrix}
	    \innerproduct{v_i}{y} u_i \\
            \innerproduct{u_i}{x} v_i
	\end{bmatrix} \times \begin{cases}
	\mathrm{sign}(\innerproduct{u_i}{x}\innerproduct{v_i}{y} - b_i), & \text{if } \innerproduct{u_i}{x}\innerproduct{v_i}{y} \neq b_i\\
	[-1,1], & \text{otherwise}.
	\end{cases}
\end{align*}

To set up the problem data, we draw the vectors $u_i$ and $v_i$ for $i = 1,\ldots,n$ from a standard Gaussian distribution with mean $0$ and variance $I$ randomly, generate a pair of ground truth signals $\bar{x}, \bar{y} \in \RR^d$ from a unit sphere, and set $b_i  = \innerproduct{u_i}{\bar{x}}\innerproduct{v_i}{\bar{y}}$ for each $i = 1,\ldots, n$.

\textbf{Experiment 1: Sensitivity to algorithm parameters $\beta \in (0,1)$ and $\rho >0$.} Similar to the experiment one in \cref{subsection:phase}, this experiment considers problem \cref{eq:Blind-deconvolution} with the dimension $d = 50$ and run \Cref{alg:Proxi-descent} with different parameters. Our choice of $\beta$ and $\rho$ and the maximum number of iterations are the same as the experiment one in \cref{subsection:phase}. The numerical results are presented in \cref{fig:numerical-Blind-sensitivity}. Across different selection of parameters $\beta$ and $\rho$, we see that the value of $\beta$ does not have a big impact on the convergence. The convergence is largely affected by the magnitude of $\rho$. For all different $\beta$, choosing $\rho = 1$ enables \Cref{alg:Proxi-descent} to achieve the accuracy of $10^{-3}$ and $10^{-4}$ for $\|\tilde{g}_{k+1}\|^2$ and $\epsilon_{k+1}$, respectively, whereas  \Cref{alg:Proxi-descent} with $\rho = 0.1$ can only achieve $10^{-2}$ and $10^{-3}$  for those residuals respectively.

\begin{figure}[t]
\centering
\begin{subfigure}[b]{0.32\textwidth}
\centering
{\includegraphics[width=1\textwidth]
{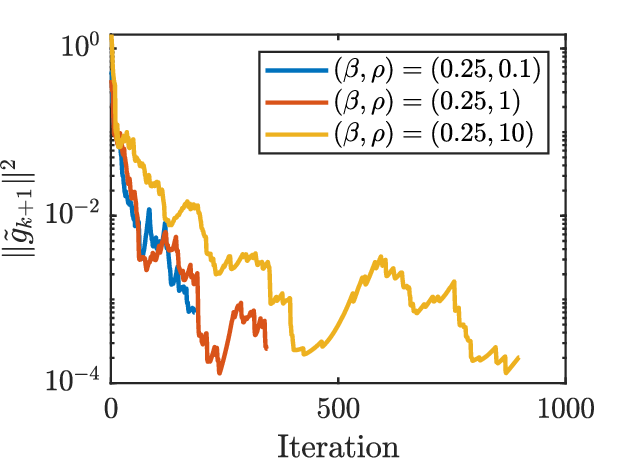}}
\end{subfigure}
\begin{subfigure}[b]{0.32\textwidth}
\centering
{\includegraphics[width=1\textwidth]
{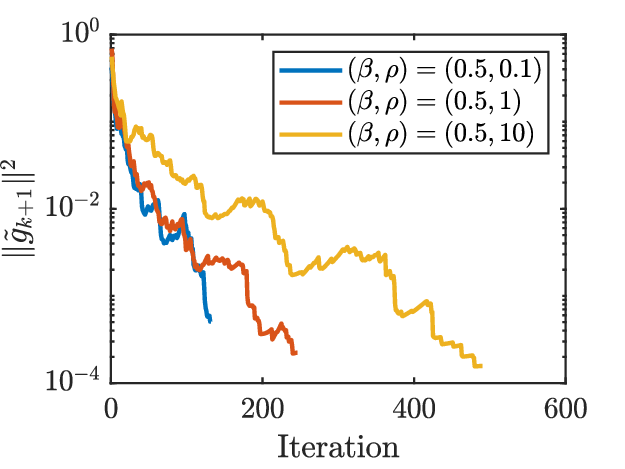}}
\end{subfigure}
\begin{subfigure}[b]{0.32\textwidth}
\centering
{\includegraphics[width=1\textwidth]
{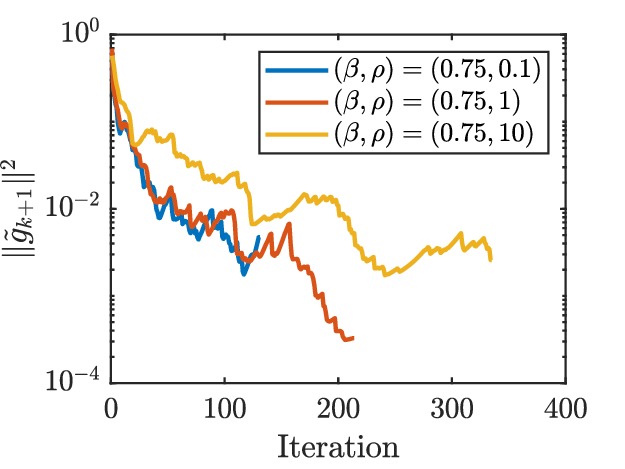}}
\end{subfigure}
\begin{subfigure}[b]{0.32\textwidth}
\centering  
{\includegraphics[width=1\textwidth]
{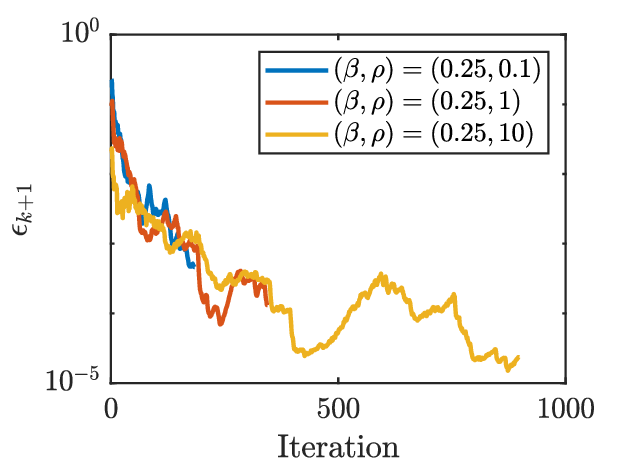}}
\end{subfigure}
\begin{subfigure}[b]{0.32\textwidth}
\centering
{\includegraphics[width=1\textwidth]
{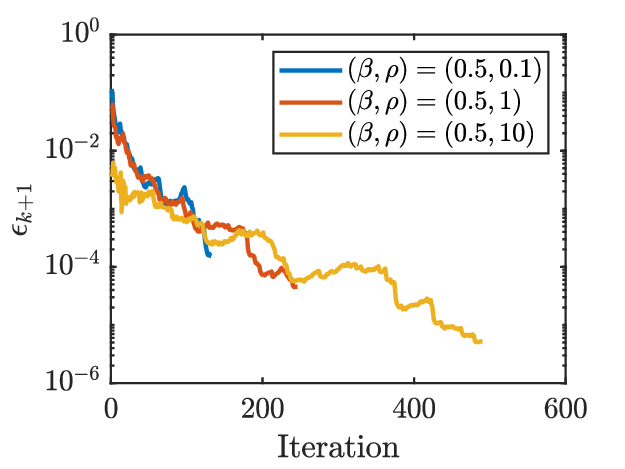}}
\end{subfigure}
\begin{subfigure}[b]{0.32\textwidth}
\centering
{\includegraphics[width=1\textwidth]
{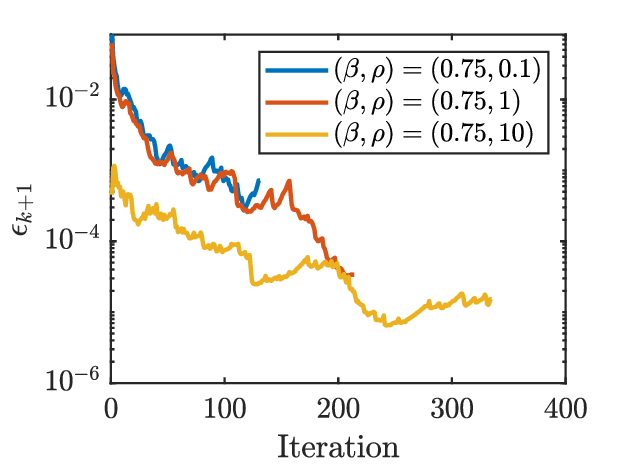}}
\end{subfigure}
\caption{Performance of \Cref{alg:Proxi-descent} on the blind deconvolution problem with varying algorithm parameters: $\beta \in \{0.25, 0.5, 0.75\}$ and $\rho \in \{0.1, 1, 10\}$. The first row plots the evolution of the quantity $\|\tilde{g}_{k+1}\|^2$ in \cref{eq:inexactness-trial-point-a}. The second row plots the evolution of the quantity $\epsilon_{k+1}$ in \cref{eq:inexactness-trial-point-b}.  }
\label{fig:numerical-Blind-sensitivity}
\end{figure}

\textbf{Experiment 2: Comparison with PGSG in \cite{davis2019proximally}.}  We consider three problem instances with the dimension $(d,n)  = (100,300), (150,450),$ and $ (200,600)$. Similar to the second experiment in \cref{subsection:phase}, we run \Cref{alg:Proxi-descent} and PGSG with the same total iteration budget $10^6$, and consider three different combinations of inner and outer budget of PGSG. Our implementation of PGSG is the same as the description in \cref{subsection:phase}. For the implementation of \Cref{alg:Proxi-descent}, we choose the parameters $\beta = 0.75$ and $\rho = 10$. The numerical results are presented in \cref{tab:experiment-Blind-comparison}. We see that \Cref{alg:Proxi-descent} consistently outperforms PGSG, across different dimensional problems. For example, when the dimension of the problem equals $d = 150$ and $n = 450$, \Cref{alg:Proxi-descent} solves the problem to the accuracy of $3.01\times 10^{-4}$, whereas PGSG only achieves the accuracy of $6.05\times 10^{-4}$. 

{
\renewcommand{\arraystretch}{1.1}
\begin{table}[t]
\centering

\begin{tabular}{c c c c c c}
\toprule 
$(d,n)$ &  Parameter &   \multicolumn{3}{c}{PGSG \cite{davis2019proximally}}  &  \Cref{alg:Proxi-descent}  \\
\hline
\multirow{4}{*}{$(100,300)$} 
& Outer Iter. & $250$  & $500$  & $1000$   & $676$  \\
& Inner Iter. & $4000$ & $2000$ & $1000$  & Dynamic \\
& Total Iter. & $10^6$  & $10^6$ & $10^6$ & $10^6$ \\ 
& Stationarity & $5.96\ee{-3}$ & $8.94\ee{-4}$ &  $3.28\ee{-4}$ &$\mathbf{1.66\ee{-4}}$  \\
\cline{2-6}
\multirow{4}{*}{$(150,450)$}
& Outer Iter. & $250$  & $500$  & $1000$   & $828$  \\
& Inner Iter. & $4000$ & $2000$ & $1000$  & Dynamic \\
& Total Iter. & $10^6$  & $10^6$ & $10^6$ & $10^6$ \\ 
& Stationarity  & $5.09\ee{-3}$ & $1.68\ee{-3}$ &  $6.05\ee{-4}$ &$\mathbf{3.01\ee{-4}}$ \\
\cline{2-6}
\multirow{4}{*}{$(200,600)$}
& Outer Iter. & $250$  & $500$  & $1000$ & $891$  \\
& Inner Iter. & $4000$ & $2000$ & $1000$  & Dynamic \\
& Total Iter. & $10^6$  & $10^6$ & $10^6$ & $10^6$ \\ 
& Stationarity  &$6.91\ee{-3}$ & $1.75\ee{-3}$ &  $9.89\ee{-4}$ &$\mathbf{4.27\ee{-4}}$  \\
\bottomrule
\end{tabular}

\caption{The numerical performance of \Cref{alg:Proxi-descent} and PGSG for solving the blind deconvolution problem \cref{eq:Phase-retrieval}. The stationarity measure uses the quantity $\min_{k} \{(\rho+m)^2\|x_{k+1} - x_k\|^2\}$.}
\label{tab:experiment-Blind-comparison}
\end{table}
}

\section{Conclusion}
\label{section:conclusion}
In this paper, we have introduced a proximal descent algorithm for solving weakly convex optimization problems. 
The algorithm is built upon the framework of the inexact proximal point method, with each subproblem efficiently addressed via a proximal bundle update. From a methodological perspective, our work extends the well-studied proximal bundle method from the convex setting to the broader class of weakly convex functions. From a theoretical perspective, we established explicit non-asymptotic convergence guarantees: the algorithm finds near-stationary points at the best-known rates for weakly convex optimization, and it automatically accelerates under additional structural conditions such as smoothness or quadratic growth. Finally, our numerical experiments validate the theoretical findings, illustrating the effectiveness and robustness of the proposed method for solving practical non-convex problems. 

We conclude by highlighting a few directions for future research:
\begin{itemize}[wide = 0pt]
\setlength{\itemsep}{0pt}
     \item \textbf{Weakly convex parameter and growth condition:} \Cref{alg:Proxi-descent} requires the knowledge of the weakly convex parameter $m$. In practice, however, $m$ may be unknown even though the objective is weakly convex. An interesting direction is to design adaptive schemes that estimate or learn the weak convexity parameter on the fly, thereby broadening the applicability of the method in real-world settings. In addition, 
    \Cref{subsection:QG} focused on the quadratic growth condition \cref{eq:QG}. A natural extension is to consider the more general growth condition 
    \begin{align*}
        f(x) - \min_{x} f(x) \geq \frac{\mu}{2}  \cdot \Dist^\alpha(x,S), \; \forall x \in \RR^n,
    \end{align*}
    where $\mu > 0$, $\alpha \geq 1$, and $S = \argmin_{x} f(x)$. Investigating this condition could yield sharper convergence guarantees, paralleling the recent advances for convex proximal bundle methods \cite{diaz2023optimal}. 
    \item \textbf{Convergence to local minima:} Examining the proof for \cref{lemma:proximal-descent-linear}, we find that the assumption can be relaxed while still guaranteeing the same improved iteration complexity. 
    The linear convergence result in \cref{lemma:proximal-descent-linear} relies on the linear decay of the function value gap $f(x) - f^\star$, which follows from the quadratic growth condition \cref{eq:QG} together with the assumption $\muq > m$ (in which case no local minima exist). When the function admits local minima, if it satisfies a quadratic growth-type condition in a neighborhood of a local minimum, and assuming the algorithm converges to that local minimum, then the same complexity bound can still be established. This raises a natural open question: Does the proximal descent method always converge to a local minimum?

    \item \textbf{Constrained weakly convex optimization and applications to control-theoretic problems.} Robust control was one of the earliest motivations and applications for nonsmooth optimization \cite{lewis2007nonsmooth}. However, control problems naturally involve nonconvex domains, since the set of stabilizing policies is nonconvex \cite{talebi2024policy}. Recent advances on benign nonconvexity \cite{zheng2024benign,zheng2023benign,zheng2025extended} suggest that many control problems may in fact be weakly convex over compact regions. This makes it particularly interesting to extend the proximal descent method to constrained weakly convex optimization, thereby opening new opportunities for applications in control-theoretic settings.
\end{itemize}

\bibliographystyle{unsrt}
\bibliography{reference}

\newpage

\tableofcontents

\numberwithin{equation}{section}
\numberwithin{example}{section}
\numberwithin{remark}{section}
\numberwithin{assumption}{section}
\numberwithin{theorem}{section}
\numberwithin{proposition}{section}
\numberwithin{lemma}{section}
\numberwithin{definition}{section}

\appendix

\newpage

\noindent {\LARGE \bfseries Appendix}

\vspace{5mm}
In this appendix, we provide supplementary materials for the main text. The appendix is divided into four parts
\begin{itemize}
    \item \cref{Apx-section:diff-stationary} establishes the relationships between different stationary measures;
    \item \cref{Apx-section:PMB-weakly} completes the missing proofs in \cref{section:PMB-weakly};
    \item \cref{Apx-section:improved-rate} completes the missing proofs in \cref{section:improved-rate};
    \item \cref{Apx-section:numerics} provides more details on the numerical experiment in \cref{section:numerics}.
\end{itemize}

\section{Different notions of stationarity and their relationship} \label{appendix:stationary-point}

In this appendix, we review different notions of stationarity, including gradient norm, $(\eta,\epsilon)$-inexact stationarity, and $(\delta,\alpha)$-Moreau stationarity. We also summarize their relationship. 

\label{Apx-section:diff-stationary}
\subsection{Different notions of stationarity}
For an $M$-smooth function $f:\RR^n \to \RR \cup \{+\infty\}$, a point $x$ is stationary if $\nabla f(x) = 0$, and a natural measure of near-stationarity is the norm of its gradient $\|\nabla f(x)\|$. If $\|\nabla f(x)\|$ is small, we say $x$ is close to a stationary point. For a closed function $f$, its Fr\'echet subdifferential at $x$ with $f(x)$ finite is defined as
$$
    \partial f(x)  = \left \{ v \in \RR^n \mid \liminf_{y \to x }  \frac{ f(y) - f(x) -  \innerproduct{v}{y-x}}{\|y-x\|} \geq 0 \right\}.
$$ 
If $f$ is further $m$-weakly convex, the subdifferential $\partial f$ can be equivalently characterized as
\begin{align*}
    \partial f(x)  = \left \{v \in \RR^n \mid f(y) \geq  f(x) + \innerproduct{v}{y-x} - \frac{m}{2}\|y-x\|^2,\; \forall y \in \RR^n\right \}.
\end{align*}
We say $x$ is a stationary point if $0 \in \partial f(x)$. However, the quantity $\mathrm{dist}(0,\partial f(x))$ may not be a good measure of near stationarity, since $\mathrm{dist}(0,\partial f(x))$ may be a discontinuous function. For example, consider $x \mapsto |x|$,  we have $\mathrm{dist}(0,\partial f(x)) = 1, \forall x \neq 0$, and $\mathrm{dist}(0,\partial f(x)) = 1, \text{if}\, x = 0$.  

To quantify approximated stationarity for $m$-weakly convex functions, we have considered two notions in the main text: $(\eta,\epsilon)$-inexact stationarity, and $(\delta,\alpha)$-Moreau stationarity. 
An inexact version of the subdiffernetial $\partial f$ introduces an inexactness $\epsilon >0$ and defines 
\begin{align*}
    \partial_{\epsilon} f(x)  =\left \{v \in \RR^n \mid f(y) \geq  f(x) + \innerproduct{v}{y-x} - \frac{m}{2}\|y-x\|^2 - \epsilon,\; \forall y \in \RR^n\right \}.
\end{align*}
Ideally, we aim to find a point $x\in \RR^n$ such that $\Dist(0,\partial_{\epsilon} f(x)) \leq \eta $ with both $\epsilon$ and $\eta$ being small. Thus, we consider the notion of  $(\eta,\epsilon)$-inexact stationarity.

\begin{definition}[$(\eta,\epsilon)$-inexact stationarity]
    Let $f:\RR^n \to \RR\cup \{+\infty\}$ be an $m$-weakly convex function, and $\eta > 0, \epsilon > 0$. A point $x \in \mathrm{dom}\, f$ is called an $(\eta,\epsilon)$-stationary point if 
    \begin{align*}
        \Dist(0,\partial f_{\epsilon}(x)) \leq \eta.
    \end{align*}
\end{definition}
When $\epsilon =0 $ and $\eta = 0$, an $(\eta,\epsilon)$-inexact stationary point becomes a true stationary point, i.e., $0 \in \partial f(x)$. Another stationary measure uses the notion of the Moreau envelope, defined as 
\begin{equation*} 
    f_{\rho}(x) := \inf_{y\in \mathbb{R}^n} \left\{f(y) + \frac{\rho}{2}\|y-x\|^2\right\},
\end{equation*}
where $\rho >0$ is fixed. For a $m$-weakly convex function and any $\rho > m $, the function $f_{\rho}$ becomes continuous differentiable (see \cref{lemma:gradient-moreau}), and the gradient can be computed as 
$
    \nabla f_{\rho}(x) = \rho (x - \hat{x}),
$
where $\hat{x}:=\argmin_{y} \{f(y) + \frac{\rho}{2}\|y-x\|^2\}$ is unique. Moreover, we have the following result. 
\begin{proposition}
    \label{prop:moreau-iff}
    Let $f:\RR^n \to \RR \cup \{+\infty\}$ be an $m$-weakly convex function, $\rho > m$ and $x \in \RR^n$. Then $ \nabla f_{\rho}(x) = \{0\}$ if and only if  $0 \in \partial f(x)$.
\end{proposition}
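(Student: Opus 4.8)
The plan is to prove both implications by combining the two characterizations already established in the excerpt: the explicit gradient formula $\nabla f_{\rho}(x) = \rho(x - \hat{x})$ from \cref{lemma:gradient-moreau}, where $\hat{x} := \argmin_{y}\{f(y) + \frac{\rho}{2}\|y-x\|^2\}$ is the unique proximal point (uniqueness holds because $\rho > m$ makes the proximal objective strongly convex), together with the subdifferential characterization for weakly convex functions in \cref{lemma:subdifferential-weakly-convex}. Each direction then reduces to a short argument linking the minimizer $\hat{x}$ to the subdifferential $\partial f$.

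For the forward direction, suppose $\nabla f_{\rho}(x) = \{0\}$. The gradient formula gives $\rho(x - \hat{x}) = 0$, and since $\rho > 0$ we conclude $x = \hat{x}$. Next I would invoke the first-order optimality condition for the strongly convex problem defining $\hat{x}$: writing $g(y) := f(y) + \frac{\rho}{2}\|y-x\|^2$, the minimizer satisfies $0 \in \partial g(\hat{x})$, and because the quadratic term is continuously differentiable and convex, the sum rule yields $\partial g(\hat{x}) = \partial f(\hat{x}) + \rho(\hat{x} - x)$. Hence $\rho(x - \hat{x}) \in \partial f(\hat{x})$, i.e.\ $\nabla f_{\rho}(x) \in \partial f(\hat{x})$. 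Combining this with $x = \hat{x}$ and $\nabla f_{\rho}(x) = 0$ gives $0 \in \partial f(x)$, as desired.

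For the backward direction, suppose $0 \in \partial f(x)$. By \cref{lemma:subdifferential-weakly-convex} this is exactly the inequality $f(y) \geq f(x) - \frac{m}{2}\|y-x\|^2$ for all $y \in \RR^n$. Adding $\frac{\rho}{2}\|y-x\|^2$ to both sides and using $\rho > m$ gives $f(y) + \frac{\rho}{2}\|y-x\|^2 \geq f(x) + \frac{\rho - m}{2}\|y-x\|^2 \geq f(x)$, with equality at $y = x$. Therefore $x$ itself minimizes the proximal objective $y \mapsto f(y) + \frac{\rho}{2}\|y-x\|^2$, and by uniqueness of the minimizer we obtain $\hat{x} = x$. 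The gradient formula then yields $\nabla f_{\rho}(x) = \rho(x - \hat{x}) = 0$.

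The only step that deserves care is the forward direction's use of the subdifferential sum rule $\partial(f + \frac{\rho}{2}\|\cdot - x\|^2)(\hat{x}) = \partial f(\hat{x}) + \rho(\hat{x} - x)$; this is standard and requires no constraint qualification, since the perturbation is smooth. Everything else is elementary algebra chaining the two lemmas, so I do not anticipate any genuine obstacle beyond keeping the sign conventions consistent.
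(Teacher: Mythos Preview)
Your proposal is correct and follows essentially the same route as the paper: both directions hinge on the gradient formula $\nabla f_\rho(x) = \rho(x-\hat{x})$ from \cref{lemma:gradient-moreau} together with the first-order optimality condition $\rho(x-\hat{x}) \in \partial f(\hat{x})$ for the proximal subproblem. The only cosmetic difference is that in the backward direction you verify directly via the inequality in \cref{lemma:subdifferential-weakly-convex} that $x$ minimizes the proximal objective, whereas the paper simply reads $0 \in \partial f(x) + \rho(x-x)$ as the optimality condition; both arguments are equivalent and equally short.
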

\begin{proof}
    The proof follows easily from \cref{lemma:gradient-moreau}. Suppose $\nabla f_{\rho}(x) = \{0\}$. From \cref{eq:gradient-Moreau}, we know that $x = \argmin_{y \in \RR^n } \{  f(y) + \frac{\rho}{2}\|y -x \|^2 \}$ and therefore $0 \in \partial f(x) + \rho (x - x) = \partial f(x)$.
        
         Conversely, suppose $0 \in \partial f(x)$. Then $0 \in \partial f(x) + \rho (x-x)$, showing that $x \in \argmin_{y \in \RR^n } \{  f(y) + \frac{\rho}{2}\|y -x \|^2 \}$. By \cref{eq:gradient-Moreau}, we know $x$ is the unique minimizer and $\nabla f_{\rho}(x) = \rho (x - x) = 0$. 
\end{proof}

This motivates the following approximate stationary notion.
\begin{definition}[$(\delta,\alpha)$-Moreau stationarity]
    Let $f:\RR^n \to \RR\cup \{+\infty\}$ be an $m$-weakly convex function, $\delta>0,$ and $ \alpha >m$. A point $x \in \mathrm{dom}\, f$ is called an $(\delta,\alpha)$-Moreau stationary point if 
\begin{align*}
    \|\nabla f_{\alpha}(x)\| \leq \delta.
\end{align*}
\end{definition}
Clearly, if $\delta = 0$, $(\delta,\alpha)$-Moreau stationarity recovers $\|\nabla f_{\alpha}(x)\| = 0$, i.e., $x$ is a stationary point. 

\subsection{Properties and their relationship}
In this subsection, we establish the relationship between different stationary measures.
\begin{lemma}[$(\eta,\epsilon)$-inexact stationarity to $(\delta,\alpha)$-Moreau stationarity]
\label{lemma:stationary-conversion-R2M-2}
    Let $f:\RR^n \to \RR \cup \{+\infty\}$ be an $m$-weakly convex function. Suppose $x$ is a $(\eta,\epsilon)$-inexact stationary point. Then  we have
    \begin{align*}
        \|\nabla f_{m+\lambda}(x)\|  \leq  (m + \lambda)\left ( \frac{2}{\lambda}\eta + \sqrt{\frac{2}{\lambda}\epsilon} \right), \quad \forall \lambda > 0.
    \end{align*}
    If $\lambda = m$, it simplifies to
     $   \|\nabla f_{2m}(x)\| \leq  4 \eta + 2\sqrt{2m\epsilon}.$ 
\end{lemma}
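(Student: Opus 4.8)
The proof of \Cref{lemma:stationary-conversion-R2M-2} seeks to control $\|\nabla f_{m+\lambda}(x)\|$, which by \Cref{lemma:gradient-moreau} equals $(m+\lambda)\|x - \hat{x}\|$ where $\hat{x} = \argmin_y\{f(y) + \frac{m+\lambda}{2}\|y-x\|^2\}$. The plan is therefore to derive an upper bound on the distance $\|x - \hat{x}\|$ using the two pieces of information encoded in the $(\eta,\epsilon)$-inexact stationarity hypothesis: there exists $v \in \partial_\epsilon f(x)$ with $\|v\| \leq \eta$, which by the definition \cref{eq:inexact-subdifferential} means $f(y) \geq f(x) + \langle v, y-x\rangle - \frac{m}{2}\|y-x\|^2 - \epsilon$ for all $y$.

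First I would write down the two defining inequalities at the specific point $\hat{x}$. From the optimality of $\hat{x}$ for the strongly convex problem, the function $y \mapsto f(y) + \frac{m+\lambda}{2}\|y-x\|^2$ satisfies a strong-convexity lower bound at its minimizer; evaluating at $y = x$ gives
\[
f(\hat{x}) + \frac{m+\lambda}{2}\|\hat{x}-x\|^2 + \frac{\lambda}{2}\|x - \hat{x}\|^2 \leq f(x),
\]
where the extra $\frac{\lambda}{2}\|x-\hat{x}\|^2$ term comes from the modulus of strong convexity $\lambda = (m+\lambda) - m$ of the perturbed objective relative to the weakly convex $f$. Simultaneously, applying the inexact subgradient inequality at $y = \hat{x}$ yields
\[
f(\hat{x}) \geq f(x) + \langle v, \hat{x} - x\rangle - \frac{m}{2}\|\hat{x}-x\|^2 - \epsilon.
\]

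The key step is to add these two relations to eliminate the unknown quantity $f(\hat{x}) - f(x)$. After cancellation the $\frac{m}{2}\|\hat{x}-x\|^2$ and $\frac{m+\lambda}{2}\|\hat{x}-x\|^2$ terms combine, and one is left with an inequality of the shape $\frac{\lambda}{2}\|x-\hat{x}\|^2 \leq \langle v, x - \hat{x}\rangle + \epsilon$ (up to signs I would track carefully). Bounding the inner product by Cauchy–Schwarz, $\langle v, x-\hat{x}\rangle \leq \eta\|x-\hat{x}\|$, produces a scalar quadratic inequality $\frac{\lambda}{2}t^2 - \eta t - \epsilon \leq 0$ in $t := \|x-\hat{x}\| \geq 0$. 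Solving this quadratic and using the elementary bound $\sqrt{a+b} \leq \sqrt{a} + \sqrt{b}$ gives $t \leq \frac{2}{\lambda}\eta + \sqrt{\frac{2}{\lambda}\epsilon}$. Multiplying through by $(m+\lambda)$ recovers the claimed bound on $\|\nabla f_{m+\lambda}(x)\|$, and substituting $\lambda = m$ gives the stated special case $4\eta + 2\sqrt{2m\epsilon}$.

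I expect the main obstacle to be purely bookkeeping: getting the strong-convexity constant right (it is $\lambda$, the gap between the Moreau parameter $m+\lambda$ and the weak-convexity modulus $m$, not $m+\lambda$ itself) and ensuring the quadratic-inequality constants line up so that the final $2/\lambda$ factors emerge rather than, say, $1/\lambda$. The conceptual content is light — it is the standard trick of pitting a second-order minorant against a quadratic majorant and reading off a one-dimensional inequality — but the coefficients must be tracked with care to land exactly on the advertised constants.
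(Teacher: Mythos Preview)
Your proposal is correct and matches the paper's proof essentially line for line: both exploit the $\lambda$-strong convexity of $y\mapsto f(y)+\tfrac{m+\lambda}{2}\|y-x\|^2$ at its minimizer $\hat{x}$, combine it with the inexact-subgradient inequality evaluated at $y=\hat{x}$, reduce to the scalar quadratic $\tfrac{\lambda}{2}t^2 \leq \eta t + \epsilon$, solve for $t=\|x-\hat{x}\|$, and multiply by $(m+\lambda)$. (A small aside: if you genuinely \emph{add} the two inequalities rather than chain them as the paper does, you retain an extra $\tfrac{\lambda}{2}t^2$ on the left and obtain the slightly sharper $\lambda t^2 \leq \eta t + \epsilon$; the paper drops this term in its step $(a)$, which is why it lands on the stated constants.)
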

\begin{proof}
    Since $x$ is a $(\eta,\epsilon)$-inexact stationary point and $\partial_{\epsilon}f(x)$ is a closed set, there exists a $w \in \RR^n$ such that 
    \begin{align}
        \label{eq:R2M-step-1}
        \|w\| \leq \eta, \quad f(y) + \frac{m}{2}\|y-x\|^2 \geq f(x) + \innerproduct{w}{y-x} - \epsilon, \; \forall y \in \RR^n.
    \end{align}
    Let $\lambda> 0$ and $\hat{x} = \argmin_{y} \{f(y) + \frac{ m + \lambda}{2} \|y - x\|^2\}$. Since $f(\cdot) + \frac{m+\lambda}{2}\|\cdot-x\|^2$ is $\lambda$-strongly convex, we have 
    \begin{align*}
        \frac{\lambda}{2}\|\hat{x} - x\|^2 &\leq f(x) + \frac{ m + \lambda}{2} \|x - x\|^2 - \left(f(\hat{x}) + \frac{ m + \lambda}{2} \|\hat{x}^\lambda - x\|^2\right) \\
        & \overset{(a)}{\leq} f(x ) -  \left (f(\hat{x}) + \frac{ m }{2} \|\hat{x} - x\|^2 \right) \\
        & \overset{(b)}{\leq} \innerproduct{w}{x - \hat{x}} + \epsilon \\
        & \overset{(c)}{\leq} \eta\|\hat{x} - x\| + \epsilon
    \end{align*}
    where $(a)$ drops a nonpositive term $\frac{\lambda}{2}\|\hat{x} - x\|^2$, $(b)$ uses \cref{eq:R2M-step-1} with $y = \hat{x}$, and $(c)$ uses Cauchy-inequality with the bound $\|w\| \leq \eta$ in  \cref{eq:R2M-step-1}.
    Therefore, we obtain
    \begin{align*}
        \|\hat{x} - x\|^2  - \frac{2}{\lambda}\eta\|\hat{x} - x\| - \frac{2}{\lambda}\epsilon\leq 0 .
    \end{align*}
    Since the above is a quadratic function in terms of $\|\hat{x} - x\|$, the quantity $\|\hat{x} - x\|$ can not be larger than its right root. We thus have  
    \begin{align*}
        \|\hat{x} - x\| &\leq \frac{1}{2}\left (\frac{2}{\lambda}\eta + \sqrt{\frac{4}{\lambda^2}\eta^2 +\frac{8}{\lambda} \epsilon}\right) \leq \frac{1}{2}\left (\frac{4}{\lambda}\eta + \sqrt{ \frac{8}{\lambda} \epsilon}\right) = \frac{2}{\lambda}\eta + \sqrt{\frac{2}{\lambda}\epsilon}.
    \end{align*}
    Lastly, using the relationship $\nabla f_{ m + \lambda}(x) = (m+\lambda)(\hat{x} - x)$ from \cref{eq:gradient-Moreau}, we obtain
    \begin{align*}
        \|\nabla f_{m + \lambda}(x)\| \leq (m + \lambda)\left ( \frac{2}{\lambda}\eta + \sqrt{\frac{2}{\lambda}\epsilon} \right).
    \end{align*}
    This completes the proof. 
\end{proof}

If an $m$-weakly convex function is also $M$-smooth, $(\delta,\alpha)$-Moreau stationarity also implies a small gradient, which is a more natural stationary measure for smooth functions.

\begin{lemma}[$(\delta,\alpha)$-Moreau stationarity to gradient stationarity]
\label{lemma:stationary-conversion-R2G}
    Suppose $f:\RR^n \to \RR \cup \{+\infty\}$ is $m$-weakly convex and $M$-smooth. Let $\alpha > m$, $x \in \RR^n$, and $\epsilon \geq 0$. If $x$ satisfies $\|\nabla_{\alpha} f(x)\| \leq \delta$, then $\|\nabla f(x)\| \leq \left ( 1+ \frac{M}{\alpha} \right )\delta$.
\end{lemma}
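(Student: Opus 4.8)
The plan is to relate the gradient of $f$ at $x$ to its gradient at the proximal point $\hat{x} := \argmin_{y}\{f(y) + \frac{\alpha}{2}\|y-x\|^2\}$, which is well-defined and unique because $\alpha > m$ renders the objective strongly convex. First I would invoke \cref{lemma:gradient-moreau} to write $\nabla f_{\alpha}(x) = \alpha(x-\hat{x})$, and combine it with the first-order optimality condition $\nabla f(\hat{x}) + \alpha(\hat{x}-x) = 0$ at the minimizer $\hat{x}$. These two facts give the key identity $\nabla f(\hat{x}) = \alpha(x-\hat{x}) = \nabla f_{\alpha}(x)$, so that $\|\nabla f(\hat{x})\| = \|\nabla f_{\alpha}(x)\| \leq \delta$. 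In words, the proximal point $\hat{x}$ already has a controllably small gradient.

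Next I would transfer this bound from $\hat{x}$ back to $x$ using $M$-smoothness. Since $\nabla f$ is $M$-Lipschitz, we have $\|\nabla f(x) - \nabla f(\hat{x})\| \leq M\|x - \hat{x}\|$, and the displacement $\|x-\hat{x}\|$ is exactly the quantity recorded in \cref{eq:Moreau-property-1}, namely $\|x-\hat{x}\| = \|\nabla f_{\alpha}(x)\|/\alpha \leq \delta/\alpha$. A single application of the triangle inequality then yields
\begin{align*}
\|\nabla f(x)\| \leq \|\nabla f(\hat{x})\| + \|\nabla f(x) - \nabla f(\hat{x})\| \leq \delta + M \cdot \frac{\delta}{\alpha} = \left(1 + \frac{M}{\alpha}\right)\delta,
\end{align*}
which is precisely the claimed bound.

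There is no serious obstacle here: the argument is a direct chaining of the Moreau-envelope identities in \cref{eq:Moreau-property} with the Lipschitz-gradient inequality. The only point deserving care is confirming that the optimality condition at $\hat{x}$ is a genuine equation in the true gradient $\nabla f(\hat{x})$ rather than a subdifferential inclusion; this is justified because $M$-smoothness makes $f$ differentiable everywhere, so $\partial f(\hat{x}) = \{\nabla f(\hat{x})\}$. I note that the inexactness parameter $\epsilon$ plays no role in this particular estimate—it appears in the hypothesis only for uniformity with the surrounding results—so the proof need not track it.
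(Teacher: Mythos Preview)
Your proposal is correct and follows essentially the same approach as the paper: both arguments introduce the proximal point $\hat{x}$, use the optimality condition (equivalently, \cref{eq:Moreau-property}) to bound $\|\nabla f(\hat{x})\|$ by $\delta$, and then apply $M$-smoothness together with the displacement bound $\|x-\hat{x}\| = \|\nabla f_\alpha(x)\|/\alpha$ and the triangle inequality to conclude. Your derivation of the exact identity $\nabla f(\hat{x}) = \nabla f_\alpha(x)$ is a slightly sharper observation than the inequality form the paper cites from \cref{eq:Moreau-property-3}, but the logic is otherwise identical.
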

\begin{proof}
    Suppose $\|\nabla_{\alpha} f(x)\| \leq \epsilon$. From \cref{eq:Moreau-property}, we know that there exists $\hat{x} \in \RR^n $ such that $\|\nabla f(\hat{x})\| \leq \epsilon$ and $\|\hat{x} - x\| = \|\nabla f_{\alpha}(x)\|/\alpha$. It then holds that
    \begin{align*}
        \|\nabla f(x)\| \leq \|\nabla f(\hat{x})\| +  M \| x - \hat{x}\|  \leq \epsilon + \frac{M}{\alpha}\|\nabla f_{\alpha}(x)\| \leq \left (1+ \frac{M}{\alpha} \right)\epsilon,
    \end{align*}
    where the first inequality uses the $M$-smoothness.
\end{proof}

Combining \cref{lemma:stationary-conversion-R2M-2,lemma:stationary-conversion-R2G}, we also see that $(\eta,\epsilon)$-inexact stationarity implies gradient stationarity.

\begin{lemma}[$(\eta,\epsilon)$-inexact stationarity to gradient stationarity]
\label{lemma:stationary-conversion-R2G-appendix}
    Suppose $f:\RR^n \to \RR \cup \{+\infty\}$ is $m$-weakly convex and $M$-smooth. Let $\lambda > 0$, $\alpha = m +\lambda$, $x \in \RR^n$, and $\epsilon \geq 0$. If $x$ is a $(\eta,\epsilon)$-inexact stationary point, then $$\|\nabla f(x)\| \leq \left ( 1+ \frac{M}{\alpha} \right )\alpha \left ( \frac{2}{\lambda}\eta + \sqrt{\frac{2}{\lambda}\epsilon} \right).$$
    Consequently, choosing $\epsilon \leq  \frac{2}{\lambda}\eta^2$ and $\eta \leq  \frac{\delta \lambda}{4 (1+M/\alpha) \alpha}$ with $\delta \geq 0$ implies $\|\nabla f(x)\| \leq \delta$.
\end{lemma}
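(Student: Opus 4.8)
The plan is to prove this as a straightforward composition of the two preceding lemmas in this subsection, since the statement precisely chains the conversion $(\eta,\epsilon)\text{-inexact} \Rightarrow \text{Moreau} \Rightarrow \text{gradient}$. First I would apply \cref{lemma:stationary-conversion-R2M-2} with the given $\lambda > 0$. Recalling that $\alpha = m + \lambda$, this lemma gives directly
\[
    \|\nabla f_{\alpha}(x)\| = \|\nabla f_{m+\lambda}(x)\| \leq (m+\lambda)\left( \frac{2}{\lambda}\eta + \sqrt{\frac{2}{\lambda}\epsilon} \right) = \alpha \left( \frac{2}{\lambda}\eta + \sqrt{\frac{2}{\lambda}\epsilon} \right),
\]
so that $x$ is a $(\delta',\alpha)$-Moreau stationary point with $\delta' := \alpha\bigl(\frac{2}{\lambda}\eta + \sqrt{\frac{2}{\lambda}\epsilon}\bigr)$.

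Next I would invoke \cref{lemma:stationary-conversion-R2G}, which applies because $f$ is $M$-smooth and $\alpha > m$. Feeding in $\delta = \delta'$ yields
\[
    \|\nabla f(x)\| \leq \left(1 + \frac{M}{\alpha}\right)\delta' = \left(1 + \frac{M}{\alpha}\right)\alpha \left( \frac{2}{\lambda}\eta + \sqrt{\frac{2}{\lambda}\epsilon} \right),
\]
which is exactly the claimed bound. This completes the first assertion with no subtlety beyond keeping the parameters $\lambda$ and $\alpha = m+\lambda$ aligned correctly between the two lemmas.

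For the consequence, I would carry out a short elementary estimate. Under the hypothesis $\epsilon \leq \frac{2}{\lambda}\eta^2$ we have $\sqrt{\frac{2}{\lambda}\epsilon} \leq \sqrt{\frac{2}{\lambda}\cdot\frac{2}{\lambda}\eta^2} = \frac{2}{\lambda}\eta$, hence $\frac{2}{\lambda}\eta + \sqrt{\frac{2}{\lambda}\epsilon} \leq \frac{4}{\lambda}\eta$. Substituting into the bound gives $\|\nabla f(x)\| \leq (1+M/\alpha)\alpha\cdot\frac{4}{\lambda}\eta$, and then the choice $\eta \leq \frac{\delta\lambda}{4(1+M/\alpha)\alpha}$ immediately forces $\|\nabla f(x)\| \leq \delta$.

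I do not expect any genuine obstacle here: both conversion lemmas are already established earlier in the appendix, and the only arithmetic is the square-root bound and a cancellation. The one place to be careful is bookkeeping — ensuring the $\lambda$ appearing in \cref{lemma:stationary-conversion-R2M-2} is identified with $\alpha - m$ and that the smoothness-dependent factor $(1+M/\alpha)$ from \cref{lemma:stationary-conversion-R2G} is threaded through correctly — but this is purely notational rather than a mathematical difficulty.
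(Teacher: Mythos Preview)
Your proposal is correct and follows essentially the same approach as the paper: first invoke \cref{lemma:stationary-conversion-R2M-2} with the given $\lambda$ to bound $\|\nabla f_\alpha(x)\|$, then apply \cref{lemma:stationary-conversion-R2G} to pass to $\|\nabla f(x)\|$, and finally carry out the same square-root estimate for the consequence. The paper's argument is line-for-line the same as yours.
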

\begin{proof}
    From \cref{lemma:stationary-conversion-R2M-2}, we know that 
    \begin{align*}
        \|\nabla f_{\alpha}(x)\|  \leq  \alpha\left ( \frac{2}{\lambda}\eta + \sqrt{\frac{2}{\lambda}\epsilon} \right).
    \end{align*}
    Combining this with \cref{lemma:stationary-conversion-R2G} yields
    \begin{align*}
        \|\nabla f(x)\| \leq \left ( 1+ \frac{M}{\alpha} \right ) \alpha\left ( \frac{2}{\lambda}\eta + \sqrt{\frac{2}{\lambda}\epsilon} \right).
    \end{align*}
     If we choose $\epsilon \leq  \frac{2}{\delta}\eta^2$ and $\eta \leq  \frac{\delta \lambda}{4 (1+M/\alpha) \alpha}$ with $\delta \geq 0$, then the above becomes
    \begin{align*}
        \|\nabla f(x)\| & \leq \left ( 1+ \frac{M}{\alpha} \right ) \alpha   \frac{4}{\lambda}\eta \leq \delta.
    \end{align*}
\end{proof}

In \cref{proposition:implication-proximal-gap,corollary:proximal-gap}, we quantify the stationarity of the iterates in \Cref{alg:Proxi-descent} by looking at the proximal gap. We next show that the proximal gap indeed implies $(\eta,\epsilon)$-inexact stationarity and $(\delta,\alpha)$-Moreau stationarity with $\eta,\epsilon,$ and $\delta$ related to the proximal gap.

\begin{lemma}[Consequences of the proximal gap]
    \label{lemma:prox-gap-consq}
    Assume $f:\RR^n \to \RR \cup\{+\infty\}$ is $m$-weakly convex. Fix $x \in \RR^n$, $\rho > 0$, and $\alpha = m + \rho$. Let $\Delta_x = f(x) - \inf_{y} \{f(y) + \frac{\alpha}{2}\|y-x\|^2\}$, $\hat{x}^\rho = \argmin_{y \in \RR^n} \{ f(y) + \frac{m + \rho }{2}\|y - x\|^2 \}$, and $w = \rho (x -\hat{x}^\rho)$. Then we have the following
    \begin{itemize}
        \item The vector $w$ is an $\Delta_x$-inexact subgradient of $f$ at $x$, i.e., $w \in \partial_{\Delta_x} f(x)$;
        \item The vector $w$ can be bounded as $\|w\| \leq  \sqrt{ 2\rho \Delta_x}$. Consequently, the point $x$ is a $(\Delta_x,\sqrt{ 2\rho \Delta_x})$-inexact stationary point and a $(\sqrt{\frac{2 \Delta_x \alpha^2 }{\rho}},\alpha)$-Moreau stationary point.
\end{itemize}

\end{lemma}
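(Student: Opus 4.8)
The plan is to pass to the convex function $\phi(y) := f(y) + \frac{m}{2}\|y - x\|^2$, which is convex precisely because $f$ is $m$-weakly convex, and to reinterpret $\hat{x}^\rho$ as the proximal point of $\phi$ at the center $x$. Since $\alpha = m + \rho$, we have $f(y) + \frac{\alpha}{2}\|y-x\|^2 = \phi(y) + \frac{\rho}{2}\|y-x\|^2$, so $\hat{x}^\rho = \argmin_y \{\phi(y) + \frac{\rho}{2}\|y-x\|^2\}$. The optimality condition for this strongly convex problem reads $0 \in \partial\phi(\hat{x}^\rho) + \rho(\hat{x}^\rho - x)$, i.e. $w = \rho(x - \hat{x}^\rho) \in \partial\phi(\hat{x}^\rho)$. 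I would also record the $\phi$-form of the proximal gap: substituting $f(\hat{x}^\rho) = \phi(\hat{x}^\rho) - \frac{m}{2}\|\hat{x}^\rho - x\|^2$ and using $\frac{m}{2} - \frac{\alpha}{2} = -\frac{\rho}{2}$ in the definition of $\Delta_x$ gives
\[
\Delta_x = \phi(x) - \phi(\hat{x}^\rho) - \tfrac{\rho}{2}\|x - \hat{x}^\rho\|^2 .
\]

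For the first bullet, I would first reduce the claim $w \in \partial_{\Delta_x} f(x)$ to a purely convex statement about $\phi$. Unfolding the definition of the inexact subdifferential in \cref{eq:inexact-subdifferential}, substituting $f(y) = \phi(y) - \frac{m}{2}\|y-x\|^2$ and $f(x)=\phi(x)$, and cancelling the common $-\frac{m}{2}\|y-x\|^2$ term, the claim is equivalent to the convex $\Delta_x$-subgradient inequality $\phi(y) \geq \phi(x) + \innerproduct{w}{y-x} - \Delta_x$ for all $y \in \RR^n$. To establish this, start from the ordinary subgradient inequality $\phi(y) \geq \phi(\hat{x}^\rho) + \innerproduct{w}{y-\hat{x}^\rho}$, valid for all $y$ since $w \in \partial\phi(\hat{x}^\rho)$. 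Cancelling the common term $\innerproduct{w}{y}$ between this bound and the target leaves the $y$-free inequality $\phi(x) - \phi(\hat{x}^\rho) - \innerproduct{w}{x-\hat{x}^\rho} \leq \Delta_x$. Using $\innerproduct{w}{x-\hat{x}^\rho} = \rho\|x-\hat{x}^\rho\|^2$ together with the $\phi$-form of $\Delta_x$, this collapses to $-\frac{\rho}{2}\|x-\hat{x}^\rho\|^2 \leq 0$, which is immediate.

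For the second bullet, I would apply the same subgradient inequality at $y = x$ to get $\phi(x) - \phi(\hat{x}^\rho) \geq \rho\|x-\hat{x}^\rho\|^2$; plugging this into the $\phi$-form of $\Delta_x$ yields $\Delta_x \geq \frac{\rho}{2}\|x-\hat{x}^\rho\|^2 = \frac{1}{2\rho}\|w\|^2$, hence $\|w\| \leq \sqrt{2\rho\Delta_x}$. The two stationarity consequences then follow mechanically: since $w \in \partial_{\Delta_x} f(x)$ from the first bullet, $\Dist(0, \partial_{\Delta_x} f(x)) \leq \|w\| \leq \sqrt{2\rho\Delta_x}$, giving $(\sqrt{2\rho\Delta_x}, \Delta_x)$-inexact stationarity; and since $\nabla f_\alpha(x) = \alpha(x - \hat{x}^\rho) = \frac{\alpha}{\rho}w$ by \cref{lemma:gradient-moreau}, $\|\nabla f_\alpha(x)\| = \frac{\alpha}{\rho}\|w\| \leq \sqrt{2\alpha^2\Delta_x/\rho}$, which is the claimed Moreau stationarity.

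The only delicate point is the translation in the first bullet: one must verify that the approximation error measured \emph{at the center} $x$ is exactly the proximal gap $\Delta_x$, rather than the (zero) gap at $\hat{x}^\rho$ itself. Once $\phi$ and the $\phi$-form identity for $\Delta_x$ are in place, this is elementary; the $m$-weak convexity enters only through the convexity of $\phi$ and through the cancellation of the $\frac{m}{2}\|y-x\|^2$ terms.
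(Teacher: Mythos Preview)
Your proof is correct and follows essentially the same route as the paper's. The only difference is packaging: the paper works directly with $f$, starting from $v=\alpha(x-\hat{x}^\rho)\in\partial f(\hat{x}^\rho)$ and the weak-convexity subgradient inequality centered at $\hat{x}^\rho$, then recenters to $x$ via an explicit polarization identity; you instead pass to the convexified function $\phi(\cdot)=f(\cdot)+\tfrac{m}{2}\|\cdot-x\|^2$, which absorbs all the $m$-terms up front and reduces the first bullet to a one-line convex $\epsilon$-subgradient computation. Your version is a bit cleaner for exactly this reason, but the underlying argument is identical.
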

\begin{proof}
    Let $\hat{x}^\rho = \argmin_{y} \{f(y) + \frac{m+\rho}{2}\|y-x\|^2 \}$. From the optimality condition, we know 
\begin{align*}
     (m + \rho) (x -\hat{x}^\rho) \in \partial f (\hat{x}^\rho).
\end{align*}
Let $v = (m + \rho) (x -\hat{x}^\rho)$ and $w = \rho (x -\hat{x}^\rho)$. As $f$ is $m$-weakly convex, for all $y \in \RR^n$, \cref{eq:Frechet-subdifferetnial-concrete} shows that 
\begin{align*}
     f(y) + \frac{m}{2}\|y - \hat{x}^\rho\|^2  \geq  f(\hat{x}^\rho) + \innerproduct{v}{y-\hat{x}^\rho},
\end{align*}
which is equivalent to 
\begin{align*}
    f(y) + \frac{m}{2}\|y - \hat{x}^\rho\|^2 + \frac{m}{2}\|y - x \|^2 \geq \frac{m}{2}\|y-x\|^2+  f(\hat{x}^\rho) + \innerproduct{v}{y-\hat{x}^\rho}.
\end{align*}
Hence, we have
\begin{align*}
     f(y) + \frac{m}{2}\|y - x\|^2 &   \geq \frac{m}{2}\|y-x\|^2  -  \frac{m}{2}\|y - \hat{x}^\rho \|^2  + f(x) - f(x)+  f(\hat{x}^\rho) + \innerproduct{v}{y-\hat{x}^\rho}\\
    & \overset{(a)}{=} f(x)+ \frac{m}{2}\|x-\hat{x}^\rho\|^2 + m\innerproduct{\hat{x}^\rho - x}{y-\hat{x}^\rho}   - f(x)+  f(\hat{x}^\rho) + \innerproduct{v}{y-\hat{x}^\rho} \\
    & \overset{(b)}{=} f(x)+ \frac{m}{2}\|x-\hat{x}^\rho\|^2 + \rho \innerproduct{ x - \hat{x}^\rho }{y-\hat{x}^\rho}   - f(x)+  f(\hat{x}^\rho)  \\
    &  \overset{(c)}{=} f(x)+ \frac{m + \rho }{2}\|x-\hat{x}^\rho\|^2 +  \innerproduct{ w}{y-x}   - f(x)+  f(\hat{x}^\rho) \\
     & = f(x)+ \innerproduct{ w }{y-x}  - \left( f(x) -   f(\hat{x}^\rho) - \frac{m + \rho }{2}\|x-\hat{x}^\rho\|^2 \right)\\
     & = f(x)+ \innerproduct{ w }{y-x}  -\Delta_x,
\end{align*}
where $(a)$ applies the identity $2m \innerproduct{a}{b} = m\|a + b\|^2 - m \|a\|^2 - m\|b\|^2$ with $a +b = y-x, a = \hat{x}^\rho -x,$ and $b = y - \hat{x}^\rho$, $(b)$ uses $\rho ( x - \hat{x}^\rho ) =  v - m(x -\hat{x}^\rho ) $, and $(c)$ adds and subtracts the term $\rho \innerproduct{x-\hat{x}^\rho}{x}$. Hence, we have $w \in \partial_{\Delta_x} f(x)$.

On the other hand, due to the $\rho$-strong convexity of $y \mapsto f(y) + \frac{m+\rho}{2}\|y-x\|^2 $, we have 
\begin{align*}
    \frac{1}{2\rho}\|w\|^2= \frac{\rho}{2}\|x - \hat{x}^\rho\|^2 \leq \Delta_x,
\end{align*}
implying $\|w\| \leq \sqrt{2\rho\Delta_x}$. Thus, by \Cref{def:Regularized-stationary}, the point $x$ is a $(\Delta_x,\sqrt{2\rho\Delta_x})$-inexact stationary point.

Lastly, using the gradient of the Moreau envelope \cref{eq:gradient-Moreau}, we can further deduce that $\| \nabla f_{\alpha} (x) \| \leq \sqrt{\frac{2 \Delta_x \alpha^2 }{\rho}},$ completing the proof.
\end{proof}

\Cref{lemma:stationary-conversion-R2M-2} establishes the implication from $(\eta,\epsilon)$-inexact stationarity to $(\delta,\alpha)$-Moreau stationarity. We next show that $(\delta,\alpha)$-Moreau stationarity implies $(\eta,\epsilon)$-inexact stationarity, under the Lipschitz continuity assumption.

\begin{lemma}[$(\delta,\alpha)$-Moreau stationarity to $(\eta,\epsilon)$-inexact stationarity]
\label{lemma:MS-2-IS}
     Let $f:\RR^n \to \RR \cup\{+\infty\}$ be a $m$-weakly convex function, $x \in \RR^n$, and $\alpha = m + \rho$ with $\rho > 0$. If $f$ is $L$-Lipschitz continuous and $x$ is a $(\delta,\alpha)$-Moreau stationary point, then $x$ is a $(L \frac{\epsilon}{\alpha},\sqrt{2\rho L \frac{\epsilon}{\alpha}})$-inexact stationary point.
\end{lemma}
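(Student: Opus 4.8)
The plan is to route the argument through the proximal gap $\Delta_x = f(x) - f_\alpha(x)$, since \cref{lemma:prox-gap-consq} already packages the conversion from a gap bound to $(\eta,\epsilon)$-inexact stationarity, and all that remains is to control $\Delta_x$ by the Moreau stationarity measure $\delta$. First I would set $\hat{x} = \argmin_{y}\{f(y) + \frac{\alpha}{2}\|y-x\|^2\}$ and record the standard facts from \cref{eq:Moreau-property}: namely $\|\hat{x}-x\| = \|\nabla f_\alpha(x)\|/\alpha \le \delta/\alpha$ by \cref{eq:Moreau-property-1}, and $f(\hat{x}) \le f(x)$ by \cref{eq:Moreau-property-2}. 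These are immediate from the definition of $\hat{x}$ and the hypothesis $\|\nabla f_\alpha(x)\| \le \delta$.

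The key quantitative step is to bound the proximal gap using Lipschitz continuity. Since $f_\alpha(x) = f(\hat{x}) + \frac{\alpha}{2}\|\hat{x}-x\|^2 \ge f(\hat{x})$, I have $\Delta_x \le f(x) - f(\hat{x})$, and $L$-Lipschitzness gives $f(x) - f(\hat{x}) \le L\|x-\hat{x}\| \le L\delta/\alpha$, hence $\Delta_x \le L\delta/\alpha$. This is precisely where the Lipschitz assumption is indispensable: without it, the proximal gap $\Delta_x$ need not be controlled by the Moreau stationarity measure at all, and the conclusion would fail.

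With the gap bounded, I would invoke \cref{lemma:prox-gap-consq}, which certifies that $x$ is inexactly stationary with inexactness level $\Delta_x$ and distance bound $\sqrt{2\rho\Delta_x}$ (the latter coming from the embedded $\rho$-strong convexity estimate $\|\rho(x-\hat{x})\| \le \sqrt{2\rho\Delta_x}$ on the candidate subgradient $w = \rho(x-\hat{x})$). Finally I would use the monotonicity of the inexact subdifferential: enlarging the inexactness level only enlarges the set $\partial_\epsilon f(x)$, so $\Dist(0,\partial_\epsilon f(x))$ is nonincreasing in $\epsilon$. Combining this with $\Delta_x \le L\delta/\alpha$ substitutes $L\delta/\alpha$ in for $\Delta_x$ and yields the two quantities $L\delta/\alpha$ and $\sqrt{2\rho L\delta/\alpha}$, which together give the asserted $(\eta,\epsilon)$-inexact stationarity (with $\epsilon = \alpha - m = \rho$ entering through the strong-convexity constant).

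I expect the only real obstacle to be careful bookkeeping of which of the two quantities is the distance bound $\eta$ (bounding $\Dist(0,\partial_\epsilon f(x))$) and which is the inexactness level $\epsilon$, since the estimates themselves are routine. If a self-contained proof is preferred over citing \cref{lemma:prox-gap-consq}, I would instead exhibit $w = \rho(x-\hat{x})$ directly, establish $w \in \partial_{\Delta_x} f(x)$ from the weak-convexity subgradient inequality \cref{eq:Frechet-subdifferetnial-concrete} at $\hat{x}$ for the subgradient $\alpha(x-\hat{x}) \in \partial f(\hat{x})$ together with a cross-term rearrangement, and bound $\|w\| \le \sqrt{2\rho\Delta_x}$ by the $\rho$-strong convexity of $y \mapsto f(y) + \frac{\alpha}{2}\|y-x\|^2$; the Lipschitz bound on $\Delta_x$ then closes the argument exactly as above.
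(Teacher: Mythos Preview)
Your proposal is correct and follows essentially the same route as the paper: bound the proximal gap $\Delta_x \le f(x)-f(\hat x) \le L\|x-\hat x\| \le L\delta/\alpha$ via Lipschitz continuity and the Moreau-stationarity hypothesis, then invoke \cref{lemma:prox-gap-consq} to convert the gap bound into $(\eta,\epsilon)$-inexact stationarity. Your remark about bookkeeping is apt---the paper's own statement of \cref{lemma:prox-gap-consq} (and hence of this lemma) swaps the roles of the two parameters relative to \Cref{def:Regularized-stationary}; the monotonicity argument you spell out is exactly what is needed to pass from $\Delta_x$ to $L\delta/\alpha$ in both slots.
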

\begin{proof}
    Since $x$ is a $(\delta,\alpha)$-Moreau stationary point, from \cref{eq:gradient-Moreau}, we have $\alpha\|x - \hat{x}\| \leq  \delta$, where $\hat{x} = \argmin_{y}\{f(y) + \frac{\alpha}{2}\|y - x\|^2\}$. It follows that
\begin{align*}
    \Delta_x := f(x) -  f(\hat{x}) - \frac{\alpha }{2}\|x-\hat{x}\|^2 \leq L\| x - \hat{x}\| \leq L \frac{\delta}{\alpha},
\end{align*}
where the first inequality uses the $L$-Lipschitz continuous assumption.

On the other hand, \cref{lemma:prox-gap-consq} tells us that $x$ is a $(\Delta_x,\sqrt{2\rho\Delta_x})$-inexact stationary point. Consequently, we deduce that $x$ is a $(L \frac{\delta}{\alpha},\sqrt{2\rho L \frac{\delta}{\alpha}})$-inexact stationary point.
\end{proof}

To conclude this section, we present a sketch of the proof showing that the function $x \mapsto \min_{v \in \partial_{\epsilon}f(x)}\|v\|$ is continuous for any given $\epsilon > 0$, provided $f$ is convex. The proof is adapted from the recent paper \cite[Proposition 4]{li2025subgradient}, which utilizes results from \cite{asplund1969gradients,rockafellar2009variational}.
\begin{lemma} \label{lemma:continuous-subdifferential}
    Let $f:\RR^n \to \RR$ be a convex function and $\epsilon > 0$. The function $x \mapsto \min_{v \in \partial_{\epsilon}f(x)}\|v\|$ is continuous for any given $\epsilon > 0$.
\end{lemma}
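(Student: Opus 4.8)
The plan is to establish the result by showing that the set-valued map $S(x) := \partial_\epsilon f(x)$ is \emph{outer semicontinuous}, \emph{inner semicontinuous}, and \emph{locally bounded}, and then transferring these properties to $\phi(x) := \min_{v \in \partial_\epsilon f(x)} \|v\| = \mathrm{dist}(0, S(x))$. Since $f$ is convex (the case $m = 0$), I would work throughout with the conjugate characterization
\[
    v \in \partial_\epsilon f(x) \iff g(x,v) := f(x) + f^*(v) - \langle v, x\rangle \le \epsilon,
\]
where $f^*$ is the convex conjugate. The two key structural facts are that $g(\,\cdot\,,v)$ is continuous in $x$ (a finite convex $f$ on $\RR^n$ is continuous) and that $g(x,\,\cdot\,)$ is convex and lower semicontinuous in $v$ (since $f^*$ is). Because $f$ is finite everywhere, $\partial f(x)\neq\emptyset$ and $\partial f(x)\subseteq\partial_\epsilon f(x)$, so $S(x)$ is nonempty, closed, and convex; moreover local Lipschitzness of $f$ yields a uniform bound $\|v\|\le L + \epsilon/r$ for every $v\in S(x')$ with $x'$ in a ball $B(x,r)$, which gives local boundedness.

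First I would record outer semicontinuity: if $x_k\to x$, $v_k\in S(x_k)$, and $v_k\to v$, then passing to the limit in $g(x_k,v_k)\le\epsilon$ — using continuity of $f$, the convergence $\langle v_k,x_k\rangle\to\langle v,x\rangle$, and lower semicontinuity of $f^*$ — yields $g(x,v)\le\epsilon$, i.e.\ $v\in S(x)$. Combined with local boundedness this gives \emph{lower} semicontinuity of $\phi$: choosing minimal-norm elements $v_k\in S(x_k)$ with $\|v_k\|=\phi(x_k)$ along a sequence realizing $\liminf_k\phi(x_k)$, local boundedness lets me extract a convergent subsequence $v_k\to v\in S(x)$, whence $\phi(x)\le\|v\|=\liminf_k\phi(x_k)$.

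The crux, and where $\epsilon>0$ is genuinely essential, is inner semicontinuity. Given $v\in S(x)$, I would fix any exact subgradient $v_0\in\partial f(x)$, so $g(x,v_0)=0$, and form the segment $v_t=(1-t)v+t v_0$. By convexity of $g(x,\,\cdot\,)$, $g(x,v_t)\le(1-t)\epsilon$, so for every $t\in(0,1]$ the point $v_t$ satisfies the defining inequality with strict slack $t\epsilon$; continuity of $g(\,\cdot\,,v_t)$ then furnishes a radius $r_t>0$ with $g(x',v_t)<\epsilon$, i.e.\ $v_t\in S(x')$, whenever $\|x'-x\|<r_t$. For $x_k\to x$, a diagonal choice $t=t_k\to0$ then produces $v_k\in S(x_k)$ with $\|v_k-v\|=t_k\|v_0-v\|\to0$, establishing inner semicontinuity. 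Applying this to a minimal-norm $v^\star\in S(x)$ gives $v_k\in S(x_k)$ with $v_k\to v^\star$, so $\phi(x_k)\le\|v_k\|\to\phi(x)$, i.e.\ $\phi$ is \emph{upper} semicontinuous; together with the previous paragraph this yields continuity.

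The hard part is precisely this inner-semicontinuity step: the slack created by averaging a boundary element against an exact subgradient is the mechanism that repairs the failure one sees at $\epsilon=0$, where boundary elements with $g(x,v)=\epsilon$ cannot be perturbed into $S(x')$ and $\mathrm{dist}(0,\partial f(\,\cdot\,))$ is discontinuous (as for $f(x)=|x|$). This averaging device is exactly the continuity mechanism underlying \cite{asplund1969gradients} and the treatment in \cite{rockafellar2009variational}, so in the final writeup I would cite these for the continuity of the $\epsilon$-subdifferential map and present the segment argument as the self-contained core, following \cite{li2025subgradient}.
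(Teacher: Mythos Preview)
Your proposal is correct and follows the same overall skeleton as the paper's proof: establish that the set-valued map $x\mapsto\partial_\epsilon f(x)$ is continuous (outer and inner semicontinuous, locally bounded), then transfer this to the minimal-norm selection. The paper, however, treats the continuity of $\partial_\epsilon f$ as a black box, citing \cite{asplund1969gradients} for Hausdorff continuity and \cite{rockafellar2009variational} for the equivalence with set-valued continuity under local boundedness and for the convergence of projections. You instead unpack this: you work through the conjugate characterization $g(x,v)=f(x)+f^*(v)-\langle v,x\rangle\le\epsilon$, prove outer semicontinuity from lower semicontinuity of $f^*$, and---most importantly---give the segment argument $v_t=(1-t)v+tv_0$ with $v_0\in\partial f(x)$ to manufacture strict slack $g(x,v_t)\le(1-t)\epsilon<\epsilon$ and obtain inner semicontinuity. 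This is precisely the mechanism inside the Asplund--Rockafellar result, and it makes transparent exactly where $\epsilon>0$ is needed, which the paper's citation-based proof leaves implicit. Your route is thus more elementary and self-contained; the paper's is shorter but relies on the reader chasing references.
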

\begin{proof}
    The proof can be established through the following steps:
    \begin{itemize}
        \item Since $f$ is locally bound, for any $\epsilon > 0$, the set value map $\partial_{\epsilon} f$ is also locally bounded \cite[Corollary 1]{asplund1969gradients};
        \item Fix any $\bar x \in \RR^n$. Since $f$ is convex, it holds that $\lim_{x \to \bar x} \mathbb{H}(\partial_{\epsilon} f(x), \partial_{\epsilon} f(\bar x)) = 0$ \cite[Proposition 5]{asplund1969gradients}, where $\mathbb{H}$ is the Hausdorff distance (i.e., Pompeiu-Hausdorff distance) defined as $$\displaystyle \mathbb{H}(A,B) := \max \left  \{  \sup_{x\in A}\; \Dist(x,B) , \sup_{y \in B} \;\Dist(y,A)\right \}$$ for two closed sets $A, B \subseteq \RR^n$.
        \item Since $\partial_{\epsilon} f$ is locally bounded, it holds that $\lim_{x \to \bar x} \mathbb{H}(\partial_{\epsilon} f(x), \partial_{\epsilon} f(\bar x)) = 0$ if and only if the set value map $ \partial_{\epsilon} f$ is continuous at $\bar x$ \cite[Corollary 5.21]{rockafellar2009variational};
        \item Since $\partial_{\epsilon} f$ is continuous at $\bar x$, for any sequence $\{ x_k\} $ converges to $\bar x$, we have $\partial_{\epsilon} f(x_k) \to \partial_{\epsilon} f(x)$. The minimal norm element also converges as $\lim_{x \to \bar x} \argmin_{v \in \partial_{\epsilon}} f(x) = \argmin_{v \in \partial_{\epsilon}} f(\bar x)$ \cite[Proposition 4.9]{rockafellar2009variational}. This completes the proof.
    \end{itemize} 
\end{proof}

\section{Technical Proofs in \Cref{section:PMB-weakly}}
\label{Apx-section:PMB-weakly}
In this section, we complete the missing proofs in \Cref{section:PMB-weakly}.
\subsection{Proof for \cref{lemma:null-step-improvement}}
\label{subsection:null-step-improvement}
Our analysis is largely inspired by \cite[Section 5.3]{diaz2023optimal}. The proof follows from writing out the solution of the subproblem 
\begin{align}
    \label{eq:essential-subproblem}
    \min_y \left \{ \hat{f}_{j+1}(y) + \frac{\rho}{2}\|y -x_k\|^2 \right \},
\end{align}
where $ \hat{f}_{j+1}$ is the essential model
\begin{align*}
    \hat{f}_{j+1}(y) = \max \left \{ \tilde{f}_j(z_{j+1}) + \innerproduct{s_{j+1}}{ y- z_{j+1}}, f(z_{j+1}) + \frac{m}{2}\|z_{j+1} - x_{k}\|^2 + \innerproduct{ g_{j+1}}{y  - z_{j+1}} \right \}.
\end{align*}
The optimal solution for \cref{eq:essential-subproblem} can be found as (see \cref{subsection:closed-form-sol})
\begin{align}
    y^\star &=  x_k -  \frac{1}{\rho}((1-\theta^\star)s_{j+1} + \theta^\star g_{j+1}), \label{eq:ystar}\\
    \theta^\star &= \min\left \{1, \frac{\rho(f(z_{j+1}) + \frac{m}{2}\|z_{j+1} - x_{k}\|^2 - \tilde{f}_j(z_{j+1}))}{\|s_{j+1} - g_{j+1}\|^2} \right \}. \nonumber
\end{align}
We also note that 
\begin{align}
    \frac{1}{\rho}s_{j+1} & =x_k - z_{j+1}, \label{eq:update-1}  \\
    y^\star - z_{j+1} & = x_k - \frac{1}{\rho}( s_{j+1} + \theta^\star (g_{j+1} - s_{j+1}) ) -z_{j+1} = -\frac{\theta^\star}{\rho} (g_{j+1} - s_{j+1}) .\label{eq:update-2}
\end{align}
Therefore, it follows that 
{
\allowdisplaybreaks
\begin{align*}
     & \eta_{j+1} \\
    \geq\; &  \hat{f}_{j+1}(y^\star) + \frac{\rho}{2}\|y^\star -x_k\|^2 \\
    \geq \; & (1- \theta^\star)(\tilde{f}_j(z_{j+1}) + \innerproduct{s_{j+1}}{ y^\star- z_{j+1}}) \\
     & \quad +  \theta^\star ( f(z_{j+1}) + \frac{m}{2}\|z_{j+1} - x_{k}\|^2 + \innerproduct{ g_{j+1}}{y^\star - z_{j+1}} )  +\frac{\rho}{2}\|y^\star -x_k\|^2\\
    = \; &\tilde{f}_j(z_{j+1}) + \theta^\star (f(z_{j+1}) + \frac{m}{2}\|z_{j+1} - x_{k}\|^2  - \tilde{f}_j(z_{j+1}) ) \\
    & \quad + \innerproduct{s_{j+1} + \theta^\star( g_{j+1} - s_{j+1})}{y^\star- z_{j+1}} +  \frac{\rho}{2}\|y^\star -x_k\|^2 \\
    = \; & \tilde{f}_j(z_{j+1}) + \theta^\star (f(z_{j+1}) + \frac{m}{2}\|z_{j+1} - x_{k}\|^2  - \tilde{f}_j(z_{j+1}) ) \\
   &  \quad + \innerproduct{s_{j+1} + \theta^\star( g_{j+1} - s_{j+1})}{-\frac{\theta^\star}{\rho} (g_{j+1} - s_{j+1})} +  \frac{1}{2\rho}\|s_{j+1} + \theta^\star( g_{j+1} - s_{j+1})\|^2 \\
    = \; & \tilde{f}_j(z_{j+1}) + \theta^\star (f(z_{j+1}) + \frac{m}{2}\|z_{j+1} - x_{k}\|^2  - \tilde{f}_j(z_{j+1}) ) - \frac{(\theta^\star)^2}{2\rho} \|g_{j+1} - s_{j+1}\|^2 + \frac{1}{2\rho}\|s_{j+1}\|^2 \\
     = \; & \tilde{f}_j(z_{j+1}) + \theta^\star (f(z_{j+1}) + \frac{m}{2}\|z_{j+1} - x_{k}\|^2  - \tilde{f}_j(z_{j+1}) ) - \frac{(\theta^\star)^2}{2\rho} \|g_{j+1} - s_{j+1}\|^2 + \frac{\rho}{2}\|z_{j+1} -x_k\|^2 \\
     = \;&\eta_{j} + \underbrace{ \theta^\star (f(z_{j+1}) + \frac{m}{2}\|z_{j+1} - x_{k}\|^2  - \tilde{f}_j(z_{j+1}) ) - \frac{(\theta^\star)^2}{2\rho} \|g_{j+1} - s_{j+1}\|^2}_{\Phi},
\end{align*}
}
where the first inequality is due to $\tilde{f}_{j+1} \geq \hat{f}_{j+1} $, the second equality uses \cref{eq:update-2,eq:ystar}, the fourth equality uses \cref{eq:update-2}, and the last equality uses the definition of $\eta_{j} =\min_{x} \{  \tilde{f}_j(x) + \frac{\rho}{2}\|x-x_k\|^2 \}= \tilde{f}_j(z_{j+1})+  \frac{\rho}{2}\|z_{j+1} -x_k\|^2$.

Using the definition of $\theta^\star$, we can further lower bound the improvement $\Phi$. Specifically, if $\theta^\star = 1$ (in this case, $\rho(f(z_{j+1}) + \frac{m}{2}\|z_{j+1} - x_{k}\|^2 - \tilde{f}_j(z_{j+1})) \geq \|g_{j+1} - s_{j+1}\|^2$), then it holds that
\begin{align*}
    \Phi & =  (f(z_{j+1}) + \frac{m}{2}\|z_{j+1} - x_{k}\|^2  - \tilde{f}_j(z_{j+1}) ) - \frac{1}{2\rho} \|g_{j+1} - s_{j+1}\|^2 \\
    & \geq  \frac{1}{2}(f(z_{j+1}) + \frac{m}{2}\|z_{j+1} - x_{k}\|^2  - \tilde{f}_j(z_{j+1}) ) 
\end{align*}
On the other hand, if $\theta^\star = \frac{\rho(f(z_{j+1}) + \frac{m}{2}\|z_{j+1} - x_{k}\|^2 - \tilde{f}_j(z_{j+1}))}{\|s_{j+1} - g_{j+1}\|^2} $, we have 
\begin{align*}
    \Phi =  \frac{\rho(f(z_{j+1}) + \frac{m}{2}\|z_{j+1} - x_{k}\|^2 - \tilde{f}_j(z_{j+1}))^2}{2\|s_{j+1} - g_{j+1}\|^2}.
\end{align*}
Hence, the improvement $\Phi$ can be bounded as
\begin{align*}
    \Phi \geq \frac{1}{2} \min \left \{f(z_{j+1}) + \frac{m}{2}\|z_{j+1} - x_{k}\|^2  - \tilde{f}_j(z_{j+1})  ,  \frac{\rho(f(z_{j+1}) + \frac{m}{2}\|z_{j+1} - x_{k}\|^2 - \tilde{f}_j(z_{j+1}))^2}{\|s_{j+1} - g_{j+1}\|^2} \right \},
\end{align*}
proving \cref{eq:strict-improvement}.

To prove \cref{eq:strict-improvement-2}, it is sufficient to prove that if \cref{eq:descent-condition-main} is not satisfied at $z_{j+1}$, then 
\begin{align}
    \label{eq:key:approximation-gap}
    f(z_{j+1}) + \frac{m}{2}\|z_{j+1} - x_k\|^2 - \tilde{f}_{j}(z_{j+1}) \geq (1-\beta) \tilde{\Delta}_j.
\end{align}
Suppose \cref{eq:key:approximation-gap} holds, the improvement $\Phi$ then can be relaxed as 
\begin{align*}
    \Phi \geq  \frac{1}{2} \min \left \{(1-\beta) \tilde{\Delta}_j  ,  \frac{\rho (1-\beta)^2 \tilde{\Delta}_j^2}{\|s_{j+1} - g_{j+1}\|^2} \right \}.
\end{align*}
Indeed, if \cref{eq:descent-condition-main} is not satisfied at $z_{j+1}$, then 
\begin{align*}
f(z_{j+1}) + \frac{m}{2}\|z_{j+1} - x_k\|^2 & > f(x_k) - \beta \left(f(x_k) - \tilde{f}_{j}(z_{j+1})\right) \\
& = (1-\beta)(f(x_k) - \tilde{f}(z_{j+1}) )+\tilde{f}_{j}(z_{j+1}) \\
& \geq (1-\beta)(f(x_k) - \tilde{f}(z_{j+1}) - \frac{\rho}{2}\|z_{j+1}-x_k\|^2 )+\tilde{f}_{j}(z_{j+1}) \\
& = (1-\beta) \tilde{\Delta}_j+ \tilde{f}_{j}(z_{j+1}),
\end{align*}
where the first equality adds and subtracts $\tilde{f}(z_{j+1})$, the second inequality subtracts $(1-\beta) \frac{\rho}{2}\|z_{j+1}-x_k\|^2$, and the last equality applies the definition of the approximated gap $\tilde{\Delta}_j =  f(x_{k}) - \eta_j$. The above is then equivalent to \cref{eq:key:approximation-gap}, completing the proof.

\subsection{Optimal solution for the essential subproblem \cref{eq:essential-subproblem}}
\label{subsection:closed-form-sol}
This subsection gives the derivation of the closed-form solution to \cref{eq:essential-subproblem}, i.e.,
\begin{align*}
    \min_y \left \{ \hat{f}_{j+1}(y) + \frac{\rho}{2}\|y -x_k\|^2 \right \}.
\end{align*}
Another proof that directly verifies the optimality condition can be found in \cite[Appendix B]{diaz2023optimal}. For notational simplicity, we let $f_1 = \tilde{f}_j(z_{j+1})$, $f_2 = f(z_{j+1}) + \frac{m}{2}\|z_{j+1} - x_{k}\|^2$, $v_1 = s_{j+1} $ and $v_2 = g_{j+1}$. For a $\theta \in [0,1]$, we define the shorthands $f(\theta) = (1-\theta) f_1 + \theta f_2$ and $v(\theta) =  (1-\theta) v_1 +  \theta v_2$. The function $\hat{f}_{j+1}$ can be rewritten as 
\begin{align*}
    \hat{f}_{j+1}(y) = \max \left \{ f_1 + \innerproduct{v_1}{ y - z_{j+1}},f_2 + \innerproduct{v_2}{ y - z_{j+1}} \right \}.
\end{align*}
\Cref{eq:essential-subproblem} then becomes 
\begin{align}
     & \min_y \left \{ \hat{f}_{j+1}(y) + \frac{\rho}{2}\|y -x_k\|^2 \right \} \nonumber \\
    =  & \min_{y}  \max \left \{ f_1 + \innerproduct{v_1}{ y - z_{j+1}},f_2 + \innerproduct{v_2}{ y - z_{j+1}} \right \} + \frac{\rho}{2}\|y -x_k\|^2 \nonumber \\
    = &\min_{y}  \max_{\theta \in [0,1]} (1- \theta) (f_1 + \innerproduct{v_1}{ y - z_{j+1}}) + \theta (f_2 + \innerproduct{v_2}{ y - z_{j+1}})  + \frac{\rho}{2}\|y -x_k\|^2 \nonumber\\
    = &\min_{y}  \max_{\theta \in [0,1]}  f(\theta) +  \innerproduct{v(\theta)}{ y- z_{j+1}} + \frac{\rho}{2}\|y -x_k\|^2 \nonumber \\
     = &\max_{\theta \in [0,1]}  \min_{y}   f(\theta) +  \innerproduct{v(\theta)}{ y- z_{j+1}} + \frac{\rho}{2}\|y -x_k\|^2, \label{eq:min-max}
\end{align}
where the second equality uses the fact that $f_1 + \innerproduct{v_1}{ \cdot - z_{j+1}}$ and $f_2 + \innerproduct{v_2}{ y - z_{j+1}}$ are affine, and the last equality swaps the order of minimization and maximization since $[0,1]$ is convex and bounded \cite[Corollary 37.3.2]{rockafellar1997convex}. Fix a $\theta \in [0,1]$ in \cref{eq:min-max}. The optimal solution $y^\star$ in \cref{eq:min-max} can be found as 
\begin{align*}
    y^\star & = x_k - \frac{1}{\rho}v(\theta).
\end{align*}
Plugging this back into \cref{eq:min-max}, we arrive at the optimization
\begin{align*}
   & \max_{\theta \in [0,1]}    f(\theta) +  \innerproduct{v(\theta)}{ x_k - \frac{1}{\rho}v(\theta)- z_{j+1}} + \frac{\rho}{2}\|v(\theta)\|^2 \\
    = &\max_{\theta \in [0,1]}    f(\theta) +  \innerproduct{v(\theta)}{ x_k- z_{j+1}} - \frac{1}{2\rho}\|v(\theta)\|^2
\end{align*}
Taking the gradient with respect to $\theta$ and setting it to zero gives us 
\begin{align*}
    0 & = f_2 - f_1 + \innerproduct{v_2 - v_1}{x_k - z_{j+1}} - \frac{1}{\rho}\innerproduct{v_1 + \theta(v_2-v_1)}{v_2-v_1} \\
    & =f_2 - f_1 +  \innerproduct{v_2 - v_1}{\frac{1}{\rho} v_1} - \frac{1}{\rho}\innerproduct{v_1 + \theta(v_2-v_1)}{v_2-v_1} \\
    & = f_2 - f_1  - \frac{\theta}{\rho}\|v_1 - v_2\|^2, 
\end{align*}
where the second equality uses the update $z_{j+1} = x_k - \frac{1}{\rho}s_{j+1} = x_k - \frac{1}{\rho} v_1$. Considering the constraint $\theta \in [0,1]$, we find the optimal $\theta^\star$ is 
\begin{align*}
    \theta^\star = \min\left \{1, \frac{\rho(f_2 - f_1)}{\|v_1 - v_2\|^2} \right \} = \min\left \{1, \frac{\rho(f(z_{j+1}) + \frac{m}{2}\|z_{j+1} - x_{k}\|^2 - \tilde{f}_j(z_{j+1}))}{\|s_{j+1} - g_{j+1}\|^2} \right \}.
\end{align*}

\subsection{Proof for \cref{lemma:null-step}}
\label{subsection:standard-bundle-proof}
We first introduce a key inequality in this proof.
\begin{lemma} \label{lemma:key-improvement}
    Fix a center point $x_k \in \RR^n$. For all iteration $j\geq 1$ that \cref{eq:descent-condition-main} is not satisfied at $z_{j+1}$, it holds that
    \begin{align}
        \label{eq:key}
        \frac{1}{2\rho }\|s_{j+1}\|^2 \leq \tilde{\Delta}_j \leq \tilde{\Delta}_1  \leq \frac{1}{2\rho}\|g_1\|^2,
    \end{align}
    where $s_{j+1} = \rho (x_k - z_{j+1}) \in \tilde{f}_{j}(z_{j+1})$, $\tilde{\Delta}_j = f(x_k) - \eta_j$, and $g_1 \in \partial f(x_k)$.
\end{lemma}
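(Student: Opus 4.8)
The plan is to prove the chain \cref{eq:key} by establishing its three inequalities separately, each of which reduces to a short computation once the right identity for $\eta_j$ is in place. The unifying observation is that, since the minimum defining $\eta_j$ in \cref{eq:lower-approximation-j} is attained at $z_{j+1}$ and $s_{j+1}=\rho(x_k-z_{j+1})$ forces $\|z_{j+1}-x_k\|=\|s_{j+1}\|/\rho$, we have the clean identity
\begin{equation*}
\eta_j=\tilde f_j(z_{j+1})+\frac{\rho}{2}\|z_{j+1}-x_k\|^2=\tilde f_j(z_{j+1})+\frac{1}{2\rho}\|s_{j+1}\|^2 ,
\end{equation*}
so that $\tilde\Delta_j=f(x_k)-\tilde f_j(z_{j+1})-\tfrac{1}{2\rho}\|s_{j+1}\|^2$. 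Every inequality will be read off from this expression.

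For the leftmost inequality $\tfrac{1}{2\rho}\|s_{j+1}\|^2\le\tilde\Delta_j$, I would first invoke convexity of $\tilde f_j$ with the subgradient $s_{j+1}\in\partial\tilde f_j(z_{j+1})$, evaluated at the center $x_k$, to get $\tilde f_j(x_k)\ge \tilde f_j(z_{j+1})+\langle s_{j+1},x_k-z_{j+1}\rangle=\tilde f_j(z_{j+1})+\tfrac{1}{\rho}\|s_{j+1}\|^2$. Then I would apply the lower-approximation property \cref{eq:lower-approximation-1} at $x=x_k$, where the quadratic term vanishes, to obtain $\tilde f_j(x_k)\le f(x_k)$. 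Combining these two facts yields $f(x_k)-\tilde f_j(z_{j+1})\ge\tfrac{1}{\rho}\|s_{j+1}\|^2$, and substituting into the expression for $\tilde\Delta_j$ gives $\tilde\Delta_j\ge\tfrac{1}{\rho}\|s_{j+1}\|^2-\tfrac{1}{2\rho}\|s_{j+1}\|^2=\tfrac{1}{2\rho}\|s_{j+1}\|^2$, as desired.

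The middle inequality $\tilde\Delta_j\le\tilde\Delta_1$ is equivalent to $\eta_1\le\eta_j$ because $f(x_k)$ is fixed, so I would appeal to the monotonicity $\eta_1\le\eta_2\le\cdots\le\eta_j$ already established right after \cref{eq:requirement-2}; this holds because every null-step update in \Cref{alg:Proxi-descent-subproblem} builds $\tilde f_{j+1}$ satisfying the aggregation condition \cref{eq:requirement-2}, which is exactly what drives $\eta_{j+1}\ge\eta_j$. The rightmost equality-type bound $\tilde\Delta_1\le\tfrac{1}{2\rho}\|g_1\|^2$ I would get by direct computation for the initial affine model $\tilde f_1(\cdot)=f(x_k)+\langle g_1,\cdot-x_k\rangle$: minimizing $\tilde f_1(x)+\tfrac{\rho}{2}\|x-x_k\|^2$ at $x=x_k-g_1/\rho$ gives $\eta_1=f(x_k)-\tfrac{1}{2\rho}\|g_1\|^2$, hence $\tilde\Delta_1=\tfrac{1}{2\rho}\|g_1\|^2$ with equality. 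None of these steps is a genuine obstacle; the only point requiring care is to make explicit that the monotonicity of $\{\eta_j\}$ is precisely what the aggregation condition guarantees during the null steps, so that the middle inequality is licensed under the hypothesis that \cref{eq:descent-condition-main} fails at each $z_{j+1}$.
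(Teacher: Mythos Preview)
Your proposal is correct and follows essentially the same route as the paper's proof: both obtain the first inequality from the subgradient/strong-convexity inequality for $\tilde f_j$ at $z_{j+1}$ combined with $\tilde f_j(x_k)\le f(x_k)$, the middle inequality from the monotonicity $\eta_1\le\cdots\le\eta_j$ driven by \cref{eq:requirement-2}, and the last from the explicit form of $\tilde f_1$. The only cosmetic differences are that the paper phrases the first step via $\rho$-strong convexity of $\tilde f_j(\cdot)+\tfrac{\rho}{2}\|\cdot-x_k\|^2$ and bounds $\tilde\Delta_1$ by Young's inequality, whereas you use the subgradient inequality directly and observe (correctly) that $\tilde\Delta_1=\tfrac{1}{2\rho}\|g_1\|^2$ holds with equality.
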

\begin{proof}
    Let $j\geq 1$ be an iteration that \cref{eq:descent-condition-main} is not satisfied at $z_{j+1}$. We can then bound the step length $\|z_{j+1} - x_{k}\|$ as follows
    \begin{align*}
        \frac{\rho}{2}\|z_{j+1} - x_{k}\|^2 &\overset{(a)}{\leq}  \tilde{f}_j(x_{k}) - \left( \tilde{f}_j(z_{j+1}) +\frac{\rho}{2}\|z_{j+1} - x_{k}\|^2   \right) \\ 
        & \leq f(x_{k}) - \left( \tilde{f}_j(z_{j+1}) +\frac{\rho}{2}\|z_{j+1} - x_{k}\|^2   \right) \\
        & =\Tilde{\Delta}_j \\
        & \overset{(b)}{\leq}\Tilde{\Delta}_{1} \\
        & = f(x_{k}) - \left( \tilde{f}_{1}(z_{2}) +\frac{\rho}{2}\|z_{2} - x_{k}\|^2   \right) \\
        &  \overset{(c)}{=} f(x_{k})  - \left( f(x_{k})  + \innerproduct{g_1}{z_{2} - x_{k}} +\frac{\rho}{2}\|z_{2} - x_{k}\|^2   \right) \\
        & \overset{(d)}{\leq} \frac{1}{2\rho}\|g_{1}\|^2
    \end{align*}
    where $(a)$ comes from the $\rho$-strong convexity of $f_j(\cdot) + \frac{\rho}{2}\|\cdot - x_k\|^2$, $(b)$ uses $\tilde{\Delta}_{i+1} \leq \tilde{\Delta}_{i}$ for all $i \geq 1$, $(c)$ applies the assumption $\tilde{f}_1(\cdot) = f(x_k) + \innerproduct{g_1}{\cdot - x_k}$, and $(d)$ applies the Young's inequality
    $\innerproduct{v}{w} \leq \frac{1}{2a}\|v\|^2 + \frac{a}{2}\|w\|^2$ for all $a > 0$ with $v = g_{1}$ and $w= z_{2} - x_{k}$. The proof is then complete by substituting $s_{j+1} = \rho (x_k - z_{j+1})$.
\end{proof}

From \cref{eq:key}, we see that  
    $$
        \frac{2 \rho \tilde{\Delta}_j }{\|g_1\|^2} \leq 1  , \quad  \|s_{j+1}\|^2 \leq  \|g_1\|^2.
    $$
    The improvement in \cref{eq:strict-improvement-2} can be relaxed as
    \begin{align*}
        \frac{1}{2} \min \left \{(1-\beta) \tilde{\Delta}_j, \frac{(1 - \beta)^2 \rho \tilde{\Delta}_j^2}{  \|g_{j+1} -s_{j+1} \|^2} \right\} & \geq \frac{1}{2} \min \left \{\frac{(1-\beta) 2 \rho \tilde{\Delta}_j^2}{\|g_{1}\|^2}, \frac{(1 - \beta)^2 \rho \tilde{\Delta}_j^2}{ 2 \|g_{j+1}\|^2 +2\| s_{j+1} \|^2} \right\} \\
         &\geq \frac{1}{2} \min \left \{\frac{(1-\beta) 2 \rho \tilde{\Delta}_j^2}{\|g_{1}\|^2}, \frac{(1 - \beta)^2 \rho \tilde{\Delta}_j^2}{ 4 \|g_{j+1}\|^2 } \right\} \\
          & \geq \frac{(1 - \beta)^2 \rho \tilde{\Delta}_j^2}{ 8  G_k^2}
    \end{align*}
    where the first inequality uses $\frac{2 \rho \tilde{\Delta}_1 }{\|g_1\|^2} \leq 1 $ and $\|v+w\|^2 \leq 2\|v\|^2 + 2 \|w\|^2$ for all $v,w \in \RR^n$, the second inequality applies $ \|s_{j+1}\|^2 \leq  \|g_1\|^2$, and the last inequality uses the definition of $G_k$.
    
    Multiplying $(-1)$ and adding $f(x_k)$  on both sides of the above inequality lead to 
    \begin{align}
        \label{eq:improve}
        \tilde{\Delta}_{j+1} \leq \tilde{\Delta}_j - \frac{(1 - \beta)^2 \rho \tilde{\Delta}_j^2}{8 G_k^2},
    \end{align}    
    which is a recursive formula and $\tilde{\Delta}_{j} \geq  \Delta_k \geq 0, \forall j\geq 1$.
    
    We next review a bound for a recurrence relation.
    \begin{lemma}[{\cite[Lemma A.1]{diaz2023optimal}}]
        \label{lemma:recursion}
        Assume that a nonnegative sequence $\{\delta_k\}_{k=1}^\infty$ satisfies the recurrence 
            \[
            \delta_{k+1} \leq \delta_k - \alpha \delta_k^q.
            \]
            Then for any $\epsilon > 0$, the inequality $\delta_k \leq \epsilon$ holds for some
            \[
            k \leq  \frac{1}{(q-1)\alpha \epsilon^{q-1}} .
            \]
    \end{lemma}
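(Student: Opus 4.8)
The plan is to prove the bound by a contradiction argument organized around the reciprocal-power transformation $u_k := \delta_k^{-(q-1)}$, which turns the multiplicative-looking recurrence into an additive one. Throughout I assume $q > 1$, as required for the stated bound to be meaningful (and as holds in the application, where $q=2$). First I would suppose, toward a contradiction, that $\delta_k > \epsilon$ for every index $k$ in the range under consideration. This hypothesis is essential: it guarantees $\delta_k \geq \epsilon > 0$ so that all reciprocals $u_k$ are well defined and finite, and it is precisely the negation of the conclusion.

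The key step is a per-iteration increment estimate $u_{k+1} - u_k \geq (q-1)\alpha$. To obtain it I would invoke convexity of the map $h(t) = t^{-(q-1)}$ on $(0,\infty)$, valid exactly because $q > 1$ since $h''(t) = q(q-1)t^{-q-1} > 0$. The tangent-line inequality $h(\delta_{k+1}) \geq h(\delta_k) + h'(\delta_k)(\delta_{k+1} - \delta_k)$ with $h'(t) = -(q-1)t^{-q}$ gives
\[
u_{k+1} \geq u_k - (q-1)\delta_k^{-q}(\delta_{k+1} - \delta_k).
\]
Substituting the recurrence in the form $\delta_{k+1} - \delta_k \leq -\alpha\delta_k^{q}$ and noting that the prefactor $-(q-1)\delta_k^{-q}$ is negative (so the inequality flips) yields the clean bound $u_{k+1} \geq u_k + (q-1)\alpha$.

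Telescoping this increment from the first index up to $k$ and discarding the nonnegative starting term $u_1$ produces $u_k \geq (q-1)\alpha(k-1)$, i.e. $\delta_k^{q-1} \leq \left[(q-1)\alpha(k-1)\right]^{-1}$. Comparing with $\epsilon^{q-1}$, the contradiction hypothesis $\delta_k > \epsilon$ forces $(q-1)\alpha(k-1) < \epsilon^{-(q-1)}$ at every step, which can hold for at most $\tfrac{1}{(q-1)\alpha\epsilon^{q-1}}$ consecutive indices; hence some $\delta_k$ must drop to or below $\epsilon$ within the stated count. The only real delicacy is the bookkeeping: ensuring positivity of the whole truncated sequence (handled by the contradiction hypothesis) and fixing the indexing convention, since telescoping from $k=0$ versus $k=1$ shifts the bound by an additive constant. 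With the convention of \cite{diaz2023optimal} the clean form $k \leq \tfrac{1}{(q-1)\alpha\epsilon^{q-1}}$ follows, and no step presents a substantive obstacle.
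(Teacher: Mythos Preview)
The paper does not actually prove this lemma: it is quoted verbatim from \cite[Lemma~A.1]{diaz2023optimal} and then simply applied to the recursion \cref{eq:improve}. So there is no ``paper's own proof'' to compare against here.

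Your argument is the standard and correct one. The reciprocal-power substitution $u_k = \delta_k^{-(q-1)}$ together with convexity of $t \mapsto t^{-(q-1)}$ on $(0,\infty)$ yields the additive increment $u_{k+1} - u_k \geq (q-1)\alpha$, and telescoping gives the bound. Your handling of the positivity issue via the contradiction hypothesis is the right way to make the argument airtight, and you correctly flag that the off-by-one ambiguity depends on the indexing convention in the cited source. Nothing is missing.
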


    Applying \cref{lemma:recursion} to \cref{eq:improve} with $\epsilon = \Delta_k $, we see that the number of iterations to reach $\tilde{\Delta}_j \leq  \Delta_k $ can not be more than $\frac{8G_k^2}{(1 - \beta)^2 \rho\Delta_k }$, completing the proof.

\subsection{Proof of \cref{lemma:bounds-on-G-k}}
\label{subsection:proof:bounds-on-G-k}
    Recall that $G_k = \sup_{1\leq i \leq T_k} \|g_{i}\|$ and $g_i = v_i + m(z_i-x_k)$ with $v_i \in \partial f(z_i)$ for all $ 1\leq i \leq T_k$. Since $f$ is $L$-Lipschitz continuous, we have 
    \begin{align*}
        \|v_{i} + m (z_{i}- x_{k})\| \leq L + m \|z_{i} - x_{k}\|.
    \end{align*}
    From \cref{eq:key}, we can bound the step length $\|z_{i} - x_{k}\|$ as 
    \begin{align*}
        \|z_{i} - x_{k}\| \leq \frac{1}{\rho}\|g_1\| \leq \frac{1}{\rho}L.
    \end{align*}
    Putting everything together yields
    \begin{align*}
        G_{k} = \sup_{1\leq i \leq T_k} \|v_{i} + m (z_{i}- x_{k})\| \leq (1+  m/\rho ) L.
    \end{align*}

\subsection{Proof of \cref{corollary:main-result}}
\label{subsection:corollary:main-result}
Viewing \cref{lemma:stationary-conversion-R2M-2-main} with $\lambda = \rho$ and $\alpha = m + \rho$, we see that if a point $x$ is a $(\eta,\epsilon)$-inexact stationary point with $\epsilon \leq \frac{2}{\rho}\eta^2$ and $\eta \leq \frac{\delta \rho}{4 \alpha}$, then $x$ satisfies $\|\nabla f_{\alpha}(x)\| \leq \delta$. Combining this observation with \cref{theorem:main-result} shows that \Cref{alg:Proxi-descent} finds a $(\delta,\alpha)$-Moreau stationary point in at most 
\begin{align*}
     \frac{8K_{\max}(\frac{\delta \rho}{4 \alpha},\frac{\delta^2 \rho }{8 \alpha^2})}{(1 - \beta)^2 \rho} \times \left (1 + \frac{m}{\rho} \right)^2L^2 \times \max\left\{ \frac{32 \alpha^2}{\rho \delta^2},\frac{8 \alpha^2}{ \rho \delta^2 } \right\}=  \frac{8K_{\max}(\frac{\delta \rho}{4 \alpha},\frac{\delta^2 \rho }{8 \alpha^2})}{(1 - \beta)^2 \rho} \times \left (1 + \frac{m}{\rho} \right)^2L^2 \times  \frac{32 \alpha^2}{\rho \delta^2}
\end{align*}
subgradient and function evaluations.

\subsection{Analytical solution for the essential model in \cref{subsection:re-interpretation}}
\label{subsection:analytical-sol}
As discussed in \cref{subsection:re-interpretation}, the proximal bundle method is equivalent to \Cref{alg:Proxi-descent}. The only difference between  \Cref{alg:Proxi-descent,alg:PBM-weakly} is the indices. In fact, the proximal subproblem
\begin{align*}
     \argmin_{y}\; f_{k+1}(y) + \frac{\rho}{2}\|y - x_k\|^2
\end{align*}
with $f_{k+1}$ defined as 
\begin{align*}
    f_{k+1}(y) = \max\left\{f_k(z_{k+1})\! +\! \innerproduct{s_{k+1}}{y-z_{k+1}}, f(z_{k+1}) \! +\!  \frac{m}{2}\|z_{k+1} - x_k\|^2 + \innerproduct{g_{k+1}}{y-z_{k+1}} \right\},
\end{align*}
has the optimal solution 
\begin{align*}
    y^\star &=  x_k -  \frac{1}{\rho}((1-\theta^\star)s_{k+1} + \theta^\star g_{k+1}), \\
    \theta^\star &= \min\left \{1, \frac{\rho(f(z_{k+1}) + \frac{m}{2}\|z_{k+1} - x_{k}\|^2 - f_k(z_{k+1}))}{\|s_{k+1} - g_{k+1}\|^2} \right \},
\end{align*}
as established in \cref{subsection:closed-form-sol} by setting a change of notations and indices.

\section{Improved convergence rates in \cref{section:improved-rate}}
\label{Apx-section:improved-rate}
In this section, we establish the faster convergence rates in \cref{section:improved-rate} when under favorable regularity conditions, such as smoothness and quadratic growth.

\subsection{Proof of \cref{proposition:Upper-bound_G}}
\label{subsection:Upper-bound_G}
Our proof is adapted from \cite[Section 5.2]{diaz2023optimal}. Recall the definition of $G_k = \sup_{1\leq j \leq T_k} \|g_{j}\|$ where $g_j \in \partial (f(\cdot) + \frac{m}{2}\|\cdot- x_{k}\|^2 )(z_{j} )$ and $z_1 = x_k$. Since we assume the function $f$ is $M$-smooth, it becomes $g_j = \nabla f(z_j) + m(z_j - x_k)$.
Thus, it holds that
\begin{equation}
    \label{eq:bound-G-step-1}
    \begin{aligned}
        G_k & = \sup_{1\leq j \leq T_k} \|g_{j}\| \\
    & \leq \sup_{1\leq j \leq T_k}  M\| z_j - x_k \|  + \|g_1\|  \\
    & \leq (1+ M/\rho) \|g_1\| \\
    & =  (1+ M/\rho) \|\nabla f(x_k)\|
    \end{aligned}
\end{equation}
where the first inequality uses $M$-smoothness, and the second inequality follows from \cref{eq:key}. 

On the other hand, we have 
\begin{equation}
    \label{eq:bound-G-step-2}
    \begin{aligned}
        \Delta_{k} & = f(x_k) - \min_{y} \left \{f(y) + \frac{ \alpha }{2}\|y - x_k\|^2 \right\} \\
        & \geq f(x_k) - \min_{y} \left \{f(x_k) + \innerproduct{\nabla f(x_k)}{ y- x_k} + \frac{ M+\alpha }{2}\|y - x_k\|^2 \right\} \\
        & =  \frac{1}{2(M+\alpha)}\|\nabla f(x_k)\|^2,
    \end{aligned}
\end{equation}
where the inequality uses the fact that the function $f$ being $M$-smooth implies 
\begin{align*}
    f(x) \leq f(y) + \innerproduct{\nabla f(y)}{ y - x} + \frac{M}{2}\|y-x\|,\; \forall x, y \in \RR^n.
\end{align*}
Combining \cref{eq:bound-G-step-1,eq:bound-G-step-2} yields
\begin{align*}
    \Delta_{k} \leq (1+ M/\rho) \sqrt{2(M+\alpha)\Delta_k},
\end{align*}
completing the proof of \cref{proposition:Upper-bound_G}.

\subsection{Improved convergence rates in \cref{subsection:QG}}
\label{section:improvd-rate-appendix}
In this subsection, we complete the proof in \cref{subsection:QG} where we consider a $m$-weakly convex function $f:\RR^n \to \RR$ that satisfies quadratic growth 
\begin{align}
    \label{eq:QG-appendix}
        f(x) - f^\star \geq \frac{\muq}{2}\cdot \Dist^2(x,S), \; \forall x \in \RR^n,
\end{align}
where $S  = \argmin_x  f(x)$ is the set of optimal solutions and is assumed nonempty, and we further assume the condition $\muq > m$. Under these hypotheses, all the stationary points of $f$ are global minima, as the Polyak-{\L}ojasiewicz (PL) inequality holds true \cite[Theorem 3.1]{liao2024error}. Therefore, it becomes possible to measure the cost value gap $f(x_k) - f^\star$ for \Cref{alg:Proxi-descent}. 

It is known that the proximal point method has been shown to converge linearly for weakly convex optimization when the function satisfies \cref{eq:QG-appendix} and $\muq > m$ \cite[Theorem 4.2]{liao2024error}. On the other hand, as introduced in \cref{section:PBM-inexact}, \Cref{alg:Proxi-descent} is designed to mimic the conceptual proximal point method. Hence, under the same assumptions, we expect that \Cref{alg:Proxi-descent} converges linearly as well. Our analysis can be viewed as a generalization of the analysis for convex \PBM{} in \cite{diaz2023optimal}. 

Recall that we use the notation $\Delta_k := f(x_k) -\inf_x  \{f(x) + \frac{\alpha}{2}\|x - x_k\|^2\} = f(x_k) - f_{\alpha}(x_k)$  to denote the proximal gap at the iterate $x_k$, where $\alpha =m + \rho $ with $\rho > 0$. 
 We first present an important result that quantifies the decrease of the cost value in terms of the proximal gap at every step of \Cref{alg:Proxi-descent}.

\begin{lemma}
    \label{lemma:descent-proximal-gap}
    Suppose $f:\RR^n \to \RR $ is an $m$-weakly convex function. At every iteration $k$ of \Cref{alg:Proxi-descent}, it holds that
    \begin{align*}
          f(x_{k+1})\leq f(x_k) - \beta \Delta_k.
    \end{align*}
\end{lemma}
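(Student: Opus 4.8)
The plan is to unwind the descent test \cref{eq:descent-condition-main}, which the accepted iterate $x_{k+1}=z_{k+1}$ satisfies by construction (the while-loop in \Cref{alg:Proxi-descent-subproblem} exits precisely when \cref{eq:descent-condition-main} holds). Rearranging \cref{eq:descent-condition-main} gives
\begin{equation*}
  f(z_{k+1}) + \tfrac{m}{2}\|z_{k+1}-x_k\|^2 \leq f(x_k) - \beta\bigl(f(x_k) - \tilde f(z_{k+1})\bigr),
\end{equation*}
and since $\tfrac{m}{2}\|z_{k+1}-x_k\|^2 \geq 0$ I may drop that term on the left-hand side to obtain $f(x_{k+1}) \leq f(x_k) - \beta\bigl(f(x_k)-\tilde f(z_{k+1})\bigr)$. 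It then remains to lower-bound the predicted gap $f(x_k)-\tilde f(z_{k+1})$ by the true proximal gap $\Delta_k = f(x_k) - f_\alpha(x_k)$.

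First I would control the model value at the trial point. Writing $\eta := \min_x\{\tilde f(x) + \tfrac{\rho}{2}\|x-x_k\|^2\} = \tilde f(z_{k+1}) + \tfrac{\rho}{2}\|z_{k+1}-x_k\|^2$, the lower-approximation property \cref{eq:lower-approximation-1} together with $\alpha=m+\rho$ yields the proximal-gap inequality \cref{eq:proximal-gap}, namely $\eta \leq f_\alpha(x_k)$. Discarding the nonnegative quadratic term in the definition of $\eta$ then gives $\tilde f(z_{k+1}) \leq \eta \leq f_\alpha(x_k)$, and hence $f(x_k) - \tilde f(z_{k+1}) \geq f(x_k) - f_\alpha(x_k) = \Delta_k$.

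Combining the two bounds, and using $\beta>0$ so that the inequality direction is preserved, I conclude $f(x_{k+1}) \leq f(x_k) - \beta\bigl(f(x_k)-\tilde f(z_{k+1})\bigr) \leq f(x_k) - \beta\Delta_k$, which is the claim. The argument is essentially a short bookkeeping chain of inequalities, so I do not expect a serious obstacle; the only point requiring care is that it is the model \emph{value} $\tilde f(z_{k+1})$, rather than the full model minimum $\eta$, that must be compared against $f_\alpha(x_k)$. Keeping track of which nonnegative term is dropped at each step — the $\tfrac{m}{2}\|z_{k+1}-x_k\|^2$ term on the left of the descent test and the $\tfrac{\rho}{2}\|z_{k+1}-x_k\|^2$ term inside $\eta$ — is precisely what makes the contraction constant come out to exactly $\beta$.
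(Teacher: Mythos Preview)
Your proof is correct and follows essentially the same approach as the paper: both use the lower-approximation property \cref{eq:lower-approximation-1} to bound the model minimum by $f_\alpha(x_k)$, invoke the descent test \cref{eq:descent-condition-main}, and drop the two nonnegative quadratic terms $\tfrac{m}{2}\|z_{k+1}-x_k\|^2$ and $\tfrac{\rho}{2}\|z_{k+1}-x_k\|^2$ at the appropriate points. The only difference is cosmetic ordering---the paper starts from $\Delta_k$ and bounds it above by $\tfrac{1}{\beta}(f(x_k)-f(x_{k+1}))$, whereas you start from the descent test and bound the predicted gap below by $\Delta_k$---but the chain of inequalities is identical.
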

\begin{proof}
    By the update rule of $\texttt{ProxDescent($x_k,\beta,\rho$)}$, we know that there is a lower approximation function $\tilde{f} \leq f(\cdot) + \frac{m}{2}\|\cdot - x_k\|^2$ such that 
    \begin{align}
        \label{eq:construct-xkp1}
        x_{k+1} = \argmin_{y} \tilde{f}(y) + \frac{\rho}{2}\|y-x_k\|^2
    \end{align} 
    and $x_{k+1}$ satisfies \cref{eq:descent-condition-main}, i.e., 
    $
    \beta \left(f(x_k) - \tilde{f}(x_{k+1})\right) \leq f(x_k) - \left(f(x_{k+1}) + \frac{m}{2}\|x_{k+1} - x_k\|^2\right).
    $
    It follows that 
    \begin{align*}
        \Delta_k & = f(x_k) - \inf_x \left \{f(x) + \frac{\alpha}{2}\|x - x_k\|^2 \right\} \\
        & \leq f(x_k) - \inf_x \left \{ \tilde{f}(x) + \frac{\rho}{2}\|x - x_k\|^2 \right \} \\
        & = f(x_k) - \left (\tilde{f}(x_{k+1}) + \frac{\rho}{2}\|x_{k+1} - x_k\|^2 \right ) \\
        & \leq \frac{1}{\beta}\left( f(x_k) - \left(f(x_{k+1}) + \frac{m}{2}\|x_{k+1} - x_k\|^2\right) \right) - \frac{\rho}{2}\|x_{k+1} - x_k\|^2\\
        & \leq \frac{1}{\beta}(f(x_k) - f(x_{k+1}))
    \end{align*}
    where the first equality is due to, the second equality is from \cref{eq:construct-xkp1}, the second inequality uses the definition of \cref{eq:descent-condition-main}, and the last inequality drops two nonnegative terms. Rearranging the above inequality finishes the proof.    
\end{proof}

Note that the cost value improvement in  \cref{lemma:descent-proximal-gap} is quantified by the proximal gap $\Delta_k$, which depends on the Moreau envelope $f_{\alpha}$. The Moreau envelope $f_{\alpha}$ can be estimated in the following result.

\begin{lemma}
    \label{lemma:Moreau-improvement}
    Let $f:\RR^n \to \RR $ be an $m$-weakly convex function and satisfies \cref{eq:QG-appendix} with $\mu_{\mathrm{q}} > m$. Let $S = \argmin_x f(x)$ and $\alpha = m + \rho $ with $\rho > 0$. Fix $w \in \RR^n \setminus \{S\}$ and $x^\star = \argmin_{y \in S} \|w - y\|$. Define
    $$\Lambda :=f(w) - f(x^\star) - \frac{m}{2}\|w - x^\star\|^2 \geq \frac{\mu_\mathrm{q}}{2} \|w - x^\star\|^2 - \frac{m}{2}\|w - x^\star\|^2  > 0.$$ 
    Then the Moreau envelope can be estimated as 
    \begin{align}   
        \label{eq:Moreau-envelope-estimate}
        f_{\alpha}(w) =  \inf_x  \left \{ f(x) + \frac{\alpha}{2}\|x - w\|^2 \right \}\leq \begin{cases}
            f(w) -  \Lambda + \frac{\rho}{2}\|w - x^\star\|^2,&\text{ if }  \Lambda > \rho\|w - x^\star\|^2 \\
            f(w) - \frac{\Lambda^2}{2\rho\|w - x^\star\|^2} &\text{ if }\Lambda \leq \rho\|w - x^\star\|^2.
        \end{cases}
    \end{align}
\end{lemma}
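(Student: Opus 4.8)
The plan is to upper-bound the infimum defining $f_\alpha(w)$ by evaluating the proximal objective $x\mapsto f(x)+\frac{\alpha}{2}\|x-w\|^2$ at a one-parameter family of feasible points and then optimizing the resulting bound over the parameter. Since $x^\star$ is the minimizer of $f$ nearest to $w$, the natural family is the segment joining $w$ to $x^\star$: I would set $x_t=(1-t)w+tx^\star$ for $t\in[0,1]$, so that $\|x_t-w\|^2=t^2\|w-x^\star\|^2$ and $f_\alpha(w)\le f(x_t)+\frac{\alpha}{2}t^2\|w-x^\star\|^2$ for every admissible $t$.

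First I would control $f$ along this segment using weak convexity. Because $x\mapsto f(x)+\frac{m}{2}\|x\|^2$ is convex, the chord inequality along the segment, combined with the identity $(1-t)\|w\|^2+t\|x^\star\|^2-\|x_t\|^2=t(1-t)\|w-x^\star\|^2$, yields
\[
    f(x_t)\le (1-t)f(w)+t\,f(x^\star)+\frac{m}{2}\,t(1-t)\|w-x^\star\|^2.
\]
Substituting this into the proximal objective with $\alpha=m+\rho$ is the crux of the argument: the weak-convexity correction $\frac{m}{2}t(1-t)\|w-x^\star\|^2$ and the $m$-part of $\frac{\alpha}{2}t^2\|w-x^\star\|^2$ combine with the definition $\Lambda=f(w)-f(x^\star)-\frac{m}{2}\|w-x^\star\|^2$ so that all $m$-dependence cancels, leaving the clean bound
\[
    f_\alpha(w)\le f(w)-t\,\Lambda+\frac{\rho}{2}\,t^2\|w-x^\star\|^2,\qquad \forall\, t\in[0,1].
\]

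It then remains to minimize the right-hand side, which is a convex parabola in $t$ with unconstrained minimizer $t^\star=\Lambda/(\rho\|w-x^\star\|^2)$. When $\Lambda>\rho\|w-x^\star\|^2$ the minimizer exceeds $1$, so the constrained optimum is at $t=1$, giving $f_\alpha(w)\le f(w)-\Lambda+\frac{\rho}{2}\|w-x^\star\|^2$; otherwise $t^\star\in(0,1]$ and substituting it gives $f_\alpha(w)\le f(w)-\frac{\Lambda^2}{2\rho\|w-x^\star\|^2}$. These are exactly the two cases claimed. The positivity $\Lambda\ge\frac{\muq-m}{2}\|w-x^\star\|^2>0$ (so that $t^\star>0$ is well-defined) follows immediately from quadratic growth at $x^\star\in S$, namely $f(w)-f(x^\star)\ge\frac{\muq}{2}\|w-x^\star\|^2$, together with $\muq>m$ and $w\notin S$.

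I expect no serious obstacle: once the segment points are chosen the whole estimate reduces to a scalar quadratic minimization. The only delicate step is the algebraic cancellation in the second paragraph; I would double-check that the $m$-terms vanish exactly, since it is precisely this cancellation that makes the final bound depend on $\rho$ alone and produces the clean two-case formula.
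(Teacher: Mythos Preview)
Your proposal is correct and follows essentially the same route as the paper: restrict the infimum to the segment $x_t=(1-t)w+tx^\star$, apply the weak-convexity chord inequality to bound $f(x_t)$, observe that the $m$-terms combine to produce exactly $-t\Lambda+\frac{\rho}{2}t^2\|w-x^\star\|^2$, and then minimize this scalar quadratic over $t\in[0,1]$. The paper's proof is identical in structure and in the key cancellation you flagged; your verification of that step is accurate.
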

\begin{proof}
    The proof is inspired by \cite[Lemma 7.12]{ruszczynski2011nonlinear} and follows from a few simple steps. To begin, we first recall a property for weakly convex functions \cite[Lemma 2.1]{davis2019stochastic}. That is, $f$ is $m$-weakly convex if and only if 
    \begin{align}
        \label{eq:weakly-convex-property}
        f(\gamma x + (1-\gamma) y ) \leq \gamma f(x) + (1-\gamma) f(y) + \frac{\gamma(1-\gamma)m}{2}\|x-y\|^2, \; \forall x,y \in \RR^n,\forall \gamma \in [0,1]. 
    \end{align}
    Therefore, it holds that  
    \begin{align*}
          f_{\alpha}(w)= &\inf_x f(x) + \frac{\alpha}{2}\|x - w\|^2 \\
          \leq & \inf_{t \in [0,1]} f((1-t)w + tx^\star) + \frac{\alpha}{2}\|(1-t)w + tx^\star - w\|^2 \\
          \leq & \inf_{t \in [0,1]} (1-t)f(w) + tf(x^\star) +\frac{t(1-t)m}{2}\|w - x^\star \|^2  + \frac{\alpha t^2}{2}\|w - x^\star\|^2 \\
         = &  f(w) + \inf_{t \in [0,1]} -t \Lambda + \frac{\rho}{2}t^2\|x^\star - w\|^2,
    \end{align*}
    where the first inequality restricts the searching space to the line segment between $w$ and $x^\star$, the second inequality uses the property of an $m$-weakly convex function \cref{eq:weakly-convex-property}, and the last equality uses the definition of $\Lambda$ and $\alpha = m+\rho $.
    The minimizer $t^\star$ in the last line can be computed as 
    \begin{align*}
        t^\star = \min \left \{1,\frac{\Lambda}{\rho\|x^\star - w\|^2} \right\}.
    \end{align*}
    Plugging in the minimizer $t^\star$ with some algebra completes the proof.
\end{proof}

In the following content, we will use \cref{lemma:descent-proximal-gap,lemma:Moreau-improvement} to prove \cref{lemma:Prox-descent-linear,theorem:main-result-QG,corollary:main-result-QG}.

\subsubsection{Proof of \cref{lemma:Prox-descent-linear}}
\label{subsection:proof-Prox-descent-linear}
To facilitate readability, we restate \cref{lemma:Prox-descent-linear} here. 
\begin{lemma}
    \label{lemma:Prox-descent-linear-appx}
    Consider the $m$-weakly convex problem \cref{eq:main-problem}. Suppose the function $f$ satisfies quadratic growth \cref{eq:QG} with $\muq > m$. Then  \Cref{alg:Proxi-descent} generates a sequence of iterates, satisfying
    \begin{align*}
        f(x_{k+1}) - f^\star \leq \gamma (f(x_{k}) - f^\star), \; \forall k \geq 1,
    \end{align*}
    where $\gamma \in (0,1)$.
\end{lemma}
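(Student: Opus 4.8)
The plan is to combine the per-step objective decrease from \cref{lemma:descent-proximal-gap} with the Moreau-envelope estimate of \cref{lemma:Moreau-improvement} and the quadratic growth hypothesis. Writing $D_k := f(x_k) - f^\star$, \cref{lemma:descent-proximal-gap} gives $D_{k+1} \leq D_k - \beta \Delta_k$, where $\Delta_k = f(x_k) - f_\alpha(x_k)$ is the proximal gap. Hence the whole claim reduces to producing a lower bound of the form $\Delta_k \geq c\, D_k$ with a constant $c \in (0,1)$ independent of $k$; once this is in hand, the choice $\gamma = 1 - \beta c$ yields $D_{k+1} \leq \gamma D_k$.

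First I would dispatch the trivial case $x_k \in S$, in which $D_k = 0$ and the inequality holds for any $\gamma$. So assume $x_k \notin S$, set $x^\star = \argmin_{y \in S}\|x_k - y\|$, and let $\Lambda_k := f(x_k) - f^\star - \frac{m}{2}\|x_k - x^\star\|^2$ as in \cref{lemma:Moreau-improvement}. Using quadratic growth \cref{eq:QG-appendix} in the form $\|x_k - x^\star\|^2 = \Dist^2(x_k,S) \leq \frac{2}{\muq} D_k$, I would derive the two controls $\Lambda_k \geq (1 - m/\muq) D_k > 0$ (valid since $\muq > m$) and $\|x_k - x^\star\|^2 \leq \frac{2}{\muq} D_k$.

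Next I would apply \cref{lemma:Moreau-improvement} with $w = x_k$, which splits into two regimes. When $\Lambda_k > \rho\|x_k - x^\star\|^2$, the estimate gives $\Delta_k \geq \Lambda_k - \frac{\rho}{2}\|x_k - x^\star\|^2 > \frac{1}{2}\Lambda_k \geq \frac{1}{2}(1 - m/\muq) D_k$. When $\Lambda_k \leq \rho\|x_k - x^\star\|^2$, the estimate gives $\Delta_k \geq \frac{\Lambda_k^2}{2\rho\|x_k - x^\star\|^2}$; substituting the lower bound on $\Lambda_k$ into the numerator and the upper bound on $\|x_k - x^\star\|^2$ into the denominator yields $\Delta_k \geq \frac{(1 - m/\muq)^2 \muq}{4\rho} D_k = (1 - m/\muq)\frac{\muq - m}{4\rho} D_k$. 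Taking the minimum over the two regimes produces exactly $c = (1 - m/\muq)\min\bigl\{\tfrac{1}{2},\, \tfrac{\muq - m}{4\rho}\bigr\}$, the constant already featured in \cref{theorem:main-result-QG}.

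Finally I would set $\gamma = 1 - \beta c$ and verify $\gamma \in (0,1)$: positivity of $c$ (so $\gamma < 1$) follows from $\muq > m$ and $\beta > 0$, while $\gamma > 0$ follows from $\beta c < \tfrac{1}{2}(1 - m/\muq) < 1$, using $\beta < 1$. The step I expect to be the main obstacle is the case split inherent in \cref{lemma:Moreau-improvement}: because the Moreau envelope interpolates between a linear and a quadratic regime, there is no single closed-form bound, and in the quadratic regime the raw estimate scales like $\Lambda_k^2/\|x_k - x^\star\|^2$, which is not obviously linear in $D_k$. The key is to use the lower bound on $\Lambda_k$ and the matching upper bound on $\|x_k - x^\star\|^2$ \emph{in tandem} so that this ratio collapses to a clean multiple of $D_k$ rather than degrading.
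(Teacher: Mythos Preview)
Your proposal is correct and follows essentially the same route as the paper: combine \cref{lemma:descent-proximal-gap} with the two-regime bound from \cref{lemma:Moreau-improvement} and the quadratic growth inequality to obtain $\Delta_k \geq c\,D_k$ with $c = (1 - m/\muq)\min\{\tfrac{1}{2}, \tfrac{\muq - m}{4\rho}\}$, then set $\gamma = 1 - \beta c$. The only cosmetic difference is that the paper first proves the intermediate bound $\Delta_k \geq \kappa \Lambda_k$ with $\kappa = \min\{\tfrac{1}{2}, \tfrac{\muq - m}{4\rho}\}$ and only afterward passes from $\Lambda_k$ to $D_k$, whereas you substitute the bounds on $\Lambda_k$ and $\|x_k - x^\star\|^2$ directly; both arrive at the identical contraction factor.
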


The proof of \cref{lemma:Prox-descent-linear-appx} follows from a combination of \cref{lemma:descent-proximal-gap,lemma:Moreau-improvement}. Let $x^\star_k = \argmin_{y \in S} \|x_k - y\|$ (where $S = \argmin_x f(x)$) and $\Lambda_k  =f(x_k) - f(x^\star_k) - \frac{m}{2}\|x^\star_k - x_k  \|^2 > 0 $ (due to the assumption $\muq > m$).  \cref{lemma:Moreau-improvement} shows that 
\begin{align*}
    f_{\alpha}(x_k)   \leq \begin{cases}
            f(x_k) -  \Lambda_k + \frac{\rho}{2}\|x_k - x^\star_k\|^2,&\text{ if }  \Lambda_k > \rho\|x_k- x^\star_k\|^2 \\
            f(x_k) - \frac{\Lambda_k^2}{2\rho\|x_k- x^\star_k\|^2}, &\text{ if }\Lambda_k \leq \rho\|x_k - x^\star_k\|^2.
        \end{cases}
\end{align*}
This leads to the bound on the proximal gap $\Delta_k:= f(x_k) - f_{\alpha}(x_k)$:  
\begin{align}
\label{eq:lower-bound-proximal-gap-linear}
    \Delta_k & \geq \begin{cases}
             \Lambda_k - \frac{\rho}{2}\|x_k - x^\star_k\|^2,&\text{ if }  \Lambda_k > \rho\|x_k- x^\star_k\|^2 \\
            \frac{\Lambda^2_k}{2\rho\|x_k- x^\star_k\|^2} &\text{ if }\Lambda_k \leq \rho\|x_k - x^\star_k\|^2
        \end{cases} \nonumber \\
         & \geq \begin{cases}
             \frac{1}{2}\Lambda_k,&\text{ if }  \Lambda_k > \rho\|x_k- x^\star_k\|^2 \\
            \frac{\bar{\mu}_{\mathrm{q}}}{4\rho}\Lambda_k &\text{ if }\Lambda_k \leq \rho\|x_k - x^\star_k\|^2
        \end{cases} \nonumber\\
        & \geq \kappa \Lambda_k,
\end{align}
where in the second inequality we set $\bar{\mu}_{\mathrm{q}} = \muq - m > 0$ and use the fact that $\Lambda_k \geq \frac{\bar{\mu}_{\mathrm{q}}}{2}\|x_k - x_k^\star\|^2$ due to \cref{eq:QG-appendix}, and the last inequality lets  $\kappa = \min \{ \frac{1}{2}, \frac{\bar{\mu}_{\mathrm{q}}}{4\rho}\} >0$.

Combining \cref{lemma:descent-proximal-gap,eq:lower-bound-proximal-gap-linear} yields
\begin{align*}
    f(x_{k+1}) \leq f(x_k) - \beta \kappa \Lambda_k,
\end{align*}
implying
\begin{align*}
    f(x_{k+1}) - f^\star & \leq f(x_k) -f^\star - \beta \kappa \Lambda_k \\
    & = (1-\beta \kappa)(f(x_k) -f^\star ) + \beta \kappa \frac{m}{2}\|x^\star_k - x_k  \|^2 \\
    & \leq \left(1- \beta\kappa(1  -m /\muq) \right)(f(x_k) -f^\star),
\end{align*}
where the first equality uses the definition of $\Lambda_k$, and the last inequality applies the quadratic growth \cref{eq:QG-appendix}. By the assumption, we have $m/\muq < 1$. The above then shows that the cost value gap is reduced at least by a factor $ \left(1- \beta\kappa(1  -m /\muq) \right) \in (0,1)$. Hence, the reduction factor $\gamma$ and constant $C$ in \cref{lemma:Prox-descent-linear-appx} can be chosen as $ \left(1- \beta\kappa(1  -m /\muq) \right) \in (0,1)$ and $1$, respectively.

\subsubsection{Proof of \cref{theorem:main-result-QG}}
\label{subsection:proof-theorem:main-result-QG}

From \cref{eq:inexact-subgradient-bound-linear}, we know that $\frac{2\alpha^2}{m + \beta \rho} (f(x_k) - f^\star) \leq \eta^2$ (resp. $\frac{1-\beta}{\beta} (f(x_k) - f^\star) \leq \epsilon $) implies   $\|\tilde{g}_{k+1}\| \leq \eta$ (resp. $\epsilon_{k+1} \leq \epsilon$). Let $N =  (\log(\gamma^{-1}))^{-1} \log\left (  (f(x_1) - f^\star) \max\left\{\frac{(1-\beta)}{\beta\epsilon}, \frac{2 \alpha^2}{(m+\beta \rho) \eta^2} \right\}\right) +1 $ be the iteration number in \cref{eq:descent-bound-linear}. From \cref{lemma:proximal-descent-linear}, we know that 
\begin{align*}
    f(x_k) - f^\star \leq \sigma : =\min\left \{ \frac{m + \beta     \rho}{2\alpha^2}\eta^2 , \frac{\beta}{1- \beta} \epsilon \right\},\; \forall k \geq N.
\end{align*}
Let $T \leq N$ be the first index such that $f(x_{T+1}) - f^\star \leq  \sigma$. It then holds that 
\begin{align}
    \label{eq:chain}
    \sigma < f(x_T) - f^\star \leq \gamma (f(x_{T-1}) - f^\star) \leq \gamma^2 (f(x_{T-2}) - f^\star) \leq \cdots \leq \gamma^{T-1} (f(x_{1}) - f^\star).
\end{align}
On the other hand, using the assumptions of quadratic growth \cref{eq:QG-appendix} and $\muq > m$, \cref{eq:lower-bound-proximal-gap-linear} can be relaxed as 
\begin{align}
    \label{eq:lower-bound-relaxed}
    \Delta_k \geq \kappa \Lambda_k \geq \kappa (1-m/\muq)( f(x_k) -f^\star),\; \forall k\geq 1.
\end{align}
Combining \cref{eq:chain,eq:lower-bound-relaxed} yields 
\begin{align*}
    \sigma < \gamma^{T-k} (f(x_k) - f^\star) \leq \frac{\gamma^{T-k}}{\kappa(1-m/\muq)} \Delta_k,\; \forall 1\leq  k \leq T,
\end{align*}
implying 
\begin{align*}
    \frac{\kappa(1-m/\muq)}{\gamma^{T-k}} \sigma \leq \Delta_k, \; \forall 1\leq  k \leq T. 
\end{align*}
Plugging this into the bound in \cref{lemma:null-step} and using \cref{lemma:bounds-on-G-k} yields  
\begin{align*}
    T_k \leq  \frac{8 (1 + m/\rho)^2L^2}{(1 - \beta)^2 \rho } \frac{\gamma^{T-k}}{\kappa(1-m/\muq)\sigma},  \; \forall 1\leq  k \leq T.
\end{align*}
This leads to the bound on the total function value and subgradient evaluation
\begin{align*}
    \sum_{k=1}^T T_{k} & \leq \frac{8 (1 + m/\rho)^2L^2}{(1 - \beta)^2 \rho } \frac{1}{\kappa(1-m/\muq)\sigma} \sum_{k=1}^T  \gamma^{T-k} \\
    & \leq  \frac{8 (1 + m/\rho)^2L^2}{(1 - \beta)^2 \rho } \frac{1}{\kappa(1-m/\muq)\sigma (1-\gamma)} \\
    & \leq \frac{8 (1 + m/\rho)^2L^2}{(1 - \beta)^2 \rho } \frac{1}{\kappa(1-m/\muq)(1-\gamma) \min \left \{ \frac{m + \beta     \rho}{2\alpha^2} , \frac{\beta}{1- \beta}  \right\} } \max \left \{\frac{1}{\eta^2},\frac{1}{\epsilon}\right \}.
\end{align*}
This completes the proof.

\subsubsection{Proof of \cref{corollary:main-result-QG}}
\label{subsection:corollary:main-result-QG}
Similar to the proof of \cref{corollary:main-result} in \cref{subsection:corollary:main-result}, we choose $\epsilon \leq \frac{2}{\rho}\eta^2$ and $\eta \leq \frac{\delta \rho}{4 \alpha}$ with $\delta \geq  0$. \Cref{theorem:main-result-QG} guarantees to find a $(\eta,\epsilon)$-inexact stationary point in at most $\bigO\left(\max\{\frac{1}{\eta^2}, \frac{1}{\epsilon}\}\right)$ subgradient and function evaluations. This with \cref{lemma:stationary-conversion-R2G-main-text} shows that \Cref{theorem:main-result-QG} finds a $(\delta,\alpha)$-Moreau stationary point in at most $\bigO(1/\delta^2)$ subgradient and function evaluation.

\section{Details in the numerical experiment \cref{section:numerics}}
\label{Apx-section:numerics}
In this section, we show that the two functions
\begin{align}
    \label{eq:Phase-retrieval-apx}
    f(x) = \frac{1}{n} \sum_{i = 1}^n |\innerproduct{a_i}{x}^2 - b_i|
\end{align}
and 
\begin{align}
    \label{eq:Blind-deconvolution-apx}
    f(x,y) = \frac{1}{n} \sum_{i = 1}^n |\innerproduct{u_i}{x}\innerproduct{v_i}{y} - b_i|
\end{align}
in \cref{subsection:phase,subsection:blind} are weakly convex. We first start by reviewing a result from \cite[Lemma 4.2]{drusvyatskiy2019efficiency} that shows the composition of a convex Lipschitz function and a smooth function is weakly convex.

\begin{lemma}[{\cite[Lemma 4.2]{drusvyatskiy2019efficiency}}]
    \label{lemma:comp-weakly}
    Let $h:\RR^m \to \RR$ be a convex $L$-Lischitz continous function and $c:\RR^d \to \RR^m$ be a $\beta$-smooth function. Then the function $h(c(\cdot))$ is $L\beta$-weakly convex.
\end{lemma}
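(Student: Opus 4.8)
The plan is to verify weak convexity directly through the secant characterization already recorded in this paper as \cref{eq:weakly-convex-property}: a function $g$ is $m$-weakly convex if and only if
\[
g(\gamma x + (1-\gamma)y) \le \gamma g(x) + (1-\gamma)g(y) + \frac{\gamma(1-\gamma)m}{2}\|x-y\|^2
\]
for all $x,y$ and $\gamma \in [0,1]$. I would establish exactly this inequality for $f = h\circ c$ with $m = L\beta$, which by the characterization is equivalent to the stated weak convexity.

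The first step is to quantify the failure of $c$ to commute with convex combinations. Fix $x,y \in \RR^d$, $\gamma \in [0,1]$, and set $x_\gamma = \gamma x + (1-\gamma)y$. I would expand $c(x)$ and $c(y)$ around $x_\gamma$ using the first-order Taylor remainder bound for $\beta$-smooth maps,
\[
\|c(w) - c(x_\gamma) - \nabla c(x_\gamma)(w - x_\gamma)\| \le \tfrac{\beta}{2}\|w - x_\gamma\|^2,
\]
which itself follows by writing the remainder as $\int_0^1 [\nabla c(x_\gamma + t(w - x_\gamma)) - \nabla c(x_\gamma)](w - x_\gamma)\,dt$ and invoking the $\beta$-Lipschitz continuity of the Jacobian $\nabla c$. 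Since the first-order contributions cancel, because $\gamma(x - x_\gamma) + (1-\gamma)(y - x_\gamma) = 0$, the defect $e := \gamma c(x) + (1-\gamma)c(y) - c(x_\gamma)$ reduces to a convex combination of the two remainders. Using $\|x - x_\gamma\| = (1-\gamma)\|x-y\|$ and $\|y - x_\gamma\| = \gamma\|x-y\|$, this yields the clean estimate $\|e\| \le \tfrac{\beta}{2}\gamma(1-\gamma)\|x-y\|^2$.

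The second step pushes this defect through $h$. Writing $f(x_\gamma) = h(c(x_\gamma)) = h\big(\gamma c(x) + (1-\gamma)c(y) - e\big)$, the $L$-Lipschitz continuity of $h$ gives $f(x_\gamma) \le h\big(\gamma c(x) + (1-\gamma)c(y)\big) + L\|e\|$, while the convexity of $h$ gives $h\big(\gamma c(x) + (1-\gamma)c(y)\big) \le \gamma f(x) + (1-\gamma)f(y)$. Combining these two bounds with the estimate on $\|e\|$ produces the secant inequality with $m = L\beta$, completing the argument.

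The proof is short, and the only genuinely delicate point is the vector-valued Taylor estimate on $c$: I must interpret $\beta$-smoothness as $\beta$-Lipschitz continuity of the Jacobian $\nabla c$ and confirm that the quadratic remainder bound holds in the Euclidean norm for the full vector-valued map (rather than coordinatewise), which is where essentially all of the work lies. Everything downstream is a routine application of the convexity and Lipschitz continuity of $h$.
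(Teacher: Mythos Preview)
Your argument is correct: the Taylor-remainder bound for the $\beta$-smooth map $c$, the cancellation of the linear terms at $x_\gamma$, and the subsequent use of Lipschitz continuity and convexity of $h$ together yield exactly the secant inequality \cref{eq:weakly-convex-property} with $m=L\beta$. Note, however, that the paper does not supply its own proof of this lemma; it is stated as a citation to \cite[Lemma 4.2]{drusvyatskiy2019efficiency} and used as a black box, so there is no in-paper argument to compare against. Your proof is the standard one and matches the approach in the cited reference.
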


We next present two straightforward results that show that the summation of finite smooth functions is smooth and the scale of a smooth function is smooth.
\begin{proposition}
    \label{prop:sum-smmoth}
    Suppose $f_i:\RR^d \to \RR $ is $\beta_i$-smooth for $i = 1,\ldots,n$. Then the function $f = \sum_{i=1}^n f_i$ is $\sum_{i=1}^n \beta_i$-smooth.
\end{proposition}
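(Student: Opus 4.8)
The plan is to work directly from the definition of $M$-smoothness as Lipschitz continuity of the gradient map. Recall that a differentiable function $g$ is $M$-smooth precisely when $\|\nabla g(x) - \nabla g(y)\| \leq M \|x - y\|$ for all $x, y \in \RR^d$. First I would observe that since each $f_i$ is differentiable, the finite sum $f = \sum_{i=1}^n f_i$ is also differentiable, and linearity of the gradient gives $\nabla f(x) = \sum_{i=1}^n \nabla f_i(x)$ for every $x$.

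The core estimate then follows from a single application of the triangle inequality. For any $x, y \in \RR^d$, I would write
\begin{align*}
\|\nabla f(x) - \nabla f(y)\| = \left\| \sum_{i=1}^n \left(\nabla f_i(x) - \nabla f_i(y)\right) \right\| \leq \sum_{i=1}^n \|\nabla f_i(x) - \nabla f_i(y)\| \leq \sum_{i=1}^n \beta_i \|x - y\|,
\end{align*}
where the last inequality invokes the $\beta_i$-smoothness of each individual $f_i$. Factoring out the common term $\|x-y\|$ exhibits $\sum_{i=1}^n \beta_i$ as a valid Lipschitz constant for $\nabla f$, which is exactly the asserted conclusion.

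There is no substantive obstacle here: the statement is a direct consequence of the triangle inequality combined with the linearity of differentiation. The only point meriting even minimal care is the differentiability of $f$ itself, which is immediate for a finite sum of differentiable functions, so no additional regularity argument is needed.
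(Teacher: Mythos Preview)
Your proof is correct and essentially identical to the paper's own argument: both use linearity of the gradient, apply the triangle inequality to $\|\sum_i (\nabla f_i(x)-\nabla f_i(y))\|$, and then invoke the individual $\beta_i$-smoothness bounds.
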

\begin{proof}
    By the definition, we know that for each $i = 1,\ldots,n$,
    \begin{align*}
        \|\nabla f_i(x) - \nabla f_i (y)\| \leq \beta_i \|x -y \|, \; \forall x, y \in \RR^n.
    \end{align*}
    It follows that 
    \begin{align*}
        \|\nabla f(x) - \nabla f(y)\| = \|\sum_{i=1}^n \nabla f_i(x) - \nabla f_i(y)\| \leq \sum_{i=1}^n \|\nabla f_i(x) - \nabla f_i(y)\| \leq \sum_{i=1}^n \beta_i \|x-y\|,  
    \end{align*}
    where the first inequality uses the triangle inequality. Thus, $f$ is $(\sum_{i=1}^n \beta_i)$-smooth.
\end{proof}

\begin{proposition}
    \label{prop:scaled-smooth}
    Suppose $f:\RR^d \to \RR$ is $M$-smooth and $c \in \RR$. Then $cf$ is $|c|M$-smooth.
\end{proposition}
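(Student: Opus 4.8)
The plan is to argue directly from the definition of $M$-smoothness, namely that the gradient map is $M$-Lipschitz. Since $f$ is $M$-smooth, it is differentiable and satisfies $\|\nabla f(x) - \nabla f(y)\| \leq M\|x-y\|$ for all $x,y \in \RR^d$. The proof will consist of rewriting the gradient of the scaled function and tracking how the scalar $c$ propagates through the norm.

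First I would invoke linearity of the gradient to observe that $cf$ is differentiable with $\nabla(cf)(x) = c\,\nabla f(x)$ for every $x \in \RR^d$; this is the only structural fact needed and requires no hypotheses beyond differentiability of $f$, which is implied by $M$-smoothness. Next I would estimate the Lipschitz modulus of $\nabla(cf)$ by writing, for arbitrary $x,y \in \RR^d$,
\begin{align*}
\|\nabla(cf)(x) - \nabla(cf)(y)\| = \|c\,\nabla f(x) - c\,\nabla f(y)\| = |c|\,\|\nabla f(x) - \nabla f(y)\| \leq |c|\,M\,\|x-y\|,
\end{align*}
where the middle equality uses absolute homogeneity of the norm (this is precisely where the absolute value $|c|$ in the conclusion originates) and the final inequality applies the $M$-smoothness of $f$. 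This chain of (in)equalities establishes that $\nabla(cf)$ is $|c|M$-Lipschitz, which is exactly the claim that $cf$ is $|c|M$-smooth.

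Honestly, there is no substantive obstacle here: the statement is a one-line consequence of the homogeneity of the norm combined with linearity of differentiation, entirely parallel to the triangle-inequality argument already used in \cref{prop:sum-smmoth}. The only point worth a sentence of care is the degenerate case $c = 0$, in which $cf$ is the zero function and hence trivially $0$-smooth, matching $|c|M = 0$; since the displayed computation holds verbatim for $c=0$ as well, no separate case analysis is actually required.
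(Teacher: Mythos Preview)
Your proposal is correct and follows essentially the same approach as the paper's own proof: write $\nabla(cf)=c\,\nabla f$, pull out $|c|$ via homogeneity of the norm, and invoke the $M$-Lipschitz bound on $\nabla f$. The only difference is cosmetic---you state the $|c|$ step as an equality (which it is) whereas the paper writes it as an inequality---and your remark on the $c=0$ case is a harmless elaboration.
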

\begin{proof}
    The proof is straightforward as 
    \begin{align*}
        \|\nabla cf(x) - \nabla cf(y)\| = \|c (\nabla f(x) - \nabla f(y)) \| \leq |c| \|\nabla f(x) - \nabla f(y)\| \leq |c|M\|x-y\|. 
    \end{align*}
\end{proof}

With \cref{lemma:comp-weakly,prop:sum-smmoth,prop:scaled-smooth}, the weak convexity of the functions in \cref{eq:Phase-retrieval-apx,eq:Blind-deconvolution-apx} can be established readily.
In particular, the functions in \cref{eq:Phase-retrieval-apx,eq:Blind-deconvolution-apx} can all be written in the form of 
\begin{align*}
    f(\cdot) = \frac{1}{n} \sum_{i= 1}^n h_i(c_i(\cdot)),
\end{align*}
where $h_i:\RR^m \to \RR$ is a convex and $L_i$-Lischitz continous function and $c_i:\RR^d \to \RR^m$ is a $\beta_i$-smooth function for each $i$. Thus, the function $h_i(c_i(\cdot))$ is $L_i\alpha_i$-weakly convex by \cref{lemma:comp-weakly}, and the function $f$ is $(\sum_{i=1}^n L_i \alpha_i)/n$-weakly by \cref{prop:sum-smmoth,prop:scaled-smooth}. Precisely, if we let $h_i (\cdot) = |\cdot|$ and $c_i(\cdot) = \innerproduct{a_i}{x}^2 - b_i $ for each $i =1,\ldots,n$, then the function in \cref{eq:Phase-retrieval-apx} can be written as $\frac{1}{n}\sum_{i=1}^n h_i(c_i(\cdot))$. Similarly, if we let $h_i (\cdot) = |\cdot|$ and $c_i(x,y) = \innerproduct{u_i}{x}\innerproduct{v_i}{y} - b_i$ for each $i=1,\ldots,n$, then the function in \cref{eq:Blind-deconvolution-apx} can be written as $\frac{1}{n}\sum_{i=1}^n h_i(c_i(x,y))$ as well. 

It is clear that the absolution value function $h(\cdot) = |\cdot|$ is convex and $1$-Lipschiz continuous. What remains to show is that the function $c_i$ is smooth.
\begin{proposition}
    \label{prop:smooth-phase}
    The function $c:\RR^d \to \RR$ as $c(x) = \innerproduct{a_i}{x}^2 - b_i$ is $2 \|a_i\|^2$ smooth.    
\end{proposition}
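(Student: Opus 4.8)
The plan is to verify the Lipschitz continuity of $\nabla c$ directly, since $c$ is a simple quadratic function whose gradient admits an explicit closed form. First I would compute the gradient. Writing $c(x) = \innerproduct{a_i}{x}^2 - b_i$, the chain rule gives
\[
\nabla c(x) = 2 \innerproduct{a_i}{x}\, a_i,
\]
which is an affine (in fact linear) function of $x$. This is the key simplification: because the gradient is linear in $x$, bounding $\|\nabla c(x) - \nabla c(y)\|$ reduces to a single application of the Cauchy--Schwarz inequality.

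The main computation I would carry out is the following. For any $x, y \in \RR^d$,
\[
\nabla c(x) - \nabla c(y) = 2 \innerproduct{a_i}{x}\, a_i - 2 \innerproduct{a_i}{y}\, a_i = 2 \innerproduct{a_i}{x - y}\, a_i,
\]
using bilinearity of the inner product. Taking norms and applying Cauchy--Schwarz to the scalar factor $\innerproduct{a_i}{x-y}$ yields
\[
\|\nabla c(x) - \nabla c(y)\| = 2 \left| \innerproduct{a_i}{x-y} \right| \, \|a_i\| \leq 2 \|a_i\|\, \|x - y\|\, \|a_i\| = 2\|a_i\|^2 \, \|x - y\|.
\]
This is exactly the $2\|a_i\|^2$-smoothness claimed, so the proof closes immediately.

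There is essentially no obstacle here: the statement is a routine calculation, and the only point requiring care is correctly handling the linearity of $\nabla c$ so that the increment factors as $\innerproduct{a_i}{x-y}\,a_i$, after which Cauchy--Schwarz does all the work. (An equivalent and even shorter route, if preferred, would be to note that $c$ is twice differentiable with constant Hessian $\nabla^2 c(x) = 2\, a_i a_i^{\tr}$, whose operator norm equals $2\|a_i\|^2$; a global bound on the spectral norm of the Hessian gives the smoothness constant directly. I would present the gradient-increment argument as the primary proof since it avoids invoking second-order information.)
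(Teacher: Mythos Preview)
Your proof is correct and is essentially the same as the paper's: the paper writes the gradient in matrix form as $\nabla c(x) = 2\,a_i a_i^{\tr} x$ and bounds the increment via the operator norm $\|a_i a_i^{\tr}\| = \|a_i\|^2$, which is exactly your Cauchy--Schwarz step in different notation.
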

\begin{proof}
Note that the gradient can be computed as $\nabla c(x) = 2 a_i a_i^\tr x.$ Thus, we have 
\begin{align*}
    \|\nabla c(x_1) - \nabla c(x_2)\| = 2\|a_i a_i^\tr (x_1 - x_2)\| \leq 2 \|a_i a_i^\tr\|\|x_1 - x_2\| = 2 \|a_i\|^2\|x_1 - x_2\|,
\end{align*}
finishing the proof.
\end{proof}

\begin{proposition}
    \label{prop:smooth-blind}
    The function $c:\RR^d \times \RR^d \to \RR$ as $c(x,y) =\innerproduct{u_i}{x}\innerproduct{v_i}{y} - b_i$ is $| v_i^\tr u_i |$ smooth.
\end{proposition}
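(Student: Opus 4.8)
The plan is to mirror the proof of \cref{prop:smooth-phase}: compute the gradient of $c$ explicitly, observe that $c$ is bilinear in $(x,y)$ (up to the constant $-b_i$) so that its gradient is an affine map and the gradient difference is \emph{linear} in the displacement, and then bound the norm of that difference. Identifying $\RR^d \times \RR^d$ with $\RR^{2d}$ and writing $z = (x,y)$, a direct calculation from $\partial_x c = \innerproduct{v_i}{y}\, u_i$ and $\partial_y c = \innerproduct{u_i}{x}\, v_i$ gives
\[
\nabla c(x,y) = \begin{bmatrix} \innerproduct{v_i}{y}\, u_i \\[2pt] \innerproduct{u_i}{x}\, v_i \end{bmatrix}.
\]
Since these expressions are linear in $y$ and $x$ respectively, the Hessian is the constant matrix $H = \begin{bmatrix} 0 & u_i v_i^\tr \\ v_i u_i^\tr & 0 \end{bmatrix}$, and therefore $\nabla c(z_1) - \nabla c(z_2) = H(z_1 - z_2)$ holds exactly. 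This reduces the smoothness question to computing the operator norm of the fixed matrix $H$.

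Second, I would bound $\|\nabla c(z_1) - \nabla c(z_2)\|$ directly. Writing $\Delta x = x_1 - x_2$ and $\Delta y = y_1 - y_2$, the squared norm of the gradient difference equals
\[
\left(\innerproduct{v_i}{\Delta y}\right)^2 \|u_i\|^2 + \left(\innerproduct{u_i}{\Delta x}\right)^2 \|v_i\|^2.
\]
Applying Cauchy--Schwarz termwise, $(\innerproduct{v_i}{\Delta y})^2 \le \|v_i\|^2\|\Delta y\|^2$ and $(\innerproduct{u_i}{\Delta x})^2 \le \|u_i\|^2\|\Delta x\|^2$, this is at most $\|u_i\|^2\|v_i\|^2\big(\|\Delta x\|^2 + \|\Delta y\|^2\big)$, i.e. the Lipschitz estimate $\|\nabla c(z_1) - \nabla c(z_2)\| \le \|u_i\|\,\|v_i\|\,\|z_1 - z_2\|$. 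This is exactly the operator-norm bound $\|H\|_{\mathrm{op}}\|z_1 - z_2\|$, and it is tight: $H^2$ is block-diagonal with blocks $\|v_i\|^2 u_i u_i^\tr$ and $\|u_i\|^2 v_i v_i^\tr$, whose largest eigenvalue is $\|u_i\|^2\|v_i\|^2$, so $\|H\|_{\mathrm{op}} = \|u_i\|\,\|v_i\|$.

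The step I expect to be the main obstacle is pinning down the precise value of the smoothness constant. The computation above is sharp and produces $\|u_i\|\,\|v_i\|$, whereas the statement asserts the constant $|v_i^\tr u_i| = |\innerproduct{u_i}{v_i}|$. By Cauchy--Schwarz these satisfy $|\innerproduct{u_i}{v_i}| \le \|u_i\|\,\|v_i\|$, with equality precisely when $u_i$ and $v_i$ are parallel, so the two agree in that special geometry but the natural estimate yields the larger quantity in general. Consequently the delicate point is to justify the asserted modulus: the cleanest route is the operator-norm computation of the constant Hessian $H$ carried out above, and I would use that computation to fix the exact Lipschitz constant of $\nabla c$, taking care to state the modulus in the form the displacement geometry actually supports.
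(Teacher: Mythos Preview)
Your approach is essentially the same as the paper's: compute the gradient, observe that the gradient difference is a linear map of the displacement, and bound via the operator norm of the associated matrix. The paper reorders the two blocks of the gradient difference and factors it through the block-diagonal matrix $\begin{bmatrix} v_i u_i^\tr & 0 \\ 0 & u_i v_i^\tr \end{bmatrix}$, whereas you work directly with the Hessian $H = \begin{bmatrix} 0 & u_i v_i^\tr \\ v_i u_i^\tr & 0 \end{bmatrix}$; these have the same operator norm, so the routes are equivalent.

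More importantly, you have correctly identified a flaw in the stated constant. The paper's proof ends with the claim $\|v_i u_i^\tr\| = |u_i^\tr v_i|$, but the spectral norm of the rank-one matrix $v_i u_i^\tr$ is $\|u_i\|\,\|v_i\|$, not $|u_i^\tr v_i|$. Your computation of $H^2$ confirms that the sharp Lipschitz constant of $\nabla c$ is $\|u_i\|\,\|v_i\|$, and your observation that the two quantities agree only when $u_i$ and $v_i$ are parallel is exactly right. So the proposition as stated (and the paper's own last step) is incorrect in general; the modulus should be $\|u_i\|\,\|v_i\|$, and your instinct to ``fix the exact Lipschitz constant'' to what the geometry actually supports is the right resolution. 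This does not affect the downstream use in the paper, since only \emph{some} finite weak-convexity parameter is needed for the experiments, but your constant is the correct one.
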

\begin{proof}
    Note that $\nabla f(x,y) = \begin{bmatrix} u_i v_i^\tr y  \\ v_i u_i^\tr x \end{bmatrix} $.
    Let $x_1,y_1,x_2,y_2 \in \RR^n$. We then have  
    \begin{align*}
        \|\nabla f(x_1,y_1) - \nabla f(x_2,y_2)\| & = \| \begin{bmatrix}
            u_i v_i^\tr  (y_1 - y_2) \\
            v_i u_i^\tr (x_1 - x_2)
        \end{bmatrix}\| = \| \begin{bmatrix}
            v_i u_i^\tr (x_1 - x_2) \\
            u_i v_i^\tr  (y_1 - y_2) 
        \end{bmatrix}\| \\
        & = \| \begin{bmatrix}
            v_i u_i^\tr & 0 \\
            0 & u_i v_i^\tr 
        \end{bmatrix} \begin{bmatrix}
            x_1 - x_2 \\
            y_1 - y_2
        \end{bmatrix} \| \\
        & \leq \| \begin{bmatrix}
            v_i u_i^\tr & 0 \\
            0 & u_i v_i^\tr 
        \end{bmatrix} \| \| \begin{bmatrix}
            x_1 - x_2 \\
            y_1 - y_2
        \end{bmatrix}  \|  \\
        & =\|v_i u_i^\tr\| \| \begin{bmatrix}
            x_1 - x_2 \\
            y_1 - y_2
        \end{bmatrix}  \| \\
        & = | u_i^\tr v_i | \| \begin{bmatrix}
            x_1 - x_2 \\
            y_1 - y_2
        \end{bmatrix}  \|,
    \end{align*}
    showing that the function $c$ is $| u_i^\tr v_i |$-smooth.
\end{proof}

From \cref{prop:smooth-phase,prop:smooth-blind} and the discussion above, we see that \cref{eq:Phase-retrieval-apx,eq:Blind-deconvolution-apx} is $2(\sum_{i=1}^n \|a_i \|^2)/n$ and $(\sum_{i=1}^n | u_i^\tr v_i |)/n$-smooth, respectively.

\newpage

\end{document}